\documentclass[reqno,10pt]{amsart}
\usepackage{amsthm,amsfonts,amssymb,euscript,mathrsfs,graphics,color,amsmath,latexsym,marginnote,hyperref,todonotes,multicol,yhmath,imakeidx}
\usepackage[latin1]{inputenc}

\theoremstyle{plain}

\usepackage{times}

\setlength{\marginparwidth}{3cm}

\numberwithin{equation}{section}


\oddsidemargin  0cm  
\evensidemargin 0cm  
\topmargin  0.05cm      
\headheight 0.37cm      
\headsep    0.38cm      
\textwidth   17.0cm 
\textheight 22.5cm     
\footskip   1.5cm      


\def\cD{{\mathcal D}}
\def\cP{{\mathcal P}}

\def\Z{{\mathbb Z}}
\def\N{{\mathcal N}}
\def\R{{\mathbb R}}
\def\T{{\mathbb T}}
\def\e{{\varepsilon}}
\def\g{{\gamma}}

\newcommand{\bt}{\beta}
\def\s{{\sigma}}

\def\d{{\delta}}

\def\og{{\lambda}}
\def\lal{{\mathfrak l}}
\newcommand{\tq}{{\mathtt q}}

\newcommand{\op}{{\rm Op}}

\newcommand{\ol}{\overline}

\def\B{{\mathcal{B}}}

\def\+R{+_{_{ \!\! \R}}}

\def\diag{\,\mbox{diag}\,}

\def\pa{\partial}

\def\hat{\widehat}

\def\bar{\overline}

\DeclareMathAlphabet{\mathpzc}{OT1}{pzc}{m}{it}



\theoremstyle{plain}
\newtheorem{Teo}{Theorem}
\newtheorem{teor}{Theorem}[section]
\newtheorem{ese}[teor]{Example}
\newtheorem{prop}[teor]{Proposition}
\newtheorem{lem}[teor]{Lemma}
\newtheorem{cor}[teor]{Corollary}
\newtheorem{defi}[teor]{Definition}
\newtheorem{remark}[teor]{Remark}

\newcommand{\bdm}{\begin{displaymath}}
\newcommand{\edm}{\end{displaymath}}
\newcommand{\bpb}{\begin{prob}}
\newcommand{\epb}{\end{prob}}

\newcommand{\beq}{\begin{equation}}
\newcommand{\eeq}{\end{equation}}
\newcommand{\bem}{\begin{multline}}
\newcommand{\eem}{\end{multline}}
\newcommand{\bes}{\begin{ese}}
\newcommand{\ees}{\end{ese}}
\newcommand{\bde}{\begin{defi}}
\newcommand{\ede}{\end{defi}}
\newcommand{\bpr}{\begin{prop}}
\newcommand{\epr}{\end{prop}}
\newcommand{\ble}{\begin{lem}}
\newcommand{\ele}{\end{lem}}
\newcommand{\bte}{\begin{teor}}
\newcommand{\ete}{\end{teor}}
\newcommand{\bco}{\begin{cor}}
\newcommand{\eco}{\end{cor}}

\newcommand{\gotp}{{\mathfrak p}}

\newcommand{\gotC}{{\mathfrak C}}

\newcommand{\gotL}{{\mathfrak L}}
\newcommand{\gotM}{{\mathfrak M}}
\newcommand{\gotN}{{\mathfrak N}}

\newcommand{\al}{\alpha}
\newcommand{\be}{\beta}

\newcommand{\x}{\xi}

\newcommand{\f}{\varphi}
\renewcommand{\s}{\sigma}

\newcommand{\del}{\partial}
\newcommand{\td}{{\mathtt D}}

\newcommand{\oo}{\omega}

\newcommand{\tb}{\mathtt{b}}

\newcommand{{\resonance}}{relevant self-energy cluster }

\newcommand{\ii}{{\rm i}}





\numberwithin{equation}{section}
%

\def\f{{\varphi}}

\def\g{{\gamma}}

\def\pa{\partial}

\def\cO{\mathcal{O}}

\def\e{\varepsilon}
\def\N{{\mathbb N}}
\newcommand{\calO}{{\mathcal O}}
\renewcommand{\to}{\rightarrow}
\newcommand{\tk}{{\mathtt k}}
\newcommand{\su}{\s_1}
\newcommand{\lD}{{\langle D_x\rangle}}

\renewcommand{\R}{\mathbb{R}}

\renewcommand{\T}{\mathbb{T}}

\newcommand{\calF}{{\mathcal F}}
\newcommand{\calG}{{\mathcal G}}
\newcommand{\calH}{{\mathcal H}}

\newcommand{\calL}{{\mathcal L}}

\renewcommand{\calO}{{\mathcal O}}

\newcommand{\calQ}{{\mathcal Q}}
\newcommand{\RR}{{\mathcal R}}

\theoremstyle{plain}
%
%
%

\setcounter{tocdepth}{1}
\makeatletter
%
%
%
%
%
\def\l@subsection{\@tocline{2}{0pt}{2.5pc}{5pc}{}}
\def\l@subsubsection{\@tocline{2}{0pt}{4.5pc}{5pc}{}}
\renewcommand\tocchapter[3]{%
  \indentlabel{\@ifnotempty{#2}{\ignorespaces#2.\quad}}#3%
}
\newcommand\@dotsep{4.5}
\def\@tocline#1#2#3#4#5#6#7{\relax
  \ifnum #1>\c@tocdepth 
  \else
    \par \addpenalty\@secpenalty\addvspace{#2}%
    \begingroup \hyphenpenalty\@M
    \@ifempty{#4}{%
      \@tempdima\csname r@tocindent\number#1\endcsname\relax
    }{%
      \@tempdima#4\relax
    }%
    \parindent\z@ \leftskip#3\relax \advance\leftskip\@tempdima\relax
    \rightskip\@pnumwidth plus1em \parfillskip-\@pnumwidth
    #5\leavevmode\hskip-\@tempdima{#6}\nobreak
    \leaders\hbox{$\m@th\mkern \@dotsep mu\hbox{.}\mkern \@dotsep mu$}\hfill
    \nobreak
    \hbox to\@pnumwidth{\@tocpagenum{#7}}\par
    \nobreak
    \endgroup
  \fi}
\makeatother
\AtBeginDocument{%
\makeatletter
\expandafter\renewcommand\csname r@tocindent0\endcsname{0pt}
\makeatother
}
\def\l@subsection{\@tocline{2}{0pt}{2.5pc}{5pc}{}}


\begin{document}

\def\red{\color{red}\tiny}
\def\EX+{\marginnote{\red Explain more?}}
\def\EX{\marginnote{\red Explain better}}
\def\TD{\marginnote{\red To do/check}}
\def\TA{\marginnote{\red To adjust}}

\title{\bf Reducible KAM tori for Degasperis-Procesi equation}

\author{Roberto Feola}
\address{UnivNantes, Nantes}
\email{roberto.feola@univ-nantes.fr}

\author{Filippo Giuliani}
\address{UPC, Barcelona}
\email{filippo.giuliani@upc.edu}

\author{Michela Procesi}
\address{RomaTRE}
\email{procesi@mat.uniroma3.it}

\thanks{
This research was supported by Miur-PRIN 2015 ``Variational methods, with applications to problems in 
mathematical physics and geometry'' n. 2015KB9WPT-008, by
the ERC grant ``Hamiltonian PDEs and small divisor problems: a dynamical systems approach'' n. 306414 under FP7 and
 by the ERC project ``FAnFArE''
n. 637510.\\
This project has received funding from the European Research Council (ERC) under the European Union's Horizon 2020 research and innovation programme under grant agreement No 757802.
}


\begin{abstract}
We develop KAM theory close to an elliptic fixed point for 
quasi-linear Hamiltonian perturbations of the dispersive Degasperis-Procesi equation on the circle.  
The overall strategy in KAM theory for quasi-linear PDEs is based on Nash-Moser nonlinear iteration, pseudo differential calculus and normal form techniques.
In the present case the complicated {\it symplectic structure}, the {\it weak dispersive} effects of the linear flow and the presence of {\it strong resonant interactions}  require a novel set of ideas. The main points are to exploit the integrability of the unperturbed equation, to look for special {\it wave packet} solutions and to perform a very careful algebraic analysis of the resonances.\\
Our approach is quite general and can be applied also to other $1$d integrable PDEs. We are confident for instance that the same strategy should work for the Camassa-Holm equation.  
\end{abstract} 

\maketitle

\tableofcontents

\section{Introduction and Main result}

In this paper we prove 
existence and  stability of Cantor families of quasi-periodic, 
small amplitude,  
solutions for  quasi-linear Hamiltonian 
perturbations of the Degasperis-Procesi (DP) equation
\begin{equation}\label{DP}
u_t-u_{x x t}+u_{xxx}-4 u_x-u u_{xxx}-3 u_x u_{xx}+4 u u_x
+\mathcal{N}_8( u, u_x, u_{xx}, u_{xxx})=0
\end{equation}
under periodic boundary conditions 
$x\in \mathbb{T}:=\mathbb{R}\setminus2\pi\mathbb{Z}$, 
where
\begin{equation}\label{N6}
\mathcal{N}_8( u, u_x, u_{xx}, u_{xxx})
:=-(4-\partial_{xx}) \partial_x[(\partial_u f)( u)]\, ,
\end{equation} 
 the ``Hamiltonian density'' $f$ belongs to $C^{\infty}(\mathbb{R}, \mathbb{R})$ 
and is such that 
\begin{equation}\label{HamiltonianDensity}
f(u)=O(u^9),
\end{equation}
where $O(u^9)$ denotes a function with a zero of order at least nine at the origin.
The equation \eqref{DP} is a  Hamiltonian PDE
of the form $u_t=J\,\nabla H(u)$ where 
$\nabla H$ is the $L^2(\mathbb{T},\mathbb{R})$ gradient and the function
\begin{equation}\label{DPHamiltonian}
H(u)=\int \frac{u^2}{2}-\frac{u^3}{6}+f(u)\, dx\,,
\quad J=(1-\partial_{xx})^{-1}(4-\partial_{xx})\partial_x\,
\end{equation}
is defined on the  phase space $H_0^1(\mathbb{T}):=\left\{ u\in H^1(\mathbb{T}, \mathbb{R}) : 
\int_{\mathbb{T}} u\,dx=0\right\}$.
The equation \eqref{DP} for $f=0$ is the 
 DP equation which was first proposed in \cite{DegPro} in the form  
\begin{equation}\label{DPvera}
u_t+c_0 u_x+\gamma u_{xxx}-\alpha^2 u_{xx t}=
\left(-\frac{2 c_1}{\alpha^2} u^2+c_2 (u_x^2+ u u_{xx})\right)_x\,,
\end{equation}
where $c_0,c_1,c_2,\gamma,\al\in \mathbb{R}$, $\al\neq0$.
By applying 
Galilean boosts, translations 
and time rescaling to \eqref{DPvera} one obtains equation \eqref{DP} 
with $f=0$.

The DP equation can be 
regarded as a model for nonlinear shallow water 
dynamics and its asymptotic accuracy is the same 
as for the Camassa-Holm equation and a degree 
more than the KdV equation \cite{CL}.
There is a rather large literature on this equation
starting form the paper \cite{Deg} in which the \emph{complete integrability}
is proved.
The local and global well-posedness, for instance, have been extensively studied
as well as existence of wave breaking phenomena 
(peakons, N-peakons solutions).  Without trying to be exhaustive 
we quote   \cite{ConE}, \cite{Const1}, \cite{ConstIv}, \cite{CamHolm}, \cite{LiuYin}, \cite{Yin1}
and we refer to 
\cite{FGPa} and references therein for more literature  about Degasperis-Procesi equation.

Actually many of these results (notably the wave breaking) 
are studied in the {\it dispersionless} case, 
which corresponds to \eqref{DP} with $f=0$ and 
$u\rightsquigarrow u+1$. 
In the present paper 
the presence of the dispersive terms $-4 u_{x}+ u_{xxx}$ 
is fundamental. 
Our main purpose is to prove existence of quasi-periodic solutions 
in high Sobolev regularity by following a KAM approach. In this setting
 a \emph{quasi}-periodic solution 
 with $\nu\in \mathbb{N}$ 
 frequencies  is defined by an embedding  
\begin{equation}\label{6.6}
\T^{\nu}\ni\f\mapsto U(\f,x) \in  H_0^1(\mathbb{T})
\end{equation}
and a frequency vector
$\oo \in \R^{\nu}$, with rationally independent entries,
such that $u(t,x)=U(\oo t,x)$ is a solution  of \eqref{DP} and 
$U(\f,x)\in H^p(\T^{\nu+1},\R)$ for some $p$ sufficently large .

Notice that, in a neighbourhood of $u=0$, \eqref{DP}
 can be seen 
as a perturbation  of the linear PDE
\begin{equation}\label{lineare}
v_t-v_{x x t}+v_{xxx}-4 v_x=0\,,
\end{equation}
whose bounded solutions have the form 
\begin{equation}\label{dispersionLaw}
v(t, x)=\sum_{j\in\mathbb{Z}} v_j\,e^{\mathrm{i}(\og(j) t+j x)}\,,\qquad 
\og(j):=j\,\frac{4+j^2}{1+j^2}=j+\frac{3 j}{1+j^2}, \qquad j\in\mathbb{Z}\,,
\end{equation}
 where $j\mapsto \lambda(j)$
is the {\it linear} dispersion law.
It is easily seen that all solutions of \eqref{lineare} with compact Fourier support are periodic,
but with period depending on the support.
%
%
In this context it is natural to investigate whether equation \eqref{DP} 
has periodic or quasi-periodic solutions \emph{close to} to 
small amplitude 
linear solutions  \eqref{dispersionLaw}.
We remark that, since the solutions of \eqref{dispersionLaw}
are all periodic, the existence of 
quasi-periodic solutions, if any,  
strongly 
rely on the presence of the quadratic nonlinearity in \eqref{DP}. 

In the present paper 
we construct 
quasi-periodic solutions mainly supported  
 in Fourier space at 
$\nu\ge 2$ distinct  \textit{tangential sites} 
\begin{equation}\label{TangentialSitesDP}
S^+:=\{\overline{\jmath}_1, \dots, \overline{\jmath}_{\nu} \}, \quad S:=S^{+}\cup (-S^+), \quad 
\overline{\jmath}_i\in \mathbb{N}\setminus\{ 0\}, \quad \forall i=1,\dots, \nu\,,
\end{equation}
where, without loss of generality, we shall always assume that  
$\overline{\jmath}_1=\max_{i=1,\ldots, \nu}\overline{\jmath}_i$.
We denote by
\begin{equation}\label{LinearFreqDP}
\overline{\omega}:=\left(\frac{\overline{\jmath}_1(4+\overline{\jmath}_1^2)}{1+\overline{\jmath}_1^2},\dots, \frac{\overline{\jmath}_{\nu}(4+\overline{\jmath}_{\nu}^2)}{1+\overline{\jmath}_{\nu}^2}\right)\in\mathbb{Q}^{\nu}
\end{equation}
the linear frequencies of oscillations related to the tangential sites. 
More precisely our solutions will have the form
\begin{equation}\label{SoluzioneEsplicitaDP}
u(t, x; \xi)=2\sum_{i=1}^\nu\sqrt{\xi_i}\,\cos(\omega_i t+\bar\jmath_i x)
+o(\sqrt{\lvert \xi \rvert}), \quad \omega=\bar\omega +O(\lvert \xi \rvert),
\end{equation}
where $o(\sqrt{\lvert \xi \rvert})$
is meant  in the 
 $H^{s}$-topology with $s$ large.  
 It is well know that in looking for quasi-periodic solutions 
 ``small divisors'' problems arise.
To overcome such problems
 we shall require
 that $S^+$ satisfies a {\it wave packet} condition and that the \emph{unperturbed amplitudes} 
 $\xi$ belong to 
 an appropriate Cantor-like set of positive measure. 

\noindent
The following definition quantifies  the \emph{wave packet} condition.
\begin{defi}\label{Def:cono}
For $\mathtt{r}\in (0, 1)$, we say that a set of natural numbers
$S^+=\{\overline{\jmath}_1, \dots, \overline{\jmath}_{\nu}\}$ 
is in $\mathcal{V}(\mathtt{r})$ if
\begin{equation}\label{caso2}
\min_{i=1, \dots, \nu}  \overline{\jmath}_i >\frac{1}{\mathtt{r}}
\qquad \mbox{and} 
\qquad 
\left\lvert \frac{\overline{\jmath}_i}{\overline{\jmath}_1}
-1 \right\rvert\le\mathtt{r}\,;
\end{equation}
\begin{equation}\label{GenericAssumption1}
\begin{aligned}
&\sum_{i=1}^{\nu} 
\frac{\overline{\jmath}_i}{1+\overline{\jmath}_i^2}\,\ell_i\ne  0\,, 
\quad  \forall \ell\in \Z^{\nu}:\quad \lvert \ell \rvert = 4\,.
\end{aligned}
\end{equation}
\end{defi}
\noindent
Denoting by $B(0,\varrho)$
the 
ball centred at the origin of $\mathbb{R}^{\nu}$ of radius $\varrho>0$,  
our result can be stated as follows.
\begin{Teo}\label{MainResult}
Let $\nu\in \mathbb{N}$, $\nu\geq 2$, and consider  
$f\in C^{\infty}(\mathbb{R},\mathbb{R})$ 
satisfying \eqref{HamiltonianDensity}. 
There exists a constant $\mathtt{r}_0>0$ 
such that, for any 
choice of $S^+$ in $\mathcal{V}(\mathtt{r})$, with $0<\mathtt{r}\leq \mathtt{r}_0$,
there exist $s\gg1$,  $0<\varrho\ll1$  
and a positive measure  
 Cantor-like set $\mathfrak{A}\subseteq B(0,\varrho)$
 such that
the following holds. 
For any $\x\in \mathfrak{A}$,
the equation \eqref{DP} possesses a small amplitude 
quasi-periodic solution $u(t,x; \xi)=U(\omega t,x; \xi)$ of the form \eqref{SoluzioneEsplicitaDP} 
where 
$U(\f,x)\in H^{s}(\mathbb{T}^{\nu+1},\mathbb{R})$ and 
$\omega:=\omega(\xi)\in\mathbb{R}^{\nu}$ is a diophantine frequency vector. Moreover for  $0<\e\le \sqrt{\varrho}$, the set  $\mathfrak A$ has asymptotically full relative measure in $[\e^2,2\e^2]^\nu$.
\end{Teo}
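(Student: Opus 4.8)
\medskip
\noindent\textbf{Strategy of the proof.}
The plan is to recast the existence of the solution \eqref{SoluzioneEsplicitaDP} as the search for a zero of the nonlinear operator $\mathcal F(U,\oo):=\oo\cdot\partial_\f U-X_H(U)$ on a scale of Sobolev spaces of torus embeddings $\f\mapsto U(\f,\cdot)$, and to produce it by a Nash--Moser iteration. First I rescale $u=\e v$, with $|\x|\sim\e^2$, and use the complete integrability of \eqref{DP} with $f=0$: finitely many steps of \emph{weak Birkhoff normal form} remove from $H$ the cubic and quartic monomials that are neither supported on the tangential set $S$ nor resonant. Here both the symplectic operator $J=(1-\partial_{xx})^{-1}(4-\partial_{xx})\partial_x$ and the dispersion law \eqref{dispersionLaw} play a role: the wave-packet hypothesis $S^+\in\mathcal V(\mathtt r)$ and the genericity condition \eqref{GenericAssumption1} guarantee that the surviving resonant monomials form a small, explicitly computable family, so that in action--angle--normal coordinates $(\theta,y,z)$ adapted to $S$ (with $(\theta,y)\in\T^\nu\times\R^\nu$ and $z$ in the normal Fourier subspace) the normalized Hamiltonian is an integrable part plus a genuinely small perturbation, and the tangential frequency map $\x\mapsto\oo(\x)=\bar\oo+\mathbb A\,\x+O(|\x|^2)$ has nondegenerate twist $\mathbb A$. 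The hypothesis $f=O(u^9)$ is exploited here: $\mathcal N_8$ contributes to $H$ only at order $\ge 9$ and therefore does not affect the quartic normal form, so $\mathbb A$ coincides with that of the integrable DP hierarchy.

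I then run the Nash--Moser scheme: given an approximate zero of $\mathcal F$, I linearize and must approximately invert $\der_U\mathcal F$. After the standard symplectic reductions (the ``good unknown'' of Alinhac, and the projection onto the normal directions) this operator takes the form $\oo\cdot\partial_\f+a(\f,x)\,\partial_x+(\text{lower order})$, a first-order transport operator whose top-order coefficient $a=1-u+\dots$ is inherited from $J\nabla^2 H$ and is \emph{quasi-linear}, since it depends on the unknown $u$ (the first-order, transport character being what remains, after the factor $(1-\partial_{xx})^{-1}$ in $J$, of the quasilinear term $-uu_{xxx}$ in \eqref{DP}). To invert it I first \emph{regularize}: using the pseudodifferential and para-differential calculus adapted to the nonstandard symplectic form of \eqref{DP}, I conjugate $\der_U\mathcal F$ by a $\f$-dependent diffeomorphism of $\T$ and by a ladder of pseudodifferential flows so as to make its principal symbol constant in $(\f,x)$, then iterate to lower orders, reducing $\der_U\mathcal F$ to $\oo\cdot\partial_\f+\ii\,\mathcal D+\mathcal R$ with $\mathcal D$ a $\x$- and $\oo$-dependent Fourier multiplier and $\mathcal R$ a remainder regularizing to any prescribed order. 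A KAM reducibility iteration then removes $\mathcal R$, producing a block-diagonal operator $\oo\cdot\partial_\f+\ii\,\mathcal D_\infty$ with $\mathcal D_\infty=\mathrm{diag}_{j}\mu^\infty_j$ and $\mu^\infty_j=\mu^\infty_j(\oo,\x)=\mathtt m_1\,j+\mathtt m_0+O(\langle j\rangle^{-1})$, $\mathtt m_1$ close to $1$; this reduction also gives the reducibility, hence the linear stability, of the final torus.

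The iteration closes on a Cantor set of amplitudes cut out by first- and second-order Melnikov conditions of the type $|\oo\cdot\ell+\mu^\infty_j|\ge\g\,\langle\ell\rangle^{-\tau}$ and $|\oo\cdot\ell+\mu^\infty_j-\mu^\infty_k|\ge\g\,\langle\ell\rangle^{-\tau}$, for the relevant $(\ell,j,k)$ with $(\ell,j-k)\ne(0,0)$. Because $\lambda(j)-j=3j/(1+j^2)\to0$, the separation between distinct normal frequencies, and between $\oo\cdot\ell$ and the $\mu^\infty_j$, is governed only by the $O(\langle j\rangle^{-1})$ anomalous part of $\lambda$, so the constants and exponents in these conditions, and in particular the resonance analysis that validates them, are extremely delicate: a careful algebraic study of the zero sets of $\ell\mapsto\oo\cdot\ell$ and of $(\ell,j,k)\mapsto\oo\cdot\ell+\lambda(j)-\lambda(k)$, again using $S^+\in\mathcal V(\mathtt r)$ and \eqref{GenericAssumption1}, is needed to confine the would-be resonances to finitely many families on which the small divisor is either bounded away from $0$ or depends transversally on $\x$. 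Combining this with the twist of $\x\mapsto\oo(\x)$ and with bounds on $\partial_\x\mu^\infty_j$, one derives \Russ-type transversality estimates and concludes that the set $\mathfrak A\subseteq B(0,\varrho)$ of amplitudes along which all Melnikov conditions persist, at every step of the scheme, has positive measure, with complement of relative measure $O(\g^{c})$ in each cube $[\e^2,2\e^2]^\nu$; choosing $\g=\g(\e)\to0$ slowly as $\e\to0$ then yields the asymptotically full relative measure. Undoing the changes of variables produces the solution in the stated form \eqref{SoluzioneEsplicitaDP} with $\oo(\x)$ diophantine.

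I expect the main obstacle to be precisely this last circle of ideas: reconciling the \emph{weak dispersion}, which forces extremely small divisors and hence very weak lower bounds in the Melnikov conditions, with the \emph{strong quadratic resonant interactions}. Handling it requires both the a priori localization of the tangential sites into a narrow cone (the condition $\mathcal V(\mathtt r)$ with $\mathtt r$ small enough) and a hands-on classification of the resonances of $\lambda$, so that the Birkhoff normal form stays nondegenerate and the measure estimate does not collapse. A second, more technical, difficulty is the regularization of $\der_U\mathcal F$ against the nonlocal symplectic operator $J$, which forces one to set up the symbolic calculus in a form compatible with the phase space $H_0^1(\T)$ and with the factors $(4-\partial_{xx})$ and $(1-\partial_{xx})^{-1}$ appearing in $J$.
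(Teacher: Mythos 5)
Your road map is the one the paper follows (weak Birkhoff normal form, action--angle variables with a twist condition, Nash--Moser with an approximate inverse, pseudo differential regularization of the linearized operator, KAM reducibility, measure estimates), but two of the genuinely new points are missing or misattributed, and without them the scheme as you set it up does not close. First, the resonance problem: you claim that the wave-packet condition $S^+\in\mathcal V(\mathtt r)$ and \eqref{GenericAssumption1} make the surviving resonant monomials a ``small, explicitly computable family''. They do not: the DP dispersion law has many non-trivial resonances already at order four, for every choice of $S^+$, and the cone condition plays no role in killing them (in the paper it is used only for the twist condition of Lemma \ref{Twist1} and for the transversality/measure estimates of Lemma \ref{ecologia}). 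What makes the restricted normal form integrable is the existence of eight approximate constants of motion of \eqref{DP}: one shows (Propositions \ref{vvv} and \ref{LemmaBelloWeak}, via a Vandermonde argument) that a monomial resonant for \emph{all} the $K_m$ must be trivial, and this is exactly where the hypothesis $f=O(u^9)$ in \eqref{HamiltonianDensity} enters. Moreover, stopping the weak BNF at the quartic terms is not enough: since the equation is resonant, $\gamma$ is tied to the amplitude, $\gamma=\e^{2b}$ as in \eqref{gammaDP}, and the Nash--Moser smallness condition \eqref{SmallnessConditionNMdp} requires the remainder to be of size $\e^{9-2b}$ against $\gamma^{7/2}$; hence one must normalize up to order eight (six BNF steps), which is again only possible thanks to the approximate integrability up to $O(u^9)$.

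Second, the reduction of the linearized operator cannot be run as the ``standard ladder of pseudodifferential flows plus KAM reducibility'' you describe, because the perturbative smallness condition of that machinery, of the type \eqref{picci}, reads $\gamma^{-1}\|a_0\|\ll1$ with $a_0\sim\e$ and $\gamma\sim\e^{2+a}$, and therefore fails. The paper must first perform five non-perturbative order-reduction steps and three steps of linear Birkhoff normal form ``by hand'', exploiting homogeneity in $\bar v$ and the algebraic non-degeneracy of the Birkhoff divisors; in particular the $\e^2$-corrections $\lal_j$ to the normal eigenvalues must be computed exactly, which is done through the a posteriori identification argument of Theorem \ref{PartialWeak}, and these corrections are indispensable both to solve the third linear BNF step and to formulate the non-resonance conditions \eqref{divisoriLBNF3} and the second Melnikov conditions \eqref{GnDP}. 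Relatedly, your measure estimate is too optimistic as stated: because the spectral gap $\og(j)-\og(k)$ is asymptotically constant there are infinitely many second Melnikov conditions at comparable scales, and the paper can only sum them by imposing them with the weaker constant $\gamma^{3/2}$ and showing that for $|j|,|k|$ large they are implied by the first Melnikov conditions; simply ``choosing $\gamma\to0$ slowly'' does not reconcile this with the smallness requirement above. These are the points where a proof along your outline would break down, not mere technical refinements.
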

\noindent
Moreover we have the following stability result.
\begin{Teo}{\bf (Linear stability)}\label{MainResult2}
The quasi-periodic solutions \eqref{SoluzioneEsplicitaDP} $u(t, x)=U(\omega t, x)$ of equation \eqref{DP}
are \emph{linearly stable} and \emph{reducible} in the following sense. Consider equation \eqref{DP} linearized at the embedded torus $U(\f,x)$, then the corresponding operator has purely imaginary spectrum and there exists a change of variables  $H^s(\T, \mathbb{R}) \to H^s(\T, \mathbb{R})$, quasi periodic in time with frequency $\omega$, which diagonalizes it in the  directions normal to the torus.
As a consequence the  Cauchy problem of the linearized equation is stable, i.e. the Sobolev norms are uniformly bounded in $t$. 
\end{Teo}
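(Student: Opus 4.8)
The reducibility and linear stability are, to a large extent, a byproduct of the machinery needed to invert the linearized operators in the Nash--Moser scheme of Theorem~\ref{MainResult}: the plan is to carry out that reduction at the \emph{exact} quasi-periodic solution $U(\f,x;\x)$ with $\x\in\mathfrak A$, on which the Diophantine and second-order Melnikov conditions hold by construction of $\mathfrak A$, and to read off the two assertions. Write the equation obtained by linearizing \eqref{DP} at $U$ as $\partial_{t}h=\calL(U)h$, where $\calL(U)=J\,\partial_{u}\nabla H(U)$ is a time--quasi-periodic Hamiltonian operator. First I would apply the symplectic, torus-adapted change of variables (in the spirit of Berti--Bolle) that conjugates $\calL(U)$ to a block-triangular operator on $\R^{\nu}\times\R^{\nu}\times H^{s}_{\perp}$, where $H^{s}_{\perp}\subset H^{s}(\T,\R)$ denotes the subspace of functions with Fourier support outside $S$: the two finite-dimensional diagonal blocks are copies of the trivial translation generator $\om\cdot\partial_{\f}$ on $\T^{\nu}$ --- so that in the tangential directions $h$ remains bounded, as it must since $\partial_{\f}U(\om t)$ solves the linearized equation and is quasi-periodic in $t$ --- while the normal diagonal block is an operator $\calL_{\perp}(\om t)$ on $H^{s}_{\perp}$. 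All the content is the analysis of $\calL_{\perp}$.

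Because the perturbation is quasi-linear, the top-order part of $\calL_{\perp}$ has variable coefficients and must first be normalized. Using the pseudo/para-differential calculus developed in the paper, I would conjugate $\calL_{\perp}$ by a finite chain of symplectic maps of $H^{s}_{\perp}$ --- a time--quasi-periodic diffeomorphism of $\T$ straightening the transport vector field, multiplication operators, and the time-one flow of a smoothing pseudo-differential generator --- bringing it to the form
\[
\om\cdot\partial_{\f}+\ii\,\tm\,\Lambda_{\perp}+\mathcal R_{0}\,,
\]
where $\tm=\tm(\x)\in\R$ is a constant close to $1$, $\Lambda_{\perp}$ is the Fourier multiplier with symbol $\og(j)$ (see \eqref{dispersionLaw}) restricted to $j$ outside $S$, and $\mathcal R_{0}$ is a small bounded, indeed regularizing, remainder; since every conjugation is symplectic and real/parity preserving, the operator is still Hamiltonian. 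On this normal form I would run the iterative KAM diagonalization, solving at each step the homological equation that removes the off-diagonal part of the current remainder. This is possible because for every $\x\in\mathfrak A$ the frequency $\om(\x)$ is Diophantine and the second-order Melnikov conditions
\[
|\om(\x)\cdot\ell+\mu_{j}(\x)-\mu_{k}(\x)|\ \geq\ \frac{\gamma}{\langle\ell\rangle^{\tau}}\,,\qquad \ell\in\Z^{\nu},\ j,k\notin S,\ (\ell,j)\neq(0,k)\,,
\]
hold by definition of $\mathfrak A$, where the perturbed normal frequencies $\mu_{j}(\x)\in\R$ are close to $\tm(\x)\og(j)$. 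The scheme converges in $H^{s}_{\perp}$ and produces a bounded, boundedly invertible, time--quasi-periodic change of variables $\Phi(\om t)$ with
\[
\Phi(\om t)^{-1}\,(\partial_{t}-\calL_{\perp}(\om t))\,\Phi(\om t)=\partial_{t}-\mathrm{diag}_{j\notin S}\,(\ii\,\mu_{j})\,.
\]
The $\mu_{j}$ are real because the reduced operator, as a limit of Hamiltonian diagonal operators, is itself Hamiltonian and diagonal; hence the spectrum of $\calL_{\perp}$ is $\{\ii\mu_{j}\}_{j\notin S}\subset\ii\R$, and, together with the tangential blocks (of vanishing real part), this shows that the operator obtained by linearizing \eqref{DP} at $U$ has purely imaginary spectrum and is reducible in the directions normal to the torus; the change of variables of the statement is the composition of the torus-adapted symplectic map, the regularizing conjugations, and $\Phi$.

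Linear stability then follows at once: in the final variables the normal equation is $\dot v_{j}=\ii\mu_{j}v_{j}$, so $|v_{j}(t)|=|v_{j}(0)|$ and $\|v(t)\|_{H^{s}_{\perp}}$ is constant in $t$; since $\Phi(\om t)^{\pm1}$ and the transformations described above are bounded on the relevant Sobolev spaces uniformly in $t\in\R$ (being continuous and quasi-periodic in the angle variable), and the tangential dynamics is bounded, any solution $h(t)$ of $\partial_{t}h=\calL(U)h$ has $\|h(t)\|_{H^{s}}$ bounded uniformly in time.

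The main obstacle --- already met in the proof of Theorem~\ref{MainResult} --- is that the dispersion relation $\og(j)=j+3j/(1+j^{2})$ is asymptotically \emph{linear}, so that $\og(j)-\og(k)$ behaves like $j-k$ and the usual non-resonance and separation properties degenerate; combined with the quasi-linear character of the nonlinearity, which places variable coefficients in the principal part of $\calL_{\perp}$, this is precisely what makes the regularization and the verification of the Melnikov conditions delicate. Both difficulties are, however, confined to the steps preceding the KAM reducibility --- the variable coefficients being removed by the para-differential normalization of the principal symbol, and the resonances being controlled by the careful algebraic analysis that carves out the set $\mathfrak A$ of asymptotically full measure --- so that, once $\calL_{\perp}$ is in the form $\om\cdot\partial_{\f}+\ii\tm\Lambda_{\perp}+\mathcal R_{0}$ with $\mathcal R_{0}$ smoothing, the reducibility and the ensuing stability follow by now-standard arguments.
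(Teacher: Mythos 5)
Your proposal follows essentially the same route as the paper: the Berti--Bolle torus-adapted symplectic variables at the exact invariant torus give the triangular structure with $\dot\eta=0$, the pseudo-differential regularization plus linear BNF (the map $\Upsilon$ of Theorem \ref{risultatosez8}) and the KAM reducibility of Theorem \ref{ReducibilityDP} diagonalize the normal block with real $d_j^\infty$, and boundedness of Sobolev norms follows on the Cantor set where the Melnikov conditions hold. The only minor difference is that in the paper the normal equation after triangularization carries a quasi-periodic forcing from the tangential variables, handled by a standard non-resonance argument, whereas you treat it as fully decoupled; this does not affect the substance of the argument.
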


%
Theorems \ref{MainResult}, \ref{MainResult2} are formulated in the typical style of results on reducible KAM tori for PDEs. 
For the proof we use the overall strategy of \cite{KdVAut}, which 
however has to be substantially  developed to deal with \eqref{DP}. 
Let us briefly explain the main new issues.
 \begin{itemize}
\item The dispersion law is \emph{asymptotically linear}
as for the Klein-Gordon equation, studied for instance in \cite{BBiP1}, \cite{BBiP2}.
As explained in those papers, the fact that the dispersive effects are very weak
(essentially time and space play the same role)
creates a number of difficulties even in the study of KAM theory for semi-linear PDEs.
Of course, since \eqref{DP} is quasi-linear, there are additional serious difficulties 
coming from the strong perturbative effects of the nonlinearity.

\item 
The DP equation is \emph{resonant} at zero and does 
not depend on any \emph{external} parameters. 
This is a fundamental difference w.r.t. the Klein-Gordon equation, where 
one modulates the mass in order to avoid resonances. 
Moreover the DP has non-trivial resonances already at order four (see section \ref{schema}), differently 
from the previous KAM  results for quasi-linear PDEs.
As a further difficulty the algebraic structure of the resonances is quite complicated. 
In order to avoid the inherent problems   we rely on the presence of 
``many'' (precisely $eight$) approximate constants of motion of \eqref{DP}
coming from the integrable structure of the DP equation. Dealing with the problems related to resonances is the core of this paper and
 requires a set of new ideas and a careful analysis.
\item The very strong restriction of the 
tangential sites $S^{+}$ is exploited several times to simplify the problems arising from the rational and asymptotically linear dispersion law. Physically we are looking for solutions mainly supported in Fourier space on modes which are \emph{relatively close} to each other.\\
It seems reasonable that such condition could be weakened, but it is not clear to us how to deal with the technical difficulties which would arise.
\item As in other resonant cases, 
the diophantine constant $\gamma$ is related to the size of the 
solution one is looking for (see \eqref{SoluzioneEsplicitaDP}).
Moreover, due to the linear dispersion law, we are forced to impose 
very ``weak'' non-degeneracy conditions on the linear frequencies of oscillations.
As a consequence we need a refined bifurcation analysis in order to 
 find a very good first  approximate solution and fulfil the smallness conditions required
 for the Nash-Moser scheme.
\end{itemize}

%
%

\noindent
Some comments on equation \eqref{DP} and on  Theorems \ref{MainResult}, \ref{MainResult2}
are in order.
\subsubsection*{The unperturbed DP equation}
We look at \eqref{DP} as a perturbation of 
the linear equation \eqref{lineare},
in order to fit the typical perturbative setting of KAM for PDEs
, we refer to subsection \ref{liter} for more details.

Actually,  since the Degasperis-Procesi equation is completely integrable (see \cite{Deg}) it would be very natural to try to construct solutions of \eqref{DP} 
which bifurcate from quasi-periodic solutions 
of the unperturbed DP equation
\begin{equation}\label{DPVera}
u_t-u_{x x t}+u_{xxx}-4 u_x-u u_{xxx}-3 u_x u_{xx}+4 u u_x=0,
\end{equation}
which corresponds to \eqref{DP} with $f=0$. Indeed, near zero, the \eqref{DP} can be seen \emph{also}
as a perturbation of \eqref{DPVera}.
Unfortunately even though 
algebro-geometric {\it finite-gap} solutions have been already
constructed in literature for the DP equation (see  \cite{sC})
it is not clear to us whether they are real quasi-periodic 
solutions in the sense of \eqref{6.6}. 
%
Of course
 if one were able to bifurcate from {\it finite-gap} solutions of \eqref{DPVera} 
then it would be possible  to prove existence of 
{\it large} quasi-periodic solutions, by requiring that $f$ is small. 
Such a strategy has been followed successfully for the KdV and cubic NLS equation on the circle. 
Actually for those equations it has been proved  the existence of global Birkhoff coordinates \cite{KdVeKAM}, 
\cite{GreKap} (the cartesian version of action-angle variables), which trivialize the dynamics (in the sense that the solutions turn out to be all periodic, quasi-periodic or almost periodic) and provide a fundamental tool for investigating the dynamical consequences of small perturbative effects, also far from the origin.\\ For $1$d integrable PDEs one would expect it to be the typical scenario, but, up to now, such results are not available for the DP equation. Then 
Theorem \ref{MainResult}  provides, as far as we know, 
the first existence result of 
quasi-periodic solutions, in the sense of \eqref{6.6},
for \eqref{DPVera}, at least near the origin.
It would be interesting to apply our KAM approach to the Camassa-Holm equation, which is a well-known integrable PDE with an asymptotically linear  dispersion law, but with a different symplectic structure.
Even though we have not performed the computations, we expect to be able to prove the equivalent of Theorems \ref{MainResult}, \ref{MainResult2} also for this equation. We remark that in this case, the finite gap solutions are known to be quasi-periodic tori, see \cite{Const1}.\\
 One could start by comparing them with the solutions predicted by our method and then possibly develop KAM theory close to large finite gap solutions.

 
%
%
%

\subsubsection*{Approximate constants of motion of \eqref{DP}}
Even though 
we do not fully exploit the integrability of \eqref{DPVera}
it is fundamental for us that (the non integrable) \eqref{DP}  has at least {\it eight} 
approximate constants of motion (up to an error of order $O(u^9)$). 
It is interesting to notice that, as shown in \cite{DegPro}, 
no other  equation with 
the same dispersion law, and the same symplectic structure, 
 has $eight$ 
approximate conserved quantities. This means that in \eqref{DP} we cannot  
consider any quadratic nonlinearity, but we really need the DP structure.

The request of the presence of such approximate conserved 
quantities it is not only a
technical matter. In order to implement a Nash/Moser-KAM algorithm one looks for
 a family of approximately invariant tori
of \eqref{DP} (with a sufficiently good approximation) such that the dynamics on the tori is integrable and non-degenerate, while the dynamics normal to the torus is non-degenerate at the linear level and satisfies the Melnikov conditions. 
If there are external parameters modulating the linear frequencies, 
then we can consider as approximate solutions the linear ones. Otherwise the modulation must come from the initial data and, hopefully, this
 can be achieved 
by means of Birkhoff normal form (BNF), see for instance \cite{KdVAut},\cite{Giuliani}. 
In this case, where the  the dispersion law in \eqref{dispersionLaw}
is a rational number and  is asymptotically linear, 
such procedure  is very difficult.
One has to explicitly compute some potentially 
dangerous resonant terms in the Hamiltonian
and show  that they vanish. 
This is the same type of computations
which  have been done by  Craig-Worfolk, Craig-Sulem in  \cite{CraigBF}, 
where the authors verify the vanishing of the coefficients at
some fourth order resonant interactions, 
the so called \emph{Benjamin-Feir resonances}. 
In our case 
we have to deal with higher order resonances (up to $eight$),
so this would be computationally heavy to do.
Our approach is to use the approximate constant of motions. This
will be explained more in details in subsection \ref{schema}.
Once we have constructed the approximate invariant tori we have 
to impose the non-degeneracy and Melnikov conditions.
Differently form the KdV case, this will not be possible for any choice of the tangential set, and it is where 
we will use the condition $S^+\in \mathcal{V}(\mathtt{r})$, 
see Definition \ref{Def:cono}.

\subsubsection*{Linear stability}
The linear stability result of Theorem \ref{MainResult2} is of course 
a relevant dynamical information in the study of evolutionary PDEs, but it is also
the consequence of a fundamental ingredient of our proof: the 
 reducibility  of the linearized 
equation at any quasi-periodic approximate solution.  
Reducibility for the 
 Degasperis-Procesi equation linearized at a quasi-periodic function has been obtained in \cite{FGP1}, under some appropriate diophantine  conditions on the frequencies.
Unfortunately, due to the resonances, our case does not fit such hypotheses,
and a major point will be to overcome this difficulty.
Here we shall use such result (appropriately adapted) 
inside a nonlinear algorithm to prove the existence 
of quasi-periodic solutions. This is a classical feature of the literature of KAM theory. 

\subsection{Some literature}\label{liter}
Proving existence and stability  for quasi-periodic solutions for PDEs close to an elliptic fixed point is  a natural  extension of  the classical KAM theory for lower dimensional tori \cite{Po2}.
The first results in this direction were for model PDEs on an interval with  no derivatives in the nonlinearity 
and with either Dirichlet, \cite{ImaginarySpectrum,Wayne,KP,Po2} 
 or periodic, \cite{CW,JeanB,CY}, 
 boundary conditions.
For extension of KAM theory to higher spatial dimension we mention \cite{B3,EK,PP,NLSberBol,BCP,CM}.
While KAM methods for constructing quasi-periodic solutions for PDEs on the circle with no derivatives in the nonlinearity are by now well established, generalizing to cases with derivatives is in general not at all trivial, even in the semi-linear cases (where the derivatives in the nonlinearity are of lower order w.r.t. the linear terms). We mention  \cite{Ku2} for the KdV, \cite{LY} for the derivative NLS, 
and   \cite{BBiP1,BBiP2} for the derivative NLW. Recently an innovative strategy was proposed, \cite{Airy,KdVAut}  
to deal with  quasi-linear and fully nonlinear PDEs on the circle. This approach was first developed for the KdV equation but can be applied to many equations of interest in hydrodynamics, such as 
 NLS, \cite{FP1,FP2} 
 Kirchoff \cite{Mon1} 
 or directly  the water wave equation
\cite{BM1,BBHM}. While these methods were first thought for PDEs on the circle, of course a very interesting point is  the generalization to higher dimensions.

Equation \eqref{DP} is a quasi-linear PDE on the circle and in our study we shall follow the general strategy of \cite{KdVAut},  extended and adapted to our case.
Let us briefly explain the point of view of \cite{KdVAut}, referring 
also to \cite{BBHM} for more details.

\subsection{The general strategy.} We describe the strategy to prove existence and linear stability for  small, reducible quasi-periodic solutions of completely resonant quasi-linear PDEs\label{fimo}.

\medskip

\noindent$(i)$ The starting point is a Nash-Moser theorem of hypothetical 
conjugation following \cite{BertiBolle}.
The strategy is to construct quadratically convergent sequence of families
of 
approximately  invariant (isotropic) tori.
Such construction is based on \emph{tame} estimates on the inverse 
of the operator associated to the equation
\eqref{DP} linearized at an approximate torus and restricted to the normal direction.
This is proved by exploiting the Hamiltonian structure 
and exhibiting symplectic variables adapted to each approximate invariant torus,
which essentially decouple the linearized dynamics.
Then the bounds on the inverse are achieved 
by removing all the ``bad'' values of the parameters.
We mention also
\cite{KAMtotale} for a parallel strategy 
which does not rely on the Hamiltonian structure. 

\vspace{0.5em}
\noindent
$(ii)$ To construct the sequence of item $(i)$ we need a good starting point, i.e.
 a first  family of approximately invariant tori parametrized by real vectors
$\x\in \mathbb{R}^{\nu}$.

\noindent
As explained before this is achieved by BNF techniques.
In particular, in the quasi-linear context, it is convenient to 
perform a {\it Weak} BNF, i.e. to exhibit
a change of variables, close to the identity up to a finite rank operator,
such that the following holds.
The Hamiltonian $H$ transforms to 
$H_{\rm Birk}+ R$ where $R$ is a small remainder, and 
\begin{itemize}
\item[1.]  the finite dimensional subspace
$U_S:=\{u_j=0,\;\forall j\notin S\}$ is  invariant for $H_{\rm Birk}$; 
\item[2.] the Hamiltonian restricted to $U_S$ is integrable and non-degenerate
 in the sense that
the ``frequency-to-amplitude'' map is invertible.\
\end{itemize}
In order to describe in a simpler way the dynamics in a neighborhood of $U_S$ it is convenient to define \emph{action-angle} variables. This allows to distinguish the tangential and normal dynamics to the approximately invariant tori.\\
We remark that, for semi-linear PDEs, 
typically one performs a stronger BNF 
preliminary step, in order to ``normalize'' also the linearized dynamics normal to the torus, i.e. the terms in the Hamiltonian which are
quadratic in the normal directions.
In this case the Birkhoff map is close to the identity up to a bounded operator (at most 
one-smoothing), see for instance \cite{Po2}, \cite{KP}.
Compared to the latter approach, the weak procedure has the disadvantage that the normal form depends on the angles; on the other hand we do not have to address well-posedness issues in order to construct the Birkhoff maps, since these changes of coordinates are time-one flow maps of Banach space ODEs. 

\vspace{0.5em}
\noindent
$(iii)$ The third key point is to study the invertibility 
of the linearized operator restricted to the normal directions. 
Thanks to the very ``mild'' conjugation procedure of item $(ii)$
(with a map = identity+finite rank) it turns out that such linear operator
is
\emph{pseudo differential} (with non constant coefficients) up to a  finite rank remainder. This is the most important reason for adopting the weak procedure described in $(ii)$.   \\
The invertibility of the linearized operator, with appropriate tame estimates, is  based on a 
\emph{reducibility} argument
which is  divided into two parts:
\begin{itemize}
\item[(a)]\label{stepREGU} a reduction in decreasing order procedure 
which conjugates the linearized operator to 
a pseudo differential  one
with constant coefficients
up to a remainder which is a  bounded/regularizing term 
i.e. maps $H^{s}(\mathbb{T}, \mathbb{R}) $ to $H^{s+\rho}(\mathbb{T}, \mathbb{R})$, 
$\rho\geq0$. The choice of $\rho$ depends of course on the problem one is studying.

\item[(b)] a quadratic KAM scheme (for \emph{bounded} operators) 
 which completely diagonalizes
 the bounded/smoothing remainder of the previous step.

 \end{itemize}
 We want to point out the following:
 \begin{itemize}
 \item the step (a) strongly relies on the pseudo differential structure
of the operator; 
\item the normal form contains angle-dependent terms and some of them turn out to be not perturbative for the KAM scheme $(b)$. The conjugation at constant coefficients of such terms relies on purely algebraic arguments. We refer to this procedure as \emph{linear Birkhoff normal form};
\item as a consequence of having applied the weak and the linear Birkhoff procedure, the normal form around the approximately invariant tori has constant coefficients also in the normal directions.
\end{itemize}
In order to perform the diagonalization procedure of step (b) one needs the {\it second Melnikov conditions}, which essentially amount to requiring that the operator  has simple eigenvalues with a lower bound on the differences.
Once one has diagonalized the operator, the bounds on the inverse follow trivially from lower bounds on the eigenvalues, i.e. {\it first Melnikov conditions}. 

\vspace{0.5em}
\noindent
$(iv)$ In the scheme above, at each step we have removed some bad values of the parameters $\xi$ where the Melnikov conditions do not hold. Hence the last (but not least) step is to prove that at the end of the procedure one has still a positive measure set of parameters.

\subsection{Main novelties and scheme of the proof.}\label{schema}
We describe the structure of the paper following subsection \ref{fimo}, and with particular attention to the main novelties.

\medskip

In \textbf{section} \ref{sezione functional setting} we introduce the Hamiltonian formalism for the DP equation and the functional spaces on which we shall work. 

\medskip

In \textbf{section} \ref{SezioneWBNF} we perform the weak Birkhoff normal form explained in item (ii) of the previous section. The result  is stated in  Proposition \ref{WBNFdp}. In order to reach a sufficiently good first approximate solution we need to perform $6$-BNF steps. As is well-known, at the $n$-th step of this procedure one has to take into account the denominators (recall \eqref{dispersionLaw})
\begin{equation}\label{riso}
\lambda(j_1)+\dots+\lambda(j_{n+2}).
\end{equation}
We say that a $(n+2)$-uple of integer indices $(j_1, \dots, j_{n+2})$ is a resonance, and hence may appear in $H_{Birk}$, if \eqref{riso}$=0$ and the momentum condition holds, namely $\sum_{i=1}^{n+2} j_i=0$. We say that a resonance is \emph{trivial} if it has the form $(i, -i, j, -j, \dots)$ so that the corresponding monomial is integrable.\\
As mentioned before a major difficulty comes from the fact that the DP equation has many non-trivial resonances (already at order four) and in principle there is no reason why the Birkhoff Hamiltonian restricted to $U_S$ should be integrable.
By the fact that the Hamiltonian density $f$ is of order $O(u^9)$ the perturbation does not affect the leading terms of the Birkhoff Hamiltonian and we can exploit the integrability of the DP equation. Indeed the same Birkhoff transformation should normalize simultaneously all the commuting Hamiltonians. This means that a resonant monomial contributes to $H_{Birk}$ if and only if it is resonant for all the constants of motion. 
 This was proved in detail in \cite{FGPa} at the level of formal power series. Here we adapt this result to the equation \eqref{DP} which is only approximately integrable (close to the origin) and we reformulate that in a way better suited to the weak Birkhoff normal form context, see Proposition \ref{LemmaBelloWeak}.

%
%

\smallskip

\noindent Once we have shown that the $H_{Birk}$-dynamics restricted to $U_S$ is integrable, in \textbf{section} \ref{sezioneActionAngle}, we prove that it is non-degenerate, i.e. that the frequency to amplitude map is a diffeomorphism. We have a very explicit description of this map and hence this step amounts to proving that the matrix $\mathbb{A}$ in \eqref{TwistMatrixDP} (which depends only on $S^+$) has  determinant bounded away from zero (the so-called twist condition), see Lemma \ref{Twist1}.  A big difference  with \cite{KdVAut} is that, in our case, the determinant of $\mathbb{A}$ is a rational function of several variables $\overline\jmath_i$ that could accumulate to zero as $\lvert  \overline\jmath_i \rvert\to \infty$. By imposing the wave packet condition we restrict the study of its asymptotic behaviour to regions in which it behaves like a one variable function. Then we use continuity arguments to guarantee the invertibility of $\mathbb{A}$ for every choice of $S^+\in \mathcal{V}({\mathtt{r}})$ (see Definition \ref{Def:cono}) for $\mathtt{r}$ small enough. Outside $\mathcal{V}({\mathtt{r}})$ the proof of lower bounds for $\det\mathbb{A}$ should rely on purely algebraic arguments and not on perturbative ones.

 \smallskip
 
In \textbf{section} \ref{sezioneNonlinearFunct} we introduce  the Nash Moser hypothetical conjugation theorem (see Theorem \ref{IlTeoremaDP}) and in \textbf{section} \ref{sezione6DP} we explain how to prove the invertibility of the linearized operator at an approximate solution by only studying it in the normal direction. Since there is no difference with \cite{KdVAut} we only give a synopsis.

\smallskip

In \textbf{sections} \ref{regularization} and \ref{SezioneDiagonalization} we prove the Theorems \ref{risultatosez8} and \ref{ReducibilityDP} which provide the reducibility of the linearized operator following item (iii) of subsection \ref{fimo}. 
As we already mentioned, in \cite{FGP1} we provide a reducibility result for the DP equation \eqref{DP} linearized at sufficiently small quasi-periodic functions under appropriate diophantine conditions on the frequencies . Unfortunately in our case the diophantine constant $\gamma$ is related to the size of the approximate solutions (see \eqref{gammaDP}) and then the smallness and diophantine conditions above cannot be met.
%

In \cite{KdVAut} this issue appears only in the step $(b)$ of the strategy, where it is solved by the linear Birkhoff normal form method. A first difficulty in our case is that this problem appears also in step $(a)$. So that we first need to perform some preliminary steps (see section \ref{preliminare}), more precisely we need changes of coordinates, preserving the pseudo differential structure, that conjugate the leading order of the linearized operator to a diagonal one plus a correction, which is perturbative in the sense of \cite{FGP1}. In such steps the provided changes of coordinates are similar in structure to those of step $(a)$ but they are proved to be well-defined not by using perturbative arguments, but by algebraic computations involving the Birkhoff resonances (see Lemma \ref{ecologia}). 
 These difficulties appear also for the quasi-linear generalized KdV \cite{Giuliani}, but here we have several further problems  due to the complexity of the symplectic structure of the DP equation.
 The first step, removing terms of order $\e$, is straightforward. Already at the second step we encounter the difficulties arising form the presence of non-trivial resonances of order $4$, and a priori there is no reason why the normal form should be integrable. Here it does not appear simple to apply the strategy of the weak BNF, using the constants of motion.  On the other hand, computing the normal form explicitly by hand, as done in \cite{Giuliani}, is unmanageable. To bypass this problem we take a different point of view, based on an \emph{a posteriori identification argument} of normal forms. 
 More precisely in Theorem \ref{PartialWeak} we prove that the normal form obtained after the weak BNF, the preliminary steps and the linear BNF coincides with the one that we would obtain by performing the full {\it formal} BNF and then  projecting on the quadratic terms in the normal variables. 
 This result strongly relies on the fact that all the resonances contributing to the normal form are trivial.
 A similar identification argument has been used, for instance, in  
 \cite{BFP}.
 
  A further point is that, due to the rational dispersion law $\lambda(j)$, it is possible that a {\it denominator} in the linear BNF is not zero but is still  uncontrollably small. In the third step, in order to deal with this problem we need to take into account in the unperturbed Hamiltonian also the integrable terms of order $\e^2$ coming from the previous steps of linear BNF.
  For this reason it is important to know the exact expression of the main order of the correction at the eigenvalues given by the perturbation, see for instance \eqref{divisoriLBNF3}.
This is also needed in the KAM scheme $(b)$, in order to impose the second Melnikov conditions.
Computing these corrections by hand would be a very difficult task, 
but this comes for free from Theorem \ref{PartialWeak}.

\smallskip
In the first part of \textbf{Section} \ref{sezione9DP} we show the convergence of the Nash-Moser algorithm (see Theorem \ref{NashMoserDP}), which requires the ratio between the size of $R=H-H_{Birk}$ and $\gamma^{7/2}$ to be small (see the smallness condition \eqref{SmallnessConditionNMdp}); in the second part we prove that the set of "bad" parameters, i.e. the amplitudes of the approximate solutions whose frequency do not meet the first and second Melnikov conditions, has small measure (see \eqref{UnionDP}). As it is well known the most difficult conditions to impose are the second Melnikov conditions, so we decide to focus on them.\\
 The key difficulty is that
the spectral gap $\lambda(j)-\lambda(k)$ is asymptotically constant, hence there is a bad separation property of the eigenvalues. The same occurs for the wave equation \cite{BBiP1}, \cite{BBiP2}. \\
In Lemma \ref{singolo} we provide the measure of the single bad set. Here we use the algebraic arguments provided by Lemma \ref{ecologia}, which guarantees the non-degeneracy of the leading terms of the small divisors. In section \ref{siSomma} we deal with the summability of the bad sets in $j, k$. Due to the asymptotically constant spectral gap, these sets are infinitely many.
Then the key ingredient is to show that for $j, k$ sufficiently large the second Melnikov conditions are implied by the first ones. This is possible provided that we consider two different diophantine constants. More precisely we have to impose second order Melnikov conditions with $\gamma^{3/2}$ (see \eqref{GnDP}), which is clearly much smaller than $\gamma$. This is why we have to perform many steps of Birkhoff normal form in order to obtain a very good first approximate solution.\\
We point out that, differently from \cite{BBHM}, our Melnikov conditions do not imply a loss of regularity in space. In \cite{BBHM} this loss is acceptable, since in the regularization step ($(a)$ page \pageref{stepREGU}) the diagonalization is perfomed up to a very smoothing remainder. In this procedure it is fundamental that the diophantine constant $\gamma$ is independent of the size of the solution.
Of course in our case this is not true and thus in the regularization step
we end up with a remainder of order $-1$, and then 
in the measure estimates
we put some extra efforts to prove second Melnikov conditions without loss of regularity.

%
%

\subsection*{Acknowledgements} The authors wish to thank Andy Hone, Luca Biasco, Livia Corsi and Marcel Guardia for useful discussions and comments.

\section{Functional Setting}\label{sezione functional setting}

{\bf Hamiltonian formalism of the Degasperis-Procesi equation.}
For any $u, v$ in the space
\[
H_0^1(\mathbb{T}):=\left\{ u\in L^2(\mathbb{T}, \mathbb{R}) : 
\int_{\mathbb{T}} u\,dx=0\right\}
\]
we define
 the non-degenerate symplectic form
\begin{equation}\label{simpleDP}
\Omega(u, v):=\int_{\mathbb{T}} (J^{-1} u)\,v\,dx=(J^{-1}u,v)_{L^{2}} 
\end{equation}
where  $J$ is defined in \eqref{DPHamiltonian} and $(\cdot,\cdot)_{L^{2}}$
is the $L^{2}(\mathbb{T}, \mathbb{R})$ scalar product.
To any $C^{1}$ function  $H : H_0^{1}(\mathbb{T})\to \mathbb{R}$
we associate a vector field $X_{H}$ by requiring 
\[
dH(u)[h]=(\nabla H(u),h)_{L^{2}}=\Omega(X_{H}(u),h), \quad 
\forall\, u,h\in H_0^{1}(\mathbb{T})\,.
\]
The Hamiltonian vector field $X_{H}$ is uniquely determined since 
the symplectic form $\Omega$ in \eqref{simpleDP} is non-degenerate, in particular $X_{H}(u)=J\nabla H(u)$.
The \emph{Poisson bracket} between two $C^{1}$ functions
 $F,G : H_0^{1}(\mathbb{T})\to \mathbb{R}$ is
 \begin{equation}\label{PoissonBracketDP}
\{F,G\}:=\Omega(X_{F},X_{G})=(\nabla F, J\nabla G)_{L^2}\,.
\end{equation}
In this way 
\begin{equation}\label{LieBracketDP}
X_{\{F,G\}}=[X_{F},X_{G}]\,, \quad {\rm where}\quad [X,Y]:=dX [Y]-dY [X]\,.
\end{equation}
Finally, given a Hamiltonian $H$ we define its \emph{adjoint action} 
as the operator
\begin{equation}\label{adjActHAM}
\mathrm{ad}_{H}[\cdot]:=\{H, \cdot\}\,.
\end{equation}
Consider now two Hamiltonians $H,G$ and let $\Phi_{G}$ be the time-$1$ flow map
of the vector field $X_{G}$. Then we have (formally)
\begin{equation}\label{formLIE}
H\circ\Phi_{G}=\sum_{k\geq0}\frac{(-1)^{k}}{k!}\mathrm{ad}_{G}^{k}[H]\,,
\qquad H\circ\Phi^{-1}_{G}=\sum_{k\geq0}\frac{1}{k!}\mathrm{ad}_{G}^{k}[H]\,,\qquad
\mathrm{ad}_{G}^{k}[H]:=\mathrm{ad}_{G}\big[\mathrm{ad}_{G}^{k-1}[H]\big]\,,
\end{equation}
where $\mathrm{ad}_{G}^{0}:=\mathrm{I}$ is the identity map.

\noindent
{\bf Functional space.} We consider functions $u(\varphi, x)$ defined on $\T^{\nu}\times \T$.
Passing to the Fourier representation
\begin{equation}\label{realfunctions}
u(\varphi, x)=\sum_{j\in\mathbb{Z}} u_{j}(\varphi)\,e^{\mathrm{i} j x}=\sum_{\ell\in\mathbb{Z}^{\nu}, j \in \mathbb{Z}} u_{\ell j} \,e^{\mathrm{i}(\ell \cdot \varphi+j x)}\,,\quad 
\overline{u}_j(\varphi)=u_{-j}(\varphi)\,, \quad \overline{u}_{\ell j}=u_{-\ell, -j}\,.
\end{equation}
We define the scale of Sobolev spaces 
\begin{equation}\label{space} 
H^{s}:=\Big\{ u(\varphi, x)\in L^{2}(\T^{\nu+1}, \mathbb{R}) : \lVert u \rVert_s^2:=\sum_{\ell\in\mathbb{Z}^{\nu}, j\in\mathbb{Z}} \lvert u_{\ell j} \rvert^2 \langle\ell, j \rangle^{2 s}<\infty \Big\}
\end{equation}
where $\langle \ell, j \rangle:=\max\{ 1, \lvert \ell \rvert, \lvert j \rvert\}$, $\lvert \ell \rvert:=\sum_{i=1}^{\nu} \lvert \ell_i \rvert$. Note that the spatial average of  a function $u\in H^s$ is zero. 
We denote by $\mathfrak{B}_{r}(0, X)$ the ball of radius $r$ centered at the origin of a Banach space $X$.

\smallskip

\noindent
{\bf Lipschitz norm.} Fix $\nu\in\mathbb{N}^{*}:=\mathbb{N}\setminus\{0\}$ 
and let $\calO$ be a compact subset of $\mathbb{R}^{\nu}$. 
For a function $u\colon \calO\to E$, where $(E, \lVert \cdot \rVert_E)$ is a Banach space, we define the sup-norm and the lip-seminorm of $u$ as
\begin{equation}\label{suplip}
\begin{aligned}
&\lVert u \rVert_E^{sup}:=\lVert u \rVert_{E}^{sup, \calO}
:=\sup_{\omega\in\calO} \lVert u(\omega) \rVert_E,\qquad \lVert u \rVert_{E}^{lip}
:=\lVert u \rVert_{E}^{lip, \calO}
:=\sup_{\substack{\omega_1, \omega_2\in \calO,\\ \omega_1\neq \omega_2}} 
\frac{\lVert u(\omega_1)-u(\omega_2)\rVert_E}{\lvert \omega_1-\omega_2\rvert}\,.
\end{aligned}
\end{equation}
If $E$ is finite dimensional,  for any $\gamma>0$ we introduce the 
weighted  Lipschitz norm
\begin{equation}
\label{tazzone}
\lVert u \rVert_E^{\g, \calO}
:=\lVert u \rVert_E^{sup, \calO}+\gamma \lVert u \rVert_{E}^{lip, \calO}\,.
\end{equation}
If $E$ is a scale of Banach spaces, say $E=  H^s$, for $\gamma>0$ we introduce the 
weighted  Lipschitz norm
\begin{equation}\label{tazza10}
\lVert u \rVert_s^{\g, \calO}:=\lVert u \rVert_s^{sup, \calO}
+\gamma \lVert u \rVert_{s-1}^{lip, \calO}\,, \quad \forall s\geq [\nu/2]+4
\end{equation}
where we denoted by $[ r ]$ the integer part of $r\in\R$.

\medskip

\noindent
{\bf Linear operators. } Let $A\colon \T^{\nu}\to \mathcal{L}(L^2(\T,\mathbb{R}))$, $\varphi\mapsto A(\varphi)$, be a $\varphi$-dependent family 
of linear operators acting on $L^2(\T,\mathbb{R})$. 
We consider $A$ as an operator acting on $H^{s}(\T^{\nu+1},\mathbb{R})$ by setting
\[
(A u)(\varphi, x)=(A(\varphi)u(\varphi, \cdot))(x)\,.
\]  
This action is represented in Fourier coordinates as
\begin{equation}\label{cervino}
A u(\varphi, x)=\sum_{j, j'\in\mathbb{Z}} A_j^{j'}(\varphi) \,u_{j'}(\varphi)\,e^{\mathrm{i} j x}=\sum_{\ell\in\mathbb{Z}^{\nu}, j\in\mathbb{Z}} \sum_{\ell'\in\mathbb{Z}^{\nu}, j'\in\mathbb{Z}} A_{j, \ell}^{j', \ell'}\,u_{j' \ell'}\,e^{\mathrm{i}(\ell\cdot \varphi+j x)}\,.
\end{equation}
Conversely, given a T\"opliz in time operator $A$, namely such that its matrix coefficients (with respect to the Fourier basis) satisfy
\begin{equation}\label{topliz}
A_{j, \ell}^{j', \ell'}=A_j^{j'}(\ell-\ell')\qquad \forall j, j'\in\mathbb{Z},\,\,\ell, \ell'\in\mathbb{Z}^{\nu}\,,
\end{equation}
we can associate it a time dependent family of operators acting on $H^s(\T)$ by setting
\[
A(\varphi) h=\sum_{j, j'\in\mathbb{Z}, \ell\in\mathbb{Z}^{\nu}} 
A_j^{j'}(\ell) h_{j'}\,e^{\mathrm{i} j x} e^{\mathrm{i}\ell\cdot \varphi}\,, 
\qquad \forall h\in H^s(\T,\mathbb{R})\,.
\]

\noindent
For $m=1, \dots, \nu$ we define the operators $\partial_{\varphi_m} A(\varphi)$ as
\begin{equation}\label{cervino2}
\begin{aligned}
&(\partial_{\varphi_m} A(\varphi)) u(\varphi, x)=
\sum_{\ell\in\mathbb{Z}^{\nu}, j\in\mathbb{Z}} \,
\sum_{\ell'\in\mathbb{Z}^{\nu}, j'\in\mathbb{Z}}\mathrm{i}(\ell_m-\ell_m')\, A_j^{j'}(\ell-\ell')\,u_{\ell' j'}\,e^{\mathrm{i}(\ell\cdot \varphi+j x)}\,.
\end{aligned}
\end{equation}
We say that $A$ is a \textit{real} operator if it maps real valued functions in real valued functions. For the matrix coefficients this means that
\[
\overline{A_j^{j'}(\ell)}=A_{-j}^{-j'}(-\ell)\,.
\]

\noindent 
{\bf Hamiltonian linear operators.} 
In the paper we shall deal with operators which are Hamiltonian according to the following Definition.
\begin{defi}\label{dynamicaldef}
 We  say that a linear map is symplectic if it preserves  the $2$-form $\Omega$ in \eqref{simpleDP};
similarly we say that a linear operator $M$  is Hamiltonian if $Mu $ is a linear Hamiltonian vector field w.r.t. $\Omega$ in \eqref{simpleDP}. This means that each $J^{-1} M$  is symmetric respect to the real $L^2$-scalar product.
Similarly, we call a family of maps $\f\to A(\f)$  symplectic if,  for each fixed $\f$, $A(\f)$ is symplectic, 
same for the Hamiltonians.
We shall say that an operator of the form $\oo\cdot\del_{\f}+M(\f)$ is Hamiltonian if $M(\f)$ is Hamiltonian.
\end{defi}

\paragraph{Notation.} We use the notation $A\lesssim B$ to denote $A\le C B$ where $C$ is a positive constant possibly depending on fixed parameters given by the problem. We use the notation $A\lesssim_y B$ to denote $A\le C(y) B$ if we wish to highlight the dependence on the variable $y$ of the constant $C(y)>0$.

\medskip

\noindent
{\bf Linear Tame operators.} Here we introduce rigorously the spaces and the classes of operators on which we work.

\begin{defi}[{\bf $\s$-Tame operators}]\label{TameConstants}
Given  $\s\geq 0$ we say that 
a linear operator $A$ is $\sigma$-\textit{tame} w.r.t. a non-decreasing sequence $\{\mathfrak M_A(\s,s)\}_{s=s_0}^\mathcal{S}$ (with possibly $\mathcal{S}=+\infty$) if
\begin{equation}\label{SigmaTame}
\lVert A u \rVert_{s}\le \mathfrak{M}_A(\s,s) 
\lVert u \rVert_{s_0+\sigma}+\mathfrak{M}_A(\s,s_0) \lVert u \rVert_{s+\sigma} 
\qquad u\in H^s\,,
\end{equation} 
for any $s_0\le s\le \mathcal{S}$. We call $\mathfrak{M}_A(\s,s)$  a {\sc tame constant} for the operator $A$. When the index $\s$ is not relevant 
we write $\gotM_{A}(\s,s)=\gotM_{A}(s)$. 
\end{defi}

\begin{defi}[{\bf Lip-$\s$-Tame operators}]\label{LipTameConstants}
Let $\s\geq 0$ and $A=A(\oo)$ be a linear operator defined for $\oo\in \calO\subset \mathbb{R}^{\nu}$.
Let us define
\begin{equation}\label{defDELTAomega}
\Delta_{\oo,\oo'}A:=\frac{A(\oo)-A(\oo')}{|\oo-\oo'|}\,, \quad \oo,\oo'\in \calO\,.
\end{equation}
Then $A$ is \emph{Lip-$\s$-tame} w.r.t. a 
non-decreasing sequence $\{\mathfrak M_A(\s,s)\}_{s=s_0}^\mathcal{S}$ if
 the following estimate holds
\begin{equation}\label{lipTAME}
\sup_{\oo\in \calO}\|Au\|_{s},\g\sup_{\oo\neq\oo'}\|(\Delta_{\oo,\oo'}A)\|_{s-1}
\leq _{s}\gotM^{\g}_{A}(\s,s)\|u\|_{s_0+\s}+
\gotM^{\g}_{A}(\s,s)\|u\|_{s+\s}, \quad u\in H^{s}\,.
\end{equation}
We call $\mathfrak{M}^{\gamma}_A(\s,s)$ a {\sc Lip-tame constant } of the operator $A$. 
When the index $\s$ is not relevant 
we write $\gotM^{\gamma}_{A}(\s,s)=\gotM^{\gamma}_{A}(s)$. 

\end{defi}

\noindent {\bf Modulo-tame operators and majorant norms.}
The modulo-tame operators are introduced in Section $2.2$ of \cite{BM1}. Note that we are interested only in the Lipschitz variation of the operators respect to the parameters of the problem, whereas in \cite{BM1} the authors need to control also higher order derivatives.

\begin{defi}
Let $u\in H^s$, $s\geq 0$, we define the majorant function
$
\underline{u} (\varphi, x):=\sum_{\ell\in\mathbb{Z}^{\nu}, j\in\mathbb{Z}} \lvert u_{\ell j} \rvert e^{\mathrm{i}(\ell\cdot \varphi+j x)}.
$
Note that $\lVert u \rVert_s=\lVert \underline{u} \rVert_s$.
\end{defi}
\begin{defi}[{\bf Majorant operator}]
Let $A\in\mathcal{L}(H^s)$ and recall its matrix representation \eqref{cervino}.
We define the majorant matrix $\underline{A}$ as the matrix with entries
\[
(\underline{A})_j^{j'}(\ell):=\lvert (A)_{j}^{j'}(\ell) \rvert 
\qquad j, j'\in\mathbb{Z},\,\,\ell\in\mathbb{Z}^{\nu}\,.
\]
\end{defi}

\noindent
We consider the majorant operatorial norms 
\begin{equation}\label{majorantnorm}
\|\underline{M}\|_{\mathcal L(H^s)}:= 
\sup_{\lVert u \rVert_s\le 1} \lVert\underline{M}u \rVert_{s}\,.
\end{equation}
We have a partial ordering relation in the set of the infinite dimensional matrices, i.e.
if
\begin{equation}\label{partialorder}
M \preceq N \Leftrightarrow |M_{j}^{j'}(\ell)|\leq |N_{j}^{j'}(\ell)|\;\;\forall j,j',\ell\; 
\Rightarrow \lVert \underline{M} \rVert_{\mathcal L(H^s)}\le 
\lVert \underline{N} \rVert_{\mathcal L(H^s)}\,, 
\quad \lVert  {M}u \rVert_s\le 
\lVert  \underline{M}\,\underline u \rVert_s \le \lVert\underline{N}\, 
\underline u \rVert_s\,.
\end{equation}
Since we are working on a majorant norm we have the continuity of the projections on monomial subspace, in particular we define the following functor acting on the matrices
\[
\Pi_K M:=
\begin{cases}
 M_{j}^{j'}(\ell) \qquad \qquad \text{if} \; |\ell|\le K\,, \\
0 \qquad \qquad \qquad \mbox{otherwise}
\end{cases}
\qquad \qquad  \Pi_K^\perp:= \mathrm{I}-\Pi_K\,.
\]
Finally we define for $\mathtt b_0\in \N$
\begin{equation}\label{funtore}
(\langle \pa_\f \rangle^{\mathtt b_0} M )_{j}^{j'}(\ell) :=  
\langle \ell  \rangle^{\mathtt b_0} M_j^{j'}(\ell)\,.
\end{equation}
In the sequel let $1>\g>\g^{3/2}>0$ be fixed constants.
\begin{defi}[{\bf Lip-$\sigma$-modulo tame}]\label{def:op-tame} 
Let $\s\geq 0$. A  linear operator $ A := A(\omega) $, $\omega\in \calO\subset\mathbb{R}^{\nu}$,  is  
Lip-$\s$-modulo-tame  w.r.t. a non-decreasing sequence $\{	{\mathfrak M}_{A}^{\sharp, \g^{3/2}} (\s, s) \}_{s=s_0}^{\mathcal{S}}$ if 
the majorant operators $  \underline{ A }, \underline{\Delta_{\omega,\omega'} A}$ are Lip-$\s$-tame w.r.t. these constants, i.e. they 
satisfy the following weighted tame estimates:  
for $\s\geq 0$,  for all 
 $ s \geq s_0 $ and  for any $u \in H^{s} $,  
\begin{equation}\label{CK0-tame}	
\sup_{\omega\in\calO}\| \underline{A} u\|_s\,,
 \sup_{\omega\neq \omega'\in \cO}{\g^{3/2}}  \|\underline{\Delta_{\omega,\omega'} A}  u\|_s 
 \leq  
{\mathfrak M}_{A}^{\sharp, \g^{3/2}} (\s,s_0) \| u \|_{s+\s} +
{\mathfrak M}_{A}^{\sharp, \g^{3/2}} (\s,s) \| u \|_{s_0+\s} \,.
\end{equation}
The constant $ {\mathfrak M}_A^{{\sharp, \g^{3/2}}} (\s,s) $ 
is called the {\sc modulo-tame constant} of the operator $ A $. When the index $\s$ is not relevant 
we write $ {\mathfrak M}_{A}^{{\sharp, \g^{3/2}}} (\s, s) = {\mathfrak M}_{A}^{{\sharp, \g^{3/2}}} (s) $. 
\end{defi}
\begin{defi}\label{menounomodulotame}
We say that $A$ is Lip-$-1$-modulo tame if 	
$\langle D_x\rangle^{1/2}{A}  \langle D_x\rangle^{1/2}$ 
is Lip-$0$-modulo tame. We denote
\begin{equation}\label{anagrafe} 
\begin{aligned}
&\mathfrak M^{{\sharp, \g^{3/2}}}_{A}(-1,s):=  
\mathfrak M^{\sharp, \g^{3/2}}_{ \langle D_x \rangle^{1/2}A \langle D_x \rangle^{1/2}}(0,s), \quad \mathfrak M^{\sharp, \g^{3/2}}_{A}(-1,s,a):= 
\mathfrak M^{\sharp, \g^{3/2}}_{\langle \pa_\f \rangle^{a}  \langle D_x \rangle^{1/2}A \langle D_x \rangle^{1/2}}(0, s)\,, 
\quad a\ge 0\,.
\end{aligned}
\end{equation}
\end{defi}

In the following we shall systematically use $-1$ modulo-tame operators. 
We refer the reader to the Appendix of \cite{FGP1} for the properties of Tame and Modulo-tame operators.

\smallskip

\noindent {\bf Pseudo  differential operators.}
Following \cite{BM1} 
we give the following definitions.
\begin{defi}\label{pseudoR}
Let $m\in \R$.
A linear operator $A$ is called pseudo differential 
of order $\le m$ if its action on any $H^s(\T, \mathbb{R})$ with 
$s\ge m$ is given by
\[
A\sum_{j\in\Z} u_j e^{\mathrm{i} j x} = 
\sum_{j\in\Z} a(x,j) u_j e^{\mathrm{i}j x} \,,
\]
where   $a(x, j)$, called the {\it symbol} of $A$,  is the restriction to 
$\mathbb{T}\times \mathbb{Z}$ of a complex valued function 
$a(x, y)$ which is $C^{\infty}$ smooth on $\mathbb{T}\times\mathbb{R}$, 
$2\pi$-periodic in $x$ and satisfies
\begin{equation}\label{space3}
|\del_{x}^{\al}\del_{y}^{\be}a(x,y)|\leq C_{\al,\be}\langle y\rangle^{m-\be}\,,
\;\;\forall \; \al,\be\in \mathbb{N}\,.
\end{equation} 
We denote by 
$A[\cdot]=\op(a)[\cdot]$
the pseudo operator with symbol $a:=a(x, j)$.
We call $OPS^m$ the class of the pseudo 
differential operator of order less or equal to $m$ and
$OPS^{-\infty}:=\bigcap_m OPS^m$.
We define the class $S^m$ as the set of symbols which satisfies \eqref{space3}. 
\end{defi}

We will consider mainly operators acting on $H^s(\T,\mathbb{R})$ with a quasi-periodic time dependence.  
In the case of pseudo differential operators this corresponds\footnote{since $\oo$ 
is diophantine we can replace the time variable with angles $\varphi\in\T^{\nu}$. 
The time dependence is recovered by setting $\varphi=\omega t$.} 
to consider symbols $a(\varphi, x, y)$ with $\varphi\in\T^{\nu}$. 
Clearly these operators can be thought as acting on functions 
$u(\f,x)=\sum_{j\in\Z}u_{j}(\f)e^{\ii jx}$ in 
$H^s(\T^{\nu+1},\mathbb{R})$ in the following sense:
\[
(Au)(\f,x)=\sum_{j\in\mathbb{Z}}a(\f,x,j) u_{j}(\f)e^{\mathrm{i} jx}\,, 
\quad a(\f,x,j)\in S^{m}\,.
\]
The symbol $a(\varphi, x, y)$ is $C^{\infty}$ smooth also in the variable $\varphi$. 
We still denote 
$A:=A(\varphi)=\op(a(\varphi, \cdot))=\op(a)$.

\begin{defi}
Let $a:=a(\f,x, y)\in S^{m}$ and set $A:=\op(a)\in OPS^{m}$,
\begin{equation}\label{norma}
|A|_{m,s,\al}:=\max_{0\leq \be\leq \al} 
\sup_{y\in\mathbb{R}}\|\del_{y}^{\be}a(\cdot,\cdot, y)\|_{s}
\langle y\rangle^{-m+\be}\,.
\end{equation}
We will use also the notation 
$\lvert a \rvert_{m, s, \alpha}:=|A|_{m,s,\al}$.
\end{defi}
Note that the norm $|\cdot|_{m,s,\al}$ is non-decreasing in $s$ and $\al$.
Moreover given a symbol $a(\f,x)$ independent of $y$, the norm of the associated multiplication operator $\op(a)$ is just the $H^{s}$ norm of the function $a$.
If on the contrary the symbol $a(y)$ depends only on $y$, then the norm of the corresponding 
Fourier multipliers $\op(a(y))$ is just controlled by a constant.

As in formula \eqref{tazza10}, if $A=\op(a(\oo,\f,x, y))\in OPS^{m}$ is a family of 
pseudo differential operators with symbols $a(\oo,\f,x, y)$ belonging to $S^{m}$ and 
depending in a Lipschitz way on some parameter $\oo\in \calO\subset \mathbb{R}^{\nu}$, 
we set
\begin{equation}\label{norma2}
|A|_{m,s,\al}^{\g,\calO}:=\sup_{\oo\in \calO}|A|_{m,s,\al}+
\g \sup_{\oo_1,\oo_2\in \calO}\frac{{|\rm Op}\big(a(\oo_1,\f,x, y)-a(\oo_2,\f,x, y)\big)|_{m,s-1,\al}}{|\oo_1-\oo_2|}\,.
\end{equation}
For the properties  of compositions, adjointness and quantitative estimates of the actions
on the Sobolev spaces $H^{s}$  of pseudo differential operators we 
refer to Appendix B of \cite{FGP1}.


\section{Weak Birkhoff Normal form}\label{SezioneWBNF}

The aim of this section is to construct a $\xi$-parameter family of approximately invariant, finite dimensional tori supporting quasi-periodic motions with frequency $\omega(\xi)$. We will impose the map $\xi\mapsto \omega(\xi)$ to be a diffeomorphism and we will consider such approximate solutions as the starting point for the Nash-Moser algorithm.

\noindent
 In order to state the main result of this section, we need some preliminary definitions.

\medskip
\noindent
We write the DP Hamiltonian in \eqref{DPHamiltonian}
 in the following way:
\begin{equation}\label{Rinco}
\begin{aligned}
H(u)&=H^{(2)}(u)+H^{(3)}(u)+H^{(\geq 9)}\, , \\
 H^{(2)}(u)&:=\frac{1}{2} \int_{\T} u^2\,dx\,, 
\quad H^{(3)}(u):=-\frac{1}{6}\int_{\mathbb{T}} u^3\,dx\,,
\quad H^{(\geq 9)}(u):=\int_{\T} f(u)\,dx\, .
\end{aligned}
\end{equation}
Recall $S$ in \eqref{TangentialSitesDP} and define 
$S^{c}:=\mathbb{Z}\setminus\big(S\cup\{0\}\big)$.
We decompose the phase space as 
\begin{equation}\label{decomposition}
H_0^1(\mathbb{T}):=H_S\oplus H_S^{\perp}\,, 
\quad H_S:=\mbox{span}\{ e^{\mathrm{i}\,j\,x} : j\in S \}\,, \quad 
H_S^{\perp}:=\mbox{span}\{ e^{\mathrm{i}\,j\,x} : j\in S^{c} \}\,,
\end{equation}
and we denote by $\Pi_S, \Pi_S^{\perp}$ the corresponding orthogonal projectors. The subspaces $H_S$ and $H_S^{\perp}$ are symplectic respect to the $2$-form $\Omega$ (see \eqref{simpleDP}). We write
\begin{equation*}
u=v+z\,, \quad v:=\Pi_S u:=\sum_{j\in S} u_j\,e^{\mathrm{i}\,j\,x}\,, 
\quad z=\Pi_S^{\perp} u:=\sum_{j\in S^c} u_j\,e^{\mathrm{i}\,j\,x}\,.
\end{equation*}
For a finite dimensional space 
\begin{equation}\label{FinitedimensionalSubspaceDP}
E:=E_C:=\mbox{span}\left\{ e^{\mathrm{i}\,j\,x} : 0<\lvert j \rvert\le C \right\}\,, 
\quad C>0\,,
\end{equation}
let $\Pi_E$ denote the corresponding $L^2$-projector on $E$.
The notation $R(v^{k-q} z^q)$ indicates a homogeneous polynomial of degree $k$ in $(v, z)$ of the form
\begin{equation*}\label{notaTRA}
R(v^{k-q} z^q)=M[\underbrace{v, \dots, v}_{(k-q)-times},\underbrace{z, \dots, z}_{q-times}\,]\,,
\quad M=k\mbox{-linear}\,.
\end{equation*}
We denote with $H^{(n, \geq k)}, H^{(n, k)}, H^{(n, \le k)}$ the terms of type $R(v^{n-s}\, z^s)$, where, respectively, $s\geq k, s=k, s\le k$, that appear in the homogeneous polynomial $H_n$ of degree $n$ in the variables $(v, z)$.
Given an $n$-uple $\{j_1,\dots , j_n\}\subset \mathbb Z\setminus\{0\}$  and a set $B\subset \mathbb Z\setminus\{0\}$ we define
\begin{equation*}\label{cardinale}
\sharp(\{j_1,\dots , j_n\},B) :=\mbox{number of $j_i$ belonging to $B$}\,. 
\end{equation*}

\medskip

Now we start the ``weak'' Birkhoff normal form procedure, i.e. we look for a change
of coordinates which normalizes the terms in \eqref{Rinco} independent and linear in the normal variable $z$.

As it is well known, one of the main problem of the Birkhoff normal form procedures is to deal with the resonances given by the equations \eqref{riso}$=0$
which arise from considering the kernel of the adjoint action $\mathrm{ad}_{H^{(2)}}$ (see \eqref{adjActHAM}). 
It turns out that when $n\geq 2$ there are many non-trivial solutions of \eqref{riso}=0. A way to deal with this problem is to exploit the integrability of the DP equation.

\noindent
In \cite{FGPa} the authors construct an infinite number of conserved quantities $K_n$ for the equation \eqref{DP} with $f=0$ starting from the ones given in \cite{Deg}. 
By an explicit characterization of the quadratic part of each $K_n$, they deduce that, at a purely formal level, the Birkhoff normal form of the Degasperis-Procesi equation is action preserving (or integrable).
Here we rename these constants of motion in the following way, writing only the quadratic parts (which are fundamental for the study of the Birkhoff resonances at $u=0$)
\begin{equation}\label{CostantiMotoDP}
\begin{aligned}
& K_0(u):=H(u)\,, 
\qquad K_{1}(u):=\frac{1}{2}\int_{\mathbb{T}} (J^{-1} u_x) \,u\,dx\,, 
\qquad K_{n+2}:=\int_{\T} (\partial_x^n w)^2\,dx+O(u^3)\,, 
\quad n\geq 0\,,
\end{aligned}
\end{equation}
where we denoted by 
\begin{equation}\label{Helmotz}
w:=\Lambda^{-1} u:=u-u_{xx}\,, \qquad \Lambda:=(1-\partial_{xx})^{-1}\,.
\end{equation}
We remark that $K_1$ is the \textit{momentum} Hamiltonian arising from the translation invariance of the equation.

\begin{defi}
Given a quadratic diagonal Hamiltonian $Q(u)=\sum_j \lal(j) \lvert u_j \rvert^2$, we define $\Pi_{\mbox{Ker}(Q)}$ as the projection on the kernel of the adjoint action (recall \eqref{PoissonBracketDP} and $J=\mathrm{diag}_{j\in\mathbb{Z}}(\lambda(j))$)
\begin{equation}\label{defadj}
\mathrm{ad}_{Q}(K)=\sum_{j_1, \dots, j_n} 
\Big( \sum_{i=1}^n \lal(j_i) \og(j_i)\Big) K_{j_1 \dots j_n} u_{j_1}\dots u_{j_n}\,, \quad K(u):=\sum_{j_1, \dots, j_n} K_{j_1 \dots j_n} u_{j_1}\dots u_{j_n}.
\end{equation}
We define the projector on the range of the adjoint action as 
$\Pi_{\mbox{Rg}(Q)}:=\mathrm{I}-\Pi_{{\mbox{Ker}(Q)}}$.
\end{defi}
\noindent
We say that $K$, as in \eqref{defadj}, "preserves" momentum if and only if 
\begin{equation*}\label{momentino}
\Big( \sum_{i=1}^n j_i \Big) 
K_{j_1 \dots j_n}=0 \qquad \forall j_1, \dots, j_n\in \mathbb{Z}\setminus\{0\}\,.
\end{equation*}

\noindent
The main result of this section is the following.

\begin{prop}\label{WBNFdp}
There exist $r>0$, depending on  $S$ (see \eqref{TangentialSitesDP}), and an analytic  symplectic change of coordinates 
\begin{equation}\label{FBI}
\Phi_B\colon \mathcal B_{r}(0,H_0^1(\T))\to H_0^1(\T)\,, \qquad \Phi_B= \mathrm{I} + \Psi\,,\quad  \Psi= \Pi_E \circ  \Psi \circ \Pi_E 
\end{equation}
where $E$ is a finite dimensional space 
as in \eqref{FinitedimensionalSubspaceDP}, 
such that the Hamiltonian $H$ in \eqref{Rinco} transforms into
\begin{equation}\label{piotta}
\begin{aligned}
\mathcal{H}:= H\circ \Phi_B 
&= H^{(2)} +\mathcal{H}^{(4,0)} +\mathcal{H}^{(6,0)} 
+\mathcal{H}^{(8, 0)}+\mathcal{H}^{(\geq 9, \le 1)}
+ \mathcal H^{(\geq 3,\geq 2)}\,,
\end{aligned}
\end{equation}
where 
\begin{equation}\label{FBI2}
\begin{aligned}
\mathcal{H}^{(3, \geq 2)}&:=-\frac{1}{2}\int_{\T} v\, z^2\,dx-\frac{1}{6} \int_{\T} z^3\,dx\,, 
\\[2mm]
\mathcal{H}^{(4,0)} &	:= 
\frac{1}{2}\sum_{j\in S^+} \,\frac{\og(2 j)}{2\og(j)-\og(2 j)}\,\lvert u_j \rvert^4
+\sum_{\substack{j_1, j_2\in S^+,\\ j_1- j_2\neq 0}} 
\frac{\og(j_1+j_2)}{\og(j_1)+\og(j_2)-\og(j_1+j_2)}
\lvert u_{j_1} \rvert^2\lvert u_{j_2}\rvert^2\\
&\,\quad
+\sum_{\substack{j_1, j_2\in S^+,\\ j_1 - j_2\neq 0}} 
\frac{\og(j_1-j_2)}{\og(j_1)-\og(j_2)-\og(j_1-j_2)}
\lvert u_{j_1} \rvert^2\lvert u_{j_2}\rvert^2
\end{aligned}
\end{equation}
and $\mathcal{H}^{(k, 0)}=\Pi_{\mbox{Ker}(H^{(2)})} \mathcal{H}^{(k, 0)}$ with $k=4, 6, 8$ depend only on $\lvert u_j \rvert^2$.
The same change of variables $\Phi_B$ puts all the Hamiltonians in \eqref{CostantiMotoDP} in weak Birkhoff normal form  up to order eight as in \eqref{piotta}. In particular we have
$K_1\circ\Phi_B=K_1$.
\end{prop}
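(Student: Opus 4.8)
The strategy is a standard weak Birkhoff normal form iteration, organized so that at each step the homological equation is solved only for the terms that are independent of or linear in the normal variable $z$, while all the terms that are at least quadratic in $z$ are collected into the untouched remainder $\mathcal H^{(\ge 3,\ge 2)}$. First I would set up notation: expand $H=H^{(2)}+H^{(3)}+H^{(\ge 9)}$ as in \eqref{Rinco}, write each homogeneous piece in the split variables $u=v+z$ using the $R(v^{k-q}z^q)$ notation, and observe that since $f=O(u^9)$ the term $H^{(\ge 9)}$ contributes nothing to the monomials of degree $\le 8$, so up to degree eight the Hamiltonian is literally that of the integrable DP equation \eqref{DPVera}. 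Then I would run the iteration in $n=3,4,5,6,7,8$: at step $n$, having already normalized degrees $<n$, pick the generator $G_n$ of degree $n$, supported on $E$ (i.e. on finitely many Fourier modes, which is why $\Psi=\Pi_E\circ\Psi\circ\Pi_E$), solving $\{H^{(2)},G_n\}+H^{(n,\le 1)}=\Pi_{\mathrm{Ker}(H^{(2)})}H^{(n,\le 1)}$; the time-one flow $\Phi_{G_n}$ composed via the Lie series \eqref{formLIE} removes the non-resonant part of $H^{(n,\le 1)}$ and produces new higher-degree terms which are dealt with at later steps or absorbed into the remainder. Because each generator lives on a fixed finite set of modes and each $\Phi_{G_n}$ differs from the identity by a finite-rank polynomial map, the composition $\Phi_B=\Phi_{G_3}\circ\cdots\circ\Phi_{G_8}$ has the stated form $\mathrm I+\Psi$ with $\Psi=\Pi_E\circ\Psi\circ\Pi_E$, and convergence on a small ball $\mathcal B_r(0,H^1_0(\T))$ is automatic since these are polynomial vector fields on a finite-dimensional space times the identity on the complement.

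The nontrivial content is the claim that $\mathcal H^{(k,0)}$ for $k=4,6,8$ is in $\Pi_{\mathrm{Ker}(H^{(2)})}\mathcal H^{(k,0)}$ \emph{and} depends only on the actions $|u_j|^2$. The first half is true by construction (the homological equation leaves exactly the resonant part). The second half — integrability of the normalized Hamiltonian restricted to $U_S$ — is where the resonances of the DP equation enter, and this is the main obstacle. The point is that $\mathrm{Ker}(\mathrm{ad}_{H^{(2)}})$ contains non-trivial resonant monomials already at order four (solutions of \eqref{riso}$=0$ with the momentum condition that are not of the form $(i,-i,j,-j,\dots)$), so a priori $\mathcal H^{(k,0)}$ need not be action-preserving. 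Here I would invoke the integrable structure: the same $\Phi_B$ must simultaneously normalize \emph{all} the commuting Hamiltonians $K_n$ in \eqref{CostantiMotoDP}, because the generators $G_n$ are determined by $H^{(2)}=K_0^{(2)}$-homology only and $\{K_n,K_m\}=0$ for the integrable flow; hence a resonant monomial can survive in $\mathcal H^{(k,0)}$ only if it is simultaneously resonant for every $K_n$. By the explicit characterization of the quadratic parts of the $K_n$ in \eqref{CostantiMotoDP} — eigenvalues $\lambda(j)$, $j\lambda(j)/(\ldots)$ wait, rather the symbols $1$, $J^{-1}(\ii j)$, $\langle j\rangle^{2}j^{2n}$ type weights — one checks (this is exactly the content of \cite{FGPa}, reformulated here as Proposition \ref{LemmaBelloWeak}) that the only common solutions are the trivial ones $(i,-i,j,-j,\dots)$, which forces $\mathcal H^{(k,0)}$ to depend only on $|u_j|^2$ for $k\le 8$. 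Since \eqref{DP} is only \emph{approximately} integrable, I would phrase this as: the $K_n$ are constants of motion up to $O(u^9)$, which is harmless because we only track monomials of degree $\le 8$.

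For the explicit formula \eqref{FBI2} for $\mathcal H^{(4,0)}$: at the first step $n=3$ the generator $G_3$ removes $H^{(3,\le 1)}$ entirely since there are no non-trivial cubic resonances (three-wave resonances $\lambda(j_1)+\lambda(j_2)+\lambda(j_3)=0$, $j_1+j_2+j_3=0$ are absent for this dispersion law), leaving $\mathcal H^{(3,\ge 2)}=-\frac12\int v z^2-\frac16\int z^3$ as stated. The degree-four term $\mathcal H^{(4,0)}$ is then produced purely by the correction $\frac12\{H^{(3)},G_3\}$ plus the $\Pi_{\mathrm{Ker}}$ part, and I would compute its restriction to $U_S$ by plugging in $u=v$, writing $v=\sum_{j\in S}u_j e^{\ii jx}$, expanding $-\frac16\int v^3$ against $G_3$, and collecting the action-type monomials $|u_j|^4$ and $|u_{j_1}|^2|u_{j_2}|^2$; the denominators $2\lambda(j)-\lambda(2j)$ and $\lambda(j_1)\pm\lambda(j_2)-\lambda(j_1\pm j_2)$ are exactly the small divisors of the $n=3$ homological equation, and one must check they are nonzero for $j,j_1,j_2\in S^+$ — this is a finite algebraic check for the dispersion law \eqref{dispersionLaw}, using that the $\bar\jmath_i$ are distinct positive integers. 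Finally $K_1\circ\Phi_B=K_1$ follows from momentum conservation: every $G_n$ preserves momentum (since the homological equation preserves it), so the flows $\Phi_{G_n}$ commute with the momentum Hamiltonian $K_1$, and $K_1$ being quadratic and already in normal form is left invariant; I would state this last point as an immediate consequence of the momentum-preservation property recorded just before the proposition.
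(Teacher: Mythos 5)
Your proposal follows essentially the same route as the paper: the weak BNF iteration with generators supported on finitely many modes, the simultaneous normalization of the approximate constants of motion (Proposition \ref{LemmaBelloWeak}) combined with the triviality of the resonances (Proposition \ref{vvv}) to get integrability of $\mathcal H^{(k,0)}$, an explicit computation of the first two steps to obtain \eqref{FBI2}, and momentum conservation to get $K_1\circ\Phi_B=K_1$. The only point you leave implicit is why the resonant terms \emph{linear} in $z$ of degree $\le 8$ vanish, which is needed for the $\mathcal{H}^{(\geq 9,\le 1)}$ structure in \eqref{piotta}: as in the paper, a trivial resonance $(i,-i,j,-j,\dots)$ with exactly one index in $S^c$ is impossible because $S$ and $S^c$ are both symmetric, hence the resonant part of the degree-$\le 8$, degree-one-in-$z$ terms is zero.
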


\noindent
In order to prove the Proposition \ref{WBNFdp} above we need some preliminary results proved in detail in \cite{FGPa}.

\begin{defi}[{\bf $M$-resonances}]\label{nresonances}
Fix $M\in\mathbb{N}$, $M\geq 3$. We recall that the quadratic part of $H$ and $K_r$, $2\le r\le M$, in \eqref{CostantiMotoDP} are
\[
K^{(2)}_r(u):=\sum_{j\in\mathbb{Z}\setminus\{0\}} 
(1+j^2)^2\,j^{2 (r-2)}\,\lvert u_{j} \rvert^2\,, 
\quad H^{(2)}(u)=\sum_{j\geq 1} \lvert u_{j} \rvert^2\,.
\]
We say that an $n$-uple $\{j_1,\dots,j_{n}\}\subset \Z\setminus\{0\}$, with $n\le M$, is a $M$-{\em resonance}  of order $n$ for the DP hierachy if
\begin{equation}\label{sgomento}
\sum_{i=1}^n j_i=0\,, \quad\sum_{i=1}^n \og(j_i)=0\,, 
\quad \sum_{i=1}^n (1+j_i^2)^2\,j_i^{2 (r-2)}\og(j_i)=0\, \quad \forall r=2, \dots, M+1\,.
\end{equation} 
\end{defi}

\begin{prop}\label{vvv}
Fix $M\in\mathbb{N}$, $M\geq 3$. All the $M$-resonances of the DP equations \eqref{CostantiMotoDP} are trivial, namely there are no resonances of odd order and the even ones are, up to permutations, of the form
\begin{equation}\label{coppiette}
(i, -i, j, -j,k, -k, p, -p, \dots)\,.
\end{equation}
\end{prop}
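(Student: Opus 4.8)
The plan is to show that the only solutions of the Diophantine system \eqref{sgomento} are the trivial ``paired'' ones \eqref{coppiette}. I would proceed in three stages: first reduce the polynomial structure of the constraints to a clean statement about the multiset $\{j_1,\dots,j_n\}$; then exploit that $\lambda(j) = j + 3j/(1+j^2)$ is ``almost linear'' to control the integer part; and finally close with an algebraic argument using as many of the conserved quantities as needed.

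\medskip
\noindent\textbf{Step 1: Reformulation.} Writing $\lambda(j)=j+\frac{3j}{1+j^2}$, the first two constraints in \eqref{sgomento} ($\sum j_i = 0$ and $\sum\lambda(j_i)=0$) together give $\sum_i \frac{j_i}{1+j_i^2}=0$. More importantly, the remaining constraints say that $\sum_i (1+j_i^2)^2 j_i^{2(r-2)}\lambda(j_i)=0$ for $r=2,\dots,M+1$, i.e. $\sum_i p(j_i^2)\,j_i\,(1+j_i^2)\cdot\frac{1}{1+j_i^2}\cdot(1+j_i^2)^2 = \ldots$ — more cleanly, since $\lambda(j_i)(1+j_i^2)=j_i(4+j_i^2)$, the $r$-th constraint reads
\[
\sum_{i=1}^n (1+j_i^2)\,j_i^{2(r-2)}\,j_i\,(4+j_i^2)=0, \qquad r=2,\dots,M+1.
\]
Thus $\sum_i j_i^{2k+1}(1+j_i^2)(4+j_i^2)=0$ for $k=0,\dots,M-1$. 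Grouping the indices by the value of $|j_i|$: if $|j_i|=m$ occurs with signed multiplicity $c_m := \#\{i: j_i=m\} - \#\{i: j_i=-m\}$ (note odd powers, so only the signed count matters), the constraints become $\sum_{m\ge 1} c_m\, m^{2k+1}(1+m^2)(4+m^2)=0$ for $k=0,\dots,M-1$. Since the polynomials $x\mapsto x^{2k+1}(1+x^2)(4+x^2)$, $k=0,\dots,M-1$, span (together) a space of polynomials of degree up to $2M+3$ in $x$, supported on odd-degree monomials, a Vandermonde-type argument shows that if at most $M$ distinct values $m$ appear then all $c_m=0$; hence every value $m$ with $j_i = m$ is matched by an equal number of $-m$'s, giving \eqref{coppiette}.

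\medskip
\noindent\textbf{Step 2: Bounding the number of distinct frequencies.} The subtlety above is that an $n$-uple with $n\le M$ can still involve close to $M$ distinct absolute values, so the Vandermonde argument is tight; I would make it precise by observing that the $M$ vectors $\big(m^{2k+1}(1+m^2)(4+m^2)\big)_{k=0}^{M-1}$ indexed by the $\le M$ distinct values of $|j_i|$ are the columns of a matrix that factors as a diagonal matrix (entries $m(1+m^2)(4+m^2)\ne 0$) times a Vandermonde matrix in the distinct values $m^2$, which is invertible since the $m^2$ are distinct. Hence the only solution of the homogeneous system is $c_m\equiv 0$. This also explains why one needs the full hierarchy up to $r=M+1$: with $n\le M$ indices one has at most $M$ distinct absolute values, matched exactly by $M$ independent constraints.

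\medskip
\noindent\textbf{Step 3: Ruling out odd order and concluding.} Since every index is paired with its negative, $n$ must be even, so there are no nontrivial resonances of odd order; and for even $n$ the multiset is of the form $\{i,-i,j,-j,\dots\}$ up to permutation, which is exactly \eqref{coppiette}. The main obstacle is Step 1/2 — verifying that the purely algebraic linear-independence (Vandermonde) argument genuinely uses the constraints for all $r=2,\dots,M+1$ and that no cancellation among different $|j_i|$ can occur; the nonvanishing factor $m(1+m^2)(4+m^2)$ is what makes this clean, and it is here that the specific form of the DP dispersion law and of the conserved densities $K_r$ (powers of $\partial_x$ applied to $w=\Lambda^{-1}u$) enters essentially. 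The argument is carried out at the level of formal power series in \cite{FGPa}; here one simply quotes/adapts that computation.
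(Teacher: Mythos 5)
Your proof is correct, and it reaches the same algebraic kernel as the paper — a Vandermonde matrix in the distinct squares, multiplied by a diagonal factor that is nonzero precisely because the DP dispersion $\lambda(m)(1+m^2)=m(4+m^2)$ never vanishes for $m\neq 0$ — but the reduction to that kernel is genuinely different and somewhat cleaner. The paper argues by induction on $M$: it first disposes of resonances of order $n<M$ and of resonances containing a cancelling pair $j_{i_1}+j_{i_2}=0$ via the inductive hypothesis, and only then, in the core case $n=M$ with no such pair, groups equal (signed) indices $\hat\jmath_i$ with multiplicities $\alpha_i$ and applies the Vandermonde argument in the nodes $\hat\jmath_i^{\,2}$, which are distinct exactly because $\hat\jmath_i\neq\pm\hat\jmath_h$. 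You avoid the induction and the case analysis altogether by observing that each constraint $\sum_i(1+j_i^2)\,j_i^{2(r-2)+1}(4+j_i^2)=0$ is odd in $j_i$, so it only sees the signed multiplicities $c_m=\#\{j_i=m\}-\#\{j_i=-m\}$; trivial pairs cancel automatically, the number $K$ of distinct values of $|j_i|$ is at most $n\le M$, which matches the $M$ available constraints $r=2,\dots,M+1$, and the factorization (Vandermonde in the distinct $m^2$) $\times$ $\mathrm{diag}\big(m(1+m^2)(4+m^2)\big)$ forces $c_m\equiv 0$, giving both the pairing \eqref{coppiette} and, as a byproduct, the evenness of $n$. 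In fact your argument proves slightly more than the statement, since it does not even use the momentum condition or the $H^{(2)}$-resonance condition; the paper's inductive scheme, on the other hand, is the formulation that generalizes directly to the formal-power-series setting of \cite{FGPa}. One cosmetic remark: the loose claim in your Step 1 about the polynomials $x^{2k+1}(1+x^2)(4+x^2)$ ``spanning'' a space of odd polynomials is not what is needed and could mislead; it is the precise diagonal-times-Vandermonde factorization of Step 2 (linear independence of the evaluation vectors at distinct nodes $m^2$) that carries the proof, so you should state only that.
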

\begin{proof} Since this Proposition is proved in \cite{FGPa} with different notations, for completeness  we restate here  a concise proof by induction on $M$. 
For $M=3$  the thesis  follows trivially: 
indeed direct computations show that 
\[
\sum_{i=1}^3 j_i=0, \quad\sum_{i=1}^3 \og(j_i)=0 \;
\Leftrightarrow\;  j_1=-j_2\,,\; j_3=0
\]
up to permutations, and this solution is  
incompatible with $j_i\in \Z\setminus\{0\}$.
	
Let us now suppose that the thesis is true up to $M-1\ge 3$ 
and prove it for $M$.
We start by noticing that if $n<M$ then  
\eqref{sgomento} with  $r\le M-1$ can hold only 
if $\{j_1,\dots,j_{n}\}$ is a $M-1$ resonance of order $n$. 
	The inductive hypothesis then says that $\{j_1,\dots,j_{n}\}$ is trivial. Similarly if  $\{j_1,\dots,j_{n}\}$ contains a trivial resonance, i.e. if $j_{i_1}+ j_{i_2}=0$ for $1\le i_1,i_2\le n$, then $j_{i_1}, j_{i_2}$ do not appear  in \eqref{sgomento} and hence $\{j_1,\dots,j_{n}\} $ is  an $M$-resonance of order $n$ if and only if 
\[
\{j_1,\dots,j_{n}\}\setminus \{j_{i_1}, j_{i_2}\}\,,
\quad \mbox{is an $M-2$ resonance of order $n-2$}.
\]
Without loss of generality  we assume that  $n=M$ and 
that $j_{i_1}+ j_{i_2}\ne 0$ for any $1\le i_1,i_2\le M$. 

\noindent
Up to a permutation we can assume that  for some $M\ge k\ge 1$ and $\alpha_1,\dots,\alpha_k\ge 1$  one has
\[
\{j_1,\dots,j_{n}\}= \{\underbrace{\hat\jmath_1,\dots, \hat\jmath_1}_{\alpha_1}, \dots, \underbrace{\hat\jmath_k,\dots, \hat\jmath_k}_{\alpha_k}\}\,.
\]
Consequently rewrite the third equation in \eqref{sgomento} as
$
 \sum_{i=1}^{k} \alpha_i (1+\hat\jmath_i{}^2)^2\,
 \hat\jmath_i{}^{2 (r-2)}\og(\hat\jmath_i)=0$,  $\forall r=2, \dots, M+1$.
Then we can extract $k$ equations from these ones and write them in the form
\begin{equation}\label{sgomento3}
\begin{pmatrix}
1 & \dots & 1\\
\hat\jmath_1{}^{2} & \dots & \hat\jmath_k{}^{2} \\
\vdots &   & \vdots\\
\hat\jmath_1{}^{2(k-1)} & \dots & \hat\jmath_k{}^{2(k-1)}
\end{pmatrix}
\begin{pmatrix}
\alpha_1(1+\hat\jmath_1{}^2)^2\,\og(\hat\jmath_1) \\
\vdots\\
\alpha_k(1+\hat\jmath_k{}^2)^2\,\og(\hat\jmath_k)
\end{pmatrix}
=
\begin{pmatrix}
0 \\
\vdots \\
0
\end{pmatrix}\,.
\end{equation}
The determinant of the  Vandermonde matrix in \eqref{sgomento3} is 
$
\prod_{i\neq h} (\hat\jmath_i{}^2-\hat\jmath_h{}^2)\ne 0,
$
since, by hypothesis, $\hat\jmath_i\ne \pm \hat\jmath_h$. Then the only possible solution corresponds to $\hat\jmath_i=0$ for all $i$, which is not compatible with
$\hat\jmath_i\in \Z\setminus\{0\}$.
\end{proof}

\begin{remark}\label{wellDefDP}
Notice that 
if $j_1, \dots , j_N\in\mathbb{Z}\setminus\{ 0\}$, 
$j_1+\dots+ j_N=0$ and 
$\#(\{ j_1, \dots , j_N \}, S^c)\le 1$,
then $\max_{i=1,\dots, N} \lvert j_i \rvert \le (N-1) \overline{\jmath}_1$. 
Thus, the vector field $X_{F^{(N,\leq1)}}$, generated by 
the finitely supported Hamiltonian 
\begin{equation}\label{formaGen}
F^{(N,\leq1)}=\sum_{\substack{j_1+\dots+j_N=0 \\ 
\#(\{ j_1, \dots , j_N \}, S^c)\le 1}} F^{(N,\leq1)}_{j_1 \dots j_N} u_{j_1}\dots u_{j_N}\,
\end{equation}
is finite rank, and, in particular, 
it vanishes outside the finite dimensional 
subspace $E:=E_{(N-1) \overline{\jmath}_1}$ 
(see \eqref{FinitedimensionalSubspaceDP} ) and it has the form 
\[
X_{F^{(N,\leq1)}}(u)=\Pi_E X_{F^{(N,\leq1)}} (\Pi_E u)\,.
\] 
Therefore its flow $\Phi^{(N)}$ is analytic and invertible on the phase space 
$H_0^1(\mathbb{T})$, provided that $| \Pi_E u|$ is appropriately small.
\end{remark}

In order to prove Proposition \ref{WBNFdp} we need the following result.
\begin{prop}\label{LemmaBelloWeak}
Fix $M\in\mathbb{N}$, $M\geq 2$ and consider $H$ in \eqref{DPHamiltonian} and $K_m$, $m=1, \dots, M$, in \eqref{CostantiMotoDP}.  Then, for any $N\le M-2$, there exists  $r>0$ and an analytic symplectic  change of coordinates $\Phi_N^{\pm1} \colon \mathcal B_r(0,H_0^1(\T))\to H_0^1(\T)$ of the form
\begin{equation}
\begin{aligned}
&\Phi_N= \mathrm{I} + \Psi_N\,, \quad \Phi_0=\mathrm{I},\quad  \Psi_N(u)= \Pi_E \circ  \Psi_N  \circ \Pi_E  ,
\end{aligned}
\end{equation}
where $E$ is a finite dimensional space as in \eqref{FinitedimensionalSubspaceDP}, such that
\begin{equation}\label{wnormalform}
\begin{aligned}
&H\circ \Phi_N^{-1}=H^{(2)}+Z_N^{(\le N+2, 0)}+R_N^{(\geq N+3, \le 1)}
+H_{N}^{(\geq 3, \geq 2)}\,,\qquad\;\; K_1\circ \Phi_N^{-1}=K_1\,,\\
&K_m\circ \Phi_N^{-1}=K_m^{(2)}+W_{m, N}^{(\le N+2, 0)}
+Q_{m, N}^{(\geq N+3, \le 1)}
+K_{m,N}^{(\geq 3, \geq 2)}\,, 
\quad m=2, \dots, M\,,
\end{aligned}
\end{equation}
where $Z_N^{(\le N+2, 0)}, W^{(\le N+2, 0)}_{m, N}\in \bigcap_{m=1}^M \mbox{Ker}(K_m^{(2)}) \cap \mbox{Ker}(H^{(2)})$.
\end{prop}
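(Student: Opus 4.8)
The plan is to prove Proposition \ref{LemmaBelloWeak} by induction on $N$, constructing the symplectic maps $\Phi_N$ as time-one flows of finitely supported Hamiltonians $F^{(N+2,\le 1)}$, exactly as in Remark \ref{wellDefDP}. The base case $N=0$ is trivial with $\Phi_0=\mathrm{I}$, since the cubic part $H^{(3)}$ has no $(3,\le1)$ component that is not already in the desired normal form after extracting $H^{(3,\ge 2)}$ (the terms $v^3$ with momentum constraint and $\lvert j\rvert \le \overline\jmath_1$ are in the kernel trivially, there being no cubic resonances by Proposition \ref{vvv}). For the inductive step, assume \eqref{wnormalform} holds at level $N-1$; the term to be normalized is $R_{N-1}^{(N+2,\le 1)}$, the degree-$(N+2)$, at-most-linear-in-$z$ part of $H\circ\Phi_{N-1}^{-1}$. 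I would solve the homological equation $\mathrm{ad}_{H^{(2)}}[F^{(N+2,\le1)}] + R_{N-1}^{(N+2,\le1)} = Z^{(N+2,0)}$, where $Z^{(N+2,0)} := \Pi_{\mathrm{Ker}(H^{(2)})} R_{N-1}^{(N+2,\le 1)}$ picks out the at-most-linear-in-$z$ resonant part (which is automatically purely in $v$, i.e. has $z$-degree $0$, because a resonance with exactly one index in $S^c$ cannot satisfy the momentum and frequency constraints — one uses here that single-index "resonances" are excluded). Setting $\Phi_N := \Phi_{N-1}\circ\Phi^{(N+2)}$ where $\Phi^{(N+2)}$ is the time-one flow of $X_{F^{(N+2,\le1)}}$, formula \eqref{formLIE} gives the new Hamiltonian with the degree-$(N+2)$ part in normal form and all corrections pushed to degree $\ge N+3$ or to $z$-degree $\ge 2$.

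The crucial point — and this is where the integrability of DP enters — is to show that the \emph{same} map $\Phi_N$ simultaneously normalizes all the $K_m$, $m=2,\dots,M$, i.e. that the solution $F^{(N+2,\le 1)}$ of the homological equation for $H$ also solves (up to a resonant remainder) the analogous equations for each $K_m$. Here I would invoke the formal result of \cite{FGPa}: at the formal power series level the same Birkhoff transformation normalizes the entire hierarchy, because a monomial $u_{j_1}\cdots u_{j_n}$ survives in any $Z$ or $W_m$ only if $(j_1,\dots,j_n)$ is resonant for $H^{(2)}$ \emph{and} for every $K_m^{(2)}$, i.e. is an $M$-resonance in the sense of Definition \ref{nresonances}. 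By Proposition \ref{vvv} all such resonances are trivial (of the form \eqref{coppiette}), hence the surviving monomials are functions of the actions $\lvert u_j\rvert^2$ alone and lie in $\bigcap_{m=1}^M \mathrm{Ker}(K_m^{(2)})\cap\mathrm{Ker}(H^{(2)})$ — which is the last assertion of the Proposition. Concretely: since the $K_m^{(2)}$ are diagonal with real coefficients, $F^{(N+2,\le1)}$ is uniquely determined on the range of $\mathrm{ad}_{H^{(2)}}$, and one checks that $\mathrm{ad}_{K_m^{(2)}}[F^{(N+2,\le1)}] + \Pi_{\mathrm{Rg}(H^{(2)})}(K_m \text{ remainder})^{(N+2,\le1)}$ lands in $\mathrm{Ker}(H^{(2)})$; but the commutativity $\{K_m,K_{m'}\}=0$ of the (approximate) constants of motion forces this kernel component to also vanish on $\mathrm{Rg}(K_m^{(2)})$, leaving only the common-kernel part. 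This is the content of the "formal BNF is integrable" statement that \cite{FGPa} establishes, here transcribed to the weak (finite-rank, $z$-linear) setting.

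The main obstacle I anticipate is the bookkeeping needed to pass from the formal statement of \cite{FGPa} — which concerns the full Birkhoff normal form of a genuinely integrable equation — to the present \emph{weak}, \emph{approximately integrable} setting: one must check (i) that truncating to the $(N+2,\le 1)$-component and using finite-rank flows (Remark \ref{wellDefDP}) does not break the simultaneous normalization, since the finitely supported generators only see indices with $\le 1$ element in $S^c$ and bounded size $(N+1)\overline\jmath_1$; (ii) that the perturbation $H^{(\ge 9)}=\int f(u)\,dx$, being $O(u^9)$, contributes nothing to the relevant homological equations as long as $N+2\le 8$, i.e. $N\le 6$ — which is why one needs $M\ge N+2$ and the constants of motion $K_m$ only up to order $M$; and (iii) that the analyticity and the bound $r>0$ (depending on $S$) propagate through the finitely many steps, which follows from the analyticity of finite-rank flows on $H_0^1(\T)$ for $\lvert\Pi_E u\rvert$ small, as noted in Remark \ref{wellDefDP}. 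Once these are in place, Proposition \ref{LemmaBelloWeak} follows, and Proposition \ref{WBNFdp} is then the specialization $M=8$, $N=6$, with the explicit computation of $\mathcal H^{(4,0)}$ obtained by solving the degree-$4$ homological equation by hand.
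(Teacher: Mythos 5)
Your proposal follows essentially the same route as the paper: induction on the number of steps, homological equations for $H^{(2)}$ with finitely supported generators as in Remark \ref{wellDefDP}, simultaneous normalization of the commuting hierarchy, and the triviality of joint resonances (Proposition \ref{vvv}) to place the resonant part in the common kernel and kill the terms linear in $z$. One claim, however, is wrong as stated: in your first paragraph you assert that $\Pi_{\mathrm{Ker}(H^{(2)})}R^{(N+2,\le1)}$ is automatically of $z$-degree zero ``because a resonance with exactly one index in $S^c$ cannot satisfy the momentum and frequency constraints''. For $H^{(2)}$-resonances alone this is unjustified and in general false: the DP dispersion law admits many non-trivial resonances already at order four, and nothing a priori excludes one with exactly one index outside $S$. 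The vanishing of the $z$-linear resonant term is precisely the non-trivial point; it follows only after one knows that the $H^{(2)}$-resonant remainder lies in $\bigcap_{m}\mathrm{Ker}(K_m^{(2)})$, so that Proposition \ref{vvv} forces its support onto trivial resonances, which are incompatible with having at most one index in $S^c$ unless all indices lie in $S$. You do give this correct argument in your second paragraph, so the slip is not load-bearing, but the logical order matters: the $z$-degree-zero structure in \eqref{wnormalform} is a consequence of the simultaneous normalization, not an input to it.

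The simultaneous normalization itself is the heart of the proposition, and in your write-up it is asserted (``one checks\dots'', ``the commutativity forces\dots'') or delegated to the formal result of \cite{FGPa}, whereas the proposition is exactly the adaptation of that formal statement to the truncated, $z$-graded, finite-rank and only approximately integrable setting. The paper carries it out in three concrete moves: (a) projecting $\{H,K_m\}\circ\Phi_N^{-1}=0$ onto homogeneity $\le N+3$ and $z$-degree $\le 1$ and using the inductive hypothesis gives \eqref{identita}; (b) the Jacobi-identity observation that $\mathrm{ad}_{K_m^{(2)}}$ preserves $\mathrm{Ker}(H^{(2)})$ then yields $\Pi_{\mathrm{Ker}(H^{(2)})} R_N^{(N+3,\le1)}\in \mathrm{Ker}(K_m^{(2)})$ together with the symmetric identity \eqref{prisoner}; (c) since $\mathrm{ad}_{H^{(2)}}^{-1}$ commutes with $\mathrm{ad}_{K_m^{(2)}}$ on $\mathrm{Rg}(H^{(2)})\cap\mathrm{Rg}(K_m^{(2)})$, the generator of the form \eqref{formaGen} solving the $H$-homological equation also solves those for every $K_m\circ\Phi_N^{-1}$. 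Your sketch names the right ingredients, so the approach would go through, but (a)--(c) are the content that must actually be written; quoting the formal full-BNF integrability of \cite{FGPa} does not by itself cover this weak setting.
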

\begin{proof}
The terms of degree at most $2$ in the variable $z$ are not affected by the procedure that we are going to describe. \\
We argue the result by induction on the number of steps $N$. For $N=0$ it is trivial since $\Phi_0$ is the identity map.\\
Suppose that we have performed $N$ steps. By the fact that $\{ H, K_m \}=0$ then $\{ H, K_m\}\circ \Phi_N^{-1}=0$. For the latter, we are interested in the corresponding equations for the terms of homogeneity at most $N+3$ and degree in the variable $z$ less or equal than one. So we consider the projection $\Pi^{(\le N+3, \le 1)}\Big(\{ H, K_m\}\circ \Phi_N^{-1}\Big)=0$ and
 we get, for any $m=1, \dots, M$, the following system of equations
 $\{ H^{(2)}, K_m^{(2)}\}=0\,$ and 
\begin{align*}
&\{ H^{(2)}, W_{N, m}^{(N+2, 0)}  \}
+\{  Z^{(N+2, 0)}_N, K_m^{(2)}   \}
+\Pi^{(\le N+2)}\{  Z_N^{(N+2, 0)}, W_{N, m}^{(N+2, 0)}\}=0\,,\\
&\Pi^{(N+3)}\{  Z_N^{(N+2, 0)}, W_{N, m}^{(N+2, 0)}\}
+\{ H^{(2)}, Q_{m,N}^{(N+3, \le 1)}\}
+\{ R_N^{(N+3, \le 1)}, K_m^{(2)} \}=0\,.
\end{align*}
By the inductive hypothesis 
$W_{N, m}^{(N+2, 0)}, Z_N^{(N+2, 0)}\in
\bigcap_{m=1}^M \mbox{Ker}(K_m^{(2)})
\cap \mbox{Ker}(H^{(2)})$, 
hence 
\[
\{ H^{(2)}, W_{N, m}^{(N+2, 0)}\}=\{ Z_N^{(N+2, 0)}, K_m^{(2)}\}=0
\]
 and 
\begin{equation}\label{identita}
\{ H^{(2)}, Q_{m,N}^{(N+3, \le 1)}\}+\{ R_N^{(N+3, \le 1)}, K_m^{(2)} \}=0\,, 
\quad m=1, \dots, M\,,
\end{equation}
since $\{ H^{(2)}, Q_{m,N}^{(N+3, \le 1)}\}\in \mbox{Rg}(H^{(2)})$ 
and $\{ R_N^{(N+3, \le 1)}, K_m^{(2)} \}\in \mbox{Rg}(K_m^{(2)})$.

\smallskip
\noindent
We note the following fact, which derives from the Jacobi identity: 
if $f\in \mbox{Ker}(H^{(2)})$ then $\{ f, K_m^{(2)}\}\in \mbox{Ker}(H^{(2)})$.

\noindent
Then we have that $\{ \Pi_{\mbox{Ker}(H^{(2)})} R_N^{(N+3, \le 1)}, K_m^{(2)}  \}
\in \mbox{Ker}(H^{(2)})$ and by \eqref{identita}
\[
\{ \Pi_{\mbox{Ker}(H^{(2)})} R_N^{(N+3, \le 1)}, K_m^{(2)}  \}=
-\{ \Pi_{\mbox{Rg}(H^{(2)})} R_N^{(N+3, \le 1)}, K_m^{(2)}  \}
+\{ H^{(2)}, Q_{N, m}^{(N+3, \le 1)}\}\in \mbox{Rg}(H^{(2)})\,.
\]
Thus $\{ \Pi_{\mbox{Ker}(H^{(2)})} R_N^{(N+3, \le 1)}, K_m^{(2)}  \}=0$ and 
\[
\Pi_{\mbox{Ker}(H^{(2)})} R_N^{(N+3, \le 1)}
=\Pi_{\mbox{Ker}(H^{(2)})} \Pi_{\mbox{Ker}(K_m^{(2)})} R_N^{(N+3, \le 1)}\,.
\]
By symmetry 
$\Pi_{\mbox{Ker}(K_m^{(2)})} Q_{m, N}^{(N+3, \le 1)}
=\Pi_{\mbox{Ker}(H^{(2)})} \Pi_{\mbox{Ker}(K_m^{(2)})} Q_{m, N}^{(N+3, \le 1)}$. 
Hence
\begin{equation}\label{prisoner}
\Pi_{\mbox{Rg}(H^{(2)})} \Pi_{\mbox{Ker}(K_m^{(2)})} Q_{m, N}^{(N+3, \le 1)}
=\Pi_{\mbox{Rg}(K_m^{(2)})} \Pi_{\mbox{Ker}(H^{(2)})} R_N^{(N+3, \le 1)}=0\,, 
\quad m=1,\dots, M\,.
\end{equation}
In order to obtain the Birkhoff normal form at 
order $N+3$ we consider a Birkhoff transformation 
$\Phi_{F^{(N+3,\leq1)}}$ with generator 
$F^{(N+3),\leq1}$ of the form \eqref{formaGen} 
( with $N\rightsquigarrow N+3$)
 and we define $\Phi_{N+1}:= \Phi_{F^{(N+3,\leq1)}} \circ \Phi_N$. By Remark \ref{wellDefDP} the flow $\Phi_{F^{(N+3,\leq1)}}$ is well defined in an appropriately small ball and it has the form Identity plus a finite rank operator. Note that, since $F^{(N+3,\leq1)}$ is Fourier supported on $(j_1, \dots, j_{N+3})$ such that $j_1+\dots+j_{N+3}=0$, the Hamiltonian $K_1$ commutes with $F^{(N+3,\leq1)}$ and, by the inductive hypothesis, $K_1\circ \Phi_{N+1}^{-1}=K_1$.
The function $F^{(N+3,\leq1)}$ is chosen in order to solve the homological equation
\[
\{ H^{(2)}, F^{(N+3,\leq1)}\}=\Pi_{\mbox{Rg}(H^{(2)})} R_N^{(N+3, \le 1)} 
\stackrel{(\ref{prisoner})}{=}
\Pi_{\mbox{Rg}(K_m^{(2)})}\Pi_{\mbox{Rg}(H^{(2)})} R_N^{(N+3, \le 1)}\,.
\]
We now show that $F^{(N+3,\leq1)}$ solves also the homological equation 
for the commuting Hamiltonians 
$K_m\circ \Phi_N^{-1}$, $m=1, \dots, M$. 
Indeed, by the fact that $\mathrm{ad}_{H^{(2)}}^{-1}$ 
commutes with $\mathrm{ad}_{K_m^{(2)}}$ 
on the intersection $\mbox{Rg}(H^{(2)})\cap \mbox{Rg}(K_m^{(2)})$, we have
\[
\{ K_m^{(2)}, F^{(N+3,\leq1)}\}=
\mathrm{ad}_{H^{(2)}}^{-1}\{ K_m^{(2)}, 
\Pi_{Rg(K_m^{(2)})}\Pi_{Rg(H^{(2)})} R_N^{(N+3, \le 1)} \}\,,
\]
and by \eqref{identita}, \eqref{prisoner} we get
\[
\{ K_m^{(2)}, \Pi_{\mbox{Rg}(K_m^{(2)})}
\Pi_{\mbox{Rg}(H^{(2)})} R_N^{(N+3, \le 1)} \}
=\{H^{(2)}, \Pi_{\mbox{Rg}(K_m^{(2)})}\Pi_{\mbox{Rg}(H^{(2)})} 
Q_{m, N}^{(N+3, \le 1)}  \}\,.
\]
By \eqref{prisoner} we have that the resonant term 
$Z^{(N+3, \le 1)}_{N+1}:=\Pi_{\mbox{Ker}(H^{(2)})} R_N^{(N+3, \le 1)}$ 
belongs to the intersection of the kernels 
and by Proposition \ref{vvv} these terms are supported only 
on $n$-ples of indices of the form $(i, -i, j, -j, k, -k, \dots)$. 
By the symmetry of the tangential set $S$ 
this is possible for a set of indices with at most one 
outside $S$ if and only if all the indices belong to $S$. 
Hence $Z^{(N+3, 1)}_{N+1}=0$ and we define $Z_{N+1}^{(\le N+3, 0)}:=Z_{N+1}^{(N+3, 0)}+Z_N^{(\le N+2, 0)}$.
We do not compute explicitly the radius $r$ 
of the ball in which we can perform the Birkhoff change of variables, 
however one can easiliy check that 
$r\to 0$ as $N\to \infty$ or as 
$\mathtt r\to 0$ in Definition \ref{TangentialSitesDP}.
\end{proof}

\begin{proof}[{\bf Proof of Proposition \ref{WBNFdp}}]
We apply Proposition \ref{LemmaBelloWeak} with $N=6$ and $M=8$ 
and we obtain \eqref{FBI}, \eqref{piotta} by setting $\Phi_B:=\Phi_N^{-1}$. To prove \eqref{FBI2} 
we have to show explicitly 
the computations of the first step of Birkhoff normal form.

\smallskip
\noindent
First we remove the cubic terms independent of $z$ and linear in $z$ from the Hamiltonian 
\begin{equation}\label{terzogrado}
H^{(3)}= -\frac16 \int_{\T} u^3\,dx=
-\frac{1}{6} \int_{\T}v^3\,dx-\frac{1}{2}\int_{\T} v^2 z\,dx
-\frac{1}{2}\int_{\T} v z^2\,dx-\frac{1}{6}\int_{\T} z^3\,dx\,. 
\end{equation}
We consider $\Phi_1:=(\Phi^t_{F^{(3,\leq 1)}})_{|_{t=1}}$ as the time-$1$ flow map generated by the Hamiltonian vector field $X_{F^{(3,\leq1)}}$, with an auxiliary Hamiltonian  $F^{(3,\leq 1)}$ of the form \eqref{formaGen} 
with $N=3$.
The transformed Hamiltonian is
$H_1:=H\circ \Phi_1^{-1}=H^{(2)}+H_1^{(3)}+H_1^{(4)}+H_1^{(\geq 5)}$ with
\begin{align}
&H_1^{(3)}:=\{ F^{(3,\leq1)}, H^{(2)}\}+H^{(3)}\,, 
\quad H_1^{(4)}:=
\frac{1}{2}\{ F^{(3,\leq1)} , \{F^{(3,\leq1)}, H^{(2)}\}\}+\{ F^{(3,\leq1)}, H^{(3)}\}\,,\label{NewHam3}
\end{align}
and where $H_1^{(\geq 5)}$ collects all the terms of order at least five in $(v, z)$.
We choose $F^{(3,\leq1)}$ 
such that the following homological equation holds
\begin{equation}\label{HomologicalEquation3}
\{ F^{(3,\leq1)}, H^{(2)}\}+H^{(3)}=H^{(3, \geq 2)} \quad \Leftrightarrow \quad \{H^{(2)}, F^{(3,\leq1)}\}=\Pi_{\mbox{Rg}(H^{(2)})} H^{(3, \leq 1)}\,.
\end{equation}
Recalling  \eqref{PoissonBracketDP} and \eqref{terzogrado}, the solution of the equation \eqref{HomologicalEquation3} 
is given by $F^{(3,\leq1)}$ as in \eqref{formaGen} with $N=3$ 
with coefficients defined as 
\begin{equation}\label{DefF3}
F_{j_1 j_2 j_3}^{(3,\leq1)}:=\begin{cases}
\dfrac{1}{6\,\mathrm{i}\, (\og(j_1)+\og(j_2)+\og(j_3))} \qquad \mbox{if}\,\,\sharp (\{ j_1, j_2, j_3 \}, S^c)\le 1,\, j_1+j_2+j_3=0\,,\\
0 \qquad\qquad \qquad \qquad\qquad\qquad\qquad\qquad \mbox{otherwise\,.}
\end{cases}
\end{equation}
The Hamiltonian $F^{(3,\leq1)}$ is well defined since,
by Proposition \ref{vvv},
 there are no non-trivial $3$-resonances of order $3$.
Since $\Pi_{\mbox{Rg}(H^{(2)})} H^{(3, \leq 1)}=H^{(3, \leq 1)}$ we get (see \eqref{NewHam3}, \eqref{HomologicalEquation3})
\begin{equation}\label{mastite}
H^{(3)}_1= H^{(3, \geq 2)}\,, 
\qquad
H_1^{(4)}=\frac{1}{2}\{F^{(3,\leq1)}, H^{(3, \leq 1)}\}+\{ F^{(3,\leq1)}, H^{(3, \geq 2)}\}\,.
\end{equation}

\noindent
In the second step we normalize the terms of total degree $4$ and $\le 1$ in the variable $z$. The term $\Pi_{\mbox{Ker}(H^{(2)})}H_1^{(4,\leq 1)}$ is Fourier supported on the set of $4$-resonances  of order $4$, which are trivial 
by Proposition \ref{vvv}.
By Proposition \ref{LemmaBelloWeak} $\Pi_{\mbox{Ker}(H^{(2)})} H_1^{(4, 1)}=0$.
Thus we have to compute only $\Pi_{\mbox{Ker}(H^{(2)})} H_1^{(4, 0)}$. 
We have
\begin{equation}\label{posta2}
Z_2^{(4,0)}= \Pi_{\mbox{Ker}(H^{(2)})}H^{(4, 0)}_1=\frac{1}{8} \sum_{\substack{j_1, j_2, j_3, j_4\in S,\\j_1+j_2+j_3+j_4=0\\ j_1+j_2\neq 0,\, j_3+j_4\neq 0,\\\sum_{k=1}^4 \og(j_k)=0} }\frac{\og(j_1+j_2)}{\og(j_1)+\og(j_2)-\og(j_1+j_2)}\,u_{j_1} u_{j_2} u_{j_3} u_{j_4}.
\end{equation}
The remaining steps of this procedure do not affect the terms with degree of homogeneity less or equal than $4$. Hence by \eqref{posta2}, the fact that $\og(-j)=-\og(j)$ (see \eqref{dispersionLaw}) and the symmetry of $S$ we obtain \eqref{FBI2}.
\end{proof}

\section{Action-angle variables}\label{sezioneActionAngle}

On the submanifold $\{ z=0\}$ we put the following action-angle variables
\begin{equation}\label{aa0dp}
\begin{aligned}
\mathbb{T}^{\nu}&\times [0, \infty)^{\nu} \longrightarrow \{ z=0\}\,,
\qquad
(\theta, I) \longmapsto v=\sum_{j\in S} \sqrt{I_j} \,e^{\mathrm{i} \theta_j}\,e^{\mathrm{i} j x} .
\end{aligned}
\end{equation}
Note that this change of 
coordinates is real 
 if and only if $I_{-j}=I_j$ and $\theta_{-j}=-\theta_j$.
The symplectic form in \eqref{simpleDP} 
restricted to the subspace $H_S$ transforms into the $2$-form
$
\sum_{j\in S^+}\frac{1}{\og(j)}  d\theta_j\wedge \, d I_j\,.
$
We have that the Hamiltonian
$\mathcal{H}^{(\leq 8)}(\theta, I, 0)=\sum_{j\in S^+} I_j
+\mathcal{H}^{(4, 0)}(I)+\mathcal{H}^{(6, 0)}(I)+\mathcal{H}^{(8, 0)}(I)$
depends only by the actions $I$ and its equations of motion read as
\begin{equation}\label{HamiltonianSisteminoDP}
\begin{cases}
\dot{\theta}_j=\og(j)\,\,\partial_{I_j} \mathcal{H}^{(\leq 8)}(\theta, I, 0)=\omega_j(I)\,, 
\qquad j\in S^+\,,\\[3mm]
\dot{I}_j=-\lambda(j)\partial_{\theta_j} \mathcal{H}^{(\leq 8)}(\theta, I, 0)=0\,, 
\qquad \quad\,\,\,\, j\in S^+\,,
\end{cases}
\end{equation}
where, by \eqref{FBI2},
\begin{align}
\omega_j(I)&=\og(j)+\frac{1}{2}\frac{\og(j) \og(2 j)}{2 \og(j)-\og(2 j)}\, I_j+ \og(j) \sum_{k\in S^+, k\neq  j}\mathtt{b}_{j k}\,I_k+O(I^2)\, , \quad j\in S^+\, ,\label{omeghinoDP}\\
\qquad\mathtt{b}_{j k}:\!
&=\frac{2}{3} \frac{(1+k^2)(1+j^2)(2+k^2+j^2)}{(3+k^2+j^2+k j)(3+k^2+j^2-k j)}\,.\label{parco1}
\end{align}
The submanifold $\{z=0\}$ is foliated by tori supporting 
small amplitude quasi-periodic solutions for the
 truncated system with Hamiltonian $\mathcal{H}^{(\le 8)}$. 
 We shall select some of them as starting point of the 
 Nash-Moser scheme. This selection shall be dictated 
 by the imposition of non-resonance conditions 
 on the frequencies $\omega(I)$ of such approximate solutions. 
 The \emph{unperturbed actions} $I$ will be 
 used as parameters to modulate 
 the vectors $\omega(I)$. 
%

\noindent
In order to highlight the fact that the amplitudes are small we introduce a small parameter $\varepsilon>0$ and we rescale $\xi\mapsto \varepsilon^2 \xi$, so that the \emph{frequency-amplitude map} can be written as
\begin{equation}\label{FreqAmplMapDP}
\alpha(\xi)=\overline{\omega}+\e^2\mathbb{A}\,\xi+O(\e^4)\,,
\end{equation}
where $\overline{\omega}$ is the vector of the linear frequencies (see \eqref{LinearFreqDP}),
\begin{equation}\label{TwistMatrixDP}
\begin{aligned}
\mathbb{A}:=\frac{1}{2}\mathbb{D}\,\,\,\mathrm{diag} 
\left(\frac{\og(2 j)}{2 \og(j)-\og(2 j)}\right)_{j\in S^+}+\mathbb{D}\,\,\,\mathbb{B}\,, 
\quad \mathbb{D}:=\mathrm{diag} \big(\og(j)\big)_{j\in S^+}\,,\quad 
\mathbb{B}_j^k:=\begin{cases}
\mathtt{b}_{j k}\quad 	\,\,\mbox{if}\,\,\,\,j\neq k\,,\\
0 \qquad \,\, \mbox{if}\,\,\,j=k\,.
\end{cases}
\end{aligned}
\end{equation}
In order to work in a small neighbourhood of the unperturbed torus $\{ I\equiv \varepsilon^2\xi \}$ it is advantageous to introduce a set of coordinates $(\theta, y, z)\in\mathbb{T}^{\nu}\times \mathbb{R}^{\nu}\times H_S^{\perp}$ adapted to it, defined by
\begin{equation}\label{AepsilonDP}
u=A_{\varepsilon}(\theta, y, z)=
\e v_\e(\theta,y)+\e^{b}z \qquad \Longleftrightarrow \qquad
\begin{cases}
u_j:=\varepsilon\sqrt{ \xi_j+\varepsilon^{2b-2}\lvert \og(j) \rvert y_j }\,
e^{\mathrm{i}\theta_j}\,e^{\mathrm{i}\,j\,x}\,, \,\,\quad j\in S\,,\\[2mm]
u_j:=\varepsilon^b z_j\,, \qquad \qquad \qquad \qquad \qquad\qquad j\in S^c\,,
\end{cases}
\end{equation}
with $b>1$ and
where  (recall $\overline{u}_j=u_{-j}$)
\begin{equation*}
\xi_{-j}=\xi_j\,, \quad \xi_j>0\,, \quad y_{-j}=y_{j}\,, \quad \theta_{-j}=-\theta_j\,, \quad \theta_j\in\mathbb{T}\,,\,\, y_j\in\mathbb{R}\,, \quad \forall j\in S\,.
\end{equation*}
The parameter $b$ will be chosen close to one, to this purpose we shall set
\begin{equation}\label{donnapia}
a := 2b -2\,,
\end{equation}
and fix $a>0$ appropriately small.
For the tangential sites 
$S^+:=\{ \overline{\jmath}_1,\dots, \overline{\jmath}_{\nu}\}$ 
we will also denote 
$\theta_{\overline{\jmath}_i}:=\theta_i$, $y_{\overline{\jmath}_i}:=y_{i}$, 
 $\xi_{\overline{\jmath}_i}:=\xi_i$, 
$ i=1,\dots, \nu$.
The symplectic $2$-form $\Omega$ in \eqref{simpleDP}, up to rescaling of time, becomes
\begin{equation}\label{NewSimplFormDP}
\mathcal{W}:=\sum_{i=1}^{\nu} d\theta_i\wedge d y_i+\frac{1}{2} \sum_{j\in S^c} \frac{1}{\mathrm{i} \og(j)}\,d z_j\wedge d z_{-j}=\Big( \sum_{i=1}^{\nu} d \theta_i\wedge d y_i \Big) \oplus \Omega_{S^{\perp}}\,,
\end{equation}
where $\Omega_{S^{\perp}}$ is the symplectic form $\Omega$ in \eqref{simpleDP}
restricted to the subspace $H_{S}^{\perp}$ in \eqref{decomposition}.
The Hamiltonian system generated by $\mathcal{H}$ in \eqref{piotta} becomes
\begin{equation}\label{HamiltonianaRiscalataDP}
H_{\varepsilon}:=\varepsilon^{-2 b}\,\mathcal{H}\circ A_{\varepsilon}\,.
\end{equation}

In the following lemma we prove that, under an appropriate choice of the tangential set \eqref{TangentialSitesDP}, the function \eqref{FreqAmplMapDP} is a diffeomorphism for $\varepsilon$ small enough (recall that we rescaled $\xi\mapsto \varepsilon^2 \xi$) and then the system \eqref{HamiltonianSisteminoDP} is integrable and non-isochronous.

\begin{lem}[{\bf Twist condition}]\label{Twist1}
There exist $\mathtt{r}_0, \mathtt{c}_{*}>0$ 
such that, for  any choice of the tangential sites 
$S^+\in \mathcal{V}(\mathtt{r})$ with $0<\mathtt{r}\leq \mathtt{r}_0 $ (see Definition \ref{Def:cono}),
 one has
$\lvert \det \mathbb{A} \rvert\geq \mathtt{c}_* \,\,
  \overline{\jmath}_1^{3 \nu}\,. $

\end{lem}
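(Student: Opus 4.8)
The plan is to analyze the rational function $\det\mathbb{A}$ in the variables $\overline{\jmath}_1,\dots,\overline{\jmath}_\nu$, using the wave packet condition $S^+\in\mathcal{V}(\mathtt{r})$ to reduce the problem to a perturbative situation near the one-parameter family $\overline{\jmath}_1=\dots=\overline{\jmath}_\nu$. First I would write $\mathbb{A}=\mathbb{D}\big(\tfrac12\,\mathrm{diag}(\og(2j)/(2\og(j)-\og(2j)))+\mathbb{B}\big)$ from \eqref{TwistMatrixDP}, so that $\det\mathbb{A}=\det\mathbb{D}\cdot\det(\tfrac12\mathrm{diag}(\dots)+\mathbb{B})=\prod_{i=1}^\nu\og(\overline{\jmath}_i)\cdot\det\mathbb{M}$, where $\mathbb{M}$ is the bracketed matrix. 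Since $\og(j)=j(4+j^2)/(1+j^2)\sim j$ for large $j$, the prefactor $\prod\og(\overline{\jmath}_i)$ already contributes a factor comparable to $\prod\overline{\jmath}_i\sim\overline{\jmath}_1^\nu$ (using $|\overline{\jmath}_i/\overline{\jmath}_1-1|\le\mathtt{r}$). So it remains to show $|\det\mathbb{M}|\gtrsim\overline{\jmath}_1^{2\nu}$.

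Next I would compute the asymptotics of the entries of $\mathbb{M}$ under the wave packet scaling. Write $\overline{\jmath}_i=\overline{\jmath}_1(1+t_i)$ with $|t_i|\le\mathtt{r}$ and $t_1=0$. A direct expansion shows $\og(2j)/(2\og(j)-\og(2j))\to -1$ as $j\to\infty$ (in fact it is $-1+O(j^{-2})$), while for the off-diagonal entries $\mathtt{b}_{jk}$ in \eqref{parco1}, when $j=\overline{\jmath}_1(1+t)$, $k=\overline{\jmath}_1(1+s)$ are both large and comparable, one finds $\mathtt{b}_{jk}=\tfrac23\cdot\frac{\overline{\jmath}_1^4(\dots)\cdot\overline{\jmath}_1^2(\dots)}{\overline{\jmath}_1^4(\dots)}\to\tfrac23$ up to corrections that are $O(\mathtt{r})$ plus $O(\overline{\jmath}_1^{-2})$; more precisely $\mathtt{b}_{jk}=\tfrac23+O(\mathtt{r})+O(\overline{\jmath}_1^{-2})$, so to leading order $\mathbb{M}$ is the $\nu\times\nu$ matrix with $-\tfrac12$ on the diagonal and $\tfrac23$ off the diagonal (this needs $\mathtt{r}$ small enough that the corrections are controlled). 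Such a matrix has the form $\tfrac23(\mathbf{1}\mathbf{1}^T)-\tfrac76 I$ and its determinant is $(-\tfrac76)^{\nu-1}(-\tfrac76+\tfrac23\nu)=(-1)^\nu\tfrac{7^{\nu-1}}{6^\nu}(\tfrac76-\tfrac23\nu)$, which is nonzero for every $\nu\ge2$ (it never vanishes since $\tfrac76-\tfrac23\nu$ is not zero for integer $\nu$). Hence there is a constant $c_\nu>0$ with $|\det\mathbb{M}_0|\ge 2c_\nu$ for the limiting matrix $\mathbb{M}_0$.

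Then I would combine these two ingredients by a continuity/perturbation argument. Fixing $\nu$, the limiting determinant is a continuous function of the parameters $t_2,\dots,t_\nu\in[-\mathtt{r},\mathtt{r}]$ and equals $\det\mathbb{M}_0+O(\mathtt{r})$ uniformly; choosing $\mathtt{r}_0$ small, $|\det(\tfrac12\mathrm{diag}(-1,\dots,-1)+\mathbb{B}_\infty)|\ge c_\nu$ on the whole parameter region, where $\mathbb{B}_\infty$ denotes the limit-as-$\overline{\jmath}_1\to\infty$ off-diagonal matrix. Finally, for finite but large $\overline{\jmath}_1$, the actual entries of $\mathbb{M}$ differ from this limiting matrix by $O(\overline{\jmath}_1^{-2})$, hence $\det\mathbb{M}=\det\mathbb{M}_\infty+O(\overline{\jmath}_1^{-2})$, so $|\det\mathbb{M}|\ge c_\nu/2$ once $\min_i\overline{\jmath}_i>1/\mathtt{r}\ge1/\mathtt{r}_0$ is large enough (shrinking $\mathtt{r}_0$ further if necessary). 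Multiplying back by $|\prod\og(\overline{\jmath}_i)|\ge\mathtt{c}'\,\overline{\jmath}_1^{\,3\nu}/\overline{\jmath}_1^{\,2\nu}\cdot\overline{\jmath}_1^{\,2\nu}$—more carefully, $|\og(\overline{\jmath}_i)|\ge\tfrac12\overline{\jmath}_i\ge\tfrac12(1-\mathtt{r})\overline{\jmath}_1$, so $|\prod\og(\overline{\jmath}_i)|\ge(\tfrac12(1-\mathtt{r}))^\nu\overline{\jmath}_1^\nu$—gives $|\det\mathbb{A}|\ge\mathtt{c}_*\overline{\jmath}_1^{\nu}$. To reach the stated exponent $3\nu$ one must be more precise: each $\og(\overline{\jmath}_i)$ actually contributes a factor $\sim\overline{\jmath}_i$, and tracking homogeneity in \eqref{TwistMatrixDP} one sees $\mathbb{A}_j^k$ scales like $\overline{\jmath}_1^{3}$ when $j,k$ are comparable to $\overline{\jmath}_1$ — indeed $\og(j)\sim j$, $\og(2j)/(2\og(j)-\og(2j))\sim$ const, and $\mathtt{b}_{jk}\sim$ const — so each entry of $\mathbb{A}$ is of size $\overline{\jmath}_1$, wait: re-examining, $\mathbb{A}=\mathbb{D}\mathbb{M}$ with $\mathbb{D}$ of size $\overline{\jmath}_1$ and $\mathbb{M}$ of size one, so each row of $\mathbb{A}$ has a factor $\og(\overline{\jmath}_i)\sim\overline{\jmath}_1$, giving $\det\mathbb{A}\sim\overline{\jmath}_1^{\nu}\det\mathbb{M}$. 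The claimed $\overline{\jmath}_1^{3\nu}$ must therefore come from a different normalization of $\mathbb{A}$ in the paper (the $O(\e^4)$ remainder in \eqref{FreqAmplMapDP} and the $\mathtt{b}_{jk}$ of \eqref{parco1} are homogeneous of a higher degree than first sight suggests). The main obstacle is precisely this bookkeeping of homogeneity degrees: one must carefully determine the exact scaling of $\mathtt{b}_{jk}$ and of $\og(2j)/(2\og(j)-\og(2j))$ as $\overline{\jmath}_1\to\infty$ with all indices comparable, and verify that after extracting the common power of $\overline{\jmath}_1$ the residual matrix has a determinant bounded below by a dimensional constant $c_\nu>0$ — and that $c_\nu$ does not degenerate as $\nu$ grows in a way that conflicts with how $\mathtt{r}_0$ is chosen. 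The nonvanishing of the limiting determinant (the $\tfrac76-\tfrac23\nu\neq0$ type condition, or whatever the correct leading matrix turns out to be) is the one genuinely algebraic input; everything else is continuity and asymptotic estimates, with the wave packet hypothesis \eqref{caso2} doing the work of confining us to a regime where $\mathbb{M}$ depends effectively on a single large parameter.
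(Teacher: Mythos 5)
Your overall scheme---factor a diagonal matrix out of $\mathbb{A}$, pass to the wave-packet variables, identify a limiting matrix as $\overline{\jmath}_1\to\infty$ with all ratios $\overline{\jmath}_i/\overline{\jmath}_1$ near $1$, check that its determinant does not vanish, and conclude by continuity---is exactly the paper's strategy. The execution, however, breaks at the entry asymptotics, which is the one computation that matters. From \eqref{dispersionLaw} one finds $2\og(j)-\og(2j)=\frac{18\,j^{3}}{(1+j^{2})(1+4j^{2})}$, hence $\frac{\og(2j)}{2\og(j)-\og(2j)}=\frac{4(1+j^{2})^{2}}{9\,j^{2}}\sim\frac{4}{9}j^{2}$, which tends to $+\infty$, not to $-1$ as you claim; likewise, for $j,k\sim\overline{\jmath}_1$ the coefficient $\mathtt{b}_{jk}$ in \eqref{parco1} is of size $\sim\frac{4}{9}\overline{\jmath}_1^{2}$, not $\to\frac23$. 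Consequently, after extracting only $\mathbb{D}=\mathrm{diag}(\og(\overline{\jmath}_i))$ the residual matrix $\mathbb{M}$ has entries growing like $\overline{\jmath}_1^{2}$, your limiting matrix $\frac23 U-\frac76\mathrm{I}$ (and the condition $\frac76-\frac23\nu\neq0$) is not the correct leading object, and the bound you obtain, $\lvert\det\mathbb{A}\rvert\gtrsim\overline{\jmath}_1^{\nu}$, is wrong. You do notice the mismatch with the exponent $3\nu$ at the end, but you misattribute it to ``a different normalization of $\mathbb{A}$ in the paper'' instead of to the faulty asymptotics, and you leave the homogeneity bookkeeping---the genuinely essential step---unresolved.

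The correct normalization, which is what the paper uses, is $\mathbb{A}=\frac29\,\mathrm{diag}\big(\og(\overline{\jmath}_i)(1+\overline{\jmath}_i^{2})\big)\,\mathbb{K}$, with $\mathbb{K}_{jj}=\frac{1+j^{2}}{j^{2}}$ and $\mathbb{K}_{jk}=3\,\frac{(1+k^{2})(2+k^{2}+j^{2})}{(3+k^{2}+j^{2}+kj)(3+k^{2}+j^{2}-kj)}$ for $j\neq k$; these entries are uniformly bounded ($\lvert\mathbb{K}_{jj}\rvert\le2$, $\lvert\mathbb{K}_{jk}\rvert\le12$). In the variables $x=1/\overline{\jmath}_1$, $p_i=\overline{\jmath}_i/\overline{\jmath}_1$, at $p_i=1$ one has $\mathbb{K}=(1+x^{2})\big[\mathrm{I}+\frac{2}{1+3x^{2}}(U-\mathrm{I})\big]$, where $U$ is the $\nu\times\nu$ matrix with all entries equal to $1$; at $x=0$ this is $2U-\mathrm{I}$, with determinant $(-1)^{\nu-1}(2\nu-1)$, of modulus at least $3$ for $\nu\ge2$. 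Continuity in $(x,p)$ near $x=0$, $p_i=1$ then yields $\lvert\det\mathbb{K}\rvert\ge\frac12$ on $\mathcal{V}(\mathtt{r})$ for $\mathtt{r}\le\mathtt{r}_0$, and the diagonal prefactor contributes $\prod_i\og(\overline{\jmath}_i)(1+\overline{\jmath}_i^{2})\gtrsim\overline{\jmath}_1^{3\nu}$, which is exactly where the stated exponent comes from. So the algebraic input is the non-vanishing of $\det(2U-\mathrm{I})$, not of your limiting determinant; until the entry asymptotics are corrected and the $\overline{\jmath}_1^{3\nu}$ scaling is actually derived, the proposal does not prove the lemma.
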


\begin{proof}
The proof is postponed in Appendix \ref{PrelimEst}.
\end{proof}

\noindent
As a consequence of the
 non-degeneracy condition 
  in Lemma \ref{Twist1}
the map in \eqref{FreqAmplMapDP} is invertible and we denote
\begin{equation}\label{xiomega}
\xi:=\xi(\omega) := \alpha^{(-1)}(\omega) = \e^{-2}\mathbb A^{-1}(\omega-\bar\omega) +O( \e^2)\,.
\end{equation}

\section{The nonlinear functional setting}\label{sezioneNonlinearFunct}

We write the Hamiltonian in \eqref{HamiltonianaRiscalataDP}
(possibly eliminating  constant terms depending only on $\x$ 
which are irrelevant for the dynamics) as
\begin{equation}\label{HepsilonDP}
\begin{aligned}
H_{\varepsilon}&=\mathcal{N}+P\,, \\
\mathcal{N}(\theta, y, z)
&=\omega\cdot y+\frac{1}{2} (N(\theta) z, z)_{L^2}\,,
\qquad 
\frac{1}{2}(N(\theta) z, z)_{L^2}:=
\frac{1}{2}((\td_z \nabla H_{\varepsilon}) (\theta, 0, 0)[z], z)_{L^2}\,,
\end{aligned}
\end{equation}
where $\mathcal{N}$ describes the 
linear dynamics normal to the torus, 
and $P:=H_{\varepsilon}-\mathcal{N}$ collects the nonlinear perturbative effects.
Note that both $N$ and $P$ depend on $\omega$ through the map $\omega\mapsto \xi(\omega)$.\\
We consider $H_{\varepsilon}$ as a $(\omega, \varepsilon)$-parameter family of Hamiltonians and we note that, for $P=0$, $H_{\varepsilon}$ possess an invariant torus at the origin with frequency $\omega$, which we want to continue to an invariant torus for the full system.

\noindent
We will select the frequency parameters from the 
following set (recall \eqref{xiomega})
\begin{equation}\label{OmegaEpsilonDP}
\Omega_{\varepsilon}:=\{\omega\in \R^\nu\,: \,\, \xi(\omega)\in [1, 2]^{\nu}\}\,.
\end{equation}
Setting (see \eqref{AepsilonDP})
\begin{equation}\label{gammaDP}
\gamma=\varepsilon^{2b}\,, \quad \tau:=2\nu+6, 
\end{equation}
 we define the non-resonant sets
 \begin{align}
\mathcal{G}^{(0)}_0&:=
\big\{ \omega \in \Omega_{\varepsilon} : 
\lvert \omega\cdot \ell \rvert\geq \gamma\,\langle \ell \rangle^{-\tau}, 
\,\, \forall \ell \in\mathbb{Z}^{\nu}\setminus\{0\}\big\}\,,\label{ZEROMEL}\\
\mathcal{G}_0^{(1)}&:=
\Big\{ \omega \in \Omega_{\varepsilon} : 
\lvert \overline{\omega}\cdot \ell +\varepsilon^2 \mathbb{A}\xi(\omega)\cdot \ell + \og(j')-\og(j)+\varepsilon^2 (\og(j')\lal_{j'}-\og(j)\lal_j)\rvert> C\gamma,\label{divisoriLBNF3}\\
&\qquad\qquad\sum_{i=1}^{\nu} \overline{\jmath}_i \ell_i+j'-j=0,\,\, \forall \lvert \ell \rvert\le 3,\,\, \ell\in\mathbb{Z}^{\nu}\setminus\{0\}\,\,j, j'\in S^c, \,\,\, (\ell, j, j')\neq (0, j, j)\Big\}\,,\nonumber
\end{align}
for some constant $C$ depending on $S$,
where $\mathbb{A}$ is defined in \eqref{TwistMatrixDP}
and
\begin{equation}\label{lambdaJ0}
\lal_j:=\frac{2}{3} \sum_{j_2\in S^+}  \,
\frac{(1+j_2^2)(1+j^2)(2+j_2^2+j^2)}{(3+j_2^2-j_2 j+j^2)
(3+j_2^2+j_2 j +j^2)}\xi_{j_2}(\omega)\,.
\end{equation}
We require that
\begin{equation}\label{0di0Melnikov}
\omega\in\mathcal{G}_0:=\mathcal{G}^{(0)}_0\cap \mathcal{G}^{(1)}_0\,.
\end{equation} 
\begin{lem}\label{measG0}
We have that $\lvert \Omega_{\varepsilon}\setminus\mathcal{G}_0 \rvert\leq C_{*}\varepsilon^{2(\nu-1)} \gamma\,$ for some $C_*=C_*(S)>0\,$.
\end{lem}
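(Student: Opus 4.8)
The plan is to write $\Omega_\varepsilon\setminus\mathcal{G}_0=(\Omega_\varepsilon\setminus\mathcal{G}_0^{(0)})\cup(\Omega_\varepsilon\setminus\mathcal{G}_0^{(1)})$ and bound the two pieces separately. Throughout I use that, by the twist condition of Lemma \ref{Twist1}, the frequency--amplitude map $\alpha$ in \eqref{FreqAmplMapDP} is a diffeomorphism onto $\Omega_\varepsilon$, so that $\Omega_\varepsilon$ is, up to an $O(\varepsilon^4)$ distortion, the affine image of the cube $[1,2]^\nu$ under $\xi\mapsto\overline{\omega}+\varepsilon^2\mathbb{A}\xi$. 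In particular $\diam(\Omega_\varepsilon)\lesssim\varepsilon^2$ and the orthogonal projection of $\Omega_\varepsilon$ onto any hyperplane of $\R^\nu$ has $(\nu-1)$--dimensional measure $\lesssim\varepsilon^{2(\nu-1)}$, all implicit constants depending only on $S$.

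For the zeroth Melnikov set this is the classical Diophantine estimate. For fixed $\ell\in\Z^\nu\setminus\{0\}$ the set $\{\omega\in\Omega_\varepsilon:|\omega\cdot\ell|<\gamma\langle\ell\rangle^{-\tau}\}$ is contained in $\Omega_\varepsilon$ intersected with a slab of width $2\gamma\langle\ell\rangle^{-\tau}/|\ell|$ orthogonal to $\ell$; by Fubini and the projection bound its measure is $\lesssim\varepsilon^{2(\nu-1)}\gamma\langle\ell\rangle^{-\tau-1}$. Summing over $\ell$ and using $\tau+1=2\nu+7>\nu$ gives $|\Omega_\varepsilon\setminus\mathcal{G}_0^{(0)}|\lesssim\varepsilon^{2(\nu-1)}\gamma\sum_{\ell\ne0}\langle\ell\rangle^{-\tau-1}\lesssim\varepsilon^{2(\nu-1)}\gamma$.

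The core of the argument is the bound on $\Omega_\varepsilon\setminus\mathcal{G}_0^{(1)}$. Here $\ell=0$ is excluded, since with the momentum relation $\sum_i\overline{\jmath}_i\ell_i+j'-j=0$ it would force $j'=j$; hence $1\le|\ell|\le3$, there are finitely many $\ell$, and for each $j'$ is determined by $j$. Using the momentum relation one rewrites the part of the divisor surviving as $|j|\to\infty$ as
\[
\overline{\omega}\cdot\ell+\og(j')-\og(j)=3\Big(\sum_{i=1}^\nu\frac{\overline{\jmath}_i\,\ell_i}{1+\overline{\jmath}_i^2}+\frac{j'}{1+j'^2}-\frac{j}{1+j^2}\Big),
\]
while, since $\lal_j$ in \eqref{lambdaJ0} depends on $j$ only through $j^2$, is bounded, and satisfies $\lal_j=c_1(\omega)+O(|j|^{-2})$ with $\og(j)=j+O(|j|^{-1})$, the difference $\og(j')\lal_{j'}-\og(j)\lal_j=-c_1(\omega)\big(\textstyle\sum_i\overline{\jmath}_i\ell_i\big)+O(|j|^{-1})$ stays bounded; hence $|\varepsilon^2\mathbb{A}\xi(\omega)\cdot\ell+\varepsilon^2(\og(j')\lal_{j'}-\og(j)\lal_j)|\le C(S)\varepsilon^2$ uniformly in $j$. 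One then invokes the algebraic non-degeneracy of Lemma \ref{ecologia}: for $S^+\in\mathcal{V}(\mathtt{r})$ with $\mathtt{r}$ small one has $\big|\sum_i\overline{\jmath}_i\ell_i/(1+\overline{\jmath}_i^2)\big|\ge c_0(S)>0$ for every $1\le|\ell|\le3$ --- which is elementary from the wave-packet shape of $S^+$ (when $\sum_i\ell_i\ne0$ the sum is $\approx(\sum_i\ell_i)\overline{\jmath}_1^{-1}$, and when $\sum_i\ell_i=0$, i.e. $\ell=e_i-e_k$, the strict monotonicity of $t\mapsto t/(1+t^2)$ on $[1,\infty)$ forces $|{\cdot}|\gtrsim\overline{\jmath}_1^{-2}$). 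Therefore there is $N_0=N_0(S)$ so that for $|j|\ge N_0$ the term $j'/(1+j'^2)-j/(1+j^2)$ is below $\tfrac13c_0(S)$, the whole divisor is $\gtrsim c_0(S)$ in modulus, hence $>C\gamma$ for $\varepsilon$ small, and the bad set is empty. Only the finitely many triples $(\ell,j,j')$ with $|j|<N_0$ survive; for each, Lemma \ref{ecologia} also yields a $\gamma$--independent lower bound on $|\nabla_\omega\delta_{\ell,j,j'}|$ (equivalently the non-vanishing of $\mathbb{A}^T\ell+\og(j')\nabla_\xi\lal_{j'}-\og(j)\nabla_\xi\lal_j$), so that bad set is $\Omega_\varepsilon$ intersected with a slab of width $\lesssim\gamma$, of measure $\lesssim\varepsilon^{2(\nu-1)}\gamma$. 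Summing the finitely many contributions and adding the $\mathcal{G}_0^{(0)}$ bound gives the claim.

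The main obstacle is precisely this last point for $\mathcal{G}_0^{(1)}$: a priori the momentum relation leaves infinitely many pairs $(j,j')$, and since the spectral gap $\og(j')-\og(j)$ is asymptotically constant the single bad sets are not summable by brute force. The reduction to finitely many $j$ rests entirely on the quantitative non-vanishing of the leading coefficient $\sum_i\overline{\jmath}_i\ell_i/(1+\overline{\jmath}_i^2)$, which is where the wave-packet hypothesis $S^+\in\mathcal{V}(\mathtt{r})$ enters, together with the observation that the $\varepsilon^2$--corrections involving $\og(j)\lal_j$ --- whose single summands grow linearly in $j$ --- contribute only the bounded difference $\og(j')\lal_{j'}-\og(j)\lal_j$. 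A secondary but essential point is that the exponent $\varepsilon^{2(\nu-1)}$ (rather than something worse) comes from systematically exploiting that $\Omega_\varepsilon$ lies in a ball of radius $O(\varepsilon^2)$, itself a consequence of the twist condition.
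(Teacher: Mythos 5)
Your decomposition and most of the steps are exactly the paper's: the classical estimate for $\mathcal{G}_0^{(0)}$, the rewriting of $\overline{\omega}\cdot\ell+\og(j')-\og(j)$ via the momentum condition, the use of \eqref{GenericAssumptionbis} to show that for $|j|$ large the $j$-dependent tails are negligible so the divisor is bounded below by a constant $c_0(S)\gg C\gamma+O(\varepsilon^2)$ (your uniform bound on $\og(j')\lal_{j'}-\og(j)\lal_j$ via $\lal_j=c(\omega)+\kappa_j/\og(j)$ and Lemma \ref{mole} is correct), and the reduction to finitely many triples. The one step that is not justified as written is the last one: you claim that for \emph{every} surviving triple Lemma \ref{ecologia} gives a $\gamma$-independent lower bound on $|\nabla_\omega\phi|$, i.e.\ the non-vanishing of $G:=\mathbb{A}^{T}\ell+(\og(j')-\og(j))\vec{v}+(\vec{w}_{j'}-\vec{w}_j)$. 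But \eqref{corto}--\eqref{corto100} control (up to the invertible factor $\mathbb{A}^{T}(\mathrm{I}-\mathbb{A}^{-T}\vec{v}\,\overline{\omega}^{T})$) the vector $(\mathbb{A}^{T}-\vec{v}\,\overline{\omega}^{T})\ell+(\vec{w}_{j'}-\vec{w}_j)$, which coincides with your $G$ only when $\og(j')-\og(j)=-\overline{\omega}\cdot\ell$, i.e.\ precisely on the triples where the unperturbed divisor vanishes. Off that resonance the discrepancy is $(\overline{\omega}\cdot\ell+\og(j')-\og(j))\,\vec{v}$, a vector of size up to $C(S)\overline{\jmath}_1^{2}$, and nothing in Lemma \ref{ecologia} (nor a crude size comparison, since $|\vec{w}_{j'}-\vec{w}_j|$ is only bounded by $\overline{\jmath}_1^{6}$ for small $|j|$) excludes that $G$ vanish there.

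The missing ingredient — which is how the paper's Appendix proof proceeds — is a dichotomy on the unperturbed divisor \emph{before} the gradient/Fubini step. For the bounded-$|j|$ triples write $\overline{\omega}\cdot\ell+\og(j')-\og(j)=P/Q$ with $P,Q$ integers, $1<Q\le C(S)$. If $P\neq 0$ this quantity is $\ge 1/C(S)$, while the $\varepsilon^{2}$-corrections are uniformly $O(\varepsilon^{2})$, so the bad set is empty and no transversality is needed. If $P=0$ (for each $\ell$ this selects at most four values of $j$), substituting $\og(j')-\og(j)=-\overline{\omega}\cdot\ell$ turns the whole divisor into $\varepsilon^{2}$ times an affine function of $\xi$ whose gradient is exactly the vector controlled by \eqref{corto},\eqref{corto100}; then Fubini, together with the fact that $\Omega_{\varepsilon}$ sits in a ball of radius $O(\varepsilon^{2})$ (equivalently the $\xi\mapsto\omega$ Jacobian of size $\varepsilon^{2\nu}\det\mathbb{A}$), gives $\lesssim\varepsilon^{2(\nu-1)}\gamma$ per triple. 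With this correction your argument closes and is essentially identical to the paper's.
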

\begin{proof}
The proof is postponed in Appendix \ref{quadro}.
\end{proof}

\begin{remark}
The diophantine condition $\omega\in \mathcal{G}_0^{(0)}$ is typical of KAM scheme.
The lower bound in $\mathcal{G}_0^{(1)}$
involves resonances of order five with two normal modes. As explained in the introduction,
in order to impose such lower bounds we need to take into account also the
corrections of order $\e^{2}$. The matrix $\mathbb{A}$ comes from the weak BNF
of section \ref{SezioneWBNF}. The terms $\lal_{j}$ come from the \emph{linear} BNF
procedure
of subsection \ref{LinearBNF}. In particular they are evaluated explicitly using the
identification argument of  Theorem \ref{PartialWeak}.
\end{remark}
%
\begin{remark}
Note that the definition of $\gamma$ in \eqref{gammaDP} 
is slightly stronger than the minimal condition
for which is possible to prove that $\mathcal{G}_0^{(0)}$ has large measure, 
namely $\gamma\le c\,\varepsilon^2$, with $c>0$ small enough. 
Our choice turns out to be useful for proving that the 
Cantor set of frequencies of the expected 
quasi-periodic solutions has asymptotically 
full measure (as $\varepsilon\to 0$).
\end{remark}
We look for an embedded invariant torus
\begin{equation}\label{iDP}
i \colon \mathbb{T}^{\nu}\to 
\mathbb{T}^{\nu}\times \mathbb{R}^{\nu}\times H_S^{\perp}\,, 
\quad \varphi \mapsto i(\varphi):=(\theta(\varphi), y(\varphi), z(\varphi))
\end{equation}
of the Hamiltonian vector field $X_{H_{\varepsilon}}$ (see \eqref{HepsilonDP}) supporting quasi-periodic solutions with diophantine frequency $\omega\in \mathcal{G}_0$.

For technical reason, it is useful to consider the modified Hamiltonian
\begin{equation}\label{HepsilonZetaDP}
H_{\varepsilon, \zeta}(\theta, y, z):=H_{\varepsilon}(\theta, y, z)
+\zeta\cdot\theta, \quad \zeta\in\mathbb{R}^{\nu}\,.
\end{equation}
More precisely, we introduce $\zeta$ in order to control the average in the $y$-component in our Nash Moser scheme.
The vector $\zeta$ has no dynamical consequences since an invariant torus for the Hamiltonian vector field $X_{H_{\varepsilon, \zeta}}$ is actually invariant for $X_{H_{\varepsilon}}$ itself.

Thus, we look for zeros of the nonlinear operator 
$\mathcal{F}(i, \zeta)\equiv \mathcal{F}(i, \zeta, \omega, \varepsilon):=
\omega\cdot\partial_{\varphi} i(\varphi)-X_{\mathcal{N}}(i(\varphi))-X_P(i(\varphi))+(0, \zeta, 0)$ 
defined as
\begin{align}\label{NonlinearFunctionalDP}
\mathcal{F}(i, \zeta)
=\begin{pmatrix}
\omega\cdot\partial_{\varphi} \Theta(\varphi)-\partial_y P(i(\varphi))\\
\omega\cdot\partial_{\varphi} y(\varphi)+\frac{1}{2} \partial_{\theta} (N(\theta(\varphi))z(\varphi))_{L^2(\mathbb{T})}+\partial_{\theta} P(i(\varphi))+\zeta\\
\omega\cdot\partial_{\varphi} z(\varphi)-J N(\theta(\varphi))\,z(\varphi)-J\nabla_z P(i(\varphi))
\end{pmatrix}  
\end{align}
where $\Theta(\varphi):=\theta(\varphi)-\varphi$ is $(2\pi)^{\nu}$-periodic. 
We define the Sobolev norm of the periodic component of the embedded torus
\begin{equation}\label{frakkiIDP}
\mathfrak{I}(\varphi):=i(\varphi)-(\varphi, 0, 0):=(\Theta(\varphi), y(\varphi), z(\varphi)) \;\quad \lVert \mathfrak{I} \rVert_{s}:=\lVert \Theta \rVert_{s}+\lVert y \rVert_{s}+\lVert z \rVert_s,
\end{equation}
where $ z \in H^s_{S^{\perp}}:= H^s \cap H_{S}^\perp$ (recall \eqref{decomposition}) with norm defined in \eqref{space} and  with abuse of notation, we are denoting by $\|\cdot\|_{s}$ the Sobolev norms of functions in 
$H^{s}(\T^{\nu},\R^{\nu})$. 
From now on we fix $s_0:=[\nu/2]+4$.

Notice that in the coordinates \eqref{AepsilonDP}, 
 a quasi-periodic solution corresponds to an embedded invariant torus \eqref{iDP}. Therefore we can reformulate the main Theorem \ref{MainResult} as follows.
\begin{teor}\label{IlTeoremaDP}
There exists a small constant $\mathtt{r}>0$ such that, for 
any $S^{+}\in \mathcal{V}(\mathtt{r})$
(see \eqref{TangentialSitesDP} and Definition \ref{Def:cono}), 
there exists $\varepsilon_0>0$, small enough, such that the following holds.
For all $\varepsilon\in (0, \varepsilon_0)$ there exist positive constants $C=C(\nu)$, $\mu=\mu(\nu)$ and a Cantor-like set $\mathcal{C}_{\varepsilon}\subseteq\Omega_{\varepsilon}$ (see \eqref{OmegaEpsilonDP}), with asymptotically full measure as $\varepsilon\to 0$, namely
\begin{equation}\label{frazionemisureDP}
\lim_{\varepsilon \to 0}
\dfrac{\lvert \mathcal{C}_{\varepsilon} \rvert}{\lvert \Omega_{\varepsilon} \rvert}=1\,,
\end{equation}
such that, for all $\omega\in\mathcal{C}_{\varepsilon}$, there exists a solution 
$i_{\infty}(\varphi):=i_{\infty}(\omega, \varepsilon)(\varphi)$ 
of the equation $\mathcal{F}(i_{\infty}, 0, \omega, \varepsilon)=0$ 
(see \eqref{NonlinearFunctionalDP}). 
Hence the embedded torus $\varphi\mapsto i_{\infty}(\varphi)$ 
is invariant for the Hamiltonian vector field 
$X_{H_{\varepsilon}}$, and it is filled by quasi-periodic 
solutions with frequency $\omega$. The torus $i_{\infty}$ satisfies
\begin{equation*}
\lVert i_{\infty}(\varphi)-(\varphi, 0, 0) 
\rVert_{s_0+\mu}^{\gamma, \mathcal{C}_{\varepsilon}}\le 
C\,\varepsilon^{9-2 b}\,\gamma^{-1}\,.
\end{equation*}
Moreover the torus $i_{\infty}$ is linearly stable.
\end{teor}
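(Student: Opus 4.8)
The plan is to prove Theorem \ref{IlTeoremaDP} by solving the equation $\mathcal{F}(i_\infty,0,\omega,\varepsilon)=0$ for the nonlinear operator $\mathcal{F}$ in \eqref{NonlinearFunctionalDP} via a Nash--Moser iteration of ``hypothetical conjugation'' type, exactly along the lines sketched in subsections \ref{fimo}--\ref{schema}. First I would take the trivial torus $i_0(\varphi)=(\varphi,0,0)$ as the starting approximate solution: thanks to the six steps of weak Birkhoff normal form of Proposition \ref{WBNFdp}, in the rescaled action--angle coordinates the non-integrable part $P$ in \eqref{HepsilonDP} is small, so that $\mathcal{F}(i_0,0)$ has size $O(\varepsilon^{9-2b}\gamma^{-1})$ in high Sobolev norm. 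This is precisely what makes the smallness condition \eqref{SmallnessConditionNMdp} (on the ratio of $\|R\|$, $R=H-H_{\rm Birk}$, to $\gamma^{7/2}$) achievable and dictates the choice $\gamma=\varepsilon^{2b}$ in \eqref{gammaDP}.

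The core of the scheme is to build, for $\omega$ in suitable Cantor sets, an approximate right inverse of the linearized operator $d_{i,\zeta}\mathcal{F}$ at any torus close to $i_0$, with tame estimates. As recalled in section \ref{sezione6DP}, introducing symplectic coordinates adapted to the isotropic approximate torus decouples the linearized dynamics and reduces the task to inverting the operator $\mathcal{L}_\omega$ obtained by linearizing \eqref{DP} at the torus and restricting to the normal subspace; because the conjugation in Proposition \ref{WBNFdp} is the identity plus a finite rank term, $\mathcal{L}_\omega$ is pseudodifferential up to a finite rank remainder, which is the whole point of using the \emph{weak} BNF. I would then prove invertibility of $\mathcal{L}_\omega$ with loss-free tame bounds by a reducibility argument in two stages: \textbf{(a)} a regularization in decreasing orders (sections \ref{preliminare}, \ref{regularization}, culminating in Theorem \ref{risultatosez8}) conjugating $\mathcal{L}_\omega$ to a Fourier multiplier plus a remainder of order $-1$, using pseudodifferential calculus together with preliminary changes of variables and the linear Birkhoff normal form of subsection \ref{LinearBNF} to kill the non-perturbative angle-dependent terms; \textbf{(b)} a quadratic KAM diagonalization of the remaining smoothing remainder (section \ref{SezioneDiagonalization}, Theorem \ref{ReducibilityDP}), requiring the second Melnikov conditions, after which the inverse follows from the first Melnikov conditions.

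A delicate point inside step (a), which I expect to be the main technical obstacle, is that the non-trivial order-four (and up to order-eight) resonances prevent computing the relevant normal form by the weak-BNF trick and make a by-hand computation unmanageable. The way around is an \emph{a posteriori} identification argument (Theorem \ref{PartialWeak}): one shows that the normal form produced by the weak BNF, the preliminary steps and the linear BNF coincides with the projection onto the quadratic normal terms of the full \emph{formal} Birkhoff normal form, which is legitimate because all contributing resonances are trivial (Proposition \ref{vvv}); this identification also hands us for free the explicit leading corrections $\lal_j$ to the normal eigenvalues appearing in \eqref{divisoriLBNF3}. With these ingredients, plugging the tame estimates into the abstract Nash--Moser theorem (Theorem \ref{NashMoserDP}) yields a sequence of tori converging in $H^{s_0+\mu}$ to $i_\infty$ with $\mathcal{F}(i_\infty,0,\omega,\varepsilon)=0$ and the stated bound, for every $\omega$ in the Cantor set $\mathcal{C}_\varepsilon$ obtained by intersecting all the non-resonance conditions imposed along the way.

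Finally I would prove the measure statement \eqref{frazionemisureDP}, that $\mathcal{C}_\varepsilon$ has asymptotically full relative measure in $\Omega_\varepsilon$. Here the hard part is again the resonances: the dispersion law $\lambda(j)$ is rational and asymptotically linear, so the spectral gap $\lambda(j)-\lambda(k)$ is asymptotically constant and there are infinitely many bad sets to control (section \ref{siSomma}). The plan is to estimate the measure of a single bad set using the non-degeneracy of the leading divisor terms provided by Lemma \ref{ecologia} (this is Lemma \ref{singolo}), and then to show that for $j,k$ large the second Melnikov conditions are implied by the first ones, at the cost of imposing the second order conditions with the smaller constant $\gamma^{3/2}$ in \eqref{GnDP} --- which is exactly why many BNF steps are required --- so that the total measure of removed frequencies (see \eqref{UnionDP}) is $o(|\Omega_\varepsilon|)$; a useful feature is that these Melnikov conditions entail no loss of regularity in $x$, which is what lets us get away with a remainder of order only $-1$ in step (a). The twist condition $|\det\mathbb{A}|\gtrsim\overline{\jmath}_1^{3\nu}$ of Lemma \ref{Twist1}, valid for $S^+\in\mathcal{V}(\mathtt{r})$, then makes $\omega\mapsto\xi(\omega)$ a diffeomorphism; transporting $\mathcal{C}_\varepsilon$ back to amplitude parameters and reading off the dominant term of $i_\infty$ gives a solution of \eqref{DP} of the form \eqref{SoluzioneEsplicitaDP}, while the diagonalization of $\mathcal{L}_\omega$ at $i_\infty$ gives the linear stability and reducibility of Theorem \ref{MainResult2}.
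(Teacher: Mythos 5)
Your proposal follows essentially the same route as the paper: the trivial torus as starting point (made admissible by the weak BNF of Proposition \ref{WBNFdp}), the approximate inverse via the isotropic-torus reduction and the two-stage reducibility of Theorems \ref{risultatosez8} and \ref{ReducibilityDP} (with the identification argument of Theorem \ref{PartialWeak}), convergence through the Nash--Moser Theorem \ref{NashMoserDP}, and the measure estimates of Section \ref{sezioneStimeMisura} based on Lemmata \ref{singolo} and \ref{ecologia} and the $\gamma^{3/2}$ second Melnikov conditions. Only a minor imprecision: the initial error is $\lVert\mathcal{F}(U_0)\rVert_s\lesssim\varepsilon^{9-2b}$ (the factor $\gamma^{-1}$ appears in the bound for the embedding $\mathfrak{I}_n$, not for $\mathcal{F}(U_0)$), which does not affect the argument.
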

\noindent
We can deduce Theorem \ref{MainResult} from Theorem \ref{IlTeoremaDP}, indeed
the quasi-periodic solution $u$ in \eqref{SoluzioneEsplicitaDP} is 
\[
u(t, x)=\Big(\Phi_B\circ A_{\varepsilon}\Big)  i_{\infty} (\omega t)
\]
for $\omega=\omega(\xi)\in \mathcal{C}_{\e}$, where $\omega(\xi)$ is the frequency amplitude map \eqref{FreqAmplMapDP}.\\
The rest of the paper is devoted to the proof of Theorem \ref{IlTeoremaDP}.

\subsection{Tame estimates of the nonlinear vector field}

We give tame estimates for the composition operator induced by the Hamiltonian vector fields $X_{\mathcal{N}}$ and $X_{P}$ in \eqref{NonlinearFunctionalDP}.
Since the functions $y\to \sqrt{\xi+\varepsilon^{2(b-1)} y},
 \theta\to e^{\mathrm{i}\,\theta}$ 
 are analytic for $\varepsilon$ small enough and $\lvert y \rvert\le C$, 
 classical  composition results 
 (see for instance Lemma $6.2$ in \cite{Airy}) 
 imply that, 
 for all $\lVert \mathfrak{I} \rVert_{s_0}^{\gamma, \calO}\le 1$,
\begin{equation*}
\lVert A_{\varepsilon}(\theta(\varphi), y(\varphi), z(\varphi)) 
\rVert_s^{\gamma, \calO}\lesssim_s 
\varepsilon (1+\lVert \mathfrak{I} 
\rVert_s^{\gamma, \calO})\,.
\end{equation*}
In the following lemma we collect tame 
estimates for the Hamiltonian vector fields $X_{\mathcal{N}}, X_{P}, X_{H_{\varepsilon}}$, see \eqref{HepsilonDP}. These bounds rely on tame estimates for composition operators 
and their proof is  completely analogous to the one in Section $5$ of \cite{KdVAut}.
\begin{lem}\label{TameEstimatesforVectorfieldsDP}
Let $\mathfrak{I}(\varphi)$ in \eqref{frakkiIDP} satisfy $\lVert \mathfrak{I} \rVert_{s_0+1}^{\gamma, \calO}\lesssim\,\varepsilon^{9-2 b}\gamma^{-1}$. Then we have
\begin{equation}\label{EstimatesVecFieldDP}
\begin{aligned}
\lVert \partial_y P(i) \rVert_s^{\gamma, \calO}&\lesssim_s \varepsilon^7+\varepsilon^{2 b} \lVert \mathfrak{I} \rVert_{s+1}^{\gamma, \calO}\,, \qquad  \qquad \quad
\lVert \partial_{\theta} P(i) \rVert_s^{\gamma, \calO}\lesssim_s 
\varepsilon^{9-2 b}(1+\lVert \mathfrak{I} 
\rVert_{s+1}^{\gamma, \calO})\,,
\\
\lVert \nabla_z P(i) \rVert_s^{\gamma, \calO}&\lesssim_s 
\varepsilon^{8-b}+\varepsilon^{9-b}\gamma^{-1} 
\lVert \mathfrak{I} \rVert_{s+1}^{\gamma, \calO}\,, 
\qquad
\lVert X_P (i) \rVert_s^{\gamma, \calO}\lesssim_s \varepsilon^{9-2 b}+\varepsilon^{2 b} 
\lVert \mathfrak{I} \rVert_{s+1}^{\gamma, \calO}\,, \\
\lVert \partial_{\theta}\partial_y P(i) \rVert_s^{\gamma, \calO}
&\lesssim_s \varepsilon^7+\varepsilon^8\gamma^{-1} 
\lVert \mathfrak{I} \rVert_{s+1}^{\gamma, \calO}\,, 
\qquad \; \lVert \partial_y\nabla_z P(i) \rVert_s^{\gamma, \calO}
\lesssim_s \varepsilon^{6+b}+\varepsilon^{2 b-1}
\lVert \mathfrak{I} \rVert_{s+1}^{\gamma, \calO}\,,\\
\lVert \partial_{y y} P(i)-\frac{\varepsilon^{2 b}}{2} \mathbb{A}\Omega \rVert_s^{\gamma, \calO}
&\lesssim_s \varepsilon^{5+2 b}+\varepsilon^{6+2 b}\gamma^{-1} \lVert \mathfrak{I} \rVert_{s+1}^{\gamma, \calO}\,,
\end{aligned}
\end{equation}
and for all $\hat{\imath}:=(\hat{\Theta}, \hat{y}, \hat{z})$,
\begin{align*}
\lVert \partial_y \td_i X_P(i)[\hat{\imath}] \rVert_s^{\gamma, \calO}
&\lesssim_s \varepsilon^{2 b-1} (\lVert \hat{\imath} \rVert_{s+1}^{\gamma, \calO}
+\lVert \mathfrak{I} \rVert_{s+1}^{\gamma, \calO}\lVert \hat{\imath} \rVert^{\g, \calO}_{s_0+1})\,,\\
\lVert \td_i X_{H_{\varepsilon}}(i)[\hat{\imath}]+(0, 0, J\, \hat{z})\rVert_s^{\gamma, \calO}
&\lesssim_s \varepsilon (\lVert \hat{\imath} \rVert_{s+1}^{\gamma, \calO}
+\lVert \mathfrak{I} \rVert_{s+1}^{\gamma, \calO}\lVert \hat{\imath} \rVert^{\g, \calO}_{s_0+1})\,,\\
\lVert \td_i^2 X_{H_{\varepsilon}} (i) [\hat{\imath}, \hat{\imath}]\rVert_s^{\gamma, \calO}
&\lesssim_s \varepsilon (\lVert \hat{\imath} \rVert_{s+1}^{\gamma, \calO}\lVert \hat{\imath} \rVert_{s_0+1}^{\gamma, \calO}
+\lVert \mathfrak{I} \rVert_{s+1}^{\gamma, \calO}(\lVert \hat{\imath} \rVert^{\g, \calO}_{s_0+1})^2)\,.
\end{align*}
\end{lem}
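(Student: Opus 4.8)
The strategy I would follow is the one of Section $5$ of \cite{KdVAut}: all the bounds in \eqref{EstimatesVecFieldDP} are obtained by combining the explicit algebraic structure of $H_\varepsilon$ produced by the weak Birkhoff normal form with the standard tame estimates for composition operators with analytic functions (as in Lemma $6.2$ of \cite{Airy}). First I would unwind the definitions: by \eqref{HamiltonianaRiscalataDP} and \eqref{piotta}, $H_\varepsilon=\varepsilon^{-2b}\,\mathcal{H}\circ A_\varepsilon$ with $\mathcal{H}=H^{(2)}+\mathcal{H}^{(4,0)}+\mathcal{H}^{(6,0)}+\mathcal{H}^{(8,0)}+\mathcal{H}^{(\geq 9,\leq 1)}+\mathcal{H}^{(\geq 3,\geq 2)}$. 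Substituting $u=A_\varepsilon(\theta,y,z)=\varepsilon v_\varepsilon(\theta,y)+\varepsilon^b z$ (see \eqref{AepsilonDP}) and using that on $\{z=0\}$ the quadratic part and the $\mathcal{H}^{(k,0)}$ pieces collapse to the integrable system of Section \ref{sezioneActionAngle}, one identifies $\mathcal{N}=\omega\cdot y+\tfrac12(N(\theta)z,z)_{L^2}$ and reads off that the perturbation $P=H_\varepsilon-\mathcal{N}$ collects the contribution of $\mathcal{H}^{(\geq 3,\geq 2)}$ (at least cubic and at least quadratic in $z$), the tail $\mathcal{H}^{(\geq 9,\leq 1)}$, and the nonlinear-in-$y$ remainders of the integrable normal form. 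The bookkeeping of powers of $\varepsilon$ — each factor $v_\varepsilon$ carrying $\varepsilon^{1}$, each factor $z$ carrying $\varepsilon^{b}$, all multiplied by the overall $\varepsilon^{-2b}$ — is precisely what produces the exponents $\varepsilon^{7},\varepsilon^{9-2b},\varepsilon^{8-b},\varepsilon^{6+b}$ and so on appearing on the right-hand sides.

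The individual estimates then follow from the usual toolbox. I would start from the composition bound $\lVert A_\varepsilon(\theta(\varphi),y(\varphi),z(\varphi))\rVert_s^{\gamma,\calO}\lesssim_s\varepsilon(1+\lVert\mathfrak{I}\rVert_s^{\gamma,\calO})$ recalled just before the lemma, valid because $y\mapsto\sqrt{\xi+\varepsilon^{2(b-1)}y}$ and $\theta\mapsto e^{\mathrm{i}\theta}$ are analytic for $\varepsilon$ small and $\lVert\mathfrak{I}\rVert_{s_0}^{\gamma,\calO}\leq1$; combining it with the tame product inequality $\lVert fg\rVert_s\lesssim_s\lVert f\rVert_s\lVert g\rVert_{s_0}+\lVert f\rVert_{s_0}\lVert g\rVert_s$ and the analytic-composition lemma upgrades it to bounds on polynomial-and-smooth functions of $A_\varepsilon$, hence on $P$ and all its derivatives. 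Since $X_{\mathcal{N}}$ and $X_P$ are obtained from $H_\varepsilon$ by applying $J=(1-\partial_{xx})^{-1}(4-\partial_{xx})\partial_x$ and finitely many derivatives in $(\theta,y,z)$, and $J$ has order $1$, one loses exactly one space derivative — this is what turns $\lVert\mathfrak{I}\rVert_s$ into $\lVert\mathfrak{I}\rVert_{s+1}$ on the right. Differentiating $P$ the requisite number of times in $y,\theta,z$ and tracking, in each case, the monomial of lowest homogeneity that survives, one reads off the leading $\varepsilon$-power; the hypothesis $\lVert\mathfrak{I}\rVert_{s_0+1}^{\gamma,\calO}\lesssim\varepsilon^{9-2b}\gamma^{-1}$ is exactly what is needed to absorb the quadratic-in-$\mathfrak{I}$ contributions into the linear ones. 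The bounds for $\td_iX_P$, $\td_iX_{H_\varepsilon}$ and $\td_i^2X_{H_\varepsilon}$ are proved the same way, now differentiating the vector field along $\hat\imath$ and using the product rule, and noting that the top-order part of $\td_iX_{H_\varepsilon}$ is precisely $-(0,0,J\hat z)$, so that its correction is of lower order, exactly as stated.

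The only point that is not pure routine is the penultimate estimate, the bound on $\partial_{yy}P(i)-\tfrac{\varepsilon^{2b}}{2}\mathbb{A}\Omega$: here one has to verify that the $\varphi$-independent, $y$-quadratic part of $P$ is exactly the one predicted by the twist matrix \eqref{TwistMatrixDP}. This is where the explicit formulas \eqref{FBI2}, \eqref{omeghinoDP}, \eqref{parco1} enter. Expanding $\mathcal{H}^{(4,0)}(I)$ with $I_j=\varepsilon^2\xi_j+\varepsilon^{2b}\lvert\lambda(j)\rvert y_j$ (see \eqref{AepsilonDP}) and dividing by $\varepsilon^{2b}$, the $y$-quadratic term is $\tfrac{\varepsilon^{2b}}{2}(\mathbb{A}\Omega\,y,y)$ up to the stated error; the $\mathcal{H}^{(6,0)},\mathcal{H}^{(8,0)}$ contributions carry extra powers of $\varepsilon^2$ (this gives the $\varepsilon^{5+2b}$), and the $z$-dependent and $\mathcal{H}^{(\geq 9)}$ pieces, after two $y$-derivatives, carry the additional $\lVert\mathfrak{I}\rVert_{s+1}$ factor. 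I expect the main obstacle to be purely organizational: keeping the homogeneity-counting consistent across the dozen separate bounds so that every exponent in \eqref{EstimatesVecFieldDP} comes out exactly as written, with no power of $\varepsilon$ or $\gamma$ misplaced.
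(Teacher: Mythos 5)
Your proposal follows essentially the same route as the paper, which itself gives no detailed proof but states that the bounds "rely on tame estimates for composition operators" and are "completely analogous" to Section $5$ of \cite{KdVAut}: namely, the composition estimate for $A_{\varepsilon}$, tame product estimates, homogeneity bookkeeping in $\varepsilon$ coming from the weak Birkhoff normal form \eqref{piotta}, and the identification of the $y$-quadratic part of $P$ with $\frac{\varepsilon^{2b}}{2}\mathbb{A}\Omega$ via \eqref{FBI2}--\eqref{TwistMatrixDP}. No essential difference in approach.
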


\noindent
In the sequel we will use that, 
by the diophantine condition \eqref{0di0Melnikov}, 
the operator $(\omega\cdot\del_{\f})^{-1}$ 
 is defined for all functions $u$ with zero $\varphi$-average, 
 and satisfies
\begin{equation*}\label{stimaDomegaDP}
\lVert (\omega\cdot\partial_{\varphi})^{-1} u \rVert_s
\lesssim_s \gamma^{-1}\,\lVert u \rVert_{s+\tau}, 
\quad \lVert (\omega\cdot\partial_{\varphi})^{-1} u \rVert_s^{\gamma, \calO}\lesssim_s \gamma^{-1} \lVert u \rVert^{\gamma, \calO}_{s+2\tau+1}\,.
\end{equation*}

\section{Approximate inverse}\label{sezione6DP}

We want to solve the nonlinear functional equation (see \eqref{NonlinearFunctionalDP})
\begin{equation}\label{EquazioneFunzionaleDP}
\mathcal{F}(i, \zeta)=0
\end{equation}
by applying a Nash-Moser scheme. It is well known that the main issue in implementing this algorithm concerns the approximate inversion of the linearized operator of $\mathcal{F}$ at any approximate solution $(i_n, \zeta_n)$, namely $\td \mathcal{F}(i_n, \zeta_n)$. Note that $\td \mathcal{F}(i_n, \zeta_n)$ is independent  of $\zeta_n$.
One of the main problem is that the $(\theta, y, z)$-components of $\td\mathcal{F}(i_n, \zeta_n)$ are coupled and then the linear system 
\begin{equation}\label{coupled}
\td \mathcal{F}(i_n, \zeta_n)[\hat\imath, \hat{\zeta}]=\omega\cdot \partial_{\varphi} \hat\imath-\td_{i} X_{H_\varepsilon} (i_n)[\hat\imath]-(0, \hat{\zeta}, 0)=g=(g^{(\theta)}, g^{(y)}, g^{(z)})
\end{equation}
is quite involved.
In order to approximately solve \eqref{coupled} we follow the scheme developed by Berti-Bolle in \cite{BertiBolle} which describe a way to approximately triangularize \eqref{coupled}.  This method has been applied in \cite{KdVAut}, \cite{Giuliani}. Since the strategy is identical to \cite{Giuliani} we only summarize it and underline the differences which mainly come from the symplectic structure. 
For a fully detailed expository presentation see \cite{GiulianiPhD}.
\medskip

We now study the solvability of equation \eqref{coupled} at an approximate solution, which we denote by $(i_0, \zeta_0)$, $i_0(\varphi)=(\theta_0(\varphi), y_0(\varphi), z_0(\varphi))$ in order to keep the notations of \cite{KdVAut},  \cite{Giuliani} . Assume the following hypothesis, which we shall verify at any step of the Nash-Moser iteration,
\begin{itemize}
\item \textbf{Assumption}. The map $\omega \mapsto i_0(\omega)$ is a Lipschitz function defined on some subset $\calO_0\subseteq \mathcal{G}_0 \subseteq\Omega_{\varepsilon}$ (recall \eqref{0di0Melnikov},\eqref{OmegaEpsilonDP}) and, for some $\mathfrak{p}_0:=\mathfrak{p}_0( \nu)>0$,
\begin{equation}\label{AssumptionDP}
\lVert \mathfrak{I}_0 \rVert_{s_0+\mathfrak{p}_0}^{\gamma, \calO_0}\le \varepsilon^{9-2b} \gamma^{-1}, \quad \lVert Z \rVert_{s_0+\mathfrak{p}_0}^{\gamma, \calO_0}\le \varepsilon^{9-2 b}, \quad \gamma=\varepsilon^{2b}, 
\end{equation}
where $\mathfrak{I}_0(\varphi):=i_0(\varphi)-(\varphi, 0, 0)$ and $Z$ is the error function
\begin{equation}
Z(\varphi):=(Z_1, Z_2, Z_3)(\varphi):=\mathcal{F}(i_0, \zeta_0)(\varphi)=\omega\cdot \partial_{\varphi} i_0 (\varphi)-X_{H_{\varepsilon, \zeta_0}}(i_0(\varphi)).
\end{equation}
\end{itemize}
By estimating the Sobolev norm of the function $Z$ we can measure how the embedding $i_0$ is close to being invariant for $X_{H_{\varepsilon, \zeta_0}}$. If $Z=0$ then $i_0$ is a  solution. In general we say that $i_0$ is "approximately invariant" up to order $O(Z)$.
We observe that by Lemma $6.1$ in \cite{KdVAut} we have that if $i_0$ is a solution, then the parameter $\zeta_0$ has to be naught, hence the embedded torus $i_0$ supports a quasi-periodic solution of the ``original'' system with Hamiltonian $H_{\varepsilon}$ (see \eqref{HepsilonDP}). 

\smallskip

By \cite{BertiBolle} we know that it is possible to construct an embedded torus $i_{\delta}(\varphi)=(\theta_0(\varphi), y_{\delta}(\varphi), z_0(\varphi))$, which differs from $i_0$ only for a small modification of the $y$-component, such that the $2$-form $\mathcal{W}$ (recall  \eqref{NewSimplFormDP}) vanishes on the torus $i_{\delta}(\T^{\nu})$, namely $i_{\delta}$ is isotropic. In particular $i_{\delta}(\varphi)$ is approximately invariant up to order $O(Z)$ (see Lemma $7$ in \cite{BertiBolle}) and, more precisely, there exists $\tilde{\mathfrak{p}}:=\tilde{\mathfrak{p}}(\nu)>0$ such that 
\begin{equation}\label{chegioia2}
\lVert i_{\delta}-i_0 \rVert^{\g, \calO_0}_s\lesssim_s \lVert \mathfrak{I}_0 \rVert^{\g, \calO_0}_{s+\tilde{\mathfrak{p}}}.
\end{equation}
 The strategy is to construct an approximate inverse for $\td \mathcal{F}(i_0, \zeta_0)$ by starting from an approximate inverse for the linear operator $\td \mathcal{F}(i_\delta, \zeta_0)$. The advantage of analyzing the linearized problem at $i_{\delta}$ is that it is possible to construct a \emph{symplectic} change of variable which approximately triangularizes the linear system thanks to the isotropicity of $i_{\delta}$. 
 For   the details we refer to \cite{BertiBolle} and \cite{KdVAut}, here we  only give the relevant definitions and state the main result.
We define the change of coordinates
\begin{equation}\label{GdeltaDP}
\begin{pmatrix}
\theta \\ y \\ z
\end{pmatrix}
:=G_{\delta}\begin{pmatrix}
\varphi\\ \eta \\ w
\end{pmatrix}
:=\begin{pmatrix}
\theta_0(\varphi) \\
y_{\delta}(\varphi)+[\partial_{\varphi}\theta_0(\varphi)]^{-T}\eta+[(\partial_{\theta} \tilde{z}_0)(\theta_0(\varphi))]^T\,J^{-1}w\\
z_0(\varphi)+w
\end{pmatrix}
\end{equation}
where $\tilde{z}_0:=z_0 (\theta_0^{-1} (\theta))$.  
We denote the transformed Hamiltonian by $K:=K(\varphi, \eta, w, \zeta_0)$.  
We then define
\begin{equation}\label{eq3}
\mathcal{L}_{\omega}:=\omega\cdot\partial_{\varphi}-J K_{02}(\varphi)\,,
\end{equation}
where $K_{02}$ is the linear operator representing the terms quadratic in $w$ of $K$, i.e.
\begin{equation}\label{K020}
 \frac12 (K_{02}(\varphi)[w],w):= \Pi^{d_w=2}K = \Pi^{d_w=2} H_\e \circ G_\delta\,. 
\end{equation}
$\calL_{\omega}$  corresponds to the $w$-component of the linearized operator after the change of variable $G_\delta$.

\medskip


%


 In \cite{BertiBolle} (see also \cite{KdVAut},\cite{Giuliani}) the following result is proved.
 
\begin{teor}\label{TeoApproxInvDP}
Assume \eqref{AssumptionDP} and the following 
\\
\textbf{Inversion Assumption:} 
There exist $\gotp_1:=\gotp_1(\nu)>0$ 
and a set 
$\Omega_{\infty}\subset \mathcal{G}_0\subseteq \Omega_{\varepsilon}$ 
such that for all $\omega\in \Omega_{\infty}$ 
and every function $g\in H^{s+2\tau+1} \cap H_{S}^\perp$, 
there exists a solution $h:=\mathcal{L}_{\omega}^{-1} g$ 
of the linear equation $\mathcal{L}_{\omega} h=g$ 
which satisfies
\begin{equation}\label{InversionAssumptionDP}
\lVert \mathcal{L}_{\omega}^{-1} g \rVert_s^{\gamma, \Omega_{\infty}}
\lesssim_s
\gamma^{-1} (\lVert g \rVert_{s+2\tau+1}^{\gamma, \Omega_{\infty}}
+\varepsilon \gamma^{-5/2}
\lVert \mathfrak{I}_\delta \rVert_{s+\gotp_1}^{\gamma, \calO_0}
\lVert g \rVert_{s_0}^{\gamma, \Omega_{\infty}})\,.
\end{equation}	
Then there exists $\mu:=\mu( \nu)$  such that, 
for all $\omega\in \Omega_{\infty}$ there exists 
a linear  operator $\mathbf{T}_0$ such that:

\noindent
1.  for all $g:=(g^{(\theta)}, g^{(y)}, g^{(z)})$, one has
\begin{equation}\label{TameEstimateApproxInvDP}
\lVert \mathbf{T}_0 g \rVert_s^{\gamma, \Omega_{\infty}}\lesssim_s\gamma^{-1}(\lVert g \rVert_{s+\mu}^{\gamma, \Omega_{\infty}}
+\varepsilon\gamma^{-5/2} 
\lVert \mathfrak{I}_0 \rVert_{s+\mu}^{\gamma, \calO_0}
 \lVert g \rVert_{s_0+\mu}^{\gamma, \Omega_{\infty}})\,.
\end{equation}
\noindent
2. $\mathbf{T}_0$  is an approximate inverse of $\td \mathcal{F}(i_0)$, 
namely
\begin{equation}\label{6.41DP}
\begin{aligned}
&\lVert (\td \mathcal{F}(i_0)\circ \mathbf{T}_0-\mathrm{I}) 
g \rVert_s^{\gamma, \Omega_{\infty}}
\lesssim_s \varepsilon^{2 b-1}\g^{-2} 
\Big( \lVert \mathcal{F}(i_0, \zeta_0) \rVert_{s_0+\mu}^{\gamma, \calO_0}
\lVert g \rVert_{s+\mu}^{\gamma, \Omega_{\infty}}\\
&\qquad\qquad\qquad 
+\{\lVert \mathcal{F}(i_0, \zeta_0)\rVert_{s+\mu}^{\gamma, \calO_0}
+\varepsilon \gamma^{-5/2}
\lVert \mathcal{F}(i_0, \zeta_0) \rVert_{s_0+\mu}^{\gamma, \calO_0}
\lVert \mathfrak{I}_0 \rVert_{s+\mu}^{\gamma, \calO_0}\} 
\lVert g \rVert_{s_0+\mu}^{\gamma, \Omega_{\infty}} \Big)\,.
\end{aligned}
\end{equation}
\end{teor}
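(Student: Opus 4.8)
The plan is to reproduce the Berti--Bolle approximate-inverse construction of \cite{BertiBolle}, as adapted to the KdV setting in \cite{KdVAut} and to the generalized-KdV case in \cite{Giuliani}, tracking only the quantitative modifications forced by the symplectic structure \eqref{NewSimplFormDP} and by the smallness bookkeeping $\gamma = \varepsilon^{2b}$. First I would pass from the given approximate solution $i_0$ to the isotropic torus $i_\delta$ supplied by \cite{BertiBolle}, which differs from $i_0$ only in the $y$-component and satisfies \eqref{chegioia2}; this is what makes the change of variables $G_\delta$ in \eqref{GdeltaDP} symplectic, so that the transformed Hamiltonian $K$ has the normal form $K = K_{00}(\varphi) + K_{10}(\varphi)\cdot\eta + (K_{0 1}(\varphi), w) + \tfrac12(K_{2 0}\eta,\eta) + (K_{1 1}\eta, w) + \tfrac12(K_{0 2}(\varphi) w, w) + \dots$ with the coupling terms $K_{0 1}, K_{1 1}, K_{2 0}$ small (of order $O(Z)$, using \eqref{chegioia2} and the tame estimates of Lemma \ref{TameEstimatesforVectorfieldsDP}). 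In these coordinates $\td\mathcal F(i_\delta)$ is conjugate to an operator that is upper-triangular up to terms controlled by $K_{0 1}$, so that solving $\td\mathcal F(i_\delta)[\widehat\imath,\widehat\zeta] = g$ reduces to: (i) solving the $\varphi$-equation $\omega\cdot\partial_\varphi \widehat\phi = g^{(\phi)}$ by $(\omega\cdot\partial_\varphi)^{-1}$, using the diophantine bound \eqref{0di0Melnikov}; (ii) solving the $w$-equation $\mathcal L_\omega \widehat w = $ (something built from $g^{(z)}$ and $K_{1 1}\widehat\phi$) via the \textbf{Inversion Assumption} \eqref{InversionAssumptionDP}; and (iii) solving the $\eta$-equation, again by $(\omega\cdot\partial_\varphi)^{-1}$, after which the free parameter $\widehat\zeta$ is fixed by imposing the solvability (zero-average) condition on the $\eta$-component — this is exactly why $\zeta$ was introduced in \eqref{HepsilonZetaDP}.

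Having built the operator $\mathbf T_\delta$ that inverts $\td\mathcal F(i_\delta)$ approximately, I would set $\mathbf T_0 := (D G_\delta)(\varphi,0,0)\circ \mathbf T_\delta \circ (\text{projection})$, i.e. conjugate back through $G_\delta$, and then prove the two claimed bounds. For item 1, estimate \eqref{TameEstimateApproxInvDP} follows by composing the tame estimates for $(\omega\cdot\partial_\varphi)^{-1}$ (loss $2\tau+1$ derivatives, gain $\gamma^{-1}$), for $\mathcal L_\omega^{-1}$ from \eqref{InversionAssumptionDP}, and for the multiplication/composition operators $G_\delta$, $DG_\delta$ and the coefficients $K_{1 1}, K_{2 0}$, all of which are controlled in terms of $\|\mathfrak I_\delta\|$ and hence, via \eqref{chegioia2}, of $\|\mathfrak I_0\|$; the power of $\mu$ is the total derivative loss accumulated along this chain, and the structure of the right-hand side of \eqref{TameEstimateApproxInvDP} (a ``low-high'' plus ``high-low'' splitting with the extra factor $\varepsilon\gamma^{-5/2}$) is inherited verbatim from the interpolation lemmas used in \cite{KdVAut,Giuliani}.

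For item 2, the error estimate \eqref{6.41DP}, the point is that $\mathbf T_0$ fails to be an exact inverse for three reasons: (a) $i_0 \neq i_\delta$, contributing an error $\lesssim \|\mathfrak I_0\|$-weighted multiple of $\|Z\| = \|\mathcal F(i_0,\zeta_0)\|$ via \eqref{chegioia2}; (b) the off-diagonal terms $K_{0 1}$ that we dropped when triangularizing, which are themselves $O(Z)$; and (c) the error in the Inversion Assumption, which enters through the $z$-block. Collecting these and using again the tame product estimates gives a bound of the schematic form ``$\varepsilon^{2b-1}\gamma^{-2}$ times ($\|\mathcal F\|_{s_0+\mu}\|g\|_{s+\mu} + (\|\mathcal F\|_{s+\mu} + \varepsilon\gamma^{-5/2}\|\mathcal F\|_{s_0+\mu}\|\mathfrak I_0\|_{s+\mu})\|g\|_{s_0+\mu}$)'', which is \eqref{6.41DP}; the prefactor $\varepsilon^{2b-1}\gamma^{-2} = \varepsilon^{-1}\gamma^{-1}$ records one loss $\gamma^{-1}$ from each of the two $(\omega\cdot\partial_\varphi)^{-1}$ inversions together with the $\varepsilon^{2b}$ from the size of the Hamiltonian vector field $X_P$ in Lemma \ref{TameEstimatesforVectorfieldsDP}. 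I expect the genuinely new work — and the main obstacle — to be item (b): verifying that, with the DP symplectic form $\Omega_{S^\perp}$ in \eqref{NewSimplFormDP} and the corresponding non-standard $G_\delta$ in \eqref{GdeltaDP}, the triangularization of \cite{BertiBolle} still produces off-diagonal blocks that are exactly of order $O(Z)$ with no worse $\gamma$-dependence, so that no extra powers of $\gamma^{-1}$ creep in beyond those already displayed; once this structural check is done, the rest is a bookkeeping repetition of \cite{KdVAut,Giuliani}, which is why the authors rightly ``only summarize it''.
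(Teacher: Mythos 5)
Your proposal follows the same route as the paper, which does not reprove this theorem but invokes the Berti--Bolle construction exactly as in \cite{BertiBolle}, \cite{KdVAut}, \cite{Giuliani}: pass to the isotropic torus $i_\delta$, conjugate by the symplectic map $G_\delta$ in \eqref{GdeltaDP}, solve the resulting approximately triangular system using $(\omega\cdot\partial_\varphi)^{-1}$ twice and the Inversion Assumption \eqref{InversionAssumptionDP} for the normal block, fix $\widehat\zeta$ by the zero-average condition, and collect the $O(Z)$ errors coming from $i_0\neq i_\delta$ and the dropped off-diagonal terms. The only correction to your sketch is the order and shape of the triangular solve: one first solves the (essentially decoupled) $\eta$-equation, then the $w$-equation, whose coupling term is $JK_{11}\widehat\eta$ (not a term in $\widehat\phi$), and only last the $\varphi$-equation, which contains $K_{20}\widehat\eta+K_{11}^{T}\widehat w$ and therefore cannot be inverted first as you wrote.
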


\subsection{The linearized operator in the normal directions}

Recalling the assumption \eqref{AssumptionDP}, 
in the sequel we assume that 
$\mathfrak{I}_{\delta}:=\mathfrak{I}_{\delta}(\varphi; \omega)
=i_{\delta}(\varphi;\, \omega)-(\varphi,\, 0,\, 0)$ 
satisfies, for some $\gotp_1>0$,
\begin{equation}\label{IpotesiPiccolezzaIdeltaDP}
\lVert \mathfrak{I}_{\delta} \rVert_{s_0+\gotp_1}^{\g,\calO_0}
\lesssim\,\varepsilon^{9-2 b}\gamma^{-1}\,.
\end{equation}
We note moreover that $G_\delta$ in \eqref{GdeltaDP} 
is the identity plus a translation plus a finite rank linear 
operator; moreover, assuming 
\eqref{AssumptionDP}, one has that $G_{\delta}$ is $O(\varepsilon^{9-2 b}\gamma^{-1})$-close to the identity in low norm.
Returning to the initial variables 
we set  (see \eqref{AepsilonDP},\eqref{GdeltaDP})
\begin{equation}\label{TdeltaDP}
T_{\delta}:=A_{\varepsilon}(G_{\delta}(\varphi, 0, 0))
= \e v_\delta +\e^b z_0\,,
\quad v_\delta = \sum_{j\in S} \sqrt{\xi_j + \e^{2b-2}|\lambda(j)|y_{\delta j}(\varphi)} 
e^{\mathrm{i} (j  x + \theta_{0 j}(\varphi))  }
\end{equation}
and we have, for some $\sigma:=\sigma(\nu)>0$,
\begin{equation}\label{balaban}
\lVert \Phi_B(T_{\delta}) \rVert_s^{\gamma, \calO_0}
\lesssim_s \varepsilon\, (1+\lVert \mathfrak{I}_{\delta} 
\rVert^{\gamma, \calO_0}_{s+\sigma})\,, 
\qquad
\lVert \td_i \Phi_B(T_{\delta}) [\hat{\imath}] \rVert_s
\lesssim_s \varepsilon(\lVert i\rVert_{s+\sigma}
+ \lVert \mathfrak{I}_{\delta} \rVert_{s+\sigma}\lVert i \rVert_{s_0+\sigma})\,.
\end{equation}
By following Section $7$ in \cite{KdVAut} (see Lemma $7.1$),  
$K_{02}$ in \eqref{K020} has rather explicit estimates.

\begin{prop}\label{LinearizzatoIniziale}
Assume \eqref{IpotesiPiccolezzaIdeltaDP}. Then there exists $\sigma_0=\sigma_0(\nu)>0$ such that the following holds. The Hamiltonian operator $\mathcal{L}_{\omega}$ in \eqref{eq3}
has the form
\begin{equation}\label{BoraMaledetta}
\mathcal{L}_{\omega}=\Pi_S^{\perp}\big(\omega\cdot\partial_{\varphi}-J\circ(1+a_0(\varphi, x))+\mathcal{Q}_0\big), \qquad 
a_0(\varphi, x):=-(\Phi_B(T_{\delta})+\partial_{u}^{2}f(\Phi_{B}(T_{\delta})))\,.
\end{equation}
Recall that $T_{\delta}$ is defined in \eqref{TdeltaDP}, $\Phi_B$ is the Birkhoff map given in Proposition \ref{WBNFdp},  $f$ is the Hamiltonian density in \eqref{HamiltonianDensity}.
The operator $\mathcal{Q}_0$ is finite rank and has the form
\begin{equation}\label{FiniteDimFormDP}
\calQ_0(\varphi) w=\sum_{\lvert j \rvert\le C} \int_0^1 (w, g_j(\tau, \varphi))_{L^2(\mathbb{T})}\,\chi_j(\tau, \varphi)\,d\tau.
\end{equation}
In particular we divide $\calQ_0= \sum_{i=1}^5 \e^i\RR_i +\RR_{>5}$,
 where the $\mathcal{R}_i$, $\mathcal{R}_{>5}$ are finite rank operators.
 Moreover
 we have 
 \begin{equation}\label{unasuaziaricca}
\lVert a_0 \rVert_{s}^{\gamma, \calO_0}\lesssim_s
\varepsilon(1+\lVert \mathfrak{I}_{\delta}\rVert^{\gamma, \calO_0}_{s+\sigma_0})\,, 
\qquad \lVert \td_i a_0 [\hat{\imath}] \rVert_s\lesssim_s 
\varepsilon (\lVert \hat{\imath} \rVert_{s+\sigma_0}
+\lVert \mathfrak{I}_{\delta}\rVert_{s+\sigma_0}\lVert \hat{\imath}\rVert_{s_0})\,.
\end{equation}
The remainders $\mathcal{R}_i$ do not depend on $\mathfrak{I}_\delta$ and satisfy
\begin{equation}\label{DecayR2dp}
\lVert g^{(i)}_j \rVert_s^{\gamma, \calO_0}
+\lVert \chi_j^{(i)} \rVert_s^{\gamma, \calO_0}\lesssim_s 1\,,
\end{equation}
while $\mathcal{R}_{>5}$ satisfies
\begin{align}
 \lVert g^{>5}_j \rVert_s^{\gamma, \calO_0}
 \lVert \chi_j^{>5} \rVert_{s_0}^{\gamma, \calO_0}&
 +\lVert g^{>5}_j \rVert_{s_0}^{\gamma, \calO_0}
 \lVert \chi_j^{>5} \rVert_{s}^{\gamma, \calO_0}\lesssim_s 
\varepsilon^6+\varepsilon^{2}\lVert \mathfrak{I}_{\delta} 
\rVert_{s+\sigma}^{\gamma, \calO_0}\,, \label{DecayR*dp}\\[2mm]
\lVert \td_i g^{>5}_j [\hat{\imath}] \rVert_s\lVert \chi_j^{>5} \rVert_{s_0}
+\lVert \td_i g^{>5}_j [\hat{\imath}] \rVert_{s_0}\lVert \chi_j^{>5} \rVert_{s}
&+\lVert g^{>5}_j \rVert_{s_0} \lVert \td_i \chi^*_j \rVert_{s}
+\lVert g^{>5}_j \rVert_{s} \lVert \td_i \chi^{>5}_j \rVert_{s_0} \label{DecayR*dp2}\\ 
&\qquad \qquad \qquad \qquad
\lesssim_s \varepsilon^{2} \lVert \hat{\imath} \rVert_{s+\sigma}
+\varepsilon^{b}\lVert \mathfrak{I}_{\delta} \rVert_{s+\sigma}
 \lVert \hat{\imath} \rVert_{s_0+\sigma}\,.\nonumber
\end{align}
Finally, recalling the Definition \ref{LipTameConstants},
 we have
\begin{align}
\mathfrak{M}^{\gamma}_{\mathcal{Q}_0}(0,s)
&\lesssim_s \varepsilon^2 
(1+\lVert \mathfrak{I}_{\delta}
\rVert^{\gamma, \calO_0}_{s+\sigma_0})\,,\label{dude}\\
\mathfrak{M}_{ \td_i \calQ_0 [\hat{\imath}] }(0,s)&\lesssim_s 
\varepsilon^2 \lVert \hat{\imath} \rVert_{s+\sigma_0}+\varepsilon^{2b-1} 
\lVert \mathfrak{I}_{\delta}\rVert_{s+\sigma_0}
\lVert \hat{\imath} \rVert_{s_0+\sigma_0}\,.\label{dude2}
\end{align}
\end{prop}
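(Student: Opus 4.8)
The plan is to compute the operator $K_{02}$ of \eqref{eq3}--\eqref{K020} essentially by hand, isolating a single unbounded term of order $1$ (the multiplication by $a_0$, composed with $-J$) and a finite rank remainder $\mathcal{Q}_0$; this is the analogue of Lemma $7.1$ of \cite{KdVAut}, the only new feature being the more complicated symplectic operator $J$ of \eqref{DPHamiltonian}. First I would use the chain rule. By \eqref{HamiltonianaRiscalataDP} and Proposition \ref{WBNFdp} one has $H_{\varepsilon} = \varepsilon^{-2b}\,H\circ\Phi_B\circ A_{\varepsilon}$, hence $K = H_{\varepsilon}\circ G_\delta = \varepsilon^{-2b}\,H\circ\Phi_B\circ A_{\varepsilon}\circ G_\delta$. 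By \eqref{GdeltaDP} the map $G_\delta$ is the identity plus a translation plus a finite rank operator, and by \eqref{AepsilonDP} the variable $z$ enters $A_{\varepsilon}$ only through the term $\varepsilon^b z$; therefore, evaluating at $\eta=0$, one may write $A_{\varepsilon}\circ G_\delta(\varphi,0,w) = T_\delta + \varepsilon^b w + (\text{finite rank in }w) + O(w^2)$, where $T_\delta$ is as in \eqref{TdeltaDP} and the finite rank term collects the $y$--dependence of $v_{\varepsilon}$ together with the summand $[(\partial_\theta\tilde{z}_0)(\theta_0(\varphi))]^T J^{-1}w$ of \eqref{GdeltaDP}.

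Differentiating $H\circ\Phi_B$ twice in $w$ at $w=0$, and using that $\Phi_B = \mathrm{I}+\Psi$ with $\Psi = \Pi_E\,\Psi\,\Pi_E$ of finite rank (Proposition \ref{WBNFdp}), every contribution that involves the finite rank part of $A_{\varepsilon}\circ G_\delta$, the finite rank map $\td_u\Psi(T_\delta)$, the second differential $\td_u^2\Phi_B(T_\delta)$, or the second $w$--derivative of $A_{\varepsilon}\circ G_\delta$, is itself finite rank. The only unbounded term that survives is $\varepsilon^{2b}\,\td_u^2 H(\Phi_B(T_\delta))$ acting on the pure $w\otimes w$ part; a direct computation of $\td_u^2 H$ from \eqref{DPHamiltonian} (the contributions of $-u^3/6$ and of the density $f$) produces the multiplication operator with symbol $1+a_0$, with $a_0$ as in \eqref{BoraMaledetta}. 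Multiplying back by $\varepsilon^{-2b}$, composing with $-J$ and projecting onto $H_S^\perp$ yields \eqref{BoraMaledetta}; the finite rank remainder is $\mathcal{Q}_0$, and it has the representation \eqref{FiniteDimFormDP} because each of the pieces above acts through finitely many $L^2$--scalar products against smooth functions, the integral over $\tau\in[0,1]$ appearing after one integrates along the Birkhoff and $G_\delta$ flows.

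Next I would Taylor expand in $\varepsilon$. By \eqref{TdeltaDP} and \eqref{donnapia}, $T_\delta = \varepsilon\, v_\delta + \varepsilon^b z_0$, with $v_\delta$ built out of the square roots $\sqrt{\xi_j + \varepsilon^{2b-2}|\lambda(j)|\,y_{\delta j}(\varphi)}$ and $2b-2>0$. Expanding these square roots and recalling that $b$ is close to $1$, both $T_\delta$ and the coefficients of all the finite rank pieces above acquire an expansion in powers of $\varepsilon$ whose first five coefficients depend only on $\xi$, on $\theta_0$ and on the (fixed) Birkhoff coefficients, but not on the $y$-- and $z$--components of $\mathfrak{I}_\delta$. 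Collecting these five orders defines $\mathcal{R}_1,\dots,\mathcal{R}_5$, which are therefore $\mathfrak{I}_\delta$--independent — exactly what will later allow an algebraic treatment of them in the linear Birkhoff normal form — while all the remaining higher order terms, together with $\partial_u^2 f(\Phi_B(T_\delta)) = O(\varepsilon^7)$ (recall $f = O(u^9)$, \eqref{HamiltonianDensity}), are absorbed into $\mathcal{R}_{>5}$.

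Finally, I would establish the estimates. The bounds \eqref{unasuaziaricca} on $a_0$ follow from the composition/tame estimates for $A_{\varepsilon}$ and $\Phi_B$ already recorded in \eqref{balaban} and from Moser estimates for $u\mapsto\partial_u^2 f(u)$. The bounds \eqref{DecayR2dp}, \eqref{DecayR*dp}, \eqref{DecayR*dp2} on the generating functions $g_j,\chi_j$ of $\mathcal{Q}_0$ are obtained by propagating these tame estimates through the finitely many scalar products that define $\mathcal{R}_i$ and $\mathcal{R}_{>5}$, the powers of $\varepsilon$ being read off from the expansion of $T_\delta$ and from $f=O(u^9)$; and the Lip--$0$--modulo--tame bounds \eqref{dude}, \eqref{dude2} are then immediate, since a finite rank operator $w\mapsto\sum_j(w,g_j)_{L^2}\chi_j$ is Lip--$0$--modulo--tame with constant controlled by $\sum_j\big(\lVert g_j\rVert_{s_0}\lVert\chi_j\rVert_s + \lVert g_j\rVert_s\lVert\chi_j\rVert_{s_0}\big)$. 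I expect the main obstacle to be not any single estimate but the bookkeeping of the $\varepsilon$--expansion: one must peel off into $\sum_{i=1}^5\varepsilon^i\mathcal{R}_i$ precisely the $\mathfrak{I}_\delta$--independent part (these operators, and the corrections they induce on the normal eigenvalues, are needed explicitly later, e.g. in \eqref{BoraMaledetta}-type normal forms and in the measure estimates), while the remainder $\mathcal{R}_{>5}$ must carry the exact powers of $\varepsilon$ in \eqref{DecayR*dp}--\eqref{DecayR*dp2}; here the closeness of $b$ to $1$ and the order $O(u^9)$ of $f$ are what make the exponents work out.
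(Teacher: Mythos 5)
Your overall route is the paper's: compute $K_{02}$ by the chain rule, use that $G_\delta$, the action--angle map and $\Phi_B-\mathrm{I}$ are (rescaling plus) finite rank so that the only unbounded contribution is $\td_u\nabla_u H$ evaluated at $\Phi_B(T_\delta)$, then Taylor expand the analytic Birkhoff map to split the finite rank part into homogeneous pieces, and deduce \eqref{dude}--\eqref{dude2} from the standard tame bound for operators of the form \eqref{FiniteDimFormDP}. Up to that point the argument matches the paper's proof and the estimates \eqref{unasuaziaricca} via \eqref{balaban} are fine.

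The genuine problem is in your $\e$--expansion step, exactly at the bookkeeping you flag as delicate. You claim the first five coefficients ``depend only on $\xi$, on $\theta_0$ and on the Birkhoff coefficients'' and are ``therefore $\mathfrak{I}_\delta$--independent''. This is a non sequitur: $\theta_0(\f)=\f+\Theta_0(\f)$ and $\Theta_0$ is a component of $\mathfrak{I}_\delta$ (see \eqref{frakkiIDP}), so an expansion of $T_\delta=\e v_\delta+\e^b z_0$ that keeps the phases $e^{\mathrm{i}\theta_{0j}(\f)}$ (and the square roots containing $y_\delta$) yields operators $\RR_i$ which do depend on the torus, contradicting the statement being proved and, more importantly, destroying the structure needed downstream: the $\RR_i$ must be homogeneous in the reference function $\overline{v}$ of \eqref{vsegnatoDP}, with phases $e^{\mathrm{i}(\mathtt{l}(j_1)+\dots+\mathtt{l}(j_p))\cdot\f}$ as in Definitions \ref{PseudoSIMho}, \ref{HAMOMO}, otherwise the algebraic arguments of the preliminary reduction steps, of the linear BNF and of the identification Theorem \ref{PartialWeak} are not available. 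The missing idea is the substitution of $v_\delta$ by $\overline{v}=A_\e(\f,0,0)$: since $\lVert v_\delta-\overline{v}\rVert_s\lesssim\lVert\mathfrak{I}_\delta\rVert_s$, one writes $\Phi_B(T_\delta)=\e\overline{v}+\sum_{i=2}^5\e^i\Psi_i(\overline{v})+\tilde q$ with $\tilde q$ as in \eqref{QtildaDP}, defines $\RR_i$ from the derivatives of $\Phi_B-\mathrm{I}$ evaluated on $\overline{v}$ only, and sends the entire difference (all $\Theta_0$, $y_\delta$, $z_0$ dependence) into $\RR_{>5}$; it is precisely this step which produces the term $\e^{2}\lVert\mathfrak{I}_\delta\rVert_{s+\sigma}$ in \eqref{DecayR*dp} and the powers of $\e$ in \eqref{DecayR*dp2}, which your expansion, as written, cannot reproduce.
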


\begin{proof} 
The expression \eqref{BoraMaledetta} follows from the 
definition \eqref{K020}  by remarking that $G_\delta$ 
and the weak BNF transformation $\Phi_B$ is the identity plus a 
finite rank operator, while the action angle change 
of coordinates is a rescaling plus 
a finite rank operator (acting only on the $v$).  
Then, in applying the chain rule, we get  
\begin{equation}
\begin{aligned}
\td_{w} \nabla_w (H_\e\circ G_\delta ) &\stackrel{\eqref{HamiltonianaRiscalataDP}}{=}
 \e^{-2b} (\td_{z} \nabla_z (\calH\circ A_\e))\circ G_\delta + R_1 
=   (\td_{z} \nabla_z \calH)\circ A_\e\circ G_\delta + R_1   \\ 
&\stackrel{\eqref{piotta}}{=}
(\td_{z} \nabla_z (H\circ\Phi_{B}))\circ A_\e\circ G_\delta + R_1 
=  (\td_{z} \nabla_z H)\circ\Phi_{B}\circ A_\e\circ G_\delta
 + R_1 + R_2
\end{aligned}
\end{equation} where the finite rank part contain 
all the terms where a derivative falls 
on the change of variables. 
Then \eqref{BoraMaledetta} follows from 
the definition of $H$ in \eqref{DPHamiltonian}.
Regarding the estimates, \eqref{unasuaziaricca} 
follows from \eqref{balaban}; regarding the 
bounds \eqref{DecayR2dp}, \eqref{DecayR*dp}, 
we split the finite rank part $R_1+R_2$ as follows. 
The operator $R_1$ contains all terms arising form 
derivatives of $G_\delta$. By tame estimates 
on the map $G_{\delta}$ 
(see for instance Lemma $6.7$ in \cite{KdVAut}), 
it satisfies the bounds \eqref{DecayR*dp} and 
 we put it in $\RR_{>5}$. The finite rank term
$R_2$   comes from the Birkhoff map.  
This is an analytic map so we consider
 the Taylor expansion 
\[
\Phi_B(u)=u+\sum_{i=2}^5\Psi_i(u)+\Psi_{\geq 6}(u)\,,
\] 
where each  $\Psi_i(u)$
is homogeneous of degree $i$ in $u$, while   
$\Psi_{\geq 6}=O(u^6)$ and all map $H^1_0(\mathbb{T})$ in itself. 
We have to evaluate $\Phi_B$ and 
its derivatives (up to order two) at  
$u=T_{\delta}=\varepsilon v_{\delta}+\varepsilon^b z_0$.
We denote by $\overline{v}$ the function
\begin{equation}\label{vsegnatoDP}
\overline{v}(\varphi, x):=\sum_{j\in S} \sqrt{\xi_j} 
e^{\mathrm{i} (j x+\mathtt{l}(j)\cdot \varphi)}= A_\e(\f,0,0)
\end{equation}
where $\mathtt{l}(\bar\jmath_i)$ is the $i$-th vector of the canonical 
basis of $\mathbb{Z}^{\nu}$ and is such that 
$\mathtt{l}(-\bar{\jmath}_i)=-\mathtt{l}(\bar{\jmath}_i)$.
\noindent
We observe that\footnote{The function $\e\bar v$ 
represents a torus supporting a 
quasi-periodic motion which is invariant for the system 
\eqref{HepsilonDP} with $P=0$, namely it is 
the approximate solution from which we bifurcate. }
\begin{equation*}\label{differenzaV}
\lVert v_{\delta}-\overline{v}\rVert^{\gamma, \calO_0}_s
\lesssim \lVert \mathfrak{I}_{\delta}\rVert^{\gamma, \calO_0}_{s}\,,
\end{equation*}
and hence we can expand 
\begin{equation}\label{PhiBdp}
\Phi_B(T_{\delta})=\e \bar v+\sum_{i=2}^5\e^i\Psi_i( \bar v)
+\tilde{q}= \Phi_B^{\le 5} + \tilde q \,,
\end{equation}
where $\tilde{q}$ is a remainder which satisfies
\begin{equation}\label{QtildaDP}
\lVert \tilde{q} \rVert_s^{\gamma, \calO_0}\lesssim_s 
\varepsilon^6+\varepsilon\lVert \mathfrak{I}_{\delta} \rVert_s^{\gamma, \calO_0}\,, 
\quad \lVert \td_i \tilde{q} [\hat{\imath}] \rVert_s
\lesssim_s \varepsilon (\lVert \hat{\imath} 
\rVert_s+\lVert \mathfrak{I}_{\delta} \rVert_s 
\lVert \hat{\imath} \rVert_{s_0})\,. 
\end{equation}
Then in $\RR_i$ we include all the terms homogeneous of degree $i$  coming from derivatives of $\Phi_B-\mathrm{I}$ , evaluated at $\tilde q=0$; we put in $\RR_{>5}$ all the rest.
The \eqref{dude}, \eqref{dude2} 
follows by \eqref{DecayR2dp}, 
\eqref{DecayR*dp} and \eqref{DecayR*dp2}.
\end{proof}

\begin{remark}\label{ordinivari}
The motivation for separating the $\RR_i$ and $\RR_{>5}$ 
is the following. Consider the Hamiltonian $H_\e$ 
as a function of $\xi$ instead of $\omega$. 
Then in all our expressions we can, and shall, 
evidence a {\it purely polinomial} term 
$\sum_{i=0}^5 \e^i f_i$ (where the $f_i$ are 
$\e$ independent)  plus a {\it remainder}, which is not analytic in $\e$, of size 
$\varepsilon^6+\varepsilon\lVert \mathfrak{I}_{\delta} 
\rVert_s^{\gamma, \calO_0}$. 
By the assumption \eqref{AssumptionDP}, 
this means that in {\it low norm} $s=s_0+\gotp_1$ 
all these remainders are negligible 
w.r.t. terms of order $\e^5$.
This distinction 
is needed because, 
due to the resonant nature of the DP equation, 
we need to perform (see subsections \ref{preliminare}
and \ref{LinearBNF})
five steps of 
the order reduction 
and of the linear 
BNF by hand, before entering 
in a perturbative regime. 

\noindent
In this framework $\RR_{>5}$ is purely 
a remainder, while the $\RR_i$  are 
homogeneous polynomial terms. 
One could apply the same division to the 
non finite rank terms, one would get
\begin{equation}
\label{fosco}
\Pi_S^{\perp} \big(\omega(\xi)\cdot\partial_{\varphi} h
-J\,[(1-\Phi_B(T_{\delta})-
\partial_{u }^2 f(\Phi_B(T_{\delta})))=
\Pi_S^{\perp} \big(\bar\omega\cdot\partial_{\varphi} h 
+ \e^2 \mathbb A\xi \cdot\partial_{\varphi} h
-J\,(1-\Phi^{\le 5}_B(\e\bar v) )h + g h
\end{equation}
where $g$ satisfies the same estimates as \eqref{DecayR*dp}.
\end{remark}

\subsubsection{Hamiltonian of the linearized operator}\label{HamLin}

Following Remark \ref{ordinivari}, we  evidence the terms homogeneous in the Hamiltonian of $\mathcal{L}_{\omega}$, let us call it $\mathsf{H}$, whose Hamiltonian vector fields have  degree $\le 5$, since they are NOT perturbative. 
As explained in \eqref{PhiBdp} this entails expanding the map $\Phi_B(T_{\delta})$ in powers of $\e$ up to order five plus a {\it small remainder $\tilde q$}.
\\ 
We consider the  symplectic form in the extended phase space $\mathbb{R}^{\nu}\times\mathbb{R}^{\nu}\times H_S^{\perp}$
\begin{equation}\label{formasimpletticaestesa}
\Omega_{e}(\varphi, \eta, z):=d\varphi \wedge d\eta+\sum_{j\in S^c} \frac{1}{\mathrm{i} \lambda(j)} d z_j\wedge d z_{-j}
\end{equation}
with the  Poisson brackets (recalling  $\{ \cdot, \cdot \}$  defined in \eqref{PoissonBracketDP})
\begin{equation}\label{PoissonEstesa}
\{  F, G \}_{e}:=\partial_{\varphi} F \partial_{\eta} G
-\partial_{\eta} F \partial_{\varphi} G+\{ F, G\}\,.
\end{equation}


\noindent
The Hamiltonian of the operator \eqref{BoraMaledetta} respect to the symplectic form \eqref{formasimpletticaestesa} is (see \eqref{fosco})
\begin{equation}\label{hamiltonianalinearizzata}
 \mathsf{H}:=\mathsf{H}_0+\sum_{i=1}^5\varepsilon^i\mathsf{H}_i+ H_{>5}+\sum_{i=2}^5\varepsilon^i\mathsf{H}_{\mathcal{R}_i}+\mathsf{H}_{\mathcal{R}_{>5}}
\end{equation}
with
\begin{equation}\label{leHamiltonianeLin}
\begin{aligned}
&\mathsf{H}_0=\overline{\omega}\cdot \eta
+\frac{1}{2}\int_{\T} z^2\,dx\,, 
\quad\mathsf{H}_1=-\frac{1}{2}\int_{\T} \overline{v}\,z^2\,dx\,,
 \quad
\mathsf{H}_2=\mathbb{A}\xi\cdot \eta
-\frac{1}{2}\int_{\T} \Psi_2(\overline{v})\,z^2\,dx\,,\\
& \mathsf{H}_i=-\frac{1}{2} \int_{\T} \Psi_i (\overline{v})\,dx\,,
\quad 3\le i\le 5\,,\;\;
{\rm and} \;\;
\|X_{\mathsf{H}_{>5}}
 \rVert^{\gamma, \calO_0}_s,
  \|X_{\mathsf{H}_{\mathcal{R}_{>5}}}
 \rVert^{\gamma, \calO_0}_s
 \lesssim_s 
 \varepsilon^6+\varepsilon\lVert \mathfrak{I}_{\delta}
 \rVert^{\gamma, \calO_0}_{s+\sigma}
\end{aligned}
\end{equation}
for some $\sigma>0$.  The functions 
$\mathsf{H}_{\mathcal R_i}$, $\mathsf{H}_{\mathcal{R}_{>5}}$ 
are the quadratic forms associated 
to the corresponding linear operators, 
thus the estimates on the Hamiltonian 
vector fields can be deduced from 
\eqref{DecayR*dp}, \eqref{DecayR2dp}. 
It is easily seen that
\[
\mathsf{H}_0+\sum_{i=1}^5\varepsilon^i(\mathsf{H}_i+ \RR_i)
= \Pi^{2d_y + d_z=2} (\calH^{(\le 7)}\circ A_\e)
\vert_{\substack{y=\eta \\ \theta=\varphi} }\,.
\]
Of course one can be even more explicit and write 	everything in terms of the original Hamiltonian \eqref{DPHamiltonian} and of the generating functions of the weak BNF, for example one has 
\begin{equation}\label{pioveNantes}
\begin{aligned}
&\mathsf{H}_0 = 
(H^{(2)}\circ A_1)\vert_{\substack{y=\eta \\ \theta=\varphi } }\,,
\qquad  \mathsf{H}_1 \stackrel{\eqref{mastite}}{=} 
(H^{(3,2)}\circ A_1)\vert_{\substack{y=\eta \\ \theta=\varphi} }\,,\\
&\mathsf{H}_2 +\mathsf{H}_{\RR_2}= 
(\Pi^{d_y=1}(Z^{(4,0)}_2\circ A_1) 
+  (\Pi^{d_z=2} \{F^{(3,\le 1)}, H^{(3)}\})\circ A_1)
\vert_{\substack{y=\eta \\ \theta=\varphi } }\,.
\end{aligned}
\end{equation}
The terms $\mathsf{H}_i$ can be computed explicitly, however this is not
necessary for our scopes. We only need to prove
that they fit the following definitions.

\begin{defi}\label{almostDIAG}
We say that a matrix $\mathtt{B}:=\Big((\mathtt{B})_{j}^{j'}(l-l')\Big)_{j, j'\in\mathbb{Z}, l, l'\in\mathbb{Z}^{\nu}}$ is
\emph{almost diagonal} if there exists 
a constant $\mathtt{C}>0$
such that, if $(\mathtt{B})_{j}^{j'}(l-l')\neq0$, then 
$\langle j-j', l-l'\rangle\le \mathtt{C}\,$  
for all $j, j'\in S^c$, $l, l'\in\mathbb{Z}^{\nu}$. \\
Let $B(\varphi)\colon H^s(\T)\to H^s(\T)$ be a T\"opliz in time operator 
(recall \eqref{topliz}). 
We say that $B(\varphi)$ is almost diagonal if 
its associated matrix is almost diagonal.\\
Let $H:=H(\varphi)$ be a quadratic Hamiltonian of the form 
$H=( A(\varphi) z, z)_{L^2}$, where $A(\varphi)$ is a 
T\"opliz in time operator. We say that 
$H$ and its vector field are almost 
diagonal if $A(\varphi)$ is almost diagonal. 
\end{defi}
\begin{remark}\label{remarkalmost}
It is easy to verify that if $X$ and $Y$ are almost diagonal vector fields then $X+Y$, $X \circ Y$ are almost diagonal.
\end{remark}

\begin{defi}\label{PseudoSIMho}
Let $p\in \mathbb{N}$ and $m\in \mathbb{R}$. 
We say that a pseudo differential operator $\mathfrak{B}=\op(b(\f,x,j))$
(recall Definition \ref{pseudoR}) is \emph{homogenenous} of degree $p$  in the 
function $\bar{v}$ in \eqref{vsegnatoDP} if its symbol $b(\f,x,j)\in S^{m}$ 
has the form
\begin{equation}\label{simboOMO}
b(\f,x,j):=\sum_{j_1,\ldots,j_{p}\in S}C_{j_1,\ldots,j_{p}}(j)
\sqrt{\x_{j_1}\cdots\x_{j_p}}e^{\ii(j_1+\ldots+j_p)x}
e^{\ii(\mathtt{l}(j_1)+\ldots+\mathtt{l}(j_p))\cdot\f}.
\end{equation}
\end{defi}

\begin{defi}\label{HAMOMO}
Let $p\in \mathbb{N}$. Let $f_{p}$ be an homogeneous real valued 
function of $\bar{v}$ (of degree $p$) 
of the form
\begin{equation}\label{simboOMO2}
f_p(\bar{v}):=\sum_{j_1,\ldots,j_{p}\in S}(f_p)_{j_1,\ldots,j_{p}}
\sqrt{\x_{j_1}\cdots\x_{j_p}}e^{\ii(j_1+\ldots+j_p)x}
e^{\ii(\mathtt{l}(j_1)+\ldots+\mathtt{l}(j_p))\cdot\f},
\end{equation}
and let $\mathfrak{B}_p\in OPS^{-2}$ be a p-homogeneous 
pseudo differential  operator according to Definition \ref{PseudoSIMho}
which is self-adjoint w.r.t. to $(\cdot,\cdot)_{L^2}$.
We say that a Hamiltonian is 
pseudo differential and $p$-homogeneous if it has the form
\begin{equation}\label{HAMpseudo}
H_p(z)=\frac{1}{2}\int_{\mathbb{T}}f_p(\bar{v})z\cdot zdx
+\frac{1}{2}\int_{\mathbb{T}}\mathfrak{B}_p z\cdot z
+\frac{1}{2}\int_{\mathbb{T}}\mathcal{R}_pz\cdot z\,, 
\end{equation}
with some \emph{finite dimensional} 
operator $\mathcal{R}_{p}$ of the form \eqref{FiniteDimFormDP}
with $g_j, \chi_j$ $p$-homogeneous functions of $\bar{v}$.
\end{defi}

\begin{lem}
The Hamiltonians $\mathsf{H}_i + \mathsf{H}_{\RR_i}$  
in \eqref{hamiltonianalinearizzata} 
are almost diagonal according to Definition 
\ref{almostDIAG} and  pseudo differential
homogeneous Hamiltonians 
according to Definition \ref{HAMOMO}.
\end{lem}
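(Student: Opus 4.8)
The plan is to prove the two claimed properties of $\mathsf{H}_i+\mathsf{H}_{\mathcal{R}_i}$ separately, by tracing the origin of these Hamiltonians through the constructions already carried out. Recall from \eqref{leHamiltonianeLin} that, for $i=1,\dots,5$, the non-finite-rank part of $\mathsf{H}_i$ has the form $-\tfrac12\int_\T \Psi_i(\bar v)\, z^2\,dx$ for $i\ge 2$ (with $\Psi_1(\bar v)$ replaced by $\bar v$ itself when $i=1$), plus for $i=2$ the term $\mathbb{A}\xi\cdot\eta$ which does not involve $z$ and is irrelevant here; and $\mathsf{H}_{\mathcal{R}_i}$ is the quadratic form $\tfrac12(\mathcal{R}_i z,z)_{L^2}$ associated to the finite rank operator $\mathcal{R}_i$ described in Proposition \ref{LinearizzatoIniziale} and the expansion \eqref{PhiBdp}. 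So the statement splits into: (a) the multiplication operator $h\mapsto -J\circ(\Psi_i(\bar v)\, h)$ is almost diagonal and pseudo differential homogeneous of degree $i$ in $\bar v$; and (b) the finite rank part $\mathcal{R}_i$ fits the finite-dimensional slot of Definition \ref{HAMOMO} with $i$-homogeneous $g_j,\chi_j$.

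For part (a), I would first observe that $\bar v$ has finite Fourier support in $S$, so that each $\Psi_i(\bar v)$ — being a homogeneous polynomial of degree $i$ in $\bar v$ obtained from the explicit Birkhoff generators $F^{(3,\le 1)},\dots$ of Section \ref{SezioneWBNF} — is again a trigonometric polynomial whose $x$-Fourier support is contained in sums of $i$ elements of $S$, and whose $\varphi$-Fourier support is correspondingly contained in sums of $i$ vectors $\mathtt{l}(j_k)$. This is precisely the structure \eqref{simboOMO2} of Definition \ref{HAMOMO} (a symbol independent of $j$, i.e.\ in $OPS^0\subset OPS^{-\infty}$ up to composing with the smoothing operator $J$: since $J=(1-\partial_{xx})^{-1}(4-\partial_{xx})\partial_x$ has order $-1$, the operator $J\circ \Psi_i(\bar v)$ has a symbol in $S^{-2}$ after symmetrization, giving the $\mathfrak{B}_i\in OPS^{-2}$ required, self-adjoint with respect to $(\cdot,\cdot)_{L^2}$ because the underlying Hamiltonian is real). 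Almost-diagonality is then immediate from Definition \ref{almostDIAG}: a multiplication-type operator with symbol of the form \eqref{simboOMO} has $(\mathtt{B})_j^{j'}(l-l')\ne 0$ only when $j-j'=j_1+\dots+j_i$ and $l-l'=\mathtt{l}(j_1)+\dots+\mathtt{l}(j_i)$ with $j_1,\dots,j_i\in S$, hence $\langle j-j', l-l'\rangle\le \mathtt{C}$ with $\mathtt{C}$ depending only on $i$ and $\max_{j\in S}|j|$. The same bookkeeping applies to the composition with $J$, using Remark \ref{remarkalmost}.

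For part (b), I would go back to the proof of Proposition \ref{LinearizzatoIniziale}: the finite rank term $R_2$ there comes from derivatives of the Birkhoff map $\Phi_B-\mathrm{I}=\sum_{i=2}^5\Psi_i+\Psi_{\ge 6}$, evaluated at $u=T_\delta$, and $\mathcal{R}_i$ collects exactly the $i$-homogeneous contribution obtained by expanding $\Phi_B(T_\delta)$ as in \eqref{PhiBdp}, i.e.\ by replacing $T_\delta$ with $\e\bar v$ and picking the degree-$i$ piece. Since each $\Psi_i$ is a homogeneous polynomial built from the (finitely Fourier supported) generators $F^{(k,\le 1)}$, the resulting kernels $g_j(\tau,\varphi),\chi_j(\tau,\varphi)$ in \eqref{FiniteDimFormDP} are $i$-homogeneous functions of $\bar v$ of the form \eqref{simboOMO2}, with the integral over $\tau\in[0,1]$ and the finite sum over $|j|\le C$ harmless — this is the finite-dimensional slot $\mathcal{R}_p$ of \eqref{HAMpseudo}. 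I would conclude that $\mathsf{H}_i+\mathsf{H}_{\mathcal{R}_i}$ is of the form \eqref{HAMpseudo} with $p=i$, hence pseudo differential homogeneous, and that all three pieces (the $f_i$-multiplication term, the $\mathfrak{B}_i$ term, the finite rank term) are almost diagonal, so by Remark \ref{remarkalmost} their sum is.

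The main obstacle is the verification that the homogeneity and finite-Fourier-support structure is genuinely \emph{preserved} through all the compositions: $\Psi_i$ is not just a polynomial in $\bar v$ but arises from nested Poisson brackets and time-one flows in the weak BNF, and one must check that the homological-equation divisors $\big(\sum_k\lambda(j_k)\big)^{-1}$ appearing in \eqref{DefF3} and its higher-order analogues do not destroy the symbol-class membership — they do not, because those divisors are just numerical coefficients $C_{j_1,\dots,j_p}(j)$ (here in fact independent of $j$) attached to fixed Fourier monomials, and they are nonzero by Proposition \ref{vvv} (absence of nontrivial resonances up to the relevant order), so the symbols stay in the required class with bounded seminorms. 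Once this is granted, everything reduces to matching definitions, and the proof is short.
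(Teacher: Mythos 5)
Your proposal is correct and follows essentially the paper's route: the paper's own proof is nothing more than the remark that the claim follows from Proposition \ref{LinearizzatoIniziale} and Remark \ref{ordinivari}, i.e.\ exactly the unpacking you carry out (each $\Psi_i(\bar v)$ is an $i$-homogeneous trigonometric polynomial whose $(x,\varphi)$-Fourier support consists of sums of $i$ elements of $S$ and of the corresponding $\mathtt{l}(j_k)$'s, and the kernels $g_j,\chi_j$ of $\mathcal{R}_i$ are $i$-homogeneous in $\bar v$, so almost-diagonality and the form \eqref{HAMpseudo} follow by inspection). The only slip is your parenthetical on $J$: the operator $J=(1-\partial_{xx})^{-1}(4-\partial_{xx})\partial_x$ has order $+1$, not $-1$, but this aside is not needed, since Definition \ref{HAMOMO} concerns the Hamiltonian itself and $\mathsf{H}_i+\mathsf{H}_{\mathcal{R}_i}$ already fits it with $f_i=-\Psi_i(\bar v)$, $\mathfrak{B}_i=0$ and the finite rank slot, no composition with $J$ being required.
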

\begin{proof}
It follows by Proposition \ref{LinearizzatoIniziale}
and Remark \ref{ordinivari}.
\end{proof}

\begin{lem}\label{pseudoHAM1}
Let $p,q\in \mathbb{N}$
and consider $H_p$, $G_q$ two pseudo differential and homogenous
Hamiltonians of degree respectively $p$
and $q$. Then there is 
a pseudo differential and $(p+q)$-homogeneous Hamiltonian $\widetilde{H}$ 
such that $X_{\widetilde{H}}=X_{\{H_p,G_{q}\}_{e}}$,
where $\{\cdot,\cdot\}_e$
are defined in \eqref{PoissonEstesa} and $X_{H}$ 
denotes the Hamiltonian vector field generated by $H$.
\end{lem}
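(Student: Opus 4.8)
The plan is to prove this closure-under-Poisson-bracket statement by expanding the bracket $\{H_p,G_q\}_e$ componentwise according to the definition \eqref{PoissonEstesa} and checking that each resulting summand is again of the form \eqref{HAMpseudo} with homogeneity degree $p+q$. Recall that $H_p$ consists of three pieces: a multiplication-operator piece $\frac12\int f_p(\bar v)z\cdot z\,dx$, a genuinely pseudo differential piece $\frac12\int \mathfrak B_p z\cdot z$, and a finite-rank piece $\frac12\int\mathcal R_p z\cdot z$ (and similarly for $G_q$). Since all these are quadratic in $z$ with coefficients that are smooth in $(\varphi,x)$ and homogeneous of degree $p$ (resp. $q$) in $\bar v$, and since the extended bracket is the sum of the $(\varphi,\eta)$-symplectic bracket plus the $z$-bracket $\{\cdot,\cdot\}$ from \eqref{PoissonBracketDP}, the bracket of two quadratic-in-$z$ Hamiltonians is again quadratic in $z$. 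The $(\varphi,\eta)$-part $\partial_\varphi H_p\,\partial_\eta G_q-\partial_\eta H_p\,\partial_\varphi G_q$ vanishes because neither $H_p$ nor $G_q$ depends on $\eta$; so only the $z$-bracket $\{H_p,G_q\}$ survives.

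First I would compute $\{H_p,G_q\}=(\nabla_z H_p,\,J\nabla_z G_q)_{L^2}$. Writing $H_p=\frac12(A_p(\varphi)z,z)_{L^2}$ with $A_p=\op(f_p)+\mathfrak B_p+\mathcal R_p$ self-adjoint, and similarly $G_q=\frac12(B_q(\varphi)z,z)_{L^2}$, one gets $\nabla_z H_p=A_p z$, $\nabla_z G_q=B_q z$, hence $\{H_p,G_q\}=(A_p z,\,J B_q z)_{L^2}=\frac12\big((A_p J B_q - B_q J A_p)z,\,z\big)_{L^2}$ after symmetrizing, using that $J^*=-J$ and $A_p,B_q$ are self-adjoint. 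Thus the new operator is $\widetilde A:=\tfrac12(A_p J B_q - B_q J A_p)$, which I must decompose into a multiplication part, a pseudo differential ($OPS^{-2}$, self-adjoint) part, and a finite-rank part. Here I use that $J\in OPS^{1}$ (it is $(1-\partial_{xx})^{-1}(4-\partial_{xx})\partial_x$), that $\op(f_p), \mathfrak B_p\in OPS^{0}\cup OPS^{-2}$, and that composition of pseudo differential operators is pseudo differential with additive orders, together with the symbolic calculus of Appendix B of \cite{FGP1}: the principal symbol contributions that are pure multiplication operators get collected into the new $f_{p+q}$ (here one checks the order count so the genuine part lands in $OPS^{-2}$, adjusting by lower-order terms), while compositions involving $\mathcal R_p$ or $\mathcal R_q$ remain finite rank of the form \eqref{FiniteDimFormDP} because finite rank is a two-sided ideal under composition with bounded operators; the homogeneity in $\bar v$ is additive by \eqref{simboOMO}, giving degree $p+q$. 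Self-adjointness of the pseudo differential part is automatic since $\widetilde A$ is manifestly anti-self-adjoint-times-$J$ symmetrized to be self-adjoint, and one keeps track of this throughout.

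The main obstacle I expect is the bookkeeping in splitting $\widetilde A$ consistently into the three prescribed pieces while respecting the order $-2$ requirement on the genuinely pseudo differential part: a priori $\op(f_p)\circ J\circ \op(f_q)$ has order $1$, so one must use the symbolic calculus to peel off the order $\ge -1$ part as an honest multiplication operator (this works precisely because $J$ has a simple structure and $f_p,f_q$ are multiplication symbols, so the composition's non-smoothing part is again multiplication up to $OPS^{-2}$ corrections), and then verify the $-2$-order remainder is self-adjoint modulo a further finite-rank/lower-order adjustment. I would organize this as: (i) reduce to the $z$-bracket via the observation about $\eta$-independence; (ii) compute $\widetilde A$ explicitly as above; (iii) invoke the composition and adjoint rules of the pseudo differential calculus (Appendix B of \cite{FGP1}) to assign each term to its class; (iv) check additivity of the $\bar v$-homogeneity from \eqref{simboOMO}, \eqref{simboOMO2}; (v) define $\widetilde H:=\tfrac12(\widetilde A z,z)_{L^2}$ and conclude $X_{\widetilde H}=X_{\{H_p,G_q\}_e}$ since two quadratic Hamiltonians with the same self-adjoint operator generate the same vector field. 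This is essentially a structural/algebraic verification, so I would not grind through the explicit symbols of $\mathsf H_i$ — the point is only that the class of pseudo differential homogeneous Hamiltonians is closed under $\{\cdot,\cdot\}_e$.
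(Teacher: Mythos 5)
Your proposal is correct and follows essentially the same route as the paper: you symmetrize the operator of the quadratic form $\{H_p,G_q\}_e$ (equivalently pass to $\tfrac12(A_pJB_q-B_qJA_p)$), observe that the order-one part cancels exactly leaving the multiplication term (the paper computes it as $f_p(f_q)_x-f_q(f_p)_x$ after splitting $J=\partial_x+3\Lambda\partial_x$), and note that the remaining cross terms, being skew at principal-symbol level, drop to a self-adjoint operator in $OPS^{-2}$ after symmetrization by the adjoint formula of the pseudo differential calculus, with the finite-rank parts and the $(p+q)$-homogeneity handled exactly as in the paper. The only differences are presentational (the paper carries out the explicit computation rather than invoking the symbolic calculus abstractly), not substantive.
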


\begin{proof}
By assumption $H_p$ and $G_{q}$ have the form \eqref{HAMpseudo}
for some $f_{p},f_q$ real valued and some self-adjoint 
pseudo differential homogenenous operators 
$\mathfrak{B}_p$ and $\mathfrak{B}_q$.
Then we have (recalling \eqref{PoissonBracketDP},  
\eqref{DPHamiltonian} and  \eqref{Helmotz})
\[
\begin{aligned}
\{H_p,G_q\}_{e}&=\int_{\mathbb{T}}
A_1 z\cdot zdx +\int_{\mathbb{T}}
A_2 z\cdot zdx, \qquad A_1:=f_{p}\circ \partial_x\circ f_{q}\,,\\
A_2&:=f_p\circ J\circ \mathfrak{B}_q+\mathfrak{B}_p\circ J\circ f_q+
\mathfrak{B}_p\circ J\circ \mathfrak{B}_q+3f_p\circ\Lambda\partial_x\circ f_q\,.
\end{aligned}
\]
One has that the Hamiltonian 
\[
\widetilde{H}:=\frac{1}{2}\int_{\mathbb{T}}(A_1+A_1^{*})z\cdot zdx
+\frac{1}{2}\int_{\mathbb{T}}(A_2+A_2^{*})z\cdot zdx
\]
is equivalent to $\{H_p,G_q\}_e$ 
in the sense that they generate the same vector field. 
Here $A_i^{*}$, $i=1,2$, denotes
the adjoint of $A_i$ w.r.t. the $L^{2}$ scalar product.
Notice that
\[
A_1+A_1^{*}=f_{p} f_{q}\partial_x+f_p(f_q)_x-f_q\circ\partial_x\circ f_p=
f_{p} f_{q}\partial_x+f_p(f_q)_x-f_qf_p\partial_x-f_q(f_p)_x=
f_p(f_q)_x-f_q(f_p)_x,
\]
which is an homogeneous function of $\bar{v}$ of degree $p+q$.
Using the results on compositions of pseudo differential operators in Section $2$ of \cite{FGP1},
the fact that $J$ is skew-self-adjoint, $\mathfrak{B}_i$, $i=p,q$,
are self-adjoint, and $f_q,f_p$ are real valued, we deduce that
the operator $A_2$ is a skew-self-adjoint operator in $OPS^{-1}$.
Hence, using the formula $(2.13)$  in \cite{FGP1} for the adjoint,
we have that ${A}_2+{A}_2^*$ is pseudo differential homogeneous
operator (according to Definition \ref{PseudoSIMho}) 
in $OPS^{-2}$. 
\end{proof}

\section{Reduction and Inversion of the linearized operator}\label{regularization}


The aim of the section is to prove
the claim in \eqref{InversionAssumptionDP}.
As explained in the introduction, first one should reduce the unbounded parts of $\calL_{\oo}$ and then  use classical KAM reducibility 
results to diagonalize. The difficulties arise from the fact that a few steps of this procedure must be done by hand, since they do not fit the typical smallness condition. \\
The key result of this section is the following.
\begin{teor}\label{risultatosez8}
Consider $\calL_{\oo}=\calL_{\oo}(\mathfrak{I}_{\delta})$ in \eqref{BoraMaledetta} and fix 
\begin{equation}\label{parametriKAM}
\tau= 2\nu+6,\quad \tb_0:=6\tau+6, \quad \tb=\tb_0+s_0.
\end{equation}
There exists $\mu_1=\mu_1(\nu)>0$ such that, if condition \eqref{IpotesiPiccolezzaIdeltaDP}
is satisfied  with $\gotp_1=\mu_1$,  then the following holds.
There exists a constant $m(\oo)$ defined for  $\oo\in\Omega_{\e}$ with 
\begin{equation}\label{clinica100}
\lvert m-1-\e^{2}c(\oo) \rvert^{\gamma, \Omega_{\varepsilon}}\lesssim  \varepsilon^4\,, 
\quad \lvert m \rvert^{lip}\lesssim 1\,,
\quad c(\oo):=\vec{v}\cdot\xi,\quad \vec{v}_k=\frac{2}{3} (1+\ol{\jmath}_{k}^2)\, , 
\;\; k=1,\ldots, \nu\,,
\footnote{Notice that $\x=\x(\oo)$, recall \eqref{xiomega}.}
\end{equation}
such that
for all $\oo$ in the set $\cO_{\infty}^{2\gamma}$, where (recall that $\calO_0\subseteq \mathcal{G}_0$, see \eqref{0di0Melnikov})
\begin{equation}\label{condizMel1}
\cO_{\infty}^{2\gamma}=\cO_{\infty}^{2\gamma}(i):= \{\omega\in \cO_0: 
\; |\omega\cdot \ell - m(\omega) j |>\frac{2\g }{\langle \ell\rangle^{\tau}}\,,
\;\forall \ell \in \Z^\nu, \,\,\forall j\in S^{c}\}\,,
\end{equation}
there exists a real, bounded linear operator  
$\Upsilon=\Upsilon(\oo) : H^{s}_{S^{\perp}}\to H^{s}_{S^{\perp}}$, 
for all $s_0\le s\le \mathcal{S}$,
such that
\begin{equation}\label{operatorefinale}
\calL:=\Upsilon \calL_{\oo}\Upsilon^{-1}=\Pi_S^{\perp}\big(\omega\cdot\del_{\f}-m J-\varepsilon^2 \mathfrak{D}(\omega) +\mathcal{P}_0 \big)
\end{equation}
where $\mathfrak{D}(\omega)$ is  the diagonal operator of order $-1$
defined as 
$\mathfrak{D}:=\mathfrak{D}(\omega)=
\mathrm{diag}(\mathrm{i}\kappa_j)_{j\in S^c} $, 
with
\begin{equation}\label{diagonalopFinale}
  \kappa_{j}:=\kappa_j(\omega):= \lambda(
 j)\big(\lal_j(\oo)-c(\oo)\big)\in \R\,, 
 \quad |\kappa_{j}|^{ sup}\lesssim |j|^{-1}\, ,
\end{equation}
where  $\lal_{j}$ is defined in \eqref{lambdaJ0}.
The constant $m$ depends on $i$ and for $\oo\in 
\cO_{\infty}^{2\gamma}(i_1)\cap \cO_{\infty}^{2\gamma}(i_2)$ 
one has
\begin{equation}\label{clinica1000}
\lvert \Delta_{12}m \rvert\lesssim \varepsilon\lVert i_1-i_2 \rVert_{s_0+\mu_1}\,,
\end{equation}
where $\Delta_{12}m:=m(i_1)-m(i_2)$.
The remainder $\mathcal{P}_0$ in \eqref{operatorefinale}
is defined and Lipschitz in $\omega$ belonging to the set $\calO^{2\gamma}_\infty$
and is Lip-$-1$-modulo tame (see Definition \ref{menounomodulotame}) with
\begin{align}\label{pasqua200}
  &\mathfrak{M}^{\sharp, \gamma^{3/2}}_{\cP_0}(-1,s,\tb_0)  \lesssim_s \varepsilon^{4-3 a}
+\varepsilon \gamma^{-1}\lVert \mathfrak{I}_{\delta}\rVert_{s+{\mu_1}}^{\gamma, \cO_0}\,,
 \\
 &\|\lD^{1/2}\underline{\Delta_{12}\cP_0 }\lD^{1/2}\|_{\mathcal L(H^{s_0})}, \,\, \|\lD^{1/2}\underline{\Delta_{12} \langle \partial_{\f}\rangle^{{\mathtt b_0}} 
 \cP_0  }\lD^{1/2}\|_{\mathcal L(H^{s_0})} 
 \lesssim  \e \g^{-1}  \|i_1-i_2\|_{s_0+\mu_1}\,,
 \label{nomidimerda2bis}
 \end{align}
 for all $\omega\in \calO_{\infty}^{2\g}(i_1)\cap \calO_{\infty}^{2\g}(i_2)$. 
Moreover if $u=u(\omega)$ depends on the parameter 
$\omega\in \calO_{\infty}^{2 \gamma}$ in a Lipschitz way then
\begin{equation}\label{grecia}
\lVert \Upsilon^{\pm 1} u \rVert_s^{\g, \calO_{\infty}^{2 \gamma}}
\lesssim_s \lVert u \rVert_s^{\g,\calO_{\infty}^{2\gamma}}
+\varepsilon \gamma^{-1}
\lVert \mathfrak{I}_{\delta} \rVert^{\g, \calO_0}_{s+\mu_1}
\lVert u \rVert^{\gamma, \calO_{\infty}^{2\gamma}}_{s_0}\,,
\;\;\; s_0\le s\le \mathcal{S}\,.
\end{equation}
\end{teor}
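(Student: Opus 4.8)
The plan is to conjugate $\calL_\oo$ to constant coefficients in the unbounded and negative-order parts by a finite sequence of explicit transformations, and then to absorb the rest into a modulo-tame remainder $\mathcal{P}_0$. Because the diophantine constant $\gamma=\e^{2b}$ is tied to the size of the solution, the first handful of reduction steps cannot be treated perturbatively as in \cite{FGP1}; instead they must be done "by hand", using the algebraic structure of the Birkhoff resonances established in Section \ref{SezioneWBNF} (see Proposition \ref{LemmaBelloWeak} and Proposition \ref{vvv}) together with the homogeneous pseudo differential calculus of Lemma \ref{pseudoHAM1}. First I would follow the classical pseudo differential reduction scheme of \cite{BM1}, \cite{FGP1}: a change of the space variable $x\mapsto x+\beta(\f,x)$ (a quasi-periodic diffeomorphism of $\T$) to make the coefficient of the highest order operator $J\partial_x$ constant in $x$, producing the constant $m(\oo)$; the estimate $|m-1-\e^2 c(\oo)|\lesssim\e^4$ and the expression $c(\oo)=\vec v\cdot\xi$ with $\vec v_k=\tfrac23(1+\ol\jmath_k^2)$ come from tracking the leading terms through Remark \ref{ordinivari}, i.e. from the homogeneous expansion $\Phi_B^{\le 5}(\e\bar v)$, and the Lipschitz bounds \eqref{clinica100}, \eqref{clinica1000} from the tame estimates on $\mathfrak I_\delta$ in \eqref{IpotesiPiccolezzaIdeltaDP} and on $\Phi_B(T_\delta)$ in \eqref{balaban}.

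Next, having straightened the top order, I would reduce the lower order symbols. Since the operator after the first step is pseudo differential of order $1$ with the principal part $-mJ$ (a Fourier multiplier of order $1$ asymptotically linear), the terms of order $0,-1,\dots$ are reduced by successive conjugations with $\op(1+\text{lower order})$; the obstruction is that the $\e$-homogeneous pieces $\sum_{i\le 5}\e^i\mathsf H_i$ contribute order-$\le -1$ operators whose $x$-averaged part is \emph{not} automatically constant, because the DP equation has non-trivial order-four resonances. Here I would invoke the \emph{a posteriori identification of normal forms} (Theorem \ref{PartialWeak}): the normal form produced by the weak BNF, the preliminary reduction steps of subsection \ref{preliminare}, and the linear BNF of subsection \ref{LinearBNF} must coincide with the one obtained by the full formal BNF projected onto the quadratic-in-$z$ terms, and by Proposition \ref{vvv} all resonances surviving in that formal normal form are trivial — hence the surviving diagonal correction is exactly the Fourier multiplier $\mathrm{diag}(\ii\lambda(j)(\lal_j-c(\oo)))_{j\in S^c}$ with $\lal_j$ as in \eqref{lambdaJ0}. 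This yields the operator $-\e^2\mathfrak D(\oo)$ in \eqref{operatorefinale}, with $|\kappa_j|^{sup}\lesssim|j|^{-1}$ following from $\lambda(j)=j+O(j^{-1})$, $c(\oo)=O(1)$ and the explicit cancellation of the $O(j)$ part of $\lambda(j)(\lal_j-c)$ built into the definitions of $\lal_j$ and $\vec v$. The transformations composing to $\Upsilon$ are, at each stage, time-one flows of bounded (or $OPS^0$) vector fields close to the identity, so \eqref{grecia} follows by composition of tame estimates, and the algebraic non-degeneracy needed to \emph{solve} the homological equations in these hand-made steps is supplied by Lemma \ref{ecologia}; the first-Melnikov-type nonresonance set where the space-variable change is well defined is $\cO_\infty^{2\gamma}$ in \eqref{condizMel1}.

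Finally, everything of order $\le -2$, together with the finite rank remainders $\calQ_0=\sum_{i\le 5}\e^i\RR_i+\RR_{>5}$ transported through $\Upsilon$, is collected into $\mathcal{P}_0$. Its modulo-tame bound \eqref{pasqua200} follows from the modulo-tame calculus of \cite{BM1}: each conjugation step costs a fixed number of derivatives in $\f$ (hence the choice $\tb_0=6\tau+6$, $\tb=\tb_0+s_0$ in \eqref{parametriKAM}, allowing up to $\tb_0$ derivatives in $\f$), the size $\e^{4-3a}$ comes from the fact that after removing the $\e$, $\e^2$ (and the relevant $\e^3,\e^4,\e^5$) pieces the leading genuinely-remainder term is $O(\e^6)$ combined with the $\e^{-2b}$ rescaling and the definition $a=2b-2$, while $\RR_{>5}$ contributes $\e^4+\e\gamma^{-1}\|\mathfrak I_\delta\|_{s+\mu_1}$ via \eqref{DecayR*dp}; the Lipschitz-in-$i$ estimate \eqref{nomidimerda2bis} is obtained by differentiating the whole construction in $i$ and using \eqref{clinica1000} together with the variational bounds \eqref{dude2}, \eqref{DecayR*dp2}. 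The main obstacle, and the genuinely new part compared to \cite{KdVAut}, \cite{Giuliani}, is the order-four step: making the reduction of the $\e^2$ term canonical without computing the BNF by brute force, which is precisely what the identification argument of Theorem \ref{PartialWeak} is designed to deliver, and keeping the resulting remainder of order $-1$ (rather than smoothing) under control so that the measure estimates of Section \ref{sezione9DP} can still be closed without loss of space regularity.
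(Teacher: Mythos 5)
Your proposal follows essentially the same route as the paper's proof: the five non-perturbative homogeneous steps done by hand (transport-type homological equations for the generators $\beta_i$, exploiting homogeneity and producing the constants $c(\oo)$ and $d(\oo)$), the straightening of the remaining coefficient via Proposition $3.6$ of \cite{FGP1} on $\cO_{\infty}^{2\gamma}$ giving $m$, three linear BNF steps whose resonant $\e^2$-part is identified through Theorem \ref{PartialWeak} yielding $\mathfrak{D}$ with $\kappa_j=\og(j)(\lal_j-c(\oo))$, and the collection of everything else into a Lip-$-1$-modulo tame remainder estimated by $\e^{4-3a}+\e\g^{-1}\lVert \mathfrak{I}_{\delta}\rVert$. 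Only two cosmetic points differ from the actual argument: in the paper the torus diffeomorphism is applied \emph{after} the removal of the $\e^i$-homogeneous pieces, since only the residual coefficient $a_5$ satisfies the smallness condition \eqref{picci}, and the remainder $\mathcal{P}_0$ still contains genuine order $-1$ terms (e.g. $\op(\mathtt{q}_6)$ and the non-normalized corrections), not only terms of order $\le -2$.
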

The result above 
has two relevant consequences. Firstly it shows
that the operator $\mathcal{L}_{\omega}$
in \eqref{BoraMaledetta} can be conjugated 
to an operator (see \eqref{operatorefinale})
which is ``diagonal'', at the highest order of derivatives,
plus a remainder which is $-1$-smoothing.
In addition to this, thanks to a \emph{linear BNF} procedure 
(performed in subsection \ref{LinearBNF}),
the non-diagonal term $\mathcal{P}_0$ in 
\eqref{operatorefinale} has a size much smaller than $\e$
(see estimates \eqref{pasqua200}, \eqref{nomidimerda2bis}).
In particular  it is ``perturbative'' w.r.t. the constant $\gamma$ in \eqref{gammaDP}.
This allows us to apply the reducibility scheme of \cite{FGP1}
in order to complete the diagonalization of the operator $\mathcal{L}$ (see Theorem \ref{ReducibilityDP}).
 Then the inversion assumption  \eqref{InversionAssumptionDP} follows directly from Proposition \ref{InversionLomegaDP}).

\bigskip

\noindent
{\it Strategy of the Proof of Theorem \ref{risultatosez8}.}


\smallskip

$\bullet${ \bf Reduction at the highest order.} \; The first step is to 
exploit the pseudo differential structure of the operator $\calL_{\oo}$ 
in order to	conjugate it to an operator 
which has constant coefficients 
up to a smoothing remainder of order $-1$. 
To this purpose we use  changes of 
variables  generated as the time-one 
flow map $\Phi^{\tau}\vert_{\tau=1}$ of the Hamiltonian
\begin{equation}\label{pseudo}
S(\tau,\f,z)= \int b(\tau,\f,x) z^2 dx\,,
\qquad
b(\tau,\f,x):=\frac{\be(\f,x)}{1+\tau \be_{x}(\f,x)},
\end{equation}	
\begin{equation}\label{rosso}
\partial_{\tau} \Phi^{\tau} u=
\Pi_S^{\perp}[(J\circ b) \Pi_S^{\perp}[\Phi^{\tau}u]]\,,\qquad
\Phi^0 u=u\,,
\end{equation}
where $\beta$ is some smooth function.	
In Proposition \ref{ConjugationLemma} 
we show that $\Phi^{\tau}$ is well 
defined as symplectic map on $H^{s}_{S^{\perp}}$
(see Lemma \ref{differenzaFlussi}) and  
study  the structure of 
$\Phi^{\tau}\calL_\oo (\Phi^{\tau})^{-1}$.
Proposition \ref{ConjugationLemma}  gives an explicit formula for the new coefficient at the 
highest order (see \eqref{Round}). 
Then  Corollary $3.6$ of \cite{FGMP} 
(see also Proposition $3.6$ in \cite{FGP1})
provides the solution for the equation 
\eqref{Round}=const provided 
that some smallness condition is satisfied.
This smallness condition has the form
\begin{equation}\label{picci}
C(s_1)\gamma^{-1}\|a_0\|_{s_1}^{\gamma,\calO}\ll1
\end{equation}
for some $s_0+\gotp_1>s_1>s_0$ and some 
constant $C(s_1)>0$. As shown in \cite{FGP1}, 
due to the Hamiltonian structure,  
this reduces $\calL_{\omega}$ to constant 
coefficients up to a correction of order $-1$.

Unfortunately, since here $\gamma= \e^{2+a}$, $a>0$, 
by \eqref{unasuaziaricca}, the coefficient $a_0(\f,x)$ 
in $\mathcal{L}_{\omega}$ does not satisfy \eqref{picci}.
This is why we have to perform some 
preliminary steps in order to enter in a perturbative regime 
for the scheme described in the proof 
of Corollary $3.6$ in \cite{FGMP}.

We first "regularize" the purely polynomial 
terms $\mathsf{H}_i$ (see \eqref{leHamiltonianeLin}) by hand, by exploiting their 
homoegeneity according to Definition \ref{HAMOMO}. 
After that we are left with 
only unbounded terms which satisfy 
the smallness conditions of \cite{FGP1}. 
We "regularize" them by applying the results of \cite{FGP1} adapted to our slightly more 
general setting, see Proposition 
\ref{ConjugationLemma}.

\smallskip
In order to determine the correct change of variables 
in the regularization of $\mathsf{H}_i$, it will 
be convenient to use the { \it Formal Lie expansions}.
We recall  that $H\circ (\Phi^{\tau}_{S(\tau)})^{-1}$ 
satisfies, for $\tau\in[0,1]$,
\begin{equation}\label{deritau}
\partial_{\tau}(H\circ (\Phi^{\tau}_{S(\tau)})^{-1})
=\{S(\tau),H\circ(\Phi^{\tau}_{S(\tau)})^{-1}\}.
\end{equation}
By setting  $S:=S(0)$ and 
$\Phi:=(\Phi^{\tau}_{S(\tau)})_{|\tau=1}$, 
the Lie expansion of the conjugated Hamiltonian
$H\circ\Phi^{-1}$ is the following:
\begin{equation}\label{lietau}
\begin{aligned}
H\circ(\Phi^\tau)^{-1}&=H+\tau \{S,H\}_{e}
+\frac{\tau^2}{2}\Big(\{S,\{S,H\}_{e}\}_{e}
+\{(\partial_{\tau}S)(0),H\}_{e}\Big)+\dots\,,
\end{aligned}
\end{equation}
where 
 the Poisson brackets $\{\cdot,\cdot\}_{e}$ 
 are  in \eqref{PoissonEstesa}.
Recall  that $\Phi$ is a $C^k$ map from $H^s $ 
to $H^{s-k}$. Therefore the Taylor expansion 
of the  conjugated Hamiltonian coincides 
with the Lie series of the generator up to any order $\tau^k$.

\medskip
$\bullet${ \bf Linear BNF.} \;The second step is to diagonalize the bounded terms. 
Here we diagonalize "by hand" the terms up to order $\e^3$, 
by exploiting the fact that they are almost diagonal 
according to Definition \ref{almostDIAG} and applying a linear BNF. 
Once this is done, the full diagonalization follows 
by a standard KAM reducibility theorem (see Theorem \ref{ReducibilityDP}).

\subsection{Reduction at the highest order}\label{preliminare}

In the following we shall assume that the \eqref{IpotesiPiccolezzaIdeltaDP} holds with some 
$\gotp_1\gg1$. The loss of regularity $\gotp_1$ will be determined explicitly 
at the end of the section.
In order to perform the {\em non-perturbative} steps, we construct changes of coordinates $\B_{i}$, $i=1,2,3,4,5$,
as the time-one flow maps generated by  \eqref{pseudo}.
In order to have a quantitative control on the remainders 
appearing in our procedure we shall use the class of smoothing operators
$\mathfrak{L}_{\rho,p}$ (introduced in \cite{FGP1}) 
given
in Definition \ref{ellerho} with (recall that $s_0:=[\nu/2]+4$)
\begin{equation}\label{tornado}
\rho\geq s_0+6\tau+9\,, 
\end{equation}
and $s_0\leq p$, which will be fixed later on.
The changes of coordinates $\B_{i}$, $i=1,\ldots,5$,
are constructed in such a way,
setting
\begin{equation}\label{iesimo}
\mathcal{L}_i:=\B_i \mathcal{L}_{i-1} \B_i^{-1}=\Pi_S^{\perp} 
\Big( \omega\cdot\partial_{\varphi}-J\circ \big(1+\varepsilon^2 c_i(\omega)+a_i(\varphi, x) \big)+\op(\mathtt{q}_i)+\widehat{\mathcal{Q}}_i\Big)
\end{equation}
with $\mathcal{L}_0=\mathcal{L}_{\omega}$ and some constant $c_i(\oo)$. We have 
\begin{equation}\label{uranioi}
\lVert a_i \rVert_s^{\gamma, \calO_0}\lesssim_s \varepsilon^{i+1}
+\varepsilon\lVert \mathfrak{I}_{\delta}\rVert_{s+\sigma_0+\sigma_{i+3}}^{\gamma, \calO_0}, \qquad 
\lVert \Delta_{12} a_i   \rVert_{s} \lesssim_{p} 
\varepsilon (1+\lVert \mathfrak{I}_{\delta} \rVert_{p+\sigma_0+\sigma_{i+3}})\|i_1-i_2\|_{p+\s_0+\s_{i+3}}.
\end{equation}
Note that the size of $a_{i}$ (in the low norm) is decreasing in $i$. Regarding the remainders, the  $\op(\mathtt{q}_i)$ are of order $-1$  while $\widehat{\mathcal{Q}}_i$ is in $\mathfrak{L}_{\rho,p}$. The numbers
$\mathbb{M}^{\gamma}_{\widehat{\mathcal{Q}}_i}(s, \tb)$ control the norm of the corresponding operator, see Definition  \ref{ellerho}. Note that by Lemma \ref{lrofiniterank} $\mathcal{Q}_0=:\widehat{\mathcal{Q}}_0$ belongs to $\mathcal{L}_{\rho, p}$ with bounds on $\mathbb{M}^{\gamma}_{\widehat{\mathcal{Q}}_0}(s, \tb)$ given in the same lemma. We have
\begin{equation}\label{Napalmi}
\begin{aligned}
\lvert \mathtt{q}_i \rvert^{\gamma, \calO_0}_{-1, s, \alpha} & \lesssim_{s, \alpha, \rho} 
\varepsilon (1+\lVert \mathfrak{I}_{\delta}\rVert^{\gamma, \calO_0}_{s+\sigma_0+\sigma_{i+3}}), \quad  s\geq s_0\\
\mathbb{M}^{\gamma}_{\widehat{\mathcal{Q}}_i}(s, \tb) & \lesssim_{s, \rho} \varepsilon 
(1+\lVert \mathfrak{I}_{\delta}\rVert^{\gamma, \calO_0}_{s+\sigma_0+\sigma_{i+3}}),  \quad s_0\le s\le \mathcal{S} ,,\qquad 0\le \tb\le \rho-2,
\end{aligned}
\end{equation}
\begin{equation}\label{Napalmii}
\begin{aligned}
\lvert \Delta_{12} \mathtt{q}_i    \rvert_{-1, p, \alpha} & \lesssim_{p, \alpha, \rho} 
\varepsilon (1+\lVert \mathfrak{I}_{\delta} \rVert_{p+\sigma_0+\sigma_{i+3}})\|i_1-i_2\|_{p+\s_0+\s_{i+3}},\\
\mathbb{M}_{\Delta_{12} \widehat{\mathcal{Q}}_i  }(p, \tb) & \lesssim_{p, \alpha} 
\varepsilon (1+\lVert \mathfrak{I}_{\delta} \rVert_{p+\sigma_0+\sigma_{i+3}})\|i_1-i_2\|_{p+\s_0+\s_{i+3}},\qquad 0\le \tb\le \rho-3,
\end{aligned}
\end{equation}
with $\sigma_0$ defined in Proposition \ref{LinearizzatoIniziale} and $\sigma_{i+3}>0$ $i=1,\ldots,5$, depending only on $\nu$
(essentially $\s_{i+3}$ are the losses coming from the application of  Proposition \ref{ConjugationLemma}). Note that if we want \eqref{Napalmi} to hold for some given $p$, we have to assume a smallness condition \eqref{IpotesiPiccolezzaIdeltaDP} with $p+\sigma_0+\sigma_{i+3}<\gotp_1$.


\medskip

\noindent\textbf{Step ($\varepsilon$)}. Consider the Hamiltonian
\begin{equation}\label{Stau}
\begin{aligned}
S(\tau):=\frac{1}{2} \int_{\mathbb{T}} b_1(\tau,\varphi, x)\,z^2\,dx &=\varepsilon S_1+\varepsilon^2\tau S_2+\varepsilon^3\tau^{2} S_3+S_4(\tau), \qquad  b_1:=\frac{\varepsilon\beta_1}{1+\tau \varepsilon  (\beta_1)_x},
\end{aligned}
\end{equation}
\begin{equation}\label{esse1}
S_1:=\frac{1}{2} \int_{\T} \beta_1\,z^2\,dx, \quad S_2:=-\frac{1}{4}\int_{\T}\partial_x (\beta_1^2)\,z^2\,dx, \quad S_3:=\frac{1}{2} \int_{\T}\beta
_1 (\beta_1)_x^2z^{2}\,dx, 
\end{equation}
with $S_4(\tau)\sim O(\tau^3\e^{4})$ and 
for some function $\beta_1$ of the form \eqref{simboOMO2}  with $p=1$ and some 
coefficients $(\beta_1)_{j}$, $j\in S$, 
 to be determined.
The Hamiltonian system associated to $S(\tau)$ is
of the form \eqref{rosso} with $b\rightsquigarrow b_1$.
We call $\B_1$ the flow at time-one generated by $S(\tau)$,
then the Hamiltonian of the conjugated linearized operator $\B_1 \mathcal{L}_{\omega} \B_1^{-1}$ is (recall \eqref{hamiltonianalinearizzata}, \eqref{leHamiltonianeLin})
\begin{align}
\!\!\!\! \mathsf{H}\circ \B_1^{-1} &=
\mathsf{H}_0+\e \mathsf{H}^{(1)}_1+\e^{2}\mathsf{H}^{(1)}_2+\e^{3}\mathsf{H}_3^{(1)}
+o(\e^{3})\,, 
\qquad \mathsf{H}_1^{(1)}:= \{  S_1, \mathsf{H}_0\}_{e}+\mathsf{H}_1\,,
\label{espansioLIE}\\
\mathsf{H}_2^{(1)}&:=\frac{1}{2}\{ S_1,\{ S_1,  \mathsf{H}_0\}_e\}_{e}
+ \{S_1,\mathsf{H}_1\}_{e}+\frac{1}{2}\{S_2,\mathsf{H}_0\}_{e}
+\mathsf{H}_2+\mathsf{H}_{\mathcal{R}_2}\,,\label{espansioLIE22}
\end{align}
where, by Lemma \ref{pseudoHAM1}, $\mathsf{H}_3^{(1)}$
is some pseudo differential $3$-homogeneous Hamiltonian of the form \eqref{HAMpseudo}.
Notice that also $H_1^{(1)}$ and $\mathsf{H}_2^{(1)}$ (for $\eta=0$)
are pseudo differential and $1-$homogeneous, resp. 2-homogenenous, Hamiltonians according to  Definition \ref{HAMOMO}.
We want to solve the following equation 
\begin{equation}\label{trieste}
\mathsf{H}_1^{(1)}=\mathsf{H}_1+\{ S_1, \mathsf{H}_0\}_{e}=\mathsf{H}_1+\{ S_1, \mathsf{H}_0\}+\overline{\omega}\cdot\partial_{\varphi} S_1= \int  \mathfrak{B}_1( z)\,z\,dx\,,
\end{equation}
where $\mathfrak{B}_1$ is some pseudo differential operator of order $-2$. 
Recalling \eqref{leHamiltonianeLin}, 
and expanding $\{S_1,\mathsf{H}_0\}$ as in the proof of Lemma \ref{pseudoHAM1},
we note that
the equation \eqref{trieste} is equivalent to the following one
\begin{equation}\label{MisterX}
\overline{\omega}\cdot\partial_{\varphi} \beta_1-(\beta_1)_x-\overline{v}=0.
\end{equation}
Hence we choose 
$
\beta_1=\frac{1}{3} (\Lambda\partial_x)^{-1} \overline{v}
$
and we note that
\begin{equation}\label{Drugo}
\lVert \varepsilon\beta_1 \rVert^{\gamma, \calO_0}_s
\lesssim_s \varepsilon\,, \quad \forall s\geq s_0\,.
\end{equation}
With the choice in  \eqref{MisterX} we have 
\begin{equation}\label{Buno}
\mathfrak{B}_1:=[3 \Lambda\partial_x, \beta_1].
\end{equation}
In this way the Hamiltonians in \eqref{espansioLIE}, \eqref{espansioLIE22} 
become
\begin{equation}\label{espansioLIE100}
\mathsf{H}^{(1)}_1:=\int  \mathfrak{B}_1( z)\,z\,dx\,,\qquad
\mathsf{H}^{(1)}_2:=\mathsf{H}_2+\frac{1}{2}\{S_{2},\mathsf{H}_0\}_{e}
+\mathsf{H}_{\mathcal{R}_2}
+\frac{1}{2}\{ S_1, \mathsf{H}_1\}+\frac{1}{2}\{S_1, \int_{\mathbb{T}} \mathfrak{B}_1(z)\,z\,dx\} \,.
\end{equation}
By \eqref{Drugo} the smallness assumption of Lemma \ref{differenzaFlussi} is satisfied.
By \eqref{unasuaziaricca}, \eqref{dude}, \eqref{dude2} and using the assumption 
\eqref{IpotesiPiccolezzaIdeltaDP} with $\gotp_1$ sufficiently large
the condition \eqref{filini} holds.
In this case $\mathtt{q}\rightsquigarrow 0$ (see \eqref{docq}).\\
Then Proposition \ref{ConjugationLemma} applies and the new linearized operator 
$\mathcal{L}_1:=\B_1 \mathcal{L}_{\omega} \B_1^{-1}$ has the form in \eqref{iesimo} with $i=1$ and $c_1=0$. 
By \eqref{Drugo}, \eqref{dude}, \eqref{JeTame}, \eqref{Round}, \eqref{BoraMaledetta}, we have
that $\widehat{\mathcal{Q}}_1\in \gotL_{\rho, p}(\calO)$ (see Definition  
\ref{ellerho}) (with $\rho$, $p$ as in \eqref{tornado}) and 
\eqref{Napalmi}, \eqref{Napalmii} hold for $i=1$.

\noindent
The only estimates that are not given by Lemma \ref{ConjugationLemma} are \eqref{uranioi}. The coefficient $a_1$ is given by \eqref{Round} with $m\rightsquigarrow 1$, $a\rightsquigarrow a_0$, $a_+\rightsquigarrow a_1$ and $\tilde{\beta}$ such that $x\mapsto x+\tilde{\beta}$ is the inverse of $x\mapsto x+\beta_1$. By the choice of $\beta_1$ in \eqref{MisterX} we have eliminated the $\varepsilon$-terms from $a_1$. Hence by \eqref{Drugo} and \eqref{unasuaziaricca} we get \eqref{uranioi} for $i=1$.

\medskip

\noindent\textbf{Step ($\varepsilon^2$)}. Now we deal with the terms of order $\varepsilon^2$ of the Hamiltonian \eqref{hamiltonianalinearizzata}. We consider the auxiliary Hamiltonian
\begin{equation}
\begin{aligned}
\tilde{S}(\tau)&=\frac{1}{2} \int_{\mathbb{T}} b_2( x,\varphi)\,z^2\,dx=\varepsilon^2\tilde{S}_2+\tilde{S}_4(\tau), \quad b_2 :=\frac{\varepsilon^2\beta_2}{1+\tau\varepsilon^2 (\beta_2)_x},
\quad
\tilde{S}_2:=\frac{1}{2} \int_{\T} \beta_2\,z^2\,dx, 
\end{aligned}
\end{equation}
where  
$\tilde{S}_4(\tau):=\tilde{S}(\tau)-\varepsilon^2\tilde{S}_2
\sim O(\tau\e^{4})$ and
 $\beta_2$ is some function of the form \eqref{simboOMO}, with $p=2$, 
 to be determined. 
 Notice that $(\partial_{\tau}\tilde{S})(0)\sim O(\e^{4})$. 
 The Hamiltonian system associated to $\tilde{S}(\tau)$ is
of the form \eqref{rosso} with $b\rightsquigarrow b_2$.
If $\B_2$ is the flow at time-one generated by $\tilde{S}(\tau)$,
then the Hamiltonian of the conjugated 
linearized operator $\mathcal{L}_2:=\B_2\mathcal{L}_1 \B_2^{-1}$ is
(recall \eqref{espansioLIE},\eqref{espansioLIE100},\eqref{lietau})
\begin{equation}\label{espansioLIE2}
\begin{aligned}
&\mathsf{H}\circ \B_1^{-1}\circ \B_2^{-1}=
\mathsf{H}_0+\e \mathsf{H}^{(1)}_1+\e^{2}\mathsf{H}^{(2)}_2
+\e^{3}\mathsf{H}^{(2)}_3+o(\e^{3})\\
&\mathsf{H}^{(2)}_2:=\mathsf{H}^{(1)}_2+\{  \tilde{S}_2, \mathsf{H}_0\}_{e}\,,
\qquad \mathsf{H}^{(2)}_3:=\mathsf{H}_3^{(1)}+\{\tilde{S}_2,\mathsf{H}_1^{(1)}\}_{e}\,.  
\end{aligned}
\end{equation}
We want to solve the equation 
\begin{equation}\label{omopato199}
\begin{aligned}
\mathsf{H}^{(2)}_2&=\mathsf{H}^{(1)}_2
+\overline{\omega}\cdot\partial_{\varphi} \tilde{S}_2+\{\tilde{S}_2 ,\mathsf{H}_0\}
= c+\int_{\mathbb{T}}  \mathfrak{B}_2( z)\,z\,dx\,+\mathsf{H}_{\mathcal{R}_2},
\end{aligned}
\end{equation}
where $\mathfrak{B}_2$ is some  pseudo differential and $2$-homogeneous
operator of order $-2$ (see Definition \ref{PseudoSIMho}), 
$c$ is some constant to be determined and $\mathsf{H}_{\mathcal{R}_2}$ (possibly different from the one in \eqref{hamiltonianalinearizzata}) is
a Hamiltonian with the form \eqref{HAMpseudo}. 
By Lemma \ref{pseudoHAM1} we have that 
$\mathsf{H}^{(1)}_2
+\{\tilde{S}_2 ,\mathsf{H}_0\}$ can be written in the form
\eqref{HAMpseudo} with, in particular,
\begin{equation}\label{tappo}
f_2(\bar{v}):=-\Psi_2(\overline{v})
+\frac{1}{4}\partial_{xx}(\beta_1^2)-\frac{1}{2}\beta_1 \overline{v}_x+\frac{1}{2}\overline{v} (\beta_1)_x,
\end{equation}
and some $\mathfrak{B}_2\in OPS^{-2}$, as in Definition \ref{PseudoSIMho},
up to a finite rank remainder.
%
Hence the equation \eqref{omopato199} is equivalent to
%
\begin{equation}\label{MisterY}
\overline{\omega}\cdot\partial_{\varphi} \beta_2 - (\beta_2)_x
+f_2(\bar{v})=c\,.
\end{equation}
Since $f_2$ in \eqref{tappo} has the form \eqref{simboOMO2} with $p=2$,
we look for a function $\beta_2$ of the same form in \eqref{simboOMO2}
with some coefficients $(\beta_2)_{j_1,j_2}\in\mathbb{C}$.
Hence equation \eqref{MisterY} reads
\begin{equation}\label{MisterY30}
\begin{aligned}
&\big[\og(j_1)+\og(j_2)-(j_1+j_2)\big](\beta_2)_{j_1,j_2}+(f_2)_{j_1,j_2}=0,
\quad {\rm for}\quad\og(j_1)+\og(j_2)-(j_1+j_2)\neq0\,,\\
&(f_2)_{j_1,j_2}=c,\quad {\rm for}\quad \og(j_1)+\og(j_2)-(j_1+j_2)=0\,.
\end{aligned}
\end{equation}
We have that, for $j_1, j_2\in S$, 
$
\og(j_1)+\og(j_2)-(j_1+j_2)=0
$ if and only if $j_1+j_2=0$, since $j_1j_2\neq -1$. 
The terms with $j_1=-j_2$ corresponds to the average in $x$ of the function 
$f_2(\bar{v})$. 
Hence we set
\begin{equation}\label{Bdue}
c:=\frac{1}{2\pi}\int_{\mathbb{T}}f_{2}(\bar{v})dx.
\end{equation}
and we  evaluate explicitly it.
The functions $\Psi_2(\overline{v})$ and 
$\partial_{xx}(\beta_1^2)$ do not contribute since they have spatial zero average.
Recalling 
that $\beta_1=\frac{1}{3} (\Lambda\partial_x)^{-1} \overline{v}$ we have 
\[
\int_{\mathbb{T}}f_{2}(\bar{v})dx=
\frac{1}{6}\int_{\mathbb{T}}\Big((\Lambda^{-1}\partial_{x}^{-1}\partial_x\bar{v})\cdot\bar{v}
-(\Lambda^{-1}\partial_x^{-1}\bar{v})\cdot \bar{v}_x\Big)
dx=\frac{1}{3}\int_{\mathbb{T}}(\Lambda^{-1}\bar{v})\cdot\bar{v}dx
\stackrel{\eqref{Helmotz}}{=}\frac{1}{3}\int_{\mathbb{T}}(\bar{v}^2+\bar{v}^{2}_x)\,.
\]
Then the constant $c=c(\omega)$ (recall the \eqref{xiomega}) in \eqref{Bdue} is given by 
\begin{equation}\label{Cxi}
c(\oo)=\frac{1}{3} \sum_{j\in S} (1+j^2)\,\xi_j=\frac{2}{3} \sum_{j\in S^+}(1+j^2)\,\xi_j.
\end{equation}
By noting that
\begin{equation}\label{Drugo2}
\lVert \varepsilon^2\beta_2 \rVert^{\gamma, \calO}_s\lesssim_s \varepsilon^2 \quad \forall s\geq s_0,
\end{equation}
by \eqref{Napalmi}-\eqref{uranioi} with $i=1$ 
and using the assumption \eqref{IpotesiPiccolezzaIdeltaDP} 
with $\gotp_1$ sufficiently large
the smallness assumption of Lemma \ref{differenzaFlussi} and the condition \eqref{filini} are satisfied.
In this case $\mathtt{q}\rightsquigarrow \mathtt{q}_1$, hence by \eqref{Napalmi}, \eqref{Napalmii} 
the bounds \eqref{docq}, \eqref{docqi} hold with 
$\mathtt{k}_1\rightsquigarrow\varepsilon$, $\mathtt{k}_2\rightsquigarrow \varepsilon$, $\mathtt{k}_3\rightsquigarrow \varepsilon$, $f\rightsquigarrow \mathfrak{I}_{\delta}$. Then Proposition \ref{ConjugationLemma} applies and $\mathcal{L}_2:=\Phi_2 \mathcal{L}_1 \Phi_2^{-1}$ with $i=2$ and $c_2(\omega):=c(\omega)$ given in \eqref{Cxi}. 
By \eqref{Drugo2}, \eqref{dude}, \eqref{JeTame}, \eqref{Round} we have
that $\widehat{\mathcal{Q}}_2\in \gotL_{\rho, p}(\calO)$ (with $\rho$, $p$ as in \eqref{tornado}) and 
\eqref{Napalmi}, \eqref{Napalmii} hold for $i=2$.
By \eqref{Drugo2}, \eqref{Napalmi}-\eqref{uranioi} for $i=1$, \eqref{ServelloniMazzantiVienDalMare}, we have 
that \eqref{Napalmi}-\eqref{uranioi} holds for $i=2$.
\medskip

\textbf{Steps $(\e^{3})$-$(\varepsilon^4)$-$(\varepsilon^5)$}. Consider $i=3,4, 5$. 
We proceed exactly as in the previous steps. 
We consider a change of coordinates $\B_i$ as the time-one flow map of
\begin{equation}\label{systemS45}
u_{\tau}=\Pi_S^{\perp}[\big(J\circ b_i(\tau)\big)\,u], \quad
b_i:=\frac{\varepsilon^i\beta_i}{1+\varepsilon^i\tau (\beta_i)_x}
\end{equation}
for some smooth function $\beta_i$ of the form \eqref{simboOMO2} (with $p=i$)
to be determined. 
Using Lemma \ref{pseudoHAM1} for the Hamiltonians of order $\e^{i}$, $i=3,4,5$,
we can choose 
$\beta_i$ in order to solve an equation like the following
\begin{equation}\label{homostep45}
\overline{\omega}\cdot\partial_{\varphi} \beta_i-(\beta_i)_x=f_i(\overline{v}),
\end{equation}
where $f_i$ is a homogeneous  function as in
 \eqref{simboOMO2} (with $p=i$).
The condition \eqref{GenericAssumption1} 
implies that the equation \eqref{homostep45} for 
$i=4$ is solved up to remainders of the form
\begin{equation}\label{dXi}
d(\oo):=d(\xi(\oo))=\sum_{j_1, j_2\in S} 
\mathtt{d}(j_1, j_2)\xi_{j_1}\xi_{j_2}\,.
\end{equation}
By \eqref{GenericAssumptionbis} there are no small divisors for \eqref{homostep45} 
if $i=3$ or $i=5$.
By \eqref{Napalmi}, \eqref{Napalmii} and by noting that
\begin{equation}\label{Drugo45}
\lVert \varepsilon^i\beta_i \rVert^{\gamma, \calO}_s
\lesssim_s \varepsilon^i \quad i=3,4\,, 5\,,\,\, \forall s\geq s_0\,,
\end{equation}
the smallness assumption of Lemma \ref{differenzaFlussi} and the condition \eqref{filini} are satisfied for the system \eqref{systemS45}. Arguing as in the previous steps we obtain that 
$\mathcal{L}_5:=\B_5\B_4 \mathcal{L}_3 \B_4^{-1}\B_5^{-1}$ 
has the form \eqref{iesimo} with $i=5$ and $c_5=c_2+\varepsilon^2 d(\xi)$. 
Moreover the bounds \eqref{Napalmi}-\eqref{uranioi} hold for $i=5$.

\medskip

Now we apply Proposition $3.6$ in \cite{FGP1} (or Corollary $3.6$ in \cite{FGMP})
in order to make constant the coefficient $a_5$ of the linearized operator $\mathcal{L}_5$, namely we find $\beta$ such that
\begin{equation}\label{borina}
\omega\cdot\partial_{\varphi} \beta-(1+\varepsilon^2 c(\omega)+\varepsilon^4 d(\omega)+a_5(\varphi, x))(1+\beta_x)=\mbox{constant}.
\end{equation}
Note that, by \eqref{uranioi} with $i=5$ and \eqref{IpotesiPiccolezzaIdeltaDP}, the smallness condition  \eqref{picci} is satisfied by the function $a_{5}$. We have the following.

\begin{prop}\label{corollarioStraight}
There exists $\beta^{(\infty)}(\varphi, x)$ such that 
$(\varphi, x)\mapsto (\varphi, x+\beta^{(\infty)}(\varphi, x))$ 
is a diffeomorphism of the torus $\T^{\nu+1}$ with the following estimates (recall \eqref{condizMel1}),
\begin{equation}\label{saintetienne}
 \lVert \beta^{(\infty)}\rVert_s^{\g, \calO_0}
 \lesssim_s \g^{-1}\lVert a_5 \rVert_{s+2\tau+4}^{\g, \calO_{\infty}^{2 \g}}\,, 
 \quad \forall s\geq s_0
\end{equation}
and the following holds.
If $\beta$ is the function such that $(\varphi, x)\mapsto (\varphi, x+\beta(\varphi, x))$ is the inverse of the above diffeomorphism and $\B_6$ is the flow of the Hamiltonian PDE
\begin{equation}\label{saintetienne10}
u_{\tau}=\Pi_S^{\perp}[\big(J \circ b(\tau)\big)\,u)]\,, 
\qquad b(\tau):=b(\tau, \varphi, x)=\frac{\beta}{1+\tau \beta_x}\,,
\end{equation}
then the conjugated of the operator $\mathcal{L}_5$ 
in \eqref{iesimo} with $i=5$ is
\begin{equation}\label{BrigitteBardot}
\mathcal{L}_6:=\B_{6}\,\mathcal{L}_5\,\B^{-1}_{6}
= \Pi_S^{\perp}\Big(\omega\cdot\partial_{\varphi}-m J+\mathcal{Q}_6  \Big)\,,
\end{equation}
where $\mathcal{Q}_6=\op(\mathtt{q}_6)+\widehat{\mathcal{Q}}_6$ is of order $-1$, as in Proposition \ref{ConjugationLemma}, and $m$ is a constant such that
\begin{equation}\label{clinica}
\lvert m-1-\e^2 c(\oo)-\e^4 d(\oo) \rvert^{\gamma}\lesssim \e^{10}\g^{-2}\,, 
\quad |m|^{lip}\lesssim 1\,,
\qquad \lvert \Delta_{12} m   \rvert\lesssim 
\varepsilon\lVert i_1-i_2 \rVert_{s_0+2}\,,
\quad \forall \omega\in \calO^{2\gamma}_{\infty}\,.
\end{equation}
Moreover, for any $s\geq s_0$,
\begin{equation}\label{todd}
\lvert \mathtt{q}_6 \rvert^{\gamma, \calO^{2\gamma}_{\infty}}_{-1, s, \alpha}
\lesssim_s \varepsilon(1+\lVert \mathfrak{I}_{\delta} 
\rVert^{\gamma, \calO_0}_{s+\hat{\s}})\,,
\qquad
\lvert \Delta_{12} \mathtt{q}_6   \rvert_{-1, p, \alpha}\lesssim_{p} 
\varepsilon \gamma^{-1} (1+\lVert \mathfrak{I}_{\delta} \rVert_{p+\hat{\s}})
\lVert i_1-i_2 \rVert_{p+\hat{\s}}\,,
\end{equation}
and $\widehat{\calQ}_{6}\in \gotL_{\rho, p}$,
for $s_0\le s\le \mathcal{S}$, satisfies
\begin{align}
&\mathbb{M}^{\gamma}_{\widehat{\mathcal{Q}}_6}(s, \tb) 
\lesssim_s 
\varepsilon (1+\lVert \mathfrak{I}_{\delta}
\rVert_{s+\hat{\s}}^{\gamma, \calO_0})\,, 
\quad 0\le \tb\le \rho-2\,,\label{capoinb}\\
&\mathbb{M}_{ \Delta_{12} \widehat{\mathcal{Q}}_6  }(p, \tb)
\lesssim_{p} 
\varepsilon \gamma^{-1} (1+\lVert \mathfrak{I}_{\delta} 
\rVert_{p+\hat{\s}})
\lVert i_1-i_2 \rVert_{p+\hat{\s}}\,,\qquad 0\le \tb\le \rho-3\label{capoinb2}
\end{align}
with $\hat{\s}=\s_0+\s_9+\rho+s_1-s_0$ for some $\s_9$, possibly larger than $\s_8$ (recall \eqref{iesimo} with $i=5$ and $s_1$ given in Proposition $3.6$ in \cite{FGP1}).
\end{prop}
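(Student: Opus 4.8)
The plan is to realize Proposition \ref{corollarioStraight} as an application of the already-established "straightening" result (Proposition $3.6$ in \cite{FGP1}, equivalently Corollary $3.6$ in \cite{FGMP}) to the operator $\mathcal{L}_5$ in \eqref{iesimo} with $i=5$, which by construction has the form $\Pi_S^{\perp}\big(\omega\cdot\partial_{\varphi}-J\circ(1+\varepsilon^2 c(\omega)+\varepsilon^4 d(\omega)+a_5(\varphi,x))+\op(\mathtt q_5)+\widehat{\mathcal Q}_5\big)$ with $\op(\mathtt q_5)$ of order $-1$ and $\widehat{\mathcal Q}_5\in\mathfrak L_{\rho,p}$. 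The first step is to check the smallness hypothesis \eqref{picci}: by \eqref{uranioi} with $i=5$ one has $\|a_5\|_{s_1}^{\gamma,\calO_0}\lesssim \varepsilon^6+\varepsilon\|\mathfrak I_\delta\|_{s_1+\sigma_0+\sigma_8}^{\gamma,\calO_0}$, and combining this with \eqref{IpotesiPiccolezzaIdeltaDP} (taking $\gotp_1$ large enough that $s_1+\sigma_0+\sigma_8\le s_0+\gotp_1$) gives $\gamma^{-1}\|a_5\|_{s_1}^{\gamma,\calO_0}\lesssim \varepsilon^{4-a}\ll 1$, since $\gamma=\varepsilon^{2b}=\varepsilon^{2+a}$. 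Thus Proposition $3.6$ of \cite{FGP1} applies and produces the diffeomorphism $(\varphi,x)\mapsto(\varphi,x+\beta^{(\infty)}(\varphi,x))$ of $\T^{\nu+1}$ satisfying the tame estimate \eqref{saintetienne}; its inverse is generated by the function $\beta$, and $\B_6$ is the symplectic flow \eqref{saintetienne10}, which is well defined on $H^s_{S^\perp}$ by the same argument as in Lemma \ref{differenzaFlussi} once $\gamma^{-1}\|a_5\|_{s_1}\ll1$.

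The second step is to identify the conjugated operator. Since $\B_6$ is exactly a change of variables of the type produced by the Hamiltonian \eqref{pseudo}, Proposition \ref{ConjugationLemma} describes the structure of $\B_6\mathcal{L}_5\B_6^{-1}$: the transport/highest-order part is governed by the coefficient equation \eqref{Round}, which by the defining property \eqref{borina} of $\beta$ becomes a constant $m$; the new first-order piece vanishes by the Hamiltonian structure (exactly as in \cite{FGP1}); and the remaining terms are collected into $\mathcal Q_6=\op(\mathtt q_6)+\widehat{\mathcal Q}_6$ of order $-1$. This yields \eqref{BrigitteBardot}. The constant $m$ is, by the averaging formula in Proposition \ref{ConjugationLemma} (cf.\ Proposition $3.6$ in \cite{FGP1}), equal to $1+\varepsilon^2 c(\omega)+\varepsilon^4 d(\omega)$ up to an error controlled by $\gamma^{-2}\|a_5\|_{s_1}^2$, whence the bound $|m-1-\varepsilon^2 c-\varepsilon^4 d|^\gamma\lesssim \varepsilon^{10}\gamma^{-2}$ in \eqref{clinica}; the Lipschitz bound $|m|^{lip}\lesssim1$ and the variation bound $|\Delta_{12}m|\lesssim\varepsilon\|i_1-i_2\|_{s_0+2}$ follow from the corresponding Lipschitz/variation estimates on $a_5$ in \eqref{uranioi} together with the Lipschitz dependence of the straightening map on its data.

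The third step is the quantitative control of the remainder $\mathcal Q_6$. The term $\op(\mathtt q_6)$ of order $-1$ is obtained by transporting $\op(\mathtt q_5)$ and adding the new contributions generated by the conjugation; by the composition estimates for pseudodifferential operators (Appendix B of \cite{FGP1}) and \eqref{Drugo}, \eqref{saintetienne}, \eqref{Napalmi}, one gets $|\mathtt q_6|_{-1,s,\alpha}^{\gamma,\calO_\infty^{2\gamma}}\lesssim_s\varepsilon(1+\|\mathfrak I_\delta\|_{s+\hat\sigma}^{\gamma,\calO_0})$ and the corresponding variation estimate, which is \eqref{todd}. The operator $\widehat{\mathcal Q}_6$ is obtained from $\widehat{\mathcal Q}_5\in\mathfrak L_{\rho,p}$ plus commutator-type terms; since $\B_6^{\pm1}$ differ from the identity by operators controlled in $H^s$-norm by $\gamma^{-1}\|a_5\|_{s+2\tau+4}$, and the class $\mathfrak L_{\rho,p}$ is stable under such conjugations with $\rho$ as in \eqref{tornado}, one obtains \eqref{capoinb} and the variation bound \eqref{capoinb2} with the loss $\hat\sigma=\sigma_0+\sigma_9+\rho+s_1-s_0$, the extra $\rho+s_1-s_0$ accounting for the smoothing order and the fixed low index at which \eqref{picci} is imposed. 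The main obstacle is purely bookkeeping: tracking the precise regularity losses through the five-plus-one steps so that a single $\gotp_1=\mu_1(\nu)$ suffices, and making sure that at the low index $s_1$ the remainders $\varepsilon^6+\varepsilon\|\mathfrak I_\delta\|_{s_1+\hat\sigma}$ stay subdominant to $\varepsilon^{4-a}$—which is exactly why \eqref{AssumptionDP}/\eqref{IpotesiPiccolezzaIdeltaDP} are imposed with a high Sobolev exponent.
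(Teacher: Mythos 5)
Your proposal follows essentially the same route as the paper: verify the smallness condition \eqref{picci} for $a_5$ using \eqref{uranioi} (with $i=5$) and \eqref{IpotesiPiccolezzaIdeltaDP}, invoke Proposition $3.6$ of \cite{FGP1} applied to the transport vector field to obtain $\beta^{(\infty)}$, $m$ and \eqref{saintetienne}, \eqref{clinica}, pass to the inverse diffeomorphism $\beta$ and check that $\B_6$ is well defined via Lemma \ref{differenzaFlussi}, and then apply Proposition \ref{ConjugationLemma} (with $\tk_1\rightsquigarrow\varepsilon^6$, $\tk_2\rightsquigarrow\varepsilon$, $\tk_3\rightsquigarrow\varepsilon\gamma^{-1}$) to get \eqref{BrigitteBardot}, \eqref{todd}, \eqref{capoinb}, \eqref{capoinb2}. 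The only (harmless) inaccuracy is quantitative: the term $\varepsilon\gamma^{-1}\lVert \mathfrak{I}_{\delta}\rVert$ dominates, so $\gamma^{-1}\lVert a_5\rVert_{s_1}\lesssim \varepsilon^{4-3a}$ rather than $\varepsilon^{4-a}$, which is still $\ll1$ and is exactly the size used in the paper.
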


\begin{proof}
The first order linear differential operator (recall \eqref{Cxi}, \eqref{dXi})
\begin{equation}\label{neve}
\oo\cdot\del_{\f}-\big(1+\varepsilon^2 c(\oo)+\varepsilon^4 d(\xi)+a_5(\varphi, x) \big)\partial_x
\end{equation}
defined on $H_{S^{\perp}}^s(\T^{\nu+1})$ is associated to the vector field on $\T^{\nu+1}$
\begin{equation}\label{Lubiana}
X_0:=\omega\cdot\frac{\partial}{\partial \varphi} - 
\big(1+\varepsilon^2 c(\oo)+\varepsilon^4 d(\xi)
+a_5(\varphi, x) \big)\frac{\partial}{\partial x}\,.
\end{equation}
For  $\gotp_{3}$ in \eqref{IpotesiPiccolezzaIdeltaDP} large enough, i.e. if 
$\gotp_{3}\gg \s_0+s_1+\s_8$, 
and by \eqref{uranioi} with $i=5$,  we have
\begin{equation*}
C(s_1)\,\gamma^{-1} \lVert a_5 \rVert^{\gamma, \calO_{\infty}^{2 \gamma}}_{s_1}\le C(s_1) \varepsilon^{4-3 a}=\delta^* \ll 1,
\end{equation*}
provided that $\varepsilon$ is small enough. 
This is the condition \eqref{picci}, hence Proposition $3.6$ in \cite{FGP1}  applies to the vector field \eqref{Lubiana}. Thus there exist $\beta^{(\infty)}$ and $m$ such that the bounds \eqref{saintetienne}, \eqref{clinica} hold.
Moreover the operator \eqref{neve} conjugated by the transformation
\[
\mathcal{T}_{\beta^{(\infty)}}\colon  u(\varphi, x)\mapsto 
u(\varphi, x+\beta^{(\infty)}(\varphi, x))\,,
\]
is associated to the vector field
\[
(\mathcal{T}_{\beta^{(\infty)}})_* X_0=\omega\cdot\frac{\partial}{\partial \varphi}+
(\mathcal{T}_{\beta^{(\infty)}})^{-1}\Big(\omega\cdot\partial_{\varphi} \beta^{(\infty)}
-(1+\varepsilon^2 c(\oo)+\varepsilon^4 d(\xi)+a_5(\varphi, x)(1+\beta_x^{(\infty)})\Big)\frac{\partial}{\partial x}\,,
\]
and by Proposition $3.6$ in \cite{FGP1} we have that
\begin{equation}
\omega\cdot\partial_{\varphi} \beta^{(\infty)}-(1+\e^2 c(\omega)+\e^4 d(\oo)+a_5(\varphi, x))(1+\beta_x^{(\infty)})=-m\,.
\end{equation}
By Lemma $11.4$ in \cite{Ono} the function $\beta$ satisfies the bound \eqref{saintetienne}.
By \eqref{saintetienne} and $\gotp_1$ large enough,
for $\varepsilon$ small enough, 
the function $\beta$ satisfies 
the smallness  condition of Lemma \ref{differenzaFlussi},
 indeed
\begin{equation}\label{eppinger}
\lVert \beta \rVert^{\g, \calO_0}_{s_0+\s_1}
\le C(s_1) \gamma^{-1} \lVert a_5 
\rVert^{\g, \calO_{\infty}^{2 \gamma}}_{s_0+\s_1+2\tau+4}
\stackrel{(\ref{uranioi})}{\le} 
C(s_1)\,\gamma^{-1} \big( \varepsilon^{6}
+\varepsilon\lVert \mathfrak{I}_{\delta}\rVert^{\g, \calO_0}_{s_0+\gotp_1} \big)\stackrel{(\ref{IpotesiPiccolezzaIdeltaDP})}{\le} 
C(s_1)\varepsilon^{4- 3 a}\,.
\end{equation}
Hence $\B_6$ is well defined.
By \eqref{uranioi}, \eqref{Napalmi}, \eqref{Napalmii}, $i=5$,  the bounds \eqref{docq}, \eqref{docqi} hold with $\tk_1\rightsquigarrow \varepsilon^6$, $\tk_2\rightsquigarrow \varepsilon$, $\tk_3\rightsquigarrow\varepsilon \gamma^{-1}$ and Proposition \ref{ConjugationLemma} applies
and the thesis follows.
\end{proof}

\noindent
Let us define
\begin{equation}\label{fi}
\B:=\B_{6}\circ \B_5\circ\B_4\circ\B_3\circ \B_2\circ \B_1\,.
\end{equation}
Then the Hamiltonian of the operator $\mathcal{L}_6$ is (recall \eqref{hamiltonianalinearizzata} and \eqref{BrigitteBardot})
\begin{align}\label{Divac}
\mathcal{K}&:=\mathsf{H}\circ \B^{-1}=\mathsf{H}_0+\varepsilon \,\mathcal{K}_1
+\varepsilon^2\Big(\mathtt{Z}_0+\,\mathcal{K}_2\Big)
+\varepsilon^3 \,\mathcal{K}_3+o(\varepsilon^3)  \,,
\end{align}
where, recalling \eqref{espansioLIE}, \eqref{espansioLIE2}, \eqref{espansioLIE22},
\eqref{Cxi}, 
\begin{equation}\label{georgeHill10}
\mathcal{K}_1:=\mathsf{H}_1^{(1)},\qquad 
\mathtt{Z}_0+ \mathcal{K}_2:=\mathsf{H}_2^{(2)} \,,\qquad 
\mathtt{Z}_0:=\mathbb{A} \xi\cdot \eta+\frac{c(\oo)}{2}\int_{\T} z^2\,dx\,,
\end{equation}
and $\mathcal{K}_3$ is some pseudo differential $3$-homogeneous Hamiltonian as in \eqref{HAMpseudo}
with the corresponding function $f_{3}(\bar{v})=0$.
Notice also that $\{\mathtt{H}_0, \mathtt{Z}_0\}_e=0$.
%

\subsection{Linear Birkhoff Normal Form}\label{LinearBNF}
 The aim of this section is to eliminate $\mathcal{K}_1$, $\mathcal{K}_3$ and normalize the Hamiltonian $\mathcal{K}_2$ from \eqref{Divac}. 
 Our first point is that the $-1$ smoothing  remainder 
 $o(\varepsilon^3)$ belongs to a special class of 
 operators defined in Definition \ref{Cuno} 
 and denoted by $\gotC_{-1}$ . It turns out that this 
 class is preserved under the changes of variables used
 in  the linear Birkhoff normal form procedure (see Lemmata \ref{IncluecompoCLASSI}, \ref{commutatoC1}).
\begin{remark}\label{tazzadiBatman}
In the following steps of linear Birkhoff normal 
form we shall use the relation
\begin{equation}\label{disperato}
\sum_{i=1}^{\nu} \overline{\jmath}_i\ell_i+j'-j=0 \,,
\qquad \mbox{if}\,\,\,\,\lvert \ell \rvert\le 3,\,\,\,\,\, \forall j, \,j'\in S^c\,,
\end{equation}
which holds by the conservation of momentum.
\end{remark}

\smallskip
\noindent

\begin{lem}\label{glenn}
Recall \eqref{BrigitteBardot}.  We have
\begin{equation}\label{elle3}
\mathcal{L}_6 =\Pi_S^{\perp}\big(\omega\cdot\partial_{\varphi}-m J-\varepsilon X_{\mathcal{K}_1}-\varepsilon^2 X_{\mathcal{K}_2}-\varepsilon^3 X_{\mathcal{K}_3}+\mathfrak{R}\big)
\end{equation}
where $X_{\mathcal{K}_i}:=J \nabla \mathcal{K}_i$, $i=1, 2, 3, $
are almost diagonal 
and in $\mathfrak{C}_{-1}(\calO^{2\gamma}_{\infty})$ (recall  Definitions \ref{almostDIAG} and \ref{Cuno})
satisfying
\begin{equation}\label{alieni4}
\mathbb{B}^{\g}_{\varepsilon^{k} J \nabla \mathcal{K}_k}(s) 
\le \varepsilon^{k} C(s)\,, \quad k=1,2,3\,.
\end{equation}
The remainder $\mathfrak{R}$ belongs to $\gotC_{-1}(\calO^{2\gamma}_{\infty})$ and satisfies
\begin{equation}\label{fracchia2}
\begin{aligned}
\mathfrak{R}&:=\mathcal{Q}_6
+\varepsilon J \nabla \mathcal{K}_1
+\varepsilon^2 J \nabla \mathcal{K}_2
+\varepsilon^3 J \nabla \mathcal{K}_3\,.
\end{aligned}
\end{equation}
\begin{equation}\label{contadini}
\begin{aligned}
\mathbb{B}^{\gamma}_{ \mathfrak{R}}(s)\lesssim_s  
\varepsilon^{4-3 a}+\varepsilon \gamma^{-1}\lVert \mathfrak{I}_{\delta}
\rVert^{\gamma, \calO_0}_{s+\hat{\s}}\,, \qquad 
 \mathbb{B}_{ \Delta_{12} \mathfrak{R}   }(s_0)\lesssim 
 \varepsilon \gamma^{-1} 
\lVert i_1-i_2 \rVert_{s_0+\hat{\s}}\,,
\end{aligned}
\end{equation}
for $\hat{\s}$ given in Proposition \ref{corollarioStraight}.
\end{lem}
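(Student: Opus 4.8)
\textbf{Plan of proof of Lemma \ref{glenn}.}
The statement is essentially a bookkeeping repackaging of Proposition \ref{corollarioStraight} together with the structural lemmas on pseudo differential homogeneous Hamiltonians. First I would start from \eqref{BrigitteBardot}, i.e. $\mathcal{L}_6=\Pi_S^{\perp}\big(\omega\cdot\del_{\f}-mJ+\mathcal{Q}_6\big)$, and recall that by \eqref{Divac}--\eqref{georgeHill10} the transformed Hamiltonian is $\mathcal{K}=\mathsf{H}\circ\B^{-1}=\mathsf{H}_0+\e\,\mathcal{K}_1+\e^2(\mathtt{Z}_0+\mathcal{K}_2)+\e^3\mathcal{K}_3+o(\e^3)$. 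Since the Hamiltonian vector field of $\mathsf{H}_0$ w.r.t. the extended symplectic form is exactly $\Pi_S^{\perp}(\bar\omega\cdot\del_\f-J)$ and that of $\mathtt{Z}_0$ restricted to the normal directions is $\Pi_S^\perp(\e^2\mathbb{A}\xi\cdot\del_\f-\e^2 c(\oo)J)$ (these contribute the diagonal constant-coefficient part that was already absorbed into $mJ$ through \eqref{clinica}, \eqref{Cxi}, \eqref{dXi}), what is left over when one writes $\mathcal{Q}_6$ in terms of the homogeneous pieces of $\mathcal{K}$ is precisely the $o(\e^3)$ remainder plus the vector fields $\e^k X_{\mathcal{K}_k}$, $k=1,2,3$. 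This gives identity \eqref{elle3} with $\mathfrak{R}$ defined by \eqref{fracchia2}, by simply adding and subtracting $\e^k X_{\mathcal{K}_k}=\e^k J\nabla\mathcal{K}_k$: the point is that $\mathcal{Q}_6-\big(-\e X_{\mathcal{K}_1}-\e^2 X_{\mathcal{K}_2}-\e^3 X_{\mathcal{K}_3}\big)=\mathfrak{R}$ with $\mathfrak{R}$ the $-1$-smoothing tail.

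Next I would establish the structural claims. The $\mathcal{K}_i$ are pseudo differential $i$-homogeneous Hamiltonians of the form \eqref{HAMpseudo}: $\mathcal{K}_1=\mathsf{H}_1^{(1)}=\int\mathfrak{B}_1(z)z\,dx$ with $\mathfrak{B}_1=[3\Lambda\del_x,\beta_1]\in OPS^{-2}$ by \eqref{Buno}; $\mathcal{K}_2$ is the $-2$-order pseudo differential part of $\mathsf{H}_2^{(2)}$ produced in \eqref{omopato199}--\eqref{MisterY}, which is pseudo differential and $2$-homogeneous by Lemma \ref{pseudoHAM1}; and $\mathcal{K}_3$ is pseudo differential $3$-homogeneous with its function part $f_3(\bar v)=0$ as noted after \eqref{georgeHill10}. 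By the Lemma preceding Lemma \ref{pseudoHAM1}, the Hamiltonians $\mathsf{H}_i+\mathsf{H}_{\RR_i}$ are almost diagonal according to Definition \ref{almostDIAG}; since each step of the regularization conjugates by flows which are almost diagonal (the generators $S(\tau)$, $\tilde S(\tau)$ etc. are built from $\beta_i$ of the form \eqref{simboOMO2}, hence almost diagonal), Remark \ref{remarkalmost} propagates this property, so $X_{\mathcal{K}_i}=J\nabla\mathcal{K}_i$ is almost diagonal. The membership in $\gotC_{-1}(\calO_\infty^{2\gamma})$ of $J\nabla\mathcal{K}_i$ and the bound \eqref{alieni4} follow from the fact that $\mathfrak{B}_i\in OPS^{-2}$, $J\in OPS^{1}$, so $J\mathfrak{B}_i\in OPS^{-1}$, combined with the explicit $\e$-homogeneity (the coefficient $\sqrt{\xi_{j_1}\cdots\xi_{j_i}}$ is bounded on $[1,2]^\nu$) and the estimate on $\beta_i$, e.g. \eqref{Drugo}, \eqref{Drugo2}, \eqref{Drugo45}; one also uses that finite rank operators of the form \eqref{FiniteDimFormDP} with $\bar v$-homogeneous kernels lie in $\gotC_{-1}$.

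Finally, for the quantitative bounds \eqref{contadini} on $\mathfrak{R}=\mathcal{Q}_6+\e J\nabla\mathcal{K}_1+\e^2 J\nabla\mathcal{K}_2+\e^3 J\nabla\mathcal{K}_3$: I would invoke Proposition \ref{corollarioStraight}, which gives $\mathcal{Q}_6=\op(\mathtt{q}_6)+\widehat{\mathcal{Q}}_6$ with $\mathtt{q}_6$ of order $-1$ satisfying \eqref{todd} and $\widehat{\mathcal{Q}}_6\in\gotL_{\rho,p}$ satisfying \eqref{capoinb}, \eqref{capoinb2}. Translating these tame bounds into the $\gotC_{-1}$-norm $\mathbb{B}^\gamma$ (this is where one uses that $\gotL_{\rho,p}$-operators with $\rho$ as in \eqref{tornado} embed into $\gotC_{-1}$, and that $-1$-order pseudo differential operators do too), and adding the $O(\e)$-bounds \eqref{alieni4} for the $\e^k J\nabla\mathcal{K}_k$ terms, the dominant contribution is $\e^{4-3a}+\e\gamma^{-1}\lVert\mathfrak{I}_\delta\rVert_{s+\hat\s}^{\gamma,\calO_0}$ coming from $\mathtt{q}_6$ and $\widehat{\mathcal{Q}}_6$ (note $\e^k\le\e\le\e^{4-3a}$ is \emph{false}, so one must be careful: actually the $\e X_{\mathcal{K}_1}$ term is $O(\e)$ and is \emph{not} absorbed—rather, it is precisely cancelled in \eqref{elle3} by the $-\e X_{\mathcal{K}_1}$ on the left, so it does not appear in $\mathfrak{R}$; what appears in $\mathfrak{R}$ via \eqref{fracchia2} is the combination that is genuinely of the stated size because the $\e, \e^2, \e^3$ homogeneous pieces of $\mathcal{Q}_6$ match $-\e X_{\mathcal{K}_1}, -\e^2 X_{\mathcal{K}_2}, -\e^3 X_{\mathcal{K}_3}$ by construction of $\B_1,\dots,\B_5$). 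The Lipschitz-in-$i$ bound \eqref{contadini} for $\Delta_{12}\mathfrak{R}$ follows the same way from the $\Delta_{12}$ estimates \eqref{todd}, \eqref{capoinb2} and the corresponding variations of $\mathcal{K}_i$, with loss $\hat\s$ as in Proposition \ref{corollarioStraight}. The main obstacle, and the only non-formal point, is verifying that the $\e^j$-homogeneous components ($j=1,2,3$) of the operator $\mathcal{Q}_6$ produced by the chain of Proposition \ref{ConjugationLemma} applications coincide exactly with $-\e^j X_{\mathcal{K}_j}$, so that their difference is genuinely the $O(\e^{4-3a})$ tail; this is guaranteed by the homological equations \eqref{trieste}, \eqref{omopato199}, \eqref{homostep45} having been solved exactly at each order, and by the identification of the normal form through the homogeneity bookkeeping of Remark \ref{ordinivari}.
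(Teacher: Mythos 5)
Your proposal is correct and follows essentially the same route as the paper: identity \eqref{elle3} holds by the very definition \eqref{fracchia2}, the structural claims for $X_{\mathcal{K}_i}$ come from their homogeneous pseudo differential form (Definition \ref{HAMOMO}, Lemma \ref{pseudoHAM1}, almost diagonality via the support on $S$ and momentum, membership in $\gotC_{-1}$ via Lemma \ref{IncluecompoCLASSI}), and \eqref{contadini} comes from the cancellation of the $\e,\e^2,\e^3$-homogeneous parts of $\mathcal{Q}_6$ against $\e^k X_{\mathcal{K}_k}$, with the surviving tail controlled through Proposition \ref{ConjugationLemma}, Proposition \ref{corollarioStraight} (estimates \eqref{saintetienne}, \eqref{todd}--\eqref{capoinb2}) and \eqref{uranioi} for $i=5$. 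Your momentary slip suggesting the $O(\e)$ bounds on $\mathtt{q}_6,\widehat{\mathcal{Q}}_6$ would be the dominant contribution is self-corrected within the same parenthesis, and your final reading — that only the $O(\e^{4-3a}+\e\gamma^{-1}\lVert \mathfrak{I}_{\delta}\rVert)$ remainder survives in $\mathfrak{R}$ because the homological equations were solved exactly at each homogeneity order — is precisely the paper's argument.
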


\begin{proof}
By the discussion of subsection \ref{preliminare}
$\mathcal{K}_i$, $i=1,2,3$, are of the form \eqref{HAMpseudo}
with $f_{i}=0$ for $i=1,2,3$. Hence the vector field 
 $X_{\mathcal{K}_i}$  are pseudo differential of order $-1$ 
 up to a finite rank term. In addition, they are almost diagonal by \eqref{simboOMO} and the momentum condition
 \eqref{disperato}.
By Lemma \ref{IncluecompoCLASSI}-(ii) $\varepsilon X_{\mathcal{K}_1}, \varepsilon^2 X_{\mathcal{K}_2}, \varepsilon^3 X_{\mathcal{K}_3}$ belong to $\gotC_{-1}$ and, by \eqref{Buno}, \eqref{Drugo}, \eqref{Bdue}, \eqref{Drugo2}, \eqref{Drugo45}, satisfy \eqref{alieni4}.
By Proposition \ref{corollarioStraight}, the choice of $\rho$ as in \eqref{tornado} and by Lemma \ref{IncluecompoCLASSI}-$(i)$ taking $p=s_0$ and $\gotp_1$ large enough,
$\mathcal{Q}_6\in\gotC_{-1}$. Thus $\mathfrak{R}\in\gotC_{-1}$.\\
Note that only $\calQ_6$ in \eqref{fracchia2} depend on the torus embedding $i_{\delta}$, then the second bound in \eqref{contadini} follows by Lemma \ref{IncluecompoCLASSI}-$(i)$, \eqref{todd} and \eqref{capoinb2}. The first  bound in \eqref{contadini} follows by the Proposition \ref{ConjugationLemma} , the bound \eqref{saintetienne} and \eqref{uranioi} for $i=5$.
\end{proof}

In order to normalize the vector fields $\varepsilon^i J \nabla \mathcal{K}_i$ we will look for changes of coordinates $\Upsilon_i$ of the form \eqref{Fi1dp} generated as one-time flow of Hamiltonians $H_{\mathtt{A}_i}$ of the form \eqref{HAMa1}, where $\mathtt{A}_i$ are almost diagonal and have the form \eqref{us}, \eqref{them}. 
We remark that the Hamiltonian $\varepsilon^2\mathtt{Z}_0$ will be left invariant by these change of coordinates, since $\{ \mathtt{H}_0, \mathtt{Z}_0\}_e=0$.
At any step of the procedure we shall verify that  $J \mathtt{B}_i$ (see \eqref{us}, \eqref{them}) are almost diagonal and belong to $\mathfrak{C}_{-1}$ in order to apply Lemma \ref{delfino}, which guarantees well-posedness and tame estimates of $\Upsilon_i$.


\medskip

\noindent
{\bf Step one (order $\varepsilon$).} At this step we want to eliminate $\varepsilon X_{\mathcal{K}_1}$ from \eqref{elle3}. We have
\begin{align}
\mathcal{K}^{(1)}&:=\mathcal{K}\circ \Upsilon_1^{-1}= \,\mathsf{H}_0+\varepsilon \mathcal{K}^{(1)}_1 +\varepsilon^2\Big(\mathtt{Z}_0+ \mathcal{K}^{(1)}_2\Big)+\varepsilon^3 \mathcal{K}_3^{(1)}+o(\varepsilon^3)\,, \label{Kone10}\\ 
\mathcal{K}^{(1)}_1&:= \mathcal{K}_1+\{H_{\mathtt{A}_1},\mathsf{H}_0\}_e=
\mathcal{K}_1+\overline{\omega}\cdot\partial_{\varphi} H_{\mathtt{A}_1}
+\{ H_{\mathtt{A}_1}, \mathsf{H}_0 \}\,,\nonumber\\
\mathcal{K}^{(1)}_2&:= 
\mathcal{K}_2+\frac{1}{2}\{ H_{\mathtt{A}_1}, \{H_{\mathtt{A}_1},\mathsf{H}_0\}_e\}_e
+\{H_{\mathtt{A}_1}, \mathcal{K}_1\}_e\label{Kone}
\\ 
&=\mathcal{K}_2+\frac{1}{2}\{ H_{\mathtt{A}_1}, \mathcal{K}_1^{(1)}\}
+\frac{1}{2}\{H_{\mathtt{A}_1}, \mathcal{K}_1\}\,.\nonumber
\end{align}
We choose $\mathtt{A}_1$ such that
\begin{equation}\label{equazioneEps}
\mathcal{K}_1^{(1)}=\overline{\omega}\cdot\partial_{\varphi} H_{\mathtt{A}_1}
+\{ H_{\mathtt{A}_1}, \mathsf{H}_0 \}+\mathcal{K}_1=0.
\end{equation}
Recalling that $\mathcal{K}_1:=\mathsf{H}_1^{(1)}$,
we have (see  \eqref{Buno}, \eqref{espansioLIE100})
\begin{equation}\label{lupus}
\begin{aligned}
&\mathcal{K}_1(u)=\sum_{j, j'\in S^c} (\mathfrak{B}_1)_j^{j'}(\varphi)\,u_{j'}\,u_{-j}. 
\end{aligned}
\end{equation}
Then we choose $\mathtt{B}_1=\mathfrak{B}_1$ in \eqref{us}. By recalling the definition of $\mathfrak{B}_1$ in \eqref{Buno} it is easy to see that $J \mathtt{B}_1\in\gotC_{-1}$, since it is a pseudo differential operator of order $-1$. Moreover it is almost diagonal because $J$, $3\Lambda\partial_x$ are diagonal operators and $\beta_1$ is a function supported on the finite set $S$.

\noindent
Given $X\in \mathfrak{C}_{-1}$ 
to shorten the notation in the following lemma we write (recall \eqref{LieBracketDP})
\begin{equation}\label{def:AD}
\mathrm{ad}_{X}[\cdot]:=[X,\cdot]\,.
\end{equation}
Under   this notation we have the following lemma.

\begin{lem}\label{LBNF1}
The transformed operator is (recall \eqref{elle3}, \eqref{Kone10})
\begin{equation}\label{L4dp}
\mathcal{L}_7 h:=\Upsilon_1 \mathcal{L}_6 \Upsilon_1^{-1} =\Pi_S^{\perp} \Big(\omega\cdot\partial_{\varphi}-m J-\varepsilon^2 X_{\mathcal{K}_2^{(1)}}-\varepsilon^3 X_{\mathcal{K}_3^{(1)}}+\mathcal{R}_7\Big)
\end{equation}
where 
\begin{align}
X_{\mathcal{K}_2^{(1)}}&:=J\nabla \mathcal{K}_2^{(1)} =X_{\mathcal{K}_2}+\mathrm{ad}_{X_{\mathtt{A}_1}}[X_{\mathcal{K}_1}]+\frac{1}{2}\mathrm{ad}^2_{X_{\mathtt{A}_1}}[\omega\cdot\partial_{\varphi}-m J],\label{K12}\\
X_{\mathcal{K}_3^{(1)}}&:=J\nabla \mathcal{K}_3^{(1)} = X_{\mathcal{K}_3}+\mathrm{ad}_{X_{\mathtt{A}_1}}[X_{\mathcal{K}_2}]+\frac{1}{2}\mathrm{ad}^2_{X_{\mathtt{A}_1}}[X_{\mathcal{K}_1}]+\frac{1}{6}\mathrm{ad}^3_{X_{\mathtt{A}_1}}[\omega\cdot\partial_{\varphi}-m J], \label{K13}
\end{align}
the operator $\RR_{7}\in \gotC_{-1}$ 
with
\begin{align}\label{caffelatte}
\mathbb{B}^{\g}_{\mathcal{R}_7}(s)  \lesssim_s 
\varepsilon^{4-3 a}+\varepsilon\g^{-1}
\lVert \mathfrak{I}_{\delta}\rVert_{s+\tilde{\sigma}}^{\gamma, \calO_0}, \qquad 
\mathbb{B}_{\Delta_{12} \mathcal{R}_7  }(s_0)  \lesssim
\varepsilon \gamma^{-1}(1+\lVert \mathfrak{I}_{\delta}\rVert_{s_0+\tilde{\sigma}})
\lVert i_1-i_2\rVert_{s_0+\tilde{\sigma}},
\end{align}
 for some $\tilde{\sigma}\geq\hat{\s}$
 (recall the loss of regularity in \eqref{todd}, \eqref{capoinb}, \eqref{capoinb2}).
\end{lem}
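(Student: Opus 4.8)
The plan is to carry out the first step of the linear Birkhoff normal form procedure, that is, to find the Hamiltonian generator $H_{\mathtt{A}_1}$ that solves the homological equation \eqref{equazioneEps} and then to track the conjugation of $\mathcal{L}_6$ under the associated flow $\Upsilon_1$, collecting all produced terms into $\mathcal{R}_7$ and verifying the estimate \eqref{caffelatte}.

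First I would solve \eqref{equazioneEps}. Since $\mathcal{K}_1 = \mathsf{H}_1^{(1)}$ has the explicit form \eqref{lupus} with coefficients the matrix entries $(\mathfrak{B}_1)_j^{j'}(\varphi)$ of the pseudo differential operator $\mathfrak{B}_1 = [3\Lambda\partial_x,\beta_1]$, and since $\beta_1 = \tfrac13(\Lambda\partial_x)^{-1}\overline{v}$ is $1$-homogeneous in $\overline v$ and Fourier supported on $S$, the coefficient $(\mathfrak{B}_1)_{j,\ell}^{j'}$ is nonzero only when $\ell = \mathtt{l}(k)$ for some $k\in S$ with $j' = j-k$. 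The homological equation becomes, componentwise,
\begin{equation*}
\big(\overline\omega\cdot\ell + \lambda(j') - \lambda(j)\big)(\mathtt{A}_1)_{j,\ell}^{j'} + (\mathfrak{B}_1)_{j,\ell}^{j'} = 0,
\end{equation*}
so one sets $\mathtt{A}_1 = \mathtt{B}_1 = \mathfrak{B}_1$ divided by the divisor $\overline\omega\cdot\ell + \lambda(j')-\lambda(j)$ whenever that divisor is nonzero; by the momentum relation \eqref{disperato} (Remark \ref{tazzadiBatman}, valid since $|\ell|\le 1$ here) together with Proposition \ref{vvv} there is no nonzero divisor vanishing, because a vanishing divisor would give a nontrivial $M$-resonance of order three, which does not exist. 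Hence $\mathtt{A}_1$ is well defined, the Hamiltonian $H_{\mathtt{A}_1}$ has the form \eqref{HAMa1} with $\mathtt{A}_1$ almost diagonal of the form \eqref{us}, \eqref{them}, and $J\mathtt{A}_1$ is a pseudo differential operator of order $-1$ which is almost diagonal and lies in $\gotC_{-1}$. This is enough to invoke Lemma \ref{delfino} to get that $\Upsilon_1$ is well posed on $H^s_{S^\perp}$ with tame estimates and that it preserves the class $\gotC_{-1}$; the size bound $\mathbb{B}^\gamma_{J\mathtt{A}_1}(s)\lesssim_s\varepsilon^0$ follows from \eqref{Drugo}.

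Next I would expand $\mathcal{L}_7 = \Upsilon_1\mathcal{L}_6\Upsilon_1^{-1}$ using the Lie series \eqref{lietau} (equivalently conjugation of vector fields via the $\mathrm{ad}$ notation \eqref{def:AD}). Starting from \eqref{elle3} and using $\{H_{\mathtt{A}_1},\mathsf H_0\}_e + \mathcal K_1 = 0$, the $\varepsilon$-order term cancels; the surviving terms at orders $\varepsilon^2,\varepsilon^3$ are exactly $X_{\mathcal K_2^{(1)}}$ and $X_{\mathcal K_3^{(1)}}$ as written in \eqref{K12}, \eqref{K13}, using that $\mathtt{Z}_0$ commutes with $\mathsf H_0$ (so $\Upsilon_1$ fixes $\varepsilon^2\mathtt Z_0$) and that $m J$ has constant coefficients. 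Everything else — the higher-order tail of the Lie series acting on $\mathsf H_0 - mJ$, the commutators of $X_{\mathtt A_1}$ with the remainder $\mathfrak R$ from Lemma \ref{glenn}, and the $o(\varepsilon^3)$ term conjugated — is put into $\mathcal R_7$. Since each $X_{\mathcal K_i}$ and $\mathfrak R$ are in $\gotC_{-1}$ and $X_{\mathtt A_1}\in\gotC_{-1}$, Lemmata \ref{IncluecompoCLASSI} and \ref{commutatoC1} give that all these brackets stay in $\gotC_{-1}$, so $\mathcal R_7\in\gotC_{-1}$. For the quantitative bound \eqref{caffelatte} one uses that the dominant contributions are: $\mathcal Q_6$, estimated by \eqref{todd} and \eqref{capoinb}, which is of size $\varepsilon^{4-3a} + \varepsilon\gamma^{-1}\|\mathfrak I_\delta\|$; the $\mathfrak R$-piece of Lemma \ref{glenn} with the same size \eqref{contadini}; and the genuinely higher-order commutator terms of size $O(\varepsilon^4)$, all absorbed into $\varepsilon^{4-3a}$ since $a$ is small. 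The Lipschitz-in-$i$ bound follows the same way from the second inequalities of \eqref{contadini}, \eqref{todd}, \eqref{capoinb2} and the fact that only $\mathcal Q_6$ (hence $\mathfrak R$) depends on the torus.

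The main obstacle I anticipate is not the algebra of the Lie expansion, which is routine, but the verification that the small divisors in the homological equation are uniformly bounded below — or rather, here, simply nonzero — so that $\mathtt A_1$ is a legitimate operator in the almost-diagonal class: this is precisely where one must invoke the momentum condition \eqref{disperato} to restrict $\ell$ and then Proposition \ref{vvv} to rule out order-three resonances. A secondary technical point is book-keeping the loss of derivatives $\tilde\sigma\ge\hat\sigma$ coming from composing the tame estimates of $\Upsilon_1^{\pm1}$ (Lemma \ref{delfino}) with those of $\mathcal Q_6$ and $\mathfrak R$, and checking that the smallness assumption \eqref{IpotesiPiccolezzaIdeltaDP} with $\gotp_1$ large enough covers it; this is standard but needs care to state cleanly.
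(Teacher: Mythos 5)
Your overall route is the paper's: solve the homological equation \eqref{equazioneEps} with $\mathtt{B}_1=\mathfrak{B}_1$, expand $\Upsilon_1\mathcal{L}_6\Upsilon_1^{-1}$ via the Lie series \eqref{charlie}, read off \eqref{K12}--\eqref{K13}, dump the rest into $\mathcal{R}_7$ and estimate with the $\gotC_{-1}$ calculus of Lemmata \ref{IncluecompoCLASSI}, \ref{commutatoC1}, \ref{delfino}. However, there is a genuine gap in the justification that $\mathcal{R}_7\in\gotC_{-1}$ with the bound \eqref{caffelatte}. Your argument is ``since each $X_{\mathcal{K}_i}$ and $\mathfrak{R}$ are in $\gotC_{-1}$ and $X_{\mathtt{A}_1}\in\gotC_{-1}$, all these brackets stay in $\gotC_{-1}$'', but the remainder (and the terms $\frac12\mathrm{ad}^2_{X_{\mathtt{A}_1}}[\omega\cdot\partial_\varphi-mJ]$, $\frac16\mathrm{ad}^3_{X_{\mathtt{A}_1}}[\omega\cdot\partial_\varphi-mJ]$ in \eqref{K12}--\eqref{K13}, as well as the tail $\sum_{k\ge 4}\frac{\varepsilon^k}{k!}\mathrm{ad}^k_{X_{\mathtt{A}_1}}[\mathcal{L}_6]$) contains repeated adjoint actions on the diagonal operator $\omega\cdot\partial_\varphi-mJ$, which is unbounded and \emph{not} in $\gotC_{-1}$, so the closure lemmata do not apply to it as stated. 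The missing idea is to use the homological equation a second time, at the level of vector fields, to write
\begin{equation*}
\mathrm{ad}_{X_{\mathtt{A}_1}}[\omega\cdot\partial_{\varphi}-m J]
=-\varepsilon X_{\mathcal{K}_1}-(\omega-\overline{\omega})\cdot\partial_{\varphi} X_{\mathtt{A}_1}-(m-1)\,[X_{\mathtt{A}_1}, J]\,,
\end{equation*}
and to observe that the right hand side lies in $\gotC_{-1}$: the last two terms are controlled precisely because $\mathtt{A}_1$ is almost diagonal (finite shifts in $j$ and in the $\varphi$-Fourier support, so that $\partial_\varphi X_{\mathtt{A}_1}$ and $[X_{\mathtt{A}_1},J]$ stay $-1$-smoothing), and they carry the extra smallness $|\omega-\overline\omega|,\,|m-1|=O(\varepsilon^2)$. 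Only after this reduction can one apply Lemma \ref{commutatoC1} to the further brackets and obtain both membership in $\gotC_{-1}$ and the quantitative bounds \eqref{caffelatte}; without it the claimed estimate for the Lie-series tail is unjustified.

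A secondary imprecision: Proposition \ref{vvv} only rules out exact third order resonances, i.e. it gives $\delta_{\ell j j'}\neq 0$, whereas to conclude that $J\mathtt{A}_1\in\gotC_{-1}$ with the size \eqref{bojack} one needs a lower bound on $|\delta_{\ell j j'}|$ uniform in $j,j'\in S^c$; this is the content of Lemma \ref{Denominatore} (combined with Lemma \ref{Numeratore}). Since you ultimately invoke Lemma \ref{delfino}, which packages exactly these facts, this is repaired by citation rather than by a new argument, but the phrase ``simply nonzero'' is not what is actually needed.
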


\begin{proof}
By using \eqref{charlie} we have that
formul\ae \,\eqref{K12}, \eqref{K13} hold and that
\begin{equation}\label{R7}
\begin{aligned}
\mathcal{R}_7:&=\mathfrak{R}+\varepsilon\,\mathrm{ad}_{X_{\mathtt{A}_1}}[\varepsilon^3 X_{\mathcal{K}_3}+\mathfrak{R}]+\frac{\varepsilon^2}{2}\mathrm{ad}^2_{X_{\mathtt{A}_1}}[-\varepsilon^2 X_{\mathcal{K}_2}-\varepsilon^3 X_{\mathcal{K}_3}+\mathfrak{R}]\\
&+\frac{\varepsilon^3}{6}\mathrm{ad}^3_{X_{\mathtt{A}_1}}[-\varepsilon X_{\mathcal{K}_1}
-\varepsilon^2 X_{\mathcal{K}_2}-\varepsilon^3 X_{\mathcal{K}_3}
+\mathfrak{R}]+\sum_{k\geq 4} \frac{\varepsilon^k}{k!}\mathrm{ad}^k_{X_{\mathtt{A}_1}}[\mathcal{L}_6].
\end{aligned}
\end{equation}
For $Y=Y(i)\in \mathfrak{C}_{-1}$ 
define $\mathcal{Z}_n:=\sum_{k\geq n} \frac{\varepsilon^k}{k!}\mathrm{ad}_{X_{\mathtt{A}_1}}^k[Y]$ for any $n\geq 1$.
By 
Lemma \ref{commutatoC1}, and using  
  \eqref{bojack}, we deduce that
\begin{equation}\label{alieni2}
\begin{aligned}
&\mathbb{B}^{\g}_{\mathcal{Z}_n}(s)\lesssim_s 
C(s, n)\mathbb{B}^{\g}_Y(s_0)+ C(s_0, n) \mathbb{B}^{\g}_Y(s)\\
&\mathbb{B}_{\Delta_{12} \mathcal{Z}_n}(s_0)\lesssim C(s_0, n)\,\Big(\mathbb{B}_{Y}(s_0)+\mathbb{B}_{\Delta_{12} Y}(s_0)  \Big)
\end{aligned}
\end{equation}
for $\varepsilon$ small enough.
In \eqref{R7} there are terms of the form $\mathrm{ad}^k_{X_{\mathtt{A}_1}}[Y]$, for some $k\geq 1$, with $Y=X_{\mathcal{K}_1}, X_{\mathcal{K}_2}, X_{\mathcal{K}_3}, \mathfrak{R}$ which belong to $\gotC_{-1}$ by Lemmata \ref{IncluecompoCLASSI} and \ref{glenn}.
We note that by \eqref{equazioneEps}
\begin{equation}\label{alieni3}
\mathrm{ad}_{X_{\mathtt{A}_1}}[\omega\cdot\partial_{\varphi}-m J]=-\varepsilon X_{\mathcal{K}_1}-(\omega-\overline{\omega})\cdot\partial_{\varphi} X_{\mathtt{A}_1}-(m-1) [X_{\mathtt{A}_1}, J]\in\gotC_{-1}
\end{equation}
since $\mathtt{A}_1$ is almost diagonal. Hence $(\omega-\overline{\omega})\cdot\partial_{\varphi}X_{\mathtt{A}_1}, [X_{\mathtt{A}_1}, J] \in\gotC_{-1}$ (see the proof of Lemma \ref{delfino}) and by Lemma \ref{IncluecompoCLASSI}-$(iii)$ the remainder $\mathcal{R}_7\in\gotC_{-1}$.
By \eqref{R7}, \eqref{alieni}, \eqref{chegioia}, \eqref{alieni2}, \eqref{alieni3}, \eqref{alieni4}, \eqref{contadini} and the fact that $\lvert \omega-\overline{\omega} \rvert\lesssim \varepsilon^2$ we get the bounds
\eqref{caffelatte}.
\end{proof}

\noindent
{\bf Step two (order $\varepsilon^2$).} At this step we want to normalize $\varepsilon^2 X_{\mathcal{K}_2^{(1)}}$ from \eqref{L4dp}. We have

\begin{equation}\label{Kone1000}
\begin{aligned}
\mathcal{K}^{(2)}&:=\mathcal{K}^{(1)}\circ \Upsilon_2^{-1}= 
\,\mathsf{H}_0+\varepsilon^2\Big( \mathtt{Z}_0+ \mathcal{K}^{(2)}_2\Big)
+\varepsilon^3 \mathcal{K}_3^{(2)}+o(\varepsilon^3)\,, 
\qquad \mathcal{K}^{(2)}_2:=\{ H_{\mathtt{A}_2}, \mathsf{H}_0\}_e
+\mathcal{K}^{(1)}_2\,,
\end{aligned}
\end{equation}
where $\mathcal{K}_2^{(1)}$ is given in \eqref{Kone} (see also \eqref{K12}).
We choose $\mathtt{A}_2$ in order to solve the following equation
\begin{equation}\label{seven}
\overline{\omega}\cdot\partial_{\varphi} H_{\mathtt{A}_2}+\{H_{\mathtt{A}_2}, \mathsf{H}_0\}=\mathcal{K}^{(1)}_2-\Pi_{\mbox{Ker}(\mathsf{H}_0)}\mathcal{K}^{(1)}_2\,.
\end{equation}
Hence we choose $\mathtt{B}_2= \nabla \Pi_{\mbox{Rg}(\mathsf{H}_0)} \mathcal{K}^{(1)}_2(u)$ in \eqref{us}. Note that $X_{\mathcal{K}_2}$ is pseudo differential of order $-1$ and $J \mathtt{A}_1$, $X_{\mathcal{K}_1}$ belong to $\gotC_{-1}$ and so also their Poisson brackets. Hence $J \mathtt{B}_2\in\gotC_{-1}$. By Remark \ref{remarkalmost} we have that $J \mathtt{B}_2$ is also almost diagonal.

 In order to perform the third step in the linear BNF 
 we need to explicitly compute 
 the corrections $O(\e^2)$ coming from $\Pi_{\mbox{Ker}(\mathsf{H}_0)}\mathcal{K}^{(1)}_2$.
The point is that a priori, it is not clear whether the resonant terms $\Pi_{\mbox{Ker}(\mathsf{H}_0)}\mathcal{K}^{(1)}_2$ are supported  only  on trivial resonances. 
Our approach is then to show that the normal form we obtain must necessarily coincide with the formal one, which is relatively easy to compute.

\begin{defi}\label{chebellosiproietta}
Recalling the notations used in Section \ref{SezioneWBNF}, we denote by
$\Pi^{d_z\leq k}$, respectively  $\Pi^{d_z=k}$,
 the projector of a homogenous Hamiltonian of degree $n$ 
 on the monomials with degree less or equal than $k$, respectively equal $k$,
 in the normal variable $z$, i.e. 
\begin{equation*}
\Pi^{d_z \leq k} H^{(n)}:=H^{(n, \leq k)}, \quad \Pi^{d_z = k} H^{(n)}:=H^{(n, k)}.
\end{equation*}
We denote by 
 $\Pi_{\mathrm{triv}}$
  the projection onto trivial resonances (of the form \eqref{coppiette}), 
  i.e. monomials of the form $$u_{j}u_{-j}u_i u_{-i}\dots u_{k}u_{-k}.$$
\end{defi}

\noindent
The following proposition allows to easily compute the resonant terms 
$\Pi_{\mbox{Ker}(\mathsf{H}_0)}\mathcal{K}^{(1)}_2$ in \eqref{chiK2}.

\begin{teor}{\bf (Normal form identification).}\label{PartialWeak}
Consider the symplectic change of coordinates 
 $A_{\varepsilon}$ in  \eqref{AepsilonDP}. 
Then
\begin{equation}\label{mare}
\Pi^{d_z=2}\Pi_{\mbox{Ker}(\mathtt{H}_0)} \Big(\mathtt{Z}_0+ \mathcal{K}^{(1)}_2\Big)=
\Big[\Pi_{\mbox{\rm triv}}\Pi^{d_z=2} \Big( \frac{1}{2}\{  \mathfrak{F}^{(3)} , H^{(3)}\} \Big)\Big]\circ A_{{1{|_{ \substack{y=0\\ \theta=\f}}}}}\,,
\end{equation}
where $A_1:=A_{{\varepsilon}_{|_{\varepsilon=1}}}$, $\mathtt{H}_0$ is in \eqref{leHamiltonianeLin} and we set 
(recalling \eqref{adjActHAM}) 
$\mathfrak{F}^{(3)}:=[\mathrm{ad}_{H^{(2)}}]^{-1}H^{(3)}$
with $H^{(3)}$ in \eqref{Rinco}.
\end{teor}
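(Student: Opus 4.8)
The plan is to exploit an \emph{a posteriori identification}: both sides of \eqref{mare} are the projection, onto the resonant monomials of degree $2$ in $z$, of the degree-$4$ part of the full formal Birkhoff normal form of $\calH$ (equivalently, of $H$). Since the formal BNF is uniquely determined once one fixes the generating functions solving the homological equations, it suffices to check that the sequence of transformations actually performed — the weak BNF $\Phi_B$ of Proposition \ref{WBNFdp}, the preliminary regularizing steps $\B_1,\dots,\B_5,\B_6$ of subsection \ref{preliminare}, and the linear BNF steps $\Upsilon_1$ — and the ``textbook'' formal BNF transformation $\Phi_{\mathfrak F^{(3)}}$ agree, at the relevant order and degree, on the monomials lying in $\mbox{Ker}(\mathtt H_0)$. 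Concretely, I would first recall that $\mathfrak F^{(3)}=[\mathrm{ad}_{H^{(2)}}]^{-1}H^{(3)}$ is precisely the generator appearing in the first weak-BNF step (cf. \eqref{DefF3}), up to the finite-rank truncation $\#(\cdot,S^c)\le 1$; so $\frac12\{\mathfrak F^{(3)},H^{(3)}\}$ is the degree-$4$ term produced by that step (cf. \eqref{mastite}). Thus the right-hand side of \eqref{mare} is, by definition, $\Pi_{\mathrm{triv}}\Pi^{d_z=2}$ of the formal degree-$4$ normal form, pulled back by $A_1$.

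For the left-hand side, I would track the degree-$2$-in-$z$, order-$\varepsilon^2$ resonant content through the chain of maps. The key structural facts are: (a) $\B_1,\dots,\B_5$ are generated by Hamiltonians $S(\tau)$ quadratic in $z$ of the form \eqref{pseudo}, hence their contributions to $\mathcal K_2^{(1)}$ are accounted for by the homological equations \eqref{trieste}, \eqref{omopato199}, \eqref{homostep45}, whose solutions $\beta_i$ are \emph{uniquely} fixed by requiring the nonresonant part to vanish; (b) $\B_6$ only changes the top-order symbol and does not affect the resonant $d_z=2$ piece at order $\varepsilon^2$ beyond the already-identified constant $c(\omega)$, which is exactly $\mathtt Z_0$; (c) $\Upsilon_1$ kills $\mathcal K_1$ via \eqref{equazioneEps} and modifies $\mathcal K_2$ only by Poisson brackets of $H_{\mathtt A_1}$ with lower-order terms, again determined by a homological equation with a unique nonresonant solution. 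The point is that \emph{any} choice of generators normalizing $\mathtt H_0$ up to this order yields the same $\Pi_{\mbox{Ker}(\mathtt H_0)}$-projection, because the ambiguity in the generators lives in $\mbox{Ker}(\mathtt H_0)$ and, after the weak BNF, such kernel terms are forced by Proposition \ref{vvv} (no nontrivial $M$-resonances) to act trivially — this is the mechanism already used in the proof of Proposition \ref{LemmaBelloWeak}. Hence $\Pi^{d_z=2}\Pi_{\mbox{Ker}(\mathtt H_0)}(\mathtt Z_0+\mathcal K_2^{(1)})$ equals $\Pi^{d_z=2}\Pi_{\mbox{Ker}(\mathtt H_0)}$ of the formal degree-$4$ BNF term, and by the resonance analysis the only surviving monomials are the trivial ones $u_j u_{-j}u_iu_{-i}$, which is exactly the $\Pi_{\mathrm{triv}}$ on the right-hand side. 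Finally the pullback by $A_{1|_{y=0,\theta=\varphi}}$ matches because $A_\varepsilon$ in \eqref{AepsilonDP} restricted to $y=0$ is the composition of the action-angle substitution \eqref{aa0dp} with the rescaling, and the $\varepsilon$-homogeneity bookkeeping of Remark \ref{ordinivari} identifies the $\varepsilon^2$ coefficient with the $\varepsilon=1$ evaluation.

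The main obstacle, and the part requiring genuine care rather than bookkeeping, is step (a)–(c): one must show that the finite-rank/pseudodifferential truncations introduced along the way (the restriction $\#(\cdot,S^c)\le1$ in $F^{(N,\le1)}$, the projections $\Pi_S^\perp$ in the flows \eqref{rosso}, and the $\mathfrak C_{-1}$-remainders) do not alter the resonant $d_z=2$ part. This is where one uses that a resonant quadratic-in-$z$ monomial with at most one index outside $S$ must, by the symmetry of $S$ and Proposition \ref{vvv}, have \emph{all} indices in $S$ — so the truncation is invisible on the resonant set — together with the momentum relation \eqref{disperato} which ensures no ``hidden'' resonances appear. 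I would isolate this as a lemma: \emph{on the subspace of Hamiltonians with $d_z=2$, the projections $\Pi_{\mbox{Ker}(\mathtt H_0)}\Pi_{\mathrm{triv}}$ of the normal forms produced by the truncated and by the untruncated (formal) schemes coincide up to degree $4$ and order $\varepsilon^2$}; its proof is the inductive comparison of homological equations already done implicitly in Proposition \ref{LemmaBelloWeak}, now carried to the $\Upsilon_1$ step. Everything else — the explicit matching of coefficients, the $\varepsilon$-scaling, the pullback formula — is routine once this lemma is in place.
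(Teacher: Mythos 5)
Your overall strategy (identify the rigorously produced $\varepsilon^2$ resonant term with the one of the full \emph{formal} BNF, using that the cubic generator is essentially unique) is indeed the strategy of the paper, and your points (a)--(c) correspond to the actual computation: one shows by a Lie expansion and the Jacobi identity that the composite generator $\mathsf{F}_1=S_1+H_{\mathtt{A}_1}$ solves the cubic homological equation, and since $\mbox{Ker}(\mathrm{ad}_{\mathsf H_0})$ is trivial on cubic monomials it must coincide with $\widetilde F^{(3,2)}$ (cf.\ \eqref{Divac11}--\eqref{Divac13}, \eqref{chiK2}, \eqref{macchine178}); the terms $\{S_2,\mathsf H_0\}_e$, $\{\tilde S_2,\mathsf H_0\}_e$ and the steps $\B_3,\dots,\B_6$ are disposed of exactly as you assert. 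However, there is a genuine gap in the bridge you build between this and the right-hand side of \eqref{mare}. The weak$+$linear procedure and the full formal BNF are \emph{not} two normalizations of the same object, so the ``coordinate independence of the normal form at lowest order'' that you invoke does not apply: the full BNF generator $\mathfrak F^{(3)}$ also contains the components $\mathfrak F^{(3,0)},\mathfrak F^{(3,1)},\mathfrak F^{(3,3)}$, and $H^{(3)}$ contains $H^{(3,0)},H^{(3,1)},H^{(3,3)}$, so the quartic $d_z=2$ part of $\frac12\{\mathfrak F^{(3)},H^{(3)}\}$ contains cross-brackets such as $\{\mathfrak F^{(3,1)},H^{(3,3)}\}$, $\{\mathfrak F^{(3,3)},H^{(3,1)}\}$, $\{\mathfrak F^{(3,2)},H^{(3,0)}\}$, $\{\mathfrak F^{(3,0)},H^{(3,2)}\}$ which the weak$+$linear scheme never generates (it never removes $H^{(3,3)}$ at all). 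Equality of the two sides after $\Pi_{\rm triv}\Pi^{d_z=2}$ therefore requires proving that these cross terms have vanishing trivial-resonant projection; this is the identity \eqref{occhio1}, whose proof is not ``uniqueness'' nor Proposition \ref{vvv}, but a support argument: a trivially resonant monomial would force the contracted index to be $0$, impossible since $0\notin S\cup S^c$ (momentum conservation inside each cubic factor).

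A second, related imprecision: you attribute the replacement of $\Pi_{\mbox{Ker}(\mathtt H_0)}$ by $\Pi_{\rm triv}$ to Proposition \ref{vvv} and ``the mechanism of Proposition \ref{LemmaBelloWeak}''. Proposition \ref{vvv} alone does not give this, because $H^{(2)}$ has genuinely non-trivial $4$-resonances; what is needed is the statement \eqref{megliotrivial} that $\big(\Pi_{\mbox{Ker}(H^{(2)})}-\Pi_{\rm triv}\big)\{\mathfrak F^{(3)},H^{(3)}\}=0$, which rests on the simultaneous formal normalization of the whole family of commuting Hamiltonians (the result of \cite{FGPa}); at the level of the \emph{linear} BNF this hierarchy argument is precisely what cannot be run directly, which is the raison d'\^etre of the identification theorem. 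So your ``isolated lemma'' is indeed the crux, but its proof is not an inductive rerun of Proposition \ref{LemmaBelloWeak}: it needs (i) the \cite{FGPa} identity \eqref{megliotrivial}, (ii) the cross-bracket cancellations \eqref{occhio1}, and (iii) the explicit order-$\varepsilon^2$ Lie-series computation identifying $S_1+H_{\mathtt A_1}$ with the formal generator. Without (i) and (ii) your argument does not close.
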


\begin{proof}
The proof is postponed to the Appendix \ref{app:Weak}.
\end{proof}

\noindent
As a consequence of the identification above, we have,
by \eqref{mare}, \eqref{DefF3}, \eqref{Rinco},
\begin{equation}
\Big(\frac{c(\oo)}{2}\sum_{j\in S^c}  \lvert u_j \rvert^2+\Pi_{\mbox{Ker}(\mathsf{H}_0)}\mathcal{K}^{(1)}_2\Big)=\frac{1}{2}\sum_{j\in S^c} \lal_j \lvert u_j \rvert^2
\end{equation}
where, by \eqref{parco1},
\begin{equation}\label{lambda}
\lal_j:=\sum_{j_2\in S}\frac{\og(j_2+j)}{\og(j_2)+\og(j)-\og(j_2+j)}\,\xi_{j_2}=\frac{2}{3} \sum_{j_2\in S^+} \,\frac{(1+j_2^2)(1+j^2)(2+j_2^2+j^2)}{(3+j_2^2-j_2 j+j^2)(3+j_2^2+j_2 j +j^2)}\xi_{j_2}.
\end{equation}
We define
 the diagonal operator (recall \eqref{Cxi})
\begin{equation}\label{diagonalop}
\mathfrak{D}:=\mathfrak{D}(\xi)=\mathrm{diag}\,\,(\mathrm{i}\kappa_j)_{j\in S^c}, \qquad \kappa_j=\og(j)\,\big(\lal_j-c(\oo)\big)\in\mathbb{R}.
\end{equation}
\begin{lem}\label{mole}
We have 
\begin{equation}\label{Bali}
 \kappa_j=\og(j)\,\big(\lal_j-c(\oo)\big)\,,\quad  \lvert j \rvert\,\lvert \kappa_j \rvert\le C \qquad \forall j\in S^c, 
\end{equation}
for an appropriate constant $C>0$ depending on the set $S$.
\end{lem}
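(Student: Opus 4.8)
The proof of Lemma \ref{mole} is essentially a quantitative asymptotic analysis of the explicit rational expression for $\kappa_j = \og(j)\big(\lal_j - c(\oo)\big)$, where $\lal_j$ is given by \eqref{lambda} and $c(\oo)$ by \eqref{Cxi}. The plan is to subtract the two sums term-by-term over $j_2 \in S^+$ and show that the summand decays like $|j|^{-2}$, which compensates the linear growth $\og(j) \sim j$.

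First I would write, using \eqref{lambda} and \eqref{Cxi},
\begin{equation*}
\lal_j - c(\oo) = \frac{2}{3}\sum_{j_2\in S^+}(1+j_2^2)\Big[\frac{(1+j^2)(2+j_2^2+j^2)}{(3+j_2^2-j_2 j+j^2)(3+j_2^2+j_2 j+j^2)} - 1\Big]\xi_{j_2}\,,
\end{equation*}
so that the problem reduces to estimating, for each fixed $j_2 \in S^+$, the bracketed quantity
\begin{equation*}
\frac{(1+j^2)(2+j_2^2+j^2) - (3+j_2^2-j_2 j+j^2)(3+j_2^2+j_2 j+j^2)}{(3+j_2^2-j_2 j+j^2)(3+j_2^2+j_2 j+j^2)}\,.
\end{equation*}
The key algebraic step is to expand the numerator: the leading $j^4$ terms cancel, and a direct computation shows the numerator is a polynomial in $j$ of degree at most $2$ (with coefficients depending on $j_2$), so that the bracket is $O_{j_2}(|j|^{-2})$ as $|j|\to\infty$. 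The denominator is bounded below by a positive quantity of order $j^4$ uniformly in $j\in S^c$, since $3+j_2^2\pm j_2 j + j^2 \geq \tfrac12(j^2 + j_2^2) > 0$ (the discriminant in $j$ is negative). Summing over the finitely many $j_2 \in S^+$ and recalling $\xi = \xi(\oo)$ is bounded on $\Omega_\e$ (by \eqref{OmegaEpsilonDP}, $\xi\in[1,2]^\nu$), one gets $|\lal_j - c(\oo)| \leq C |j|^{-2}$ for a constant $C$ depending only on $S$. Multiplying by $|\og(j)| = |j|\,\big|\tfrac{4+j^2}{1+j^2}\big| \leq 2|j|$ yields $|\kappa_j| \leq 2C|j|^{-1}$, i.e. $|j|\,|\kappa_j| \leq C'$ for all $j\in S^c$, which is \eqref{Bali}. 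That $\kappa_j \in \mathbb{R}$ is immediate since $\og(j), \lal_j, c(\oo)$ are all real.

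The main (though modest) obstacle is purely bookkeeping: one must verify that the $j^4$ and $j^3$ coefficients in the numerator genuinely cancel after expansion, so that the numerator is $O(j^2)$ rather than $O(j^4)$ — this is exactly the cancellation that makes $\mathfrak{D}$ an operator of order $-1$ (as claimed in \eqref{diagonalop} and used throughout Section \ref{regularization}), and it reflects the fact that $\og(j) = j + \tfrac{3j}{1+j^2}$ is asymptotically linear with a summable correction. Since for small $|j|$ the bound is trivial (finitely many terms, denominator bounded away from zero), only the asymptotic regime needs care, and the argument above handles it with a uniform constant. No finer arithmetic on the tangential sites is needed here; the wave packet condition \eqref{caso2} plays no role in this particular lemma.
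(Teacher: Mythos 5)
Your proof follows essentially the same route as the paper: the paper also computes $\lal_j-c(\oo)$ explicitly as a single sum over the tangential sites, obtaining a numerator of degree two in $j$ (namely $(1+j_0^2)(7+5j_0^2+j_0^4+3j^2)$, i.e.\ exactly the cancellation of the $j^4$ and $j^3$ terms you describe) over a denominator bounded below by a multiple of $j^4$ via $3+j_0^2+j^2\pm j_0 j\geq \tfrac34 j^2$, and then multiplies by $|\og(j)|\lesssim |j|$. The only nitpick is your bound $|\og(j)|\le 2|j|$, which fails at $|j|=1$ (where $\og(1)=5/2$); the correct uniform bound is $|\og(j)|\le \tfrac52|j|$ (the paper uses $4|j|$), a harmless constant that does not affect the conclusion.
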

\begin{proof}
Recalling the definitions \eqref{Cxi} and \eqref{lambda} we have, for $j\in S^c$,
\begin{equation}\label{bio}
\lal_j-c(\oo)=-\frac{2}{3}\sum_{j_0\in S^+} \,\frac{(1+j_0^2)(7+5 j_0^2+j_0^4+3 j^2)}{(3+j_0^2-j_0 j+j^2)(3+j_0^2+j_0 j +j^2)}\xi_{j_0}=\frac{P(j)}{Q(j)}
\end{equation}
It is easy to prove that $\lvert \og(j) \rvert\le 4 \lvert j \rvert$,
$3+j_0^2+j^2\pm j_0 j\geq \frac{3}{4} j^2
$
and $(1+j_0^2)(7+5 j_0^2+j_0^4+3 j^2)\le 14 j_0^6 j^2$. Hence
$
\lvert \kappa(j) \rvert\lesssim \frac{\lvert j \rvert}{j^2} \sum_{j_0\in S^+} j_0^6.
$
\end{proof}

\begin{lem}\label{LBNF2}
The transformed operator is (recall \eqref{L4dp})
\begin{equation}\label{L8}
\mathcal{L}_8 :=\Upsilon_2 \mathcal{L}_7 \Upsilon_2^{-1}  =\Pi_S^{\perp}\Big(\omega\cdot\partial_{\varphi}-m J-\varepsilon^2 \mathfrak{D}(\xi)-\varepsilon^3 X_{\mathcal{K}_3^{(2)}} +\mathcal{R}_8 \Big)
\end{equation}
where $\mathcal{K}_3^{(2)}=\mathcal{K}_3^{(1)}$, $\mathfrak{D}(\xi)$ is the diagonal operator of order $-1$ defined in \eqref{diagonalop},
 $\RR_8\in \gotC_{-1}$ satisfies 
\begin{align}\label{pasqua}
\mathbb{B}^{\g}_{\mathcal{R}_8}(s)  \lesssim_s 
\varepsilon^{4-3 a}+\varepsilon\g^{-1}
\lVert \mathfrak{I}_{\delta}\rVert_{s+\tilde{\sigma}}^{\gamma, \calO_0}, \qquad 
\mathbb{B}_{\Delta_{12} \mathcal{R}_8  }(s_0) \lesssim 
\varepsilon \gamma^{-1}(1+\lVert \mathfrak{I}_{\delta}\rVert_{s_0+\tilde{\sigma}})
\lVert i_1-i_2\rVert_{s_0+\tilde{\sigma}},
\end{align}
 for some $\tilde{\sigma}$ possibly larger than the one in Lemma \ref{LBNF1}.
\end{lem}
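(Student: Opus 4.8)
The plan is to reproduce, at order $\varepsilon^2$, the argument already used for Lemma \ref{LBNF1}. First I recall that, since the step-one linear BNF annihilated $\mathcal{K}_1^{(1)}$, the Hamiltonian of $\mathcal{L}_7$ in \eqref{L4dp} is $\mathcal{K}^{(1)}=\mathsf{H}_0+\varepsilon^2(\mathtt{Z}_0+\mathcal{K}_2^{(1)})+\varepsilon^3\mathcal{K}_3^{(1)}+o(\varepsilon^3)$, with $\mathcal{K}_2^{(1)},\mathcal{K}_3^{(1)}$ pseudo differential homogeneous Hamiltonians of degree $2$, resp. $3$ (see \eqref{K12}, \eqref{K13}). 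I then take $\Upsilon_2$ to be the time-one flow of $\varepsilon^2 X_{H_{\mathtt{A}_2}}$, with $H_{\mathtt{A}_2}$ of the form \eqref{HAMa1} to be fixed, and expand $\mathcal{K}^{(1)}\circ\Upsilon_2^{-1}$ by the formal Lie series as in \eqref{Kone1000}: the only term at order $\varepsilon^2$ is $\mathtt{Z}_0+\mathcal{K}_2^{(1)}+\{H_{\mathtt{A}_2},\mathsf{H}_0\}_e$, and at order $\varepsilon^3$ one finds $\mathcal{K}_3^{(1)}$ untouched, every correction coming from $\Upsilon_2$ being $O(\varepsilon^4)$ and hence pushed into the remainder; in particular $\mathcal{K}_3^{(2)}=\mathcal{K}_3^{(1)}$ and $\mathtt{Z}_0$ is preserved up to $O(\varepsilon^4)$.

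The heart of the proof is the solvability of the homological equation \eqref{seven}, i.e. the inversion of $\mathrm{ad}_{\mathsf{H}_0}$ on $\Pi_{\mathrm{Rg}(\mathsf{H}_0)}\mathcal{K}_2^{(1)}$ with tame bounds, and this is where the main obstacle lies: since $\gamma=\varepsilon^{2+a}$ one cannot invoke a perturbative smallness condition and must argue algebraically. I use that $\mathcal{K}_2^{(1)}$ is almost diagonal (Definition \ref{almostDIAG}) and homogeneous of degree two in $\overline{v}$, hence Fourier supported on $|\ell|\le 2$ and, by momentum conservation (Remark \ref{tazzadiBatman}), its entry with labels $(\ell,j,j')$, $j,j'\in S^c$, is supported on $\sum_i\overline{\jmath}_i\ell_i+j'-j=0$ with $\langle j-j',\ell\rangle\le\mathtt{C}$. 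Consequently, as $|j|\to\infty$ the divisors $\overline{\omega}\cdot\ell+\lambda(j')-\lambda(j)$ take only finitely many limiting values up to errors $O(|j|^{-1})$, and Lemma \ref{ecologia} gives that each divisor attached to a non-resonant monomial is bounded below by a constant $\delta=\delta(S)>0$, while the vanishing ones are exactly those of $\Pi_{\mathrm{Ker}(\mathsf{H}_0)}\mathcal{K}_2^{(1)}$, which stays in place. This makes $H_{\mathtt{A}_2}$ well defined with $\mathtt{A}_2$ almost diagonal of the form \eqref{us}--\eqref{them}; since $X_{\mathcal{K}_2^{(1)}}\in\gotC_{-1}$ is pseudo differential of order $-1$ (Lemma \ref{glenn}) and division by bounded-below divisors preserves both properties, $J\mathtt{B}_2\in\gotC_{-1}$ is almost diagonal (Remark \ref{remarkalmost}) with $\mathbb{B}^{\gamma}_{J\mathtt{B}_2}(s)\lesssim_s C(s)$ (cf. \eqref{alieni4}) and the analogous Lipschitz and $\Delta_{12}$-bounds inherited from $\mathcal{R}_7$ via \eqref{caffelatte}. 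Lemma \ref{delfino} then yields that $\Upsilon_2^{\pm1}$ are well defined, symplectic, $O(\varepsilon^2)$-close to the identity and tame.

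Next I identify the surviving resonant term: Theorem \ref{PartialWeak} equates $\Pi^{d_z=2}\Pi_{\mathrm{Ker}(\mathtt{H}_0)}(\mathtt{Z}_0+\mathcal{K}_2^{(1)})$ with $\Pi_{\mathrm{triv}}\Pi^{d_z=2}\big(\tfrac12\{\mathfrak{F}^{(3)},H^{(3)}\}\big)\circ A_{1}|_{y=0,\,\theta=\varphi}$, and a short explicit computation using \eqref{DefF3} and $H^{(3)}=-\tfrac16\int u^3$ (cf. \eqref{Rinco}) gives $\tfrac12\sum_{j\in S^c}\lal_j|u_j|^2$ with $\lal_j$ as in \eqref{lambda}. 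Subtracting the normal part $\tfrac{c(\omega)}{2}\sum_{j\in S^c}|u_j|^2$ already present in $\mathtt{Z}_0$ (recall \eqref{georgeHill10}, \eqref{Cxi}), the correction produced at this step is $\tfrac12\sum_{j\in S^c}(\lal_j-c(\omega))|u_j|^2$, whose Hamiltonian vector field is precisely $\varepsilon^2\mathfrak{D}(\xi)=\varepsilon^2\mathrm{diag}(\mathrm{i}\kappa_j)_{j\in S^c}$ with $\kappa_j=\og(j)(\lal_j-c(\omega))$; Lemma \ref{mole} certifies that $\mathfrak{D}(\xi)$ has order $-1$. Together with the cancellation of $\Pi_{\mathrm{Rg}(\mathsf{H}_0)}X_{\mathcal{K}_2^{(1)}}$ built into \eqref{seven}, this produces the form \eqref{L8}.

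Finally, exactly as in the proof of Lemma \ref{LBNF1}, I collect into $\mathcal{R}_8$ the operator $\mathcal{R}_7$, all commutators $\mathrm{ad}_{X_{H_{\mathtt{A}_2}}}^k[\,\cdot\,]$, $k\ge 1$, applied to $mJ-\varepsilon^2\mathfrak{D}(\xi)-\varepsilon^2X_{\mathcal{K}_2^{(1)}}-\varepsilon^3X_{\mathcal{K}_3^{(1)}}+\mathcal{R}_7$, and the tail of the Lie series, using that $\mathrm{ad}_{X_{H_{\mathtt{A}_2}}}[\omega\cdot\partial_{\varphi}-mJ]=-\Pi_{\mathrm{Rg}(\mathsf{H}_0)}X_{\mathcal{K}_2^{(1)}}-(\omega-\overline{\omega})\cdot\partial_{\varphi}X_{H_{\mathtt{A}_2}}-(m-1)[X_{H_{\mathtt{A}_2}},J]\in\gotC_{-1}$ because $\mathtt{A}_2$ is almost diagonal. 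Then the closure and quantitative estimates of $\gotC_{-1}$ (Lemmata \ref{IncluecompoCLASSI}, \ref{commutatoC1}), the bound $|\omega-\overline{\omega}|\lesssim\varepsilon^2$, and the bounds \eqref{alieni4}, \eqref{caffelatte} give $\mathcal{R}_8\in\gotC_{-1}(\calO_{\infty}^{2\gamma})$ together with the estimates \eqref{pasqua}, possibly after enlarging $\tilde{\sigma}$. The only genuinely delicate point in the whole argument is the lower bound on the non-resonant divisors in the second step, where the algebraic input of Lemma \ref{ecologia} replaces the usual perturbative argument.
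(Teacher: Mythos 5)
Your proposal is correct and follows essentially the same route as the paper's proof: expand the conjugation by $\Upsilon_2$ via the Lie series \eqref{charlie}, use the homological equation \eqref{seven} together with the identification Theorem \ref{PartialWeak} to recognize the surviving resonant part as $-\varepsilon^2\mathfrak{D}(\xi)$, leave $\mathcal{K}_3^{(1)}$ untouched, and absorb all the rest into $\mathcal{R}_8\in\gotC_{-1}$ using the closure and estimates of Lemmata \ref{IncluecompoCLASSI}, \ref{commutatoC1}, \ref{delfino} and the bounds \eqref{alieni4}, \eqref{caffelatte}. The only slip is a citation: the algebraic lower bound on the nonzero divisors $\delta_{\ell j j'}$ with $\lvert \ell\rvert\le 2$ that makes $\mathtt{A}_2$ in \eqref{us} well defined is Lemma \ref{Denominatore} (entering through Lemma \ref{delfino}), not Lemma \ref{ecologia}, although your asymptotic argument is in the same spirit and would also close the step.
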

\begin{proof}
The proof follows by using the same arguments of the proof of Lemma \ref{LBNF1}.
In particular, expanding the left hand side of \eqref{L8}
using \eqref{charlie} we get
\begin{equation}\label{R8}
\begin{aligned}
\mathcal{R}_8 &:=\mathcal{R}_7+\varepsilon^2\mathrm{ad}_{X_{\mathtt{A}_2}}[-\varepsilon^2 \mathfrak{D}(\xi)-\varepsilon^3 X_{\mathcal{K}_3^{(2)}} +\mathcal{R}_8]+\sum_{k\geq 2}\frac{\varepsilon^{2 k}}{k!}\mathrm{ad}_{X_{\mathtt{A}_2}}[\mathcal{L}_7]\,.
\end{aligned}
\end{equation}
By \eqref{seven} and Theorem \ref{PartialWeak} we have that
\[
\mathrm{ad}_{X_{\mathtt{A}_2}}[\omega\cdot\partial_{\varphi}-m J]+\mathcal{K}^{(1)}_2=-\mathfrak{D}(\xi)-(\omega-\overline{\omega})\cdot\partial_{\varphi}X_{\mathtt{A}_2}-(m-1)[X_{\mathtt{A}_2}, J].
\]
By \eqref{mole} $\mathfrak{D}(\xi)\in\gotC_{-1}$ and by the fact that $\mathtt{A}_2$ is almost diagonal we have that  
$(\omega-\overline{\omega})\cdot\partial_{\varphi}X_{\mathtt{A}_2}, [X_{\mathtt{A}_2}, J]\in\gotC_{-1}$. Then $\mathcal{R}_{8}\in \gotC_{-1}$. The bounds \eqref{pasqua} are obtained by using the estimates \eqref{alieni}, \eqref{chegioia}, \eqref{alieni2}, \eqref{bojack} and 
\eqref{caffelatte}.
\end{proof}

\noindent
{\bf Step three (order $\varepsilon^3$).} At this step we eliminate $\varepsilon^3 X_{\mathcal{K}^{(2)}_3}$ from \eqref{L8}.
Recalling that  $\mathcal{K}^{(2)}_3$ is given in Lemma \ref{LBNF2},
we have
\begin{equation}\label{formanormalePlus}
\begin{aligned}
&\mathcal{K}^{(3)}:=\mathcal{K}^{(2)}\circ \Upsilon^{-1}_3= \,\mathsf{H}_0+\varepsilon^2 \mathcal{K}^{(2)}_2+\varepsilon^3 \mathcal{K}^{(3)}_3+o(\varepsilon^3), \\
&\mathcal{K}^{(3)}_3:=\{ H_{\mathtt{A}_3}, \mathsf{H}_0+\varepsilon^2 \mathbb{A}\xi\cdot \eta+\frac{\varepsilon^2}{2}\sum_{j\in S^c} \lal_j(\xi) z_j \,z_{-j}\}_e+\mathcal{K}^{(2)}_3\,.
\end{aligned}
\end{equation}
Note that we consider in the normal form also the $\varepsilon^2$-terms.
We want to solve the equation
\begin{equation}\label{eqHomo3}
\mathcal{D}_{\overline{\omega}
+\varepsilon^2 \mathbb{A}\xi} H_{\mathtt{A}_3}
+\{ H_{\mathtt{A}_3}, \mathsf{H}_0
+ \frac{\varepsilon^2}{2}\sum_{j\in S^c} 
\lal_j(\xi) z_j \,z_{-j}\}+ \mathcal{K}_3^{(2)}=0\,.
\end{equation}
Hence we choose the matrix $\mathtt{B}_3:=\nabla \mathcal{K}^{(1)}_3(u)$ (note that $\mathcal{K}^{(1)}_3=\mathcal{K}^{(2)}_3$). Recalling \eqref{K13} it is easy to see that $J \mathtt{B}_3$ is sum of Lie brackets of elements of $\gotC_{-1}$, hence by Lemma \ref{IncluecompoCLASSI} it belongs to $\gotC_{-1}$. By the fact that $\mathtt{A}_1$ is almost diagonal and by Remark \ref{remarkalmost} we have that $J \mathtt{B}_3$ is almost diagonal.

\begin{lem}\label{LBNF3}
The transformed operator is (recall \eqref{L4dp})
\begin{equation}\label{L9}
\mathcal{L}_9 :=\Upsilon_3 \mathcal{L}_8 \Upsilon_3^{-1}  =
\Pi_S^{\perp}\Big(\omega\cdot\partial_{\varphi}
-m J-\varepsilon^2 \mathfrak{D}(\xi) +\mathcal{R}_9 \Big)
\end{equation}
where $\mathfrak{D}(\xi)$ is the diagonal operator of order $-1$ defined 
in \eqref{diagonalop},
$\RR_9\in \mathfrak{C}_{-1}$ satisfies
\begin{equation}\label{pasqua9}
\mathbb{B}^{\g}_{\mathcal{R}_9}(s)  
\lesssim_{s} \varepsilon^{4- 3 a}+
\varepsilon \g^{-1}
\lVert \mathfrak{I}_{\delta}\rVert_{s+\tilde{\sigma}}^{\gamma, \cO_0}\,, 
\qquad 
\mathbb{B}_{\Delta_{12} \mathcal{R}_9  }(s_0)  \lesssim 
\varepsilon\g^{-1}\lVert i_1-i_2\rVert_{s_0+\tilde{\sigma}}\,,
\end{equation}
 for some $\tilde{\sigma}$ possibly larger than the one in Lemma \ref{LBNF2}.
\end{lem}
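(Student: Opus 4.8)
\textbf{Proof plan for Lemma \ref{LBNF3}.}
The plan is to repeat verbatim the mechanism of Lemmata \ref{LBNF1} and \ref{LBNF2}, now at order $\varepsilon^3$, with the only new feature that the homological equation \eqref{eqHomo3} is solved against the \emph{modified} unperturbed Hamiltonian $\mathsf H_0+\varepsilon^2\mathtt Z_0'$, where $\mathtt Z_0':=\mathbb A\xi\cdot\eta+\tfrac12\sum_{j\in S^c}\lal_j(\xi)z_j z_{-j}$ contains the $O(\varepsilon^2)$ corrections produced by the previous steps. First I would record that, by the choice $\mathtt B_3:=\nabla\mathcal K_3^{(1)}(u)$ and the discussion preceding the statement, $J\mathtt B_3\in\gotC_{-1}$ and is almost diagonal; hence by Lemma \ref{delfino} the map $\Upsilon_3$ is well defined on $H^s_{S^\perp}$, symplectic, and satisfies tame estimates governed by $\mathbb B^\g_{J\mathtt B_3}(s)\lesssim_s\varepsilon^3 C(s)$ (using \eqref{alieni4}-type bounds for $X_{\mathcal K_3^{(1)}}$, which follow from Lemma \ref{pseudoHAM1} and the explicit size of $\beta_1,\dots,\beta_5$). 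The crucial solvability point is that $\mathcal K_3^{(2)}$ is a pseudo differential $3$-homogeneous Hamiltonian with vanishing function $f_3(\bar v)=0$ (noted after \eqref{georgeHill10} and again in Lemma \ref{LBNF2}), so the Fourier support of its vector field satisfies the momentum relation \eqref{disperato} with $|\ell|=3$, $j,j'\in S^c$; the associated small divisors are then
\[
\overline\omega\cdot\ell+\varepsilon^2\mathbb A\xi\cdot\ell+\og(j')-\og(j)+\varepsilon^2\big(\og(j')\lal_{j'}-\og(j)\lal_j\big),
\]
which are exactly the quantities bounded from below on $\mathcal G_0^{(1)}$ in \eqref{divisoriLBNF3}. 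Since $\calO_0\subseteq\mathcal G_0$, equation \eqref{eqHomo3} is solvable with the quantitative loss controlled by $C\gamma$ in the denominators, and $\mathtt A_3$ (equivalently $H_{\mathtt A_3}$) obeys $\mathbb B^\g_{J\mathtt B_3}(s)\lesssim_s\varepsilon^3\gamma^{-1}C(s)$.

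Next I would expand $\mathcal L_9=\Upsilon_3\mathcal L_8\Upsilon_3^{-1}$ with the Lie-series formula \eqref{charlie}, exactly as in \eqref{R7} and \eqref{R8}. The homological equation \eqref{eqHomo3} kills the $\varepsilon^3$ term and produces the identity
\[
\mathrm{ad}_{X_{\mathtt A_3}}\big[\omega\cdot\partial_\varphi-mJ-\varepsilon^2\mathfrak D(\xi)\big]+X_{\mathcal K_3^{(2)}}
=-(\omega-\overline\omega)\cdot\partial_\varphi X_{\mathtt A_3}-(m-1)[X_{\mathtt A_3},J]-\varepsilon^2[X_{\mathtt A_3},\mathfrak D+\mathbb A\xi\cdot\partial_\varphi-\tfrac12\lal],
\]
the analogue of \eqref{alieni3}. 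Each term on the right is in $\gotC_{-1}$: $\mathfrak D\in\gotC_{-1}$ by Lemma \ref{mole}, the frequency-shift and $[\,\cdot\,,J]$ terms are in $\gotC_{-1}$ because $\mathtt A_3$ is almost diagonal (same argument as in the proof of Lemma \ref{delfino}), and closure of $\gotC_{-1}$ under the relevant operations is Lemma \ref{IncluecompoCLASSI}. The leftover remainder
\[
\mathcal R_9:=\mathcal R_8+\varepsilon^2\,\mathrm{ad}_{X_{\mathtt A_3}}\big[-\varepsilon^2\mathfrak D(\xi)+\mathcal R_9\big]+\sum_{k\ge2}\frac{\varepsilon^{3k}}{k!}\mathrm{ad}^k_{X_{\mathtt A_3}}[\mathcal L_8]
\]
is then in $\gotC_{-1}$ by Lemmata \ref{IncluecompoCLASSI} and \ref{commutatoC1}, and the size bounds follow by combining the estimates of Lemma \ref{commutatoC1} (of \eqref{alieni2}-type) with \eqref{pasqua}, \eqref{bojack} and $|\omega-\overline\omega|\lesssim\varepsilon^2$: the dominant contributions are $\mathbb B^\g_{\mathcal R_8}(s)\lesssim_s\varepsilon^{4-3a}+\varepsilon\gamma^{-1}\|\mathfrak I_\delta\|_{s+\tilde\sigma}^{\gamma,\calO_0}$ plus terms of size $\varepsilon^3\cdot\varepsilon^3\gamma^{-1}=\varepsilon^6\gamma^{-1}=\varepsilon^{4-3a}$ (since $\gamma=\varepsilon^{2b}$, $a=2b-2$), which are absorbed into the stated bound \eqref{pasqua9}. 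The Lipschitz-in-$i$ estimate is identical, with $\mathcal R_8$ contributing the only $i$-dependence at leading order through $\Delta_{12}\mathcal R_8$, cf. the second bound in \eqref{pasqua}.

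The main obstacle is genuinely the solvability of \eqref{eqHomo3}, i.e. the control of the third-order small divisors: unlike Steps one and two, where the denominators were either $O(1)$ (after the shift) or harmless by the triviality of the relevant resonances via Theorem \ref{PartialWeak}, here the rational and asymptotically linear dispersion law $\lambda(j)$ can produce denominators that vanish only up to $O(\varepsilon^2)$, so one \emph{must} incorporate the $\varepsilon^2$-corrections $\varepsilon^2\mathbb A\xi\cdot\partial_\varphi$ and $\varepsilon^2\tfrac12\sum\lal_j z_jz_{-j}$ into the unperturbed operator and then invoke the lower bound defining $\mathcal G_0^{(1)}$. It is precisely at this point that Theorem \ref{PartialWeak} (normal form identification) is indispensable: it furnishes the explicit value of $\lal_j$ in \eqref{lambda} — hence of the corrected eigenvalues entering the divisors \eqref{divisoriLBNF3} — without any by-hand third-order Birkhoff computation, and it also guarantees that the $\varepsilon^2$-resonant part $\Pi_{\mathrm{Ker}(\mathsf H_0)}\mathcal K_2^{(1)}$ is supported only on trivial resonances so that $\mathfrak D(\xi)$ is genuinely diagonal. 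Once \eqref{eqHomo3} is solved the rest is bookkeeping in the class $\gotC_{-1}$, entirely parallel to Lemma \ref{LBNF2}.
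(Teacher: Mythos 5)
Your proposal is correct and follows essentially the same route as the paper's proof: Lie expansion via \eqref{charlie}, use of the homological equation \eqref{eqHomo3} (solved against the $\varepsilon^2$-corrected normal form, with divisors controlled on $\mathcal{G}_0^{(1)}$) to cancel the $\varepsilon^3$ term up to residuals of the form $(\omega-\overline{\omega}-\varepsilon^2\mathbb{A}\xi)\cdot\partial_{\varphi}\mathtt{A}_3$ and $(m-1-\varepsilon^2 c(\omega))[X_{\mathtt{A}_3},J]$ lying in $\gotC_{-1}$ by almost-diagonality, and then the bounds \eqref{pasqua9} from \eqref{bojack}, \eqref{pasqua} and Lemma \ref{commutatoC1}. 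The only blemishes are cosmetic: the bookkeeping of the remainder (an $\varepsilon^2$ versus $\varepsilon^3$ prefactor) and the identity $\varepsilon^6\gamma^{-1}=\varepsilon^{4-3a}$ (it equals $\varepsilon^{4-a}$, which is even smaller), neither of which affects the stated estimates.
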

\begin{proof}
The proof follows the same arguments used for proving Lemma \ref{LBNF1}. 
By \eqref{charlie} we deduce
\begin{equation}\label{R9}
\begin{aligned}
\mathcal{R}_9 &:=\mathcal{R}_8+\varepsilon^3\mathrm{ad}_{X_{\mathtt{A}_3}}[-\varepsilon^3 X_{\mathcal{K}_3^{(2)}}+\mathcal{R}_8]+\sum_{k\geq 2} \frac{1}{k!} \mathrm{ad}_{X_{\mathtt{A}_3}}^k[\mathcal{L}_8]\,.
\end{aligned}
\end{equation}
We note that
by \eqref{eqHomo3} we have (recall \eqref{FreqAmplMapDP} and \eqref{Cxi})
\[
\mathrm{ad}_{X_{\mathtt{A}_3}}[\omega\cdot\partial_{\varphi}-mJ-\varepsilon^2 \mathfrak{D}(\xi)]=\varepsilon^3 X_{\mathcal{K}_3^{(2)}}+(\omega-\overline{\omega}-\varepsilon^2 \mathbb{A}\xi)\cdot \partial_{\varphi} \mathtt{A}_{3}-(m-1-\varepsilon^2 c(\oo))[X_{\mathtt{A}_3}, J]
\in \gotC_{-1}\,,
\]
since $\mathtt{A}_3$ is almost diagonal .
Hence the bounds \eqref{pasqua9} follows by \eqref{bojack}, \eqref{pasqua} and by using Lemma \ref{commutatoC1}.
\end{proof}

\begin{proof}[\textbf{Proof of Theorem \ref{risultatosez8}}]
We choose $\mu_1=\tilde{\sigma}$ given in Lemma \ref{LBNF3}. 
We consider $p$ and $\gotp_1$ so that 
\begin{equation}\label{ipotesimu}
 s_0+\gotp_1-\hat{\s}\geq p\geq s_0\,,\quad  \s_{9}+\sigma_0+(s_1-s_0)+\su+\rho+1\leq \tilde{\s}\, ,
\qquad \tilde{\s}\leq \gotp_1\,, 
\end{equation}
where $\tilde{\s}$ is the loss of regularity in Lemma \ref{LBNF3}, $\s_0$ has been introduced in Section \ref{regularization}, see estimates \eqref{unasuaziaricca}-
\eqref{dude2}, 
$\su>0$ and  $s_1$  are  given respectively in Lemma \ref{differenzaFlussi} and in Proposition $3.6$ in \cite{FGP1}.
\\
We define the map (recall \eqref{Fi1dp}, \eqref{fi})
\[
\Upsilon:=\Upsilon_3 \circ \Upsilon_2 \circ \Upsilon_1\circ \mathcal{B}\,. 
\]
By Proposition \ref{corollarioStraight} the map $\mathcal{B}$ is defined for $\omega\in \calO^{2\g}_{\infty}$ (see \eqref{condizMel1}), and so also $\Upsilon$.
By \eqref{Drugo}, \eqref{Drugo2}, \eqref{Drugo45}, \eqref{upo1}, \eqref{bojack}, \eqref{saintetienne}, \eqref{uranioi} (with $i=5$) and Lemma \ref{differenzaFlussi} we have \eqref{grecia}.\\
The result follows by setting $\mathcal{L}:=\mathcal{L}_9$ (see \eqref{L9}), 
$\mathcal{P}_0:=\mathcal{R}_9$,
and $m$ is the constant given by Proposition \ref{corollarioStraight}.
Indeed 
 \eqref{clinica} implies \eqref{clinica100} and \eqref{clinica1000}.
Moreover, by Lemma \ref{LBNF3}, we have that 
$\cP_0\in \mathfrak{C}_{-1}$ and satisfies \eqref{pasqua9}.
 Lemma \ref{LemmaAggancio} implies that $\cP_0$  is $-1$-modulo tame  together with $\tb_0$ derivatives in the variable $\varphi$ (recall the Definition \ref{Mdritta2} and the fact that $\gamma^{3/2}<\g$). The bounds  \eqref{pasqua200}, \eqref{nomidimerda2bis} follow from \eqref{chavez1}- \eqref{chavez20} , the definition of $\mathbb{B}^{\g}_{\cP_0}(s)$ (see \eqref{Mdritta2},\eqref{Mdrittaconlai2}) and \eqref{pasqua9}. By Lemma \ref{mole} we deduce \eqref{diagonalopFinale}.
\end{proof}

\subsection{KAM reducibilty and Inversion of  the linearized operator}\label{SezioneDiagonalization}
In this subsection we prove the claim \eqref{InversionAssumptionDP}
by diagonalizing  
the operator $\mathcal{L}$ in \eqref{operatorefinale}.
We first write
\begin{equation}\label{natalino}
\mathcal{L}:=\omega\cdot\del_{\f}-\mathbf{M}_0, \quad
\mathbf{M}_0 := \cD_0+\cP_0\,,\quad \cD_0 := {\rm diag}(\mathrm{i} \, d_j^{(0)})_{j\in S^c}\,,\quad d_j^{(0)}:= 
d_j^{(0)}(\omega)
= m(\omega)\og(j)+\varepsilon^2 \kappa_j(\omega).	
\end{equation}
Notice that (by the smallness condition \eqref{PiccolezzaperKamredDP})
 Proposition $4.1$ in \cite{FGP1} applies to the operator $\calL$ in \eqref{operatorefinale}. Hence by following almost word by word the 
proof of Theorem $1.7$ in \cite{FGP1} one has the following.

\begin{teor}{\textbf{(Reducibility)}}\label{ReducibilityDP}
Fix $\tilde{\g}\in [\gamma^{3/2}/4, 4\gamma^{3/2}]$.
Assume that $\omega \mapsto i_{\delta}(\omega)$ is a Lipschitz function defined on  $\calO_0\subseteq \Omega_{\varepsilon}$ (recall   \eqref{AssumptionDP}), satisfying \eqref{IpotesiPiccolezzaIdeltaDP} with $\gotp_1 \geq\mu_1$ where $\mu_1:=\mu_1(\nu)$ is given in Proposition \ref{risultatosez8}. 
There exist $\delta_0\in (0, 1)$, $N_0>0$, $C_0>0$, such that, if (recall that by \eqref{donnapia} $\g= \e^{2+a}$)
\begin{equation}\label{PiccolezzaperKamredDP}
N_0^{C_0} \varepsilon^{4-3 a} \gamma^{-3/2}=N_0^{C_0} \varepsilon^{1-(9/2) a}\le \delta_0, 
\end{equation}
then the following holds.
\begin{itemize}
\item[(i)] \textbf{(Eigenvalues)}. For all $\omega\in \Omega_{\varepsilon}$ there exists a sequence \footnote{Whenever it is not strictly necessary, we shall drop the dependence on $i_{\delta}$.}
\begin{align}\label{FinalEigenvaluesDP}
&d_j^{\infty}(\omega):=d_j^{\infty}(\omega, i_{\delta}(\omega)):=m(\omega, i_{\delta}(\omega))\,\og(j)+\varepsilon^2 \kappa_j (\omega) + r_j^{\infty}(\omega, i_{\delta}(\omega)), \quad j\in S^c,
\end{align}
with $m$ and $\kappa_j$ in \eqref{clinica} and \eqref{diagonalop} respectively. Furthermore, for all $j\in S^c$
\begin{equation}\label{stimeautovalfinaliDP}
 \sup_j \langle j\rangle |r_j^{(\infty)}|^{\gamma^{3/2}} \lesssim \e^{4-3 a}, \qquad r_j^{\infty}=-r^{\infty}_{-j}.
\end{equation}
 All the eigenvalues $\mathrm{i} d_j^{\infty}$ are purely imaginary. 
\item[(ii)] \textbf{(Conjugacy)}. For all $\omega$ in the set
\begin{equation}\label{OmegoneInfinitoDP}
\Omega^{2 {\tilde{\gamma}}}_{\infty}:=\Omega^{2 \tilde{\gamma}}_{\infty}(i_{\delta}):=\left\{ \omega\in\calO^{2\gamma}_{\infty} : \lvert \omega\cdot \ell+d_j^{\infty}(\omega)-d_k^{\infty}(\omega)\rvert\geq \frac{2 {\tilde{\gamma}}}{\langle \ell \rangle^{\tau}}, \,\,\forall \ell\in\mathbb{Z}^{\nu}, \,\,\forall j, k\in S^c,\, j\neq k  \right\}
\end{equation}
there is a real, bounded, invertible, linear operator $\Phi_{\infty}(\omega)\colon H^s_{S^{\perp}}(\T^{\nu+1})\to H_{S^{\perp}}^s(\T^{\nu+1})$, with bounded inverse $\Phi_{\infty}^{-1}(\omega)$, that conjugates $\mathcal{L}$ in \eqref{operatorefinale} to constant coefficients, namely 
\begin{equation}\label{Linfinito}
\begin{aligned}
&\mathcal{L}_{\infty}(\omega):=\Phi_{\infty} (\omega) \circ \mathcal{L} \circ \Phi_{\infty}^{-1}(\omega)=\omega\cdot \partial_{\varphi}-\mathcal{D}_{\infty}(\omega),\\
& \mathcal{D}_{\infty}(\omega):=\mathrm{diag}_{j\in S^c} \{ \mathrm{i} d_j^{\infty}(\omega) \}.
\end{aligned}
\end{equation}
The transformations $\Phi_{\infty}, \Phi_{\infty}^{-1}$ are tame and they satisfy for $s_0\le s\le \mathcal{S}$ (recall $\mu_1$ in Theorem \ref{risultatosez8})
\begin{equation}\label{grano}
\lVert (\Phi^{\pm 1}_{\infty}-\mathrm{I}) h \rVert^{\gamma^{3/2}, \Omega_{\infty}^{2 \tilde\gamma}}_{s}\lesssim_s \big(\varepsilon^{4-3a} \gamma^{-3/2}+\varepsilon \gamma^{-5/2} \lVert \mathfrak{I}_{\delta} \rVert_{s+\mu_1}^{\gamma, \calO_0}\big) \lVert h \rVert^{\gamma^{3/2}, \Omega_{\infty}^{2 \tilde\gamma}}_{s_0}+\varepsilon^{4-3a} \gamma^{-3/2} \lVert h \rVert^{\gamma^{3/2}, \Omega_{\infty}^{\tilde\gamma}}_s.
\end{equation}
Moreover $\Phi_{\infty}, \Phi_{\infty}^{-1}$ are symplectic, and $\mathcal{L}_{\infty}$ is a Hamiltonian operator.
\item[(iii)] \textbf{(Dependence on $i_{\delta}(\omega)$)}. Let $i_1(\omega)$ and $i_2(\omega)$ be two Lipschitz maps satisfying \eqref{IpotesiPiccolezzaIdeltaDP} with $\mathfrak{I}_{\delta}\rightsquigarrow i_k(\varphi)-(\varphi, 0, 0)$, $k=1, 2$, and such that
\begin{equation}
\lVert i_1-i_2 \rVert_{s_0+\mu_1}\lesssim \rho N^{-(\tau+1)}
\end{equation}
for $N$ sufficiently large and $0<\rho<\gamma^{3/2}/4$. Fix $\g_1\in [\gamma^{3/2}/2, 2 \gamma^{3/2}]$ and $\g_2:=\g_1-\rho$. Let $r_j^{(\infty)}(\oo, i_k(\oo))$ be the sequence in \eqref{FinalEigenvaluesDP} with $\tilde{\g}\rightsquigarrow \g_k$ for $k=1, 2$. 
Then for all $\omega\in \Omega_{\infty}^{\g_1}(i_1)$ we have, for some $\kappa>(3/2) \tau$, 
\begin{equation}
\label{r12}
\g^{-1}\lvert \Delta_{12} m \rvert+ \sup_j \langle j\rangle |\Delta_{12}r_j^{(\infty)}| \le \e \g^{-1} \| i_1-i_2 \|_{s_0+\mu_1}+\varepsilon^{4- 3 a} N^{-\kappa}.
\end{equation}
\end{itemize}
\end{teor}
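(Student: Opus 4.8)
The plan is to deduce Theorem \ref{ReducibilityDP} from the KAM reducibility machinery of \cite{FGP1}: by Theorem \ref{risultatosez8} the operator $\calL$ in \eqref{operatorefinale} has exactly the structure required by Proposition $4.1$ of \cite{FGP1}, so the argument follows the proof of Theorem $1.7$ there, and what is actually needed is to check that the hypotheses transfer and to track the quantitative bounds. First I would rewrite $\calL$ as in \eqref{natalino}, $\calL=\oo\cdot\del_\f-\mathbf{M}_0$ with $\mathbf{M}_0=\cD_0+\cP_0$, $\cD_0=\mathrm{diag}(\ii d_j^{(0)})_{j\in S^c}$ and $d_j^{(0)}=m\og(j)+\e^2\kappa_j\in\R$, and record from Theorem \ref{risultatosez8} that $\cP_0$ is a real Hamiltonian operator in $\gotC_{-1}$, Lip-$-1$-modulo tame together with $\tb_0$ derivatives in $\f$, with the bound \eqref{pasqua200}. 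The essential smallness input is that, by \eqref{IpotesiPiccolezzaIdeltaDP} together with \eqref{PiccolezzaperKamredDP}, one has $N_0^{C_0}\,\mathfrak{M}^{\sharp,\g^{3/2}}_{\cP_0}(-1,s_0,\tb_0)\,\g^{-3/2}\le\delta_0$; this is exactly where the gain $\e^{4-3a}\ll\g^{3/2}=\e^{3+(3/2)a}$, produced by the many weak and linear Birkhoff normal form steps, is spent.

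Next I would set up the standard iteration: $N_{-1}:=1$, $N_n:=N_0^{(3/2)^n}$, and construct inductively real symplectic conjugations $\Phi_n=\exp(\Psi_n)$, with $\Psi_n\in\gotC_{-1}$ a Hamiltonian $-1$-smoothing generator whose matrix entries are $(\Pi_{N_n}\cP_n)_j^k(\ell)$ divided by the small divisor $\ii(\oo\cdot\ell+d_j^{(n)}-d_k^{(n)})$ on the block $\{\lvert\ell\rvert\le N_n,\ (\ell,j,k)\neq(0,j,j)\}$ and zero elsewhere, so that conjugation removes the off-diagonal part of $\Pi_{N_n}\cP_n$. This $\Psi_n$ is well defined on the nested set
\[ \Omega_{n+1}^{2\tilde{\g}}:=\{\oo\in\Omega_n^{2\tilde{\g}}:\ \lvert\oo\cdot\ell+d_j^{(n)}(\oo)-d_k^{(n)}(\oo)\rvert\ge 2\tilde{\g}\langle\ell\rangle^{-\tau}\ \ \forall\,\lvert\ell\rvert\le N_n,\ j\neq k\in S^c\},\qquad \Omega_0^{2\tilde{\g}}:=\calO_\infty^{2\g}. \]
Standard composition, perturbation and smoothing estimates for modulo-tame operators (Appendix of \cite{FGP1}) then give $\calL_{n+1}:=\Phi_n\calL_n\Phi_n^{-1}=\oo\cdot\del_\f-\cD_{n+1}-\cP_{n+1}$ with $\cD_{n+1}$ still diagonal, $d_j^{(n+1)}=m\og(j)+\e^2\kappa_j+r_j^{(n+1)}$, $r_j^{(n+1)}=-r_{-j}^{(n+1)}$, $\sup_j\langle j\rangle\lvert r_j^{(n+1)}-r_j^{(n)}\rvert^{\g^{3/2}}\lesssim\mathfrak{M}_n$, and the quadratic-plus-tail bound $\mathfrak{M}_{n+1}\lesssim N_n^{C}\mathfrak{M}_n^2+N_n^{-\tb_0+C}\mathfrak{M}_n(\mathcal{S})$, where $\mathfrak{M}_n:=\mathfrak{M}^{\sharp,\g^{3/2}}_{\cP_n}(-1,s_0,\tb_0)$ and $\mathfrak{M}_n(\mathcal{S})$ is the analogue at $s=\mathcal{S}$; since each step preserves the real/Hamiltonian/symplectic structure, the $\ii d_j^{(n)}$ remain purely imaginary. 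With $\tb_0$ as in \eqref{parametriKAM} and $\e$ small the induction closes and $\mathfrak{M}_n\to0$ superexponentially.

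For the conclusions I would argue classically: the $r_j^{(n)}$ are Cauchy uniformly in $j$, producing $r_j^\infty$ with \eqref{stimeautovalfinaliDP}, hence \eqref{FinalEigenvaluesDP}; the eigenvalues $d_j^{(n)}$ (and so $d_j^\infty$) are defined and Lipschitz on all of $\Omega_\e$ — only the conjugations, not the eigenvalues, need the Melnikov sets — and $\ii d_j^\infty$ are purely imaginary, which gives part (i). The partial products $\Phi_n\circ\cdots\circ\Phi_0$ converge in $\mathcal{L}(H^s_{S^\perp})$ to a symplectic, invertible $\Phi_\infty$ satisfying \eqref{grano}, and on $\Omega_\infty^{2\tilde{\g}}=\bigcap_n\Omega_n^{2\tilde{\g}}$ all homological equations are solvable, so $\calL_\infty:=\Phi_\infty\calL\Phi_\infty^{-1}=\oo\cdot\del_\f-\cD_\infty$ with $\cD_\infty=\mathrm{diag}_{j\in S^c}(\ii d_j^\infty)$, which is part (ii). For part (iii) I would run the iteration simultaneously for $i_1$ and $i_2$ and bound $\Delta_{12}r_j^{(n)}$, $\Delta_{12}m$ by a telescoping estimate based on \eqref{clinica1000}, \eqref{nomidimerda2bis} and the variation bounds for the $\gotC_{-1}$ norms: at each step one gains either a factor $\e\g^{-1}\lVert i_1-i_2\rVert_{s_0+\mu_1}$, or — restricting to the overlap $\Omega_\infty^{\g_1}(i_1)\cap\Omega_n^{\g_2}(i_2)$, where the margin $\rho$ and the hypothesis $\lVert i_1-i_2\rVert_{s_0+\mu_1}\lesssim\rho N^{-(\tau+1)}$ are used exactly as in \cite{FGP1} — a factor $N^{-\kappa}$ with $\kappa>(3/2)\tau$, which yields \eqref{r12}.

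As for where the difficulty lies: I do not expect the iteration itself to be the obstacle. The only point that differs from \cite{FGP1} is that the base diophantine constant in the Melnikov conditions is $\tilde{\g}\sim\g^{3/2}$ rather than $\g$, which is harmless because \eqref{pasqua200} already measures $\cP_0$ against $\g^{3/2}$ and \eqref{PiccolezzaperKamredDP} is phrased accordingly; moreover the Melnikov sets $\Omega_\infty^{2\tilde{\g}}$ are merely carried along here, their measure being estimated separately. The genuinely hard work is upstream — producing, in Theorem \ref{risultatosez8}, a remainder $\cP_0$ that is simultaneously of size $\e^{4-3a}\ll\g^{3/2}$ and still lies in the good class $\gotC_{-1}$ (almost diagonal, $-1$-smoothing, with Lipschitz-in-$\oo$ modulo-tame control) — and that step is already done, so granted Theorem \ref{risultatosez8} the diagonalization goes through essentially verbatim.
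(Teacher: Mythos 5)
Your proposal is correct and follows essentially the same route as the paper: the paper's proof simply verifies the smallness condition \eqref{PiccolezzaperKamredDP} so that Proposition $4.1$ and Theorems $1.7$/$1.9$ of \cite{FGP1} apply to $\calL$ in \eqref{operatorefinale} (giving items $(i)$--$(ii)$), and obtains item $(iii)$ from Theorems $1.4$, $1.5$ of \cite{FGP1} via the truncated quantities $m^{(N)}$, $r_j^{(N)}$ — which is exactly the telescoping/variation argument you sketch. Writing out the iterative scheme in detail, as you do, is just an unfolding of the cited machinery, so no genuinely different idea is involved.
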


\begin{proof}
The proof of items $(i)$-$(ii)$ follow by Theorem $1.9$ in \cite{FGP1}.
The only point left to prove is item $(iii)$. We apply Theorems $1.4$, $1.5$ in \cite{FGP1}. We have that
\begin{equation}\label{giovanna1}
\g^{-1}\lvert  m^{(N)}(i_2)-m(i_1)  \rvert+\langle j \rangle \lvert r_j^{(N)}(i_2)-r_j^{(\infty)}(i_1) \rvert\lesssim \varepsilon\g^{-1} \lVert i_1-i_2 \rVert_{s_0+\mu_1}+\varepsilon^{4- 3 a} N^{-\kappa},
\end{equation}
where $\kappa>\tau$. Here $m^{(N)}(i_2)$, $r_j^{(N)}(i_2)$ are defined in $(1.39)$ of \cite{FGP1} and they are an approximation of $m(i_2)$ and $r_j^{(\infty)}(i_2)$ satisfying
\begin{equation}
\g^{-1}\lvert  m^{(N)}(i_2)-m(i_2)  \rvert+ \langle j \rangle \lvert r_j^{(N)}(i_2)-r_j^{(\infty)}(i_2) \rvert\lesssim \varepsilon^{4- 3 a} N^{-\kappa}\,.\label{giovanna2}
\end{equation}
The bounds \eqref{giovanna1}, \eqref{giovanna2} imply the \eqref{r12}.
\end{proof}

\subsection{Proof of the inversion assumption \eqref{InversionAssumptionDP}}

We are in position to give estimates on the inverse of the operator $\calL_{\omega}$ in \eqref{BoraMaledetta}.
Let us  now define (recall \eqref{gammaDP})
\begin{equation}\label{primediMelnikov}
\calF_{\infty}^{2\gamma}(i_{\delta}):=\{ \omega\in \calO_0 : \lvert \omega\cdot \ell-\,d_j^{\infty}(\omega) \rvert\geq \frac{2\gamma}{\langle \ell \rangle^{\tau}} , \quad \forall \ell\in\mathbb{Z}^{\nu} , \forall j\in S^c\}
\end{equation}
We deduce the inversion assumption \eqref{InversionAssumptionDP} by the following result.

\begin{prop}\label{InversionLomegaDP}
Assume the hypotesis of Theorem \ref{ReducibilityDP}, \eqref{IpotesiPiccolezzaIdeltaDP} with $\gotp_1\geq \mu_1+2\tau+1$, where $\mu_1$ is given in Theorem \ref{risultatosez8}.
Then for all $\omega\in \Omega_{\infty}:=\Omega_{\infty}^{2 \gamma^{3/2}}(i_{\delta}) \cap \calF_{\infty}^{2\gamma}(i_{\delta})$ (see \eqref{OmegoneInfinitoDP}), for any function $g\in H^{s+2\tau+1}_{S^{\perp}}(\T^{\nu+1})$ the equation $\mathcal{L}_{\omega} h=g$ has a solution $h=\mathcal{L}_{\omega}^{-1} g\in H_{S^{\perp}}^s(\T^{\nu+1})$, satisfying
\begin{equation}\label{TameEstimateLomegaDP}
\begin{aligned}
\lVert \mathcal{L}_{\omega}^{-1} g \rVert_s^{\gamma, \Omega_{\infty}} &\lesssim_s \gamma^{-1} (\lVert g \rVert_{s+2\tau+1}^{\gamma, \Omega_{\infty}}+\varepsilon \gamma^{-5/2}\lVert \mathfrak{I}_{\delta} \rVert_{s+\gotp_1}^{\gamma, \Omega_{\infty}}\lVert g \rVert_{s_0}^{\gamma, \Omega_{\infty}}).
\end{aligned}
\end{equation}
\end{prop}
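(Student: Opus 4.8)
The plan is to combine the two conjugations constructed above and then invert the resulting diagonal, constant coefficient operator. By Theorem \ref{risultatosez8} the operator $\Upsilon=\Upsilon(\omega)$ conjugates $\mathcal{L}_{\omega}$ in \eqref{BoraMaledetta} to $\mathcal{L}$ in \eqref{operatorefinale}, defined for $\omega\in \calO_{\infty}^{2\gamma}$, while by Theorem \ref{ReducibilityDP} (applied with $\tilde{\gamma}=\gamma^{3/2}$) the operator $\Phi_{\infty}=\Phi_{\infty}(\omega)$ conjugates $\mathcal{L}$ to $\mathcal{L}_{\infty}=\omega\cdot\partial_{\varphi}-\mathcal{D}_{\infty}$ in \eqref{Linfinito} for $\omega\in \Omega_{\infty}^{2\gamma^{3/2}}(i_{\delta})\subseteq \calO_{\infty}^{2\gamma}$. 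Hence, setting $W:=\Phi_{\infty}\circ\Upsilon$, one has $W\,\mathcal{L}_{\omega}\,W^{-1}=\mathcal{L}_{\infty}$ and, as soon as $\mathcal{L}_{\infty}$ is invertible, $\mathcal{L}_{\omega}^{-1}=\Upsilon^{-1}\circ\Phi_{\infty}^{-1}\circ\mathcal{L}_{\infty}^{-1}\circ\Phi_{\infty}\circ\Upsilon$ on the intersection $\Omega_{\infty}=\Omega_{\infty}^{2\gamma^{3/2}}(i_{\delta})\cap\calF_{\infty}^{2\gamma}(i_{\delta})$.

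First I would invert $\mathcal{L}_{\infty}$. Since it acts diagonally on the basis $e^{\mathrm{i}(\ell\cdot\varphi+jx)}$, $\ell\in\mathbb{Z}^{\nu}$, $j\in S^c$, with eigenvalues $\mathrm{i}\big(\omega\cdot\ell-d_j^{\infty}(\omega)\big)$, on the first Melnikov set $\calF_{\infty}^{2\gamma}(i_{\delta})$ of \eqref{primediMelnikov} one has the lower bound $\lvert\omega\cdot\ell-d_j^{\infty}(\omega)\rvert\ge 2\gamma\langle\ell\rangle^{-\tau}$; together with the Lipschitz estimate on $d_j^{\infty}$ coming from Theorem \ref{ReducibilityDP}, the standard small-divisor bound gives
\[
\lVert \mathcal{L}_{\infty}^{-1}g\rVert_s^{\gamma,\Omega_{\infty}}\lesssim_s \gamma^{-1}\lVert g\rVert_{s+2\tau+1}^{\gamma,\Omega_{\infty}}\,,\qquad g\in H^{s+2\tau+1}_{S^{\perp}}\,.
\]
Then I would propagate this through the two conjugations, applying in sequence the tame estimate \eqref{grecia} for $\Upsilon^{\pm1}$, the tame estimate \eqref{grano} for $\Phi_{\infty}^{\pm1}$ (using $\gamma^{3/2}<\gamma$, so that the $\gamma^{3/2}$-weighted norms are dominated by the $\gamma$-weighted ones), and the bound above for $\mathcal{L}_{\infty}^{-1}$. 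At each step the low-norm smallness $\lVert\mathfrak{I}_{\delta}\rVert_{s_0+\gotp_1}^{\gamma,\calO_0}\lesssim\varepsilon^{9-2b}\gamma^{-1}$ from \eqref{IpotesiPiccolezzaIdeltaDP} absorbs the mixed terms of the form $\varepsilon\gamma^{-1}\lVert\mathfrak{I}_{\delta}\rVert_{s+\mu_1}\lVert\cdot\rVert_{s_0}$ and $\varepsilon\gamma^{-5/2}\lVert\mathfrak{I}_{\delta}\rVert_{s+\mu_1}\lVert\cdot\rVert_{s_0}$; the total loss of derivatives is exactly $\mu_1+2\tau+1$, which is why $\gotp_1\ge\mu_1+2\tau+1$ is assumed. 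Collecting the inequalities yields \eqref{TameEstimateLomegaDP}.

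The part I expect to be most delicate is the simultaneous bookkeeping of the two diophantine constants $\gamma$ and $\gamma^{3/2}$ together with the interlaced losses of regularity: the inversion of $\mathcal{L}_{\infty}$ needs the first Melnikov conditions with constant $\gamma$ (the set $\calF_{\infty}^{2\gamma}$) and costs $2\tau+1$ derivatives and a factor $\gamma^{-1}$, whereas the reducibility map $\Phi_{\infty}^{\pm1}$ is only available on $\Omega_{\infty}^{2\gamma^{3/2}}$ and contributes the factor $\varepsilon\gamma^{-5/2}$ multiplying $\lVert\mathfrak{I}_{\delta}\rVert_{s+\mu_1}$; one has to check that on $\Omega_{\infty}$ all these estimates hold at once and that the resulting constant does not degrade. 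Apart from this, the argument is a routine composition of tame estimates, carried out as in the corresponding step of \cite{KdVAut}.
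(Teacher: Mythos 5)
Your proposal is correct and follows essentially the same route as the paper: the paper also sets $\chi:=\Phi_{\infty}\circ\Upsilon$, inverts the diagonal operator $\mathcal{L}_{\infty}$ via the first Melnikov conditions in \eqref{primediMelnikov} (costing $\gamma^{-1}$ and $2\tau+1$ derivatives in the Lipschitz norm), and then concludes by composing the tame estimates \eqref{grecia} and \eqref{grano}.
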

\begin{proof}
We conjugated the operator $\mathcal{L}_{\omega}$ in \eqref{BoraMaledetta} to a diagonal operator $\mathcal{L}_{\infty}=\chi \mathcal{L}_{\omega} \chi^{-1}$, see \eqref{Linfinito}, with (recall \eqref{grano} and Theorem \ref{risultatosez8})
$\chi:=\Phi_{\infty}\circ \Upsilon$.
Moreover, by \eqref{grecia} and \eqref{grano} 
we have the following estimates
\begin{equation*}\label{nebbia}
\lVert \chi^{\pm 1} h \rVert_s^{\gamma, \Omega_{\infty}}\lesssim_s \lVert h \rVert^{\gamma, \Omega_{\infty}}_s+\varepsilon \gamma^{-5/2}\lVert \mathfrak{I}_{\delta} \rVert_{s+\mu_1}^{\gamma, \calO_0}   \lVert h \rVert^{\gamma, \Omega_{\infty}}_{s_0}.
\end{equation*}
We have
\begin{equation}
\mathcal{L}_{\infty}^{-1} g=\sum_{j\neq 0} \frac{g_{\ell j}}{\mathrm{i}\big(\omega\cdot \ell-d_j^{\infty}(\omega)\big)}\,e^{\mathrm{i}(\ell \cdot \varphi+j x)}
\end{equation}
and then
$
\lVert \mathcal{L}_{\infty}^{-1} g \rVert^{\gamma, \Omega_{\infty}}_s\le \gamma^{-1} \lVert g \rVert_{s+2\tau+1}^{\gamma, \Omega_{\infty}}.
$
Thus we get the estimate \eqref{TameEstimateLomegaDP}.
\end{proof}

\section{The Nash-Moser nonlinear iteration}\label{sezione9DP}
In this section we prove Theorem \ref{IlTeoremaDP}. 
It will be a consequence of the Nash-Moser 
theorem \ref{NashMoserDP}.\\
Consider the finite-dimensional subspaces
\[
E_n:=\{ \mathfrak{I}(\varphi)=(\Theta, y, z)(\varphi) :  
\Theta=\Pi_n\Theta, y=\Pi_n y, z=\Pi_n z \}
\]
where 
\begin{equation}\label{Nn}
N_n:=N_0^{\chi^n}\,, \quad n=0, 1, 2, \dots\,, \quad \chi:=3/2\,, \quad N_0>0\,
\end{equation}
and $\Pi_n$ are the projectors 
(which, with a small abuse of notation, 
we denote with the same symbol)
\begin{equation}\label{TagliDP}
\begin{aligned}
&\Pi_n \Theta(\varphi):=\sum_{\lvert \ell \rvert<N_n} \Theta_{\ell}\, 
e^{\mathrm{i} \ell\cdot \varphi}\,, \,\,
\Pi_n y(\varphi):=\sum_{\lvert \ell \rvert<N_n} y_{\ell}\,
e^{\mathrm{i} \ell\cdot \varphi}\,,
\,\,\mbox{where}\,\,
\Theta(\varphi)=\sum_{\ell\in\mathbb{Z}^{\nu}} 
\Theta_{\ell}\,e^{\mathrm{i} \ell\cdot \varphi}\,, \,\,
y(\varphi)=\sum_{\ell\in \mathbb{Z}^{\nu}} y_{\ell} \,
e^{\mathrm{i} \ell\cdot\varphi}\,,\\[2mm]
&\Pi_n z(\varphi, x):=\sum_{\lvert (\ell, j) \rvert<N_n} 
z_{\ell j}\,e^{\mathrm{i}(\ell\cdot \varphi+j x)}\,, 
\,\,\mbox{where}\quad z(\varphi, x)=
\sum_{\ell\in\mathbb{Z}^{\nu}, j\in S^c} z_{\ell j}\,e^{\mathrm{i} (\ell\cdot \varphi+j x)}\,.
\end{aligned}
\end{equation}
We define $\Pi_n^{\perp}=\mathrm{I}-\Pi_n$. 
The classical smoothing properties hold, namely, 
for all $\alpha, s\geq 0$,
\begin{equation}\label{SmoothingDP}
\lVert \Pi_n \mathfrak{I} \rVert_{s+\alpha}^{\g, \calO}\le N_n^{\alpha} \lVert \mathfrak{I}_{\delta} \rVert_s^{\g, \calO}, \quad \forall \,\mathfrak{I}(\omega)\in H^s\,, 
\quad \lVert \Pi_n^{\perp} \mathfrak{I} \rVert_s^{\g, \calO}
\le N_n^{-\alpha} \lVert \mathfrak{I} \rVert_{s+\alpha}^{\g, \calO}\,, 
\quad \forall\, \mathfrak{I}(\omega)\in H^{s+\alpha}\,.
\end{equation}

\noindent
Recall \eqref{gammaDP}, \eqref{donnapia} for the definition of $b$ we set $a:=2b-2$.
We define the following constants 
\begin{equation}\label{parametriNMdp}
\begin{aligned}
&\alpha_0:=3\mu+3\,, \qquad \qquad  \qquad\alpha:=3\alpha_0+1\,, 
\qquad \qquad \qquad \alpha_1:=(\alpha-3 \mu)/2\,,\\
&k:=3(\alpha_0+\rho^{-1})+1\,, 
\qquad \beta_1:=6\alpha_0+3 \rho^{-1} +3\,, 
\qquad \frac{1}{2}\left(  \frac{1-(9/2)a}{C_1 (1+a)}\right)<\rho<\frac{1-(9/2)a}{C_1 (1+a)}\,
\end{aligned}
\end{equation}
where $\mu:=\mu( \nu)>0$ is the 
``loss of regularity'' given by the 
Theorem \ref{TeoApproxInvDP} and $C_1$ is fixed below. 
%

\begin{teor}{\textbf{(Nash-Moser)}}\label{NashMoserDP}
Assume that $f\in C^{\infty}$ (see \eqref{HamiltonianDensity}). Let $\tau:=2\nu+6$. 
Then there exist $C_1>\max\{ \alpha_0+\alpha, C_0 \}$ 
(where $C_0:=C_0( \nu)$ is the one in 
Theorem \ref{ReducibilityDP}), 
$\delta_0:=\delta_0( \nu)>0$ such that, if
\begin{equation}\label{SmallnessConditionNMdp}
N_0^{C_1} \varepsilon^{b_*+1} \gamma^{-7/2}<\delta_0\,, 
\quad \gamma:=\varepsilon^{2+a}=\varepsilon^{2 b}\,, 
\quad N_0:=(\varepsilon\gamma^{-1})^{\rho}\,, 
\quad b_*=9- 2 b\,,
\end{equation}
then there exists $C_{*}=C_{*}(S)>0$ such that 
for all $n\geq 0$ the following holds:
\begin{itemize}
\item[$(\mathcal{P}1)_n$] there exists a function 
$(\mathfrak{I}_n, \zeta_n)\colon \mathcal{G}_n 
\subseteq \Omega_{\varepsilon} \to E_{n-1}\times \mathbb{R}^{\nu}, 
\omega\mapsto (\mathfrak{I}_n(\omega), \zeta_n(\omega)), 
(\mathfrak{I}_0, \zeta_0):=(0, 0), E_{-1}:=\{0\}$,
where the set $\mathcal{G}_0$ is defined 
in \eqref{0di0Melnikov} and the sets 
$\mathcal{G}_n$ for $n\geq 1$ 
are defined inductively by:
\begin{align}
\mathcal{G}_{n+1}:=
\bigcap_{i=0}^2 \Lambda_{n+1}^{(i)}\,,\;\;{\rm with}\;\;\;
&\Lambda^{(0)}_{n+1}:=\left\{ \omega\in \mathcal{G}_n : 
\lvert\omega\cdot \ell+m(i_n) j \rvert\geq 
\frac{2\,\gamma_n}{\langle \ell \rangle^{\tau}}, 
\,\,\forall j\in S^c , \ell\in\mathbb{Z}^{\nu} \right\}\,, \nonumber\\ \label{GnDP}
&\Lambda^{(1)}_{n+1}:=\left\{ \omega\in \mathcal{G}_n : 
\lvert\omega\cdot \ell+d_j^{\infty}(i_n) \rvert\geq 
\frac{2\,\gamma_n}{\langle \ell \rangle^{\tau}}, \,\,
\forall j\in S^c , \ell\in\mathbb{Z}^{\nu} \right\}\,,\\
&\Lambda^{(2)}_{n+1}:=\left\{ \omega\in \mathcal{G}_n : 
\lvert \omega\cdot \ell+d_j^{\infty}(i_n)-d_k^{\infty}(i_n) \rvert\geq 
\frac{2\,\gamma^{3/2}_n\,}{\langle \ell \rangle^{\tau}}, \,\,
\forall j, k\in S^c, j\neq k , \ell\in\mathbb{Z}^{\nu} \right\}\,, \nonumber
\end{align}
where $\gamma_n:=\gamma (1+2^{-n})$, 
$\gamma^{*}_n:=\gamma^{3/2}(1+2^{-n})$ and 
$d_j^{\infty}(\omega):=d_j^{\infty}(\omega, i_n(\omega))$ 
are defined in \eqref{FinalEigenvaluesDP} 
(and $d_0^{\infty}(\omega)=0$).
Moreover 
$\lvert \zeta_n \rvert^{\gamma, \mathcal{G}_n}
\lesssim \lVert \mathcal{F}(U_n) \rVert_{s_0}^{\gamma, \mathcal{G}_n}$ 
and
\begin{equation}\label{ConvergenzaDP}
\lVert \mathfrak{I}_n \rVert_{s_0+\mu}^{\gamma, \mathcal{G}_n}
\le C_* \varepsilon^{b_*} \gamma^{-1}\,, 
\quad \lVert \mathcal{F}(U_n) \rVert_{s_0+\mu+3}^{\gamma, \mathcal{G}_n}
\le C_* \varepsilon^{b_*}\,,
\end{equation}
where $U_n:=(i_n, \zeta_n)$ with 
$i_n(\varphi)=(\varphi, 0, 0)+\mathfrak{I}_n(\varphi)$.
The differences 
$\hat{\mathfrak{I}}_n:=\mathfrak{I}_n-\mathfrak{I}_{n-1}$ 
(where we set $\hat{\mathfrak{I}}_0:=0$) 
is defined on $\mathcal{G}_n$, and satisfy
\begin{equation}\label{FrakHatDP}
\lVert \hat{\mathfrak{I}}_1 \rVert_{s_0+\mu}^{\gamma, \mathcal{G}_1}
\le C_* \varepsilon^{b_*} \gamma^{-1}\,, 
\quad \lVert \hat{\mathfrak{I}}_n \rVert_{s_0+\mu}^{\gamma, \mathcal{G}_{n}}
\le C_* \varepsilon^{b_*} \gamma^{-1} N_{n-1}^{-\alpha}\,, 
\quad \forall n\geq 2\,.
\end{equation}
\item[$(\mathcal{P} 2)_n$] 
$\lVert \mathcal{F}(U_n) \rVert_{s_0}^{\gamma, \mathcal{G}_n}
\le C_* \varepsilon^{b_*} N_{n-1}^{-\alpha}$ 
where we set $N_{-1}:=1$.
\item[$(\mathcal{P}3)_n$]{(High Norms)}. 
$\lVert \mathfrak{I}_n \rVert_{s_0+\beta_1}^{\gamma, \mathcal{G}_n}
\le C_* \varepsilon^{b_*} \gamma^{-1} N_{n-1}^k$ 
and $\lVert \mathcal{F}(U_n) \rVert_{s_0+\beta_1}^{\gamma, \mathcal{G}_n}
\le C_* \varepsilon^{b*} N_{n-1}^k$.
\item[$(\mathcal{P} 4)_n$]{(Measure)}. 
The measure of the ``Cantor-like'' sets $\mathcal{G}_n$ 
satisfies
\begin{equation}\label{MisureDP}
\lvert \Omega_{\varepsilon}\setminus \mathcal{G}_0 \rvert
\le C_* \varepsilon^{2 (\nu-1)} \gamma\,, 
\quad \lvert \mathcal{G}_n \setminus \mathcal{G}_{n+1} \rvert
\le C_* \varepsilon^{2 (\nu-1)} \gamma N_{n-1}^{-1}\,.
\end{equation}  
\end{itemize}
\end{teor}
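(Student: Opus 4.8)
The plan is to prove Theorem \ref{NashMoserDP} by a quadratically convergent Nash-Moser iteration, following the scheme of \cite{KdVAut, BertiBolle}, with the main structural ingredient being the approximate-inverse Theorem \ref{TeoApproxInvDP} combined with the inversion estimate \eqref{InversionAssumptionDP}, which Proposition \ref{InversionLomegaDP} delivers on the Cantor set $\Omega_\infty$. The statements $(\mathcal{P}1)_n$--$(\mathcal{P}4)_n$ are proved simultaneously by induction on $n$. The base case $n=0$ is immediate: $(\mathfrak{I}_0,\zeta_0)=(0,0)$, so $\mathcal{F}(U_0)=\mathcal{F}(0,0)=-X_P(\varphi,0,0)+(0,0,0)$ has $\|\mathcal{F}(U_0)\|_{s}^{\gamma,\mathcal{G}_0}\lesssim_s \varepsilon^{b_*}$ by the tame estimates of Lemma \ref{TameEstimatesforVectorfieldsDP} (with $\mathfrak{I}=0$), which gives $(\mathcal{P}2)_0$--$(\mathcal{P}3)_0$ with a suitable $C_*$, and $(\mathcal{P}4)_0$ is exactly Lemma \ref{measG0}; note $\mathcal{G}_0$ is defined directly in \eqref{0di0Melnikov} so $\Lambda^{(i)}$ only enter for $n\ge 1$.

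For the inductive step $n\rightsquigarrow n+1$ the scheme is standard once the ingredients are in place. First one checks that on $\mathcal{G}_{n+1}$ the torus $i_n$ satisfies the smallness assumption \eqref{AssumptionDP} and the diophantine-type conditions needed to apply Theorem \ref{ReducibilityDP}: indeed $(\mathcal{P}1)_n$ gives $\|\mathfrak{I}_n\|_{s_0+\mu}^{\gamma,\mathcal{G}_n}\le C_*\varepsilon^{b_*}\gamma^{-1}$, and the definitions of $\Lambda^{(0)}_{n+1},\Lambda^{(1)}_{n+1},\Lambda^{(2)}_{n+1}$ in \eqref{GnDP} are precisely the first and second Melnikov conditions (at the $n$-th approximation of the eigenvalues) required by Proposition \ref{InversionLomegaDP} and Theorem \ref{ReducibilityDP}; the smallness condition \eqref{PiccolezzaperKamredDP} follows from \eqref{SmallnessConditionNMdp} since $C_1>C_0$. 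Thus on $\mathcal{G}_{n+1}$ the linearized operator $\mathcal{L}_\omega=\td\mathcal{F}(i_n)$ restricted to the normal directions is invertible with the tame bound \eqref{TameEstimateLomegaDP}, hence Theorem \ref{TeoApproxInvDP} provides an approximate inverse $\mathbf{T}_n$ of $\td\mathcal{F}(i_n,\zeta_n)$ with \eqref{TameEstimateApproxInvDP}--\eqref{6.41DP}. One then defines the next iterate by the truncated Newton step
\begin{equation*}
U_{n+1}:=U_n+H_{n+1}\,,\qquad H_{n+1}:=-\Pi_{n+1}\mathbf{T}_n\,\Pi_{n+1}\mathcal{F}(U_n)\,,
\end{equation*}
where $\Pi_{n+1}$ is the smoothing projector of \eqref{TagliDP}. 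A Taylor expansion of $\mathcal{F}$ gives
\begin{equation*}
\mathcal{F}(U_{n+1})=\mathcal{F}(U_n)+\td\mathcal{F}(U_n)H_{n+1}+Q_n\,,
\end{equation*}
with $Q_n$ the quadratic remainder controlled by Lemma \ref{TameEstimatesforVectorfieldsDP}; substituting $H_{n+1}$ and using \eqref{6.41DP} together with the smoothing estimates \eqref{SmoothingDP} decomposes $\mathcal{F}(U_{n+1})$ into the quadratic error, the approximate-inverse error, and high-frequency truncation tails, each of which is estimated in low norm $s_0$ and high norm $s_0+\beta_1$ in terms of $\|\mathcal{F}(U_n)\|_{s_0},\|\mathcal{F}(U_n)\|_{s_0+\beta_1},\|\mathfrak{I}_n\|_{s_0+\beta_1}$. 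The choices of the exponents $\alpha_0,\alpha,\alpha_1,k,\beta_1,\rho$ in \eqref{parametriNMdp} are made exactly so that the resulting inequalities close the induction: $(\mathcal{P}2)_{n+1}$, $(\mathcal{P}3)_{n+1}$ and then $(\mathcal{P}1)_{n+1}$ (with \eqref{ConvergenzaDP}, \eqref{FrakHatDP}) follow from a telescoping/interpolation argument, using \eqref{SmallnessConditionNMdp} to absorb constants; this is the routine part of the argument and I would only sketch it, pointing to the analogous computation in Section 9 of \cite{KdVAut}.

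\textbf{The main obstacle} is $(\mathcal{P}4)_{n+1}$, the measure estimate $|\mathcal{G}_n\setminus\mathcal{G}_{n+1}|\le C_*\varepsilon^{2(\nu-1)}\gamma N_{n-1}^{-1}$. Since $\mathcal{G}_n\setminus\mathcal{G}_{n+1}\subseteq\bigcup_{i=0}^2(\mathcal{G}_n\setminus\Lambda^{(i)}_{n+1})$, one must bound the measure of the resonant sets where $|\omega\cdot\ell+m(i_n)j|$, $|\omega\cdot\ell+d_j^\infty(i_n)|$ or $|\omega\cdot\ell+d_j^\infty(i_n)-d_k^\infty(i_n)|$ is small. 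The zeroth and first Melnikov sets are handled by a standard perturbative argument using the twist Lemma \ref{Twist1} (non-degeneracy of the frequency-to-amplitude map) together with the expansions $m=1+\varepsilon^2 c(\omega)+O(\varepsilon^4)$ from \eqref{clinica100} and $d_j^\infty=m\lambda(j)+\varepsilon^2\kappa_j+r_j^\infty$ from \eqref{FinalEigenvaluesDP}, \eqref{stimeautovalfinaliDP}; the $\varepsilon$-dependence of $\gamma$ forces one to track the $\varepsilon^2$-corrections carefully, which is why \eqref{divisoriLBNF3} and the explicit coefficients $\lal_j$ were computed. The genuinely hard case is $\Lambda^{(2)}_{n+1}$: because the spectral gap $\lambda(j)-\lambda(k)$ is \emph{asymptotically constant} (the dispersion law \eqref{dispersionLaw} is asymptotically linear), there are infinitely many pairs $(j,k)$ contributing, and the naive sum of measures diverges. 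The remedy, as announced in subsection \ref{schema}, is a two-scale mechanism: (i) for $|j|,|k|$ below a threshold depending on $\langle\ell\rangle$, estimate each bad set individually using Lemma \ref{singolo}, whose non-degeneracy input comes from the algebraic Lemma \ref{ecologia} on the leading part $\lambda(j)(\lal_j-c(\omega))-\lambda(k)(\lal_k-c(\omega))$ of the small divisor; (ii) for $|j|,|k|$ large, show that the second Melnikov condition with the \emph{weaker} constant $\gamma^{3/2}$ is actually implied by the first Melnikov condition with constant $\gamma$ — this uses $|d_j^\infty-\lambda(j)|\lesssim\varepsilon^2$ and $|\kappa_j|,|r_j^\infty|\lesssim\langle j\rangle^{-1}$ to show $|\omega\cdot\ell+d_j^\infty-d_k^\infty|\ge|\omega\cdot\ell+\lambda(j)-\lambda(k)|-C\varepsilon^2\langle j-k\rangle^{-1}\cdots$ stays above $\gamma^{3/2}\langle\ell\rangle^{-\tau}$ once $|j|$ is large relative to $\langle\ell\rangle$, since $\gamma^{3/2}\ll\gamma$. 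Summing the two regimes over $\ell,j,k$ and over $n$ yields \eqref{MisureDP}. Finally, Theorem \ref{IlTeoremaDP} (hence Theorem \ref{MainResult}) is deduced by passing to the limit: $(\mathcal{P}1)_n$ gives a Cauchy sequence $\mathfrak{I}_n\to\mathfrak{I}_\infty$ in $\|\cdot\|_{s_0+\mu}^{\gamma,\cdot}$ with $\zeta_n\to 0$ on $\mathcal{C}_\varepsilon:=\bigcap_n\mathcal{G}_n$, so $i_\infty=(\varphi,0,0)+\mathfrak{I}_\infty$ solves $\mathcal{F}(i_\infty,0,\omega,\varepsilon)=0$ with the stated bound $\varepsilon^{9-2b}\gamma^{-1}$, linear stability comes from Theorem \ref{ReducibilityDP}, and $\lim_{\varepsilon\to0}|\mathcal{C}_\varepsilon|/|\Omega_\varepsilon|=1$ follows by summing \eqref{MisureDP} against $|\Omega_\varepsilon|\sim\varepsilon^{2(\nu-1)}$ and using $\gamma=\varepsilon^{2+a}=o(1)$.
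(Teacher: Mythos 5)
Your overall scheme is the one the paper follows: base step from Lemma \ref{TameEstimatesforVectorfieldsDP}, inductive step via the approximate inverse of Theorem \ref{TeoApproxInvDP} fed by Proposition \ref{InversionLomegaDP} on $\mathcal{G}_{n+1}\subseteq \Omega_{\infty}^{2\gamma^{*}_n}(i_n)\cap\mathcal{F}_{\infty}^{2\gamma_n}(i_n)$, a truncated Newton step, and the iterative inequalities of \cite{KdVAut} closed by the choice of exponents in \eqref{parametriNMdp}. (One bookkeeping slip: the paper's correction is $H_{n+1}=-\tilde{\Pi}_n\mathbf{T}_n\Pi_n\mathcal{F}(U_n)$, i.e.\ with the projector at scale $N_n$, so that $\mathfrak{I}_{n+1}\in E_n$ as required by $(\mathcal{P}1)_{n+1}$; your $\Pi_{n+1}$ would place it in $E_{n+1}$.)

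There is, however, a genuine gap in your treatment of $(\mathcal{P}4)_n$. Your two regimes (the single-set estimate of Lemma \ref{singolo} via Lemma \ref{ecologia}, and the inclusion of the second Melnikov sets into first-Melnikov-type sets for $|j|,|k|$ large, which is Lemma \ref{delpiero} in the paper) only give summability in $j,k$ for each fixed $\ell$; summing over \emph{all} $\ell$ produces a bound of order $\varepsilon^{2(\nu-1)}\gamma$ per step, with no decay in $n$, and then $\sum_n|\mathcal{G}_n\setminus\mathcal{G}_{n+1}|$ is not controlled and \eqref{MisureDP} as stated is not obtained. The missing ingredient is the inclusion argument of Lemma \ref{InclusionideiBadSetsDP}: for $n\geq1$ and $|\ell|\le N_{n-1}$ the resonant sets built on $i_n$ are \emph{empty}, because $\omega\in\mathcal{G}_n$ already satisfies the Melnikov conditions relative to $i_{n-1}$ with the strictly larger constants $\gamma_{n-1},\gamma^{*}_{n-1}$, while by \eqref{clinica1000} and the variation estimate \eqref{r12} of Theorem \ref{ReducibilityDP}, combined with \eqref{FrakHatDP}, the constants $m$ and the final eigenvalues $d_j^{\infty}$ change by at most $O(\varepsilon^{4-3a}N_{n-1}^{-\mathtt{a}})$ between $i_{n-1}$ and $i_n$, which is absorbed by the gap $\gamma_{n-1}-\gamma_n$ for $|\ell|\le N_{n-1}$. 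Hence only frequencies $|\ell|>N_{n-1}$ contribute at step $n$, and summing the tails $\sum_{|\ell|>N_{n-1}}\langle\ell\rangle^{-(\tau-\nu-3)}$ yields the factor $N_{n-1}^{-1}$ in \eqref{MisureDP}. This is precisely the reason the sequences $\gamma_n=\gamma(1+2^{-n})$, $\gamma^{*}_n=\gamma^{3/2}(1+2^{-n})$ appear in the definition \eqref{GnDP}, a point your argument never uses; without it the induction on the measure, and therefore the full-measure claim \eqref{frazionemisureDP}, does not close.
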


\begin{proof}
To simplify the notations we omit the index 
$\g, \mathcal{G}_n$ on the norm $\lVert \cdot \rVert_s$.\\
\textit{Proof of $(\mathcal{P}_1)_0, (\mathcal{P}_2)_0, (\mathcal{P}_3)_0$}. 
Recalling \eqref{NonlinearFunctionalDP}, we have, 
by the second estimate in \eqref{EstimatesVecFieldDP}, 
\[
\lVert \mathcal{F}(U_0)\rVert_s
=\lVert \mathcal{F}((\varphi, 0, 0), 0) \rVert_s
=\lVert X_P(\varphi, 0, 0) \rVert_s\lesssim_s \varepsilon^{9-2 b}\,.
\]
Hence the smallness conditions in 
$(\mathcal{P}_1)_0, (\mathcal{P}_2)_0, (\mathcal{P}_3)_0$ 
hold taking $C_*$ large enough.
\item \textit{Assume that $(\mathcal{P}_1)_n, (\mathcal{P}_2)_n, (\mathcal{P}_3)_n$ 
hold for some $n\geq 0$, 
and prove $(\mathcal{P}_1)_{n+1}, (\mathcal{P}_2)_{n+1}, (\mathcal{P}_3)_{n+1}$}. 
By \eqref{parametriNMdp} and \eqref{SmallnessConditionNMdp}
\[
N_0^{C_1} \varepsilon^{b_*+1} \gamma^{-7/2}
=\varepsilon^{1-(9/2) a-\rho\,C_1 (1+a)}<\delta_0
\]
for $\varepsilon$ small enough. 
If we take $C_1$ bigger than $C_0$ in Theorem \ref{ReducibilityDP} then \eqref{PiccolezzaperKamredDP} holds. 	
In \eqref{IpotesiPiccolezzaIdeltaDP} we consider $\mathfrak{p}_0=\gotp_1+\tilde{\mathfrak{p}}$, where $\gotp_1:=\mu_1+2\tau+1$ and $\tilde{\mathfrak{p}}$ appears in \eqref{chegioia2}. Since $\mu\gg \mathfrak{p}_0$ \eqref{ConvergenzaDP} implies \eqref{AssumptionDP} 
and so \eqref{IpotesiPiccolezzaIdeltaDP}, 
and Proposition \ref{InversionLomegaDP} applies. 
Hence the operator 
$\mathcal{L}_{\omega}:=\mathcal{L}_{\omega}(\omega, i_n (\omega))$  
in \eqref{BoraMaledetta} is defined on 
$\calO_0= \calG_n$  and is invertible for all 
$\omega\in \mathcal{G}_{n+1}$ since 
$\mathcal{G}_{n+1}\subseteq \Omega^{2\gamma^{*}_n}_{\infty}(i_n) 
\cap \mathcal{F}_{\infty}^{2\g_n}(i_n)$ 
and the 
\eqref{TameEstimateLomegaDP} holds. 
This means that the assumption \eqref{InversionAssumptionDP} 
of Theorem  \ref{TeoApproxInvDP} is verified with 
$\Omega_{\infty}=\mathcal{G}_{n+1}$. 
By Theorem \ref{TeoApproxInvDP} 
there exists an approximate inverse 
$\textbf{T}_n(\omega):=\textbf{T}_0(\omega, i_n(\omega))$ 
of the linearized operator 
$L_n(\omega):=\td \mathcal{F}(\omega, i_n(\omega),\zeta_n)
\equiv \td \mathcal{F}(\omega, i_n(\omega),0)$, 
satisfying \eqref{TameEstimateApproxInvDP}. 
By \eqref{SmallnessConditionNMdp}, \eqref{ConvergenzaDP}
\begin{align}
&\lVert \textbf{T}_n g\rVert_s\lesssim_s 
\gamma^{-1} (\lVert g \rVert_{s+\mu}
+\varepsilon\gamma^{-5/2} \{ \lVert \mathfrak{I}_n \rVert_{s+\mu}
+\gamma^{-1} \lVert \mathfrak{I}_n \rVert_{s_0+\mu} 
\lVert \mathcal{F}(U_n) \rVert_{s+\mu} \} \lVert g \rVert_{s_0+\mu})\,,\label{9.10dp}\\
&\lVert \textbf{T}_n g \rVert_{s_0}\lesssim \gamma^{-1} \lVert g \rVert_{s_0+\mu}
\end{align}
and, by \eqref{6.41DP}, using also \eqref{SmallnessConditionNMdp}, \eqref{ConvergenzaDP}, \eqref{SmoothingDP},
\begin{align}
\lVert (L_n\circ \textbf{T}_n-\mathrm{I}) g \rVert_s 
\lesssim_s & \varepsilon^{2 b-1}\gamma^{-2} 
(\lVert \mathcal{F}(U_n) \rVert_{s_0+\mu}\lVert g \rVert_{s+\mu}
+\lVert \mathcal{F}(U_n) \rVert_{s+\mu}\lVert g \rVert_{s_0+\mu}\notag\\
& + \varepsilon\gamma^{-5/2}\lVert \mathfrak{I}_n \rVert_{s+\mu}
\lVert \mathcal{F}(U_n) \rVert_{s_0+\mu}\lVert g \rVert_{s_0+\mu})\\
\lVert (L_n\circ \textbf{T}_n-\mathrm{I}) g \rVert_{s_0} 
\lesssim & \varepsilon^{2 b-1}\gamma^{-2} 
(\lVert \Pi_n \mathcal{F}(U_n) \rVert_{s_0+\mu}
+\lVert \Pi_n^{\perp} \mathcal{F}(U_n) \rVert_{s_0+\mu})
\lVert g \rVert_{s_0+\mu}\\
\lesssim &\varepsilon^{2 b-1}\gamma^{-2} N_n^{\mu}   \Big(
\lVert \mathcal{F}(U_n) \rVert_{s_0}+N_n^{-\beta_1}
\lVert \mathcal{F}(U_n) \rVert_{s_0+\beta_1} \Big) \lVert g \rVert_{s_0+\mu}\,.
\end{align}
Now, for all $\omega\in\mathcal{G}_{n+1}$, we can define, for $n\geq 0$,
\begin{equation}\label{HnDefDP}
U_{n+1}:=U_n+H_{n+1}, \quad H_{n+1}
:=(\hat{\mathfrak{I}}_{n+1}, \hat{\zeta}_{n+1})
:=-\tilde{\Pi}_n \textbf{T}_n \Pi_n \mathcal{F}(U_n)\in 
E_n\times\mathbb{R}^{\nu}\,,
\end{equation}
where 
$\tilde{\Pi}_n(\mathfrak{I}, \zeta):=(\Pi_n \mathfrak{I}, \zeta)$ 
with $\Pi_n$ defined in \eqref{TagliDP}. By construction we have
\begin{align*}
&\mathcal{F}(U_{n+1})=\mathcal{F}(U_n)+L_n H_{n+1}+Q_n\,,\\
Q_n:=&Q(U_n, H_{n+1})\,, 
\quad Q(U_n, H):=\mathcal{F}(U_n+H)-\mathcal{F}(U_n)-L_n H\,, 
\quad H\in E_n\times\mathbb{R}^{\nu}\,.
\end{align*}
Then, by the definition of $H_{n+1}$ in \eqref{HnDefDP}, 
using $[L_n, \Pi_n]$ and writing 
$\tilde{\Pi}_n^{\perp}(\mathfrak{I}, \zeta):=(\Pi_n^{\perp} \mathfrak{I}, 0)$ 
we have
\begin{equation*}
\begin{aligned}
\mathcal{F}(U_{n+1})&=
\mathcal{F}(U_n)-L_n \tilde{\Pi}_n \textbf{T}_n \Pi_n \mathcal{F}(U_n)+Q_n
=\mathcal{F}(U_n)-L_n \textbf{T}_n \Pi_n \mathcal{F}(U_n)
+L_n \tilde{\Pi}_n^{\perp} \textbf{T}_n \Pi_n \mathcal{F}(U_n)+Q_n\\
&=\mathcal{F}(U_n)-\Pi_n L_n \textbf{T}_n \Pi_n \mathcal{F}(U_n)
+(L_n \tilde{\Pi}_n^{\perp}-\Pi_n^{\perp}L_n)
\textbf{T}_n \Pi_n \mathcal{F}(U_n)+Q_n
=\Pi_n^{\perp} \mathcal{F}(U_n)+R_n+Q_n+Q'_n
\end{aligned}
\end{equation*}
where
\begin{equation}
R_n:=(L_n \tilde{\Pi}_n^{\perp}-\Pi_n^{\perp}L_n)
\textbf{T}_n \Pi_n \mathcal{F}(U_n)\,, 
\quad Q'_n:=-\Pi_n (L_n \textbf{T}_n-\mathrm{I})
\Pi_n \mathcal{F}(U_n)\,.
\end{equation}
\begin{lem}\label{comekdv}
Define 
\begin{equation}\label{DefWnBnDP}
w_n:=\varepsilon \gamma^{-2} \lVert \mathcal{F}(U_n) \rVert_{s_0}\,, 
\qquad B_n:=\varepsilon\gamma^{-1} \lVert \mathfrak{I}_n \rVert_{s_0+\beta_1}
+\varepsilon\gamma^{-2} \lVert \mathcal{F}(U_n) \rVert_{s_0+\beta_1}\,.
\end{equation}
Then there exists $K:=K(s_0, \beta_1)>0$ 
such that, for all $n \geq 0$, 
setting $\alpha_0:=3\mu+3$
\begin{equation}\label{disuguaglianzeDP}
w_{n+1}\le K N_n^{\alpha_0+\rho^{-1}-\beta_1} B_n
+K N_{n}^{\alpha_0} w_n^2\,, 
\qquad B_{n+1}\le K N_n^{\alpha_0+\rho^{-1}} B_n\,.
\end{equation}
\end{lem}
\noindent
The proof of Lemma \ref{comekdv} follows almost word by word the proof of Lemma $9.2$ in \cite{KdVAut}.\\
\noindent
\textit{Proof of $(\mathcal{P}_3)_{n+1}$}. 
By \eqref{disuguaglianzeDP} and $(\mathcal{P}_3)_n$
\begin{equation}\label{BoundDP}
B_{n+1}\le K N_n^{\alpha_0+\rho^{-1}} B_n
\le 2 C_* K \varepsilon^{b_*+1} \gamma^{-2} 
N_n^{\alpha_0+\rho^{-1}} N_{n-1}^k\le 
C_* \varepsilon^{b_*+1} \gamma^{-2} N_n^k\,,
\end{equation}
provided $2 K N_n^{\alpha_0+\rho^{-1}-k}N_{n-1}^k\le 1, \forall n\geq 0$. 
Choosing $k$ as in \eqref{parametriNMdp} 
and $N_0$ large enough, i.e. for $\varepsilon$ small enough. 
By \eqref{DefWnBnDP} and the bound \eqref{BoundDP} 
$(\mathcal{P}_3)_{n+1}$ holds.\\
\textit{Proof of $(\mathcal{P}_2)_{n+1}$}. 
Using \eqref{DefWnBnDP}, \eqref{disuguaglianzeDP} 
and $(\mathcal{P}_2)_n, (\mathcal{P}_3)_n$, we get
\[
w_{n+1}\le K N_n^{\alpha_0+\rho^{-1}-\beta_1} B_n
+K N_n^{\alpha_0} w_n^2\le 
K N_n^{\alpha_0+\rho^{-1}-\beta_1} 
2 C_* \varepsilon^{b_*+1} \gamma^{-2} N_{n-1}^k+K N_n^{\alpha_0} (C_*\varepsilon^{b_*+1}\gamma^{-2} N_{n-1}^{-\alpha})^2
\]
and $w_{n+1}\le C_* \varepsilon^{b_*+1} \gamma^{-2} N_n^{-\alpha}$ 
provided that
\begin{equation}\label{9.36dp}
4 K N_n^{\alpha_0+\rho^{-1}-\beta_1+\alpha} N_{n-1}^k\le 1\,, 
\quad 2 K C_* \varepsilon^{b_*+1} \gamma^{-2}N_n^{\alpha_0+\alpha} 
N_{n-1}^{-2 \alpha}\le 1\,, \,\, \forall n\geq 0\,.
\end{equation}
The inequalities in \eqref{9.36dp} hold by 
\eqref{SmallnessConditionNMdp}, 
taking $\alpha$ as in \eqref{parametriNMdp}, 
$C_1>\alpha_0+\alpha$ and $\delta_0$ 
in \eqref{SmallnessConditionNMdp} small enough. 
By \eqref{DefWnBnDP}, the inequality 
$w_{n+1}\le 
C_* \varepsilon^{b_*+1} \gamma^{-2} N_n^{-\alpha}$ 
implies $(\mathcal{P}_2)_{n+1}$.
\item \textit{Proof of $(\mathcal{P}_1)_{n+1}$}. 
The bound \eqref{FrakHatDP} for 
$\hat{\mathfrak{I}}_1$ 
follows by \eqref{HnDefDP}, 
\eqref{9.10dp} 
(for $s=s_0+\mu$) and 
$
\lVert \mathcal{F}(U_0) \rVert_{s_0+2 \mu}
\lesssim_{s_0+2\mu} \varepsilon^{b_*}\,.
$
The bound \eqref{FrakHatDP} for $\hat{\mathfrak{I}}_{n+1}$ 
follows by \eqref{TagliDP}, $(\mathcal{P}_2)_n$ 
and \eqref{parametriNMdp}. 
It remains to prove that \eqref{ConvergenzaDP} 
holds at the step $n+1$. We have
\begin{equation}
\lVert \mathfrak{I}_{n+1} \rVert_{s_0+\mu}
\le \sum_{k=1}^{n+1} \lVert \hat{\mathfrak{I}}_k \rVert_{s_0+\mu}
\le C_* \varepsilon^{b_*} \gamma^{-1}
 \sum_{k \geq 1} N_{k-1}^{-\alpha_1}\lesssim 
 C_* \varepsilon^{b_*} \gamma^{-1}
\end{equation}
taking $\alpha_1$ as in \eqref{parametriNMdp} 
and $N_0$ large enough, i.e. $\varepsilon$ small enough. 
Moreover, using \eqref{TagliDP}, $(\mathcal{P}_2)_{n+1}, 
(\mathcal{P}_3)_{n+1}$, \eqref{parametriNMdp} we get
\begin{align*}
\lVert \mathcal{F}(U_{n+1}) \rVert_{s_0+\mu+1} 
&\le N_n^{\mu+1} \lVert \mathcal{F}(U_{n+1}) \rVert_{s_0}
+N_n^{\mu+1-\beta_1}\lVert \mathcal{F}(U_{n+1}) \rVert_{s_0+\beta_1}\\
&\le C_* \varepsilon^{b_*} N_n^{\mu+1-\alpha} 
+C_* \varepsilon^{b_*}N_n^{\mu+1-\beta_1+k}\lesssim C_* \varepsilon^{b_*}\,,
\end{align*}
which is the second inequality in \eqref{ConvergenzaDP} at the step $n+1$. The bound $\lvert \zeta_{n+1} \rvert^{\gamma}\lesssim \lVert \mathcal{F}(U_{n+1}) \rVert_{s_0}^{\gamma}$ is a consequence of Lemma $6.1$ in \cite{Giuliani}.

\noindent
To conclude the proof of Theorem \eqref{NashMoserDP} 
it remains to show the bounds \eqref{MisureDP}. 
This is done in the next section.
\end{proof}

\subsection{Measure estimates}\label{sezioneStimeMisura}

Let us define for $0<\eta\le\overline{\jmath}_1^4\,\sqrt{\varepsilon}$, $\sigma\geq 1$ and $n\in \mathbb{N}$
\begin{align}
R_{\ell j k}(\eta, \sigma):=R_{\ell j k}(i_{n},\eta, \sigma)
&:=\{ \omega\in\mathcal{G}_n : \lvert \omega\cdot \ell
+d_j^{\infty}-d_k^{\infty} \rvert\le 
{2\eta}\langle \ell \rangle^{-\sigma}\}\,,\label{singoloEq}\\
Q_{\ell j }(\eta, \sigma):=Q_{\ell j }(i_n,\eta, \sigma)
&:=\{ \omega\in\mathcal{G}_n : \lvert \omega\cdot \ell+m j \rvert\le 
{2\eta}\langle \ell \rangle^{-\sigma}\}\,,\label{singoloeq3}\\
P_{\ell j }(\eta, \sigma):=P_{\ell j }(i_n,\eta, \sigma)
&:=\{ \omega\in \mathcal{G}_n : \lvert \omega\cdot \ell+d_j^{\infty} \rvert\le 
{2\eta}\langle \ell \rangle^{-\sigma}\}\,.\label{singoloEq2}
\end{align}
Recalling \eqref{GnDP} we can write, setting 
$\eta\rightsquigarrow \gamma_n$ for the sets  
$Q_{\ell j }(\eta,\s)$ and $P_{\ell j }(\eta,\s)$,
 $\eta\rightsquigarrow \gamma^{*}_n$ for the set $R_{\ell jk}(\eta,\s)$, 
 and $\sigma\rightsquigarrow \tau$, 
\begin{equation}\label{UnionDP}
\mathcal{G}_n \setminus \mathcal{G}_{n+1}=
\bigcup_{\ell\in\mathbb{Z}^{\nu}, j, k\in S^c} 
\Big( R_{\ell j k}(i_n,\gamma_{n}^{*},\tau) \cup 
Q_{\ell j }(i_n,\gamma_{n},\tau) \cup P_{\ell j}(i_n,\gamma_{n},\tau)\Big)\,.
\end{equation}

Since, by \eqref{0di0Melnikov} and $\gamma>\gamma^{3/2}$ 
(see \eqref{SmallnessConditionNMdp}), 
$R_{\ell j k}(i_n)=\emptyset$ for $j=k$, 
in the sequel we assume that $j\neq k$.
We start with a preliminary lemma, 
which gives a first relation between $\ell,j,k$ which must be 
satisfied in order to have non empty resonant sets.
\begin{lem}\label{BrexitDP}
Let $n \geq 0$.
There is a constant $C>0$  dependent 
of the tangential set and 
independent of $\ell, j, k, n, i_n, \omega$
such that the following holds:
\begin{itemize}
\item if $R_{\ell j k}(i_n,\eta,\s)\neq \emptyset$ \  
then 
\ $\lvert \ell \rvert\geq C \lvert \og(j)-\og(k) \rvert
\geq \frac{C}{2}\,\lvert j-k \rvert$;
\item if $Q_{\ell j}(i_n,\eta,\s)\neq \emptyset$ \  
then 
\ $\lvert \ell \rvert\geq C \lvert j \rvert$;
\item if $P_{\ell j}(i_n,\eta,\s)\neq \emptyset$ \  
then \ $\lvert \ell \rvert\geq C \lvert j \rvert$.
\end{itemize}
\end{lem}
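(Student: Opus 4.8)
The statement is essentially a ``separation of time and space frequencies'' estimate: a resonance of the form $\omega \cdot \ell + (\text{spatial eigenvalue combination})$ can only be small when $\ell$ is at least comparable to the magnitude of the spatial part. The plan is to use the crude lower bound $|\omega \cdot \ell| \le |\omega| \, |\ell| \lesssim |\ell|$ (since $\omega \in \mathcal G_n \subseteq \Omega_\varepsilon$ stays in a fixed compact set, so $|\omega| \le C$ for a constant depending only on the tangential set $S$), and then compare it to a lower bound on the size of the spatial part on the complement of the resonant set. First I would treat $Q_{\ell j}$: if $\omega \in Q_{\ell j}(i_n,\eta,\sigma)$ then $|\omega \cdot \ell + m j| \le 2\eta \langle \ell\rangle^{-\sigma} \le 2\eta \le C$, hence $|m|\,|j| \le |\omega \cdot \ell| + C \le C(1+|\ell|)$; since by \eqref{clinica100} (or \eqref{clinica}) one has $m = 1 + O(\varepsilon^2)$, so $|m| \ge 1/2$ for $\varepsilon$ small, this gives $|j| \le 2C(1+|\ell|)$, and absorbing the additive constant (enlarging $C$ and using that for $|\ell|$ bounded the set is empty unless $|j|$ is correspondingly bounded) yields $|\ell| \ge C|j|$. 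The case $P_{\ell j}$ is identical, using $d_j^\infty = m\,\lambda(j) + \varepsilon^2\kappa_j + r_j^\infty$ with $|\lambda(j)| = |j|\,\frac{4+j^2}{1+j^2} \ge |j|$ (since $\frac{4+j^2}{1+j^2}\ge 1$), $|\kappa_j| \lesssim |j|^{-1} \le 1$ by \eqref{diagonalopFinale}, and $|r_j^\infty| \lesssim \varepsilon^{4-3a}\langle j\rangle^{-1} \le 1$ by \eqref{stimeautovalfinaliDP}; thus $|d_j^\infty| \ge |m|\,|j| - C \ge \frac12 |j| - C$, and the same argument applies.

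For $R_{\ell j k}$ the key point is to extract the leading term of $d_j^\infty - d_k^\infty$. Writing, for $j \ne k$,
\[
d_j^\infty - d_k^\infty = m\big(\lambda(j) - \lambda(k)\big) + \varepsilon^2(\kappa_j - \kappa_k) + (r_j^\infty - r_k^\infty),
\]
the correction terms satisfy $|\varepsilon^2(\kappa_j-\kappa_k)| \lesssim \varepsilon^2$ and $|r_j^\infty - r_k^\infty| \lesssim \varepsilon^{4-3a}$, both bounded by an absolute constant. Hence on $R_{\ell jk}(i_n,\eta,\sigma)$ we get
\[
|m|\,|\lambda(j)-\lambda(k)| \le |\omega\cdot\ell| + |\varepsilon^2(\kappa_j-\kappa_k)| + |r_j^\infty-r_k^\infty| + 2\eta \le C(1+|\ell|),
\]
and since $|m|\ge 1/2$ this gives $|\lambda(j)-\lambda(k)| \le C(1+|\ell|)$, from which $|\ell| \ge C|\lambda(j)-\lambda(k)|$ after enlarging the constant (and noting the set is empty for small $|\ell|$). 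Finally I would deduce the lower bound $|\lambda(j) - \lambda(k)| \ge \frac12 |j-k|$: from the dispersion law \eqref{dispersionLaw}, $\lambda(j) - \lambda(k) = (j-k) + 3\big(\frac{j}{1+j^2} - \frac{k}{1+k^2}\big)$, and one checks that the derivative of $t \mapsto \frac{t}{1+t^2}$ has absolute value at most $1$, so $\big|\frac{j}{1+j^2} - \frac{k}{1+k^2}\big| \le |j-k|$ by the mean value theorem; actually a sharper bound is needed since $3 \cdot |j-k|$ would overwhelm $|j-k|$ --- instead one uses that the function $\frac{t}{1+t^2}$ has total variation $1$ on each half-line and is monotone for $|t|\ge 1$, so in fact $\big|\frac{j}{1+j^2} - \frac{k}{1+k^2}\big| \le \frac{1}{6}$ whenever $|j-k|$ is large, giving $|\lambda(j)-\lambda(k)| \ge |j-k| - \frac12 \ge \frac12|j-k|$ for $|j-k|$ large, while for $|j-k|$ small the claimed inequality follows directly from emptiness of the set unless $|\ell|$ is correspondingly bounded.

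The main obstacle, and the only place requiring genuine care, is the very last elementary estimate $|\lambda(j)-\lambda(k)| \gtrsim |j-k|$: because the dispersion law is asymptotically linear, $\lambda(j)-\lambda(k)$ is a small perturbation of the integer difference $j-k$, and one must rule out exact or near cancellations. The cleanest route is to observe $\lambda(j) - \lambda(k) = (j-k)\big(1 + \frac{3 - 3jk}{(1+j^2)(1+k^2)}\big)$ after combining the fractions, and then bound $\big|\frac{3(1-jk)}{(1+j^2)(1+k^2)}\big| \le \frac{3(1+|j||k|)}{(1+j^2)(1+k^2)} \le \frac12$ for all but finitely many pairs $(j,k)$ (concretely whenever $\max(|j|,|k|)$ is large enough, using $\frac{|j||k|}{(1+j^2)(1+k^2)} \le \frac14$); the remaining finitely many pairs are handled by the emptiness observation, since for them $\langle j - k\rangle$ is bounded and the resonant set is then empty unless $|\ell| \ge C$ with $C$ absorbing everything. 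Throughout, all constants depend only on $S$ (through $|\omega| \le C(S)$ on $\Omega_\varepsilon$, and through the $\varepsilon$-independent bounds on $m-1$, $\kappa_j$, $r_j^\infty$), and are independent of $\ell,j,k,n,i_n,\omega$ as claimed.
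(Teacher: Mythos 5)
Your proof is correct and follows essentially the same route as the paper's: on a nonempty resonant set one bounds the spatial part ($m(\lambda(j)-\lambda(k))$, $mj$, or $d_j^\infty$) by $|\omega|\,|\ell|$ plus uniformly bounded corrections, using $|m|\ge 1/2$ together with the bounds on $\kappa_j$ and $r_j^\infty$, the only packaging difference being that the paper divides by $|\lambda(j)-\lambda(k)|\ge 1/2$ to handle the $2\eta$ term instead of absorbing additive constants. Note also that your factorization $\lambda(j)-\lambda(k)=(j-k)\bigl(1+\tfrac{3(1-jk)}{(1+j^2)(1+k^2)}\bigr)$ in fact yields the dispersion estimate for \emph{all} pairs (the bracket is $\ge 5/8$ over all of $\mathbb{R}^2$, its minimum being attained at $j=k=\pm\sqrt{3}$), so the ``finitely many exceptional pairs'' caveat and the vague ``empty for small $|\ell|$'' remarks are unnecessary: the only case requiring the emptiness observation is $\ell=0$, which is excluded since $|d_j^\infty-d_k^\infty|$ (resp. $|mj|$, $|d_j^\infty|$) stays bounded below by a fixed constant while $2\eta\lesssim\sqrt{\varepsilon}$.
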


\begin{proof}
If $R_{\ell j k}(i_n)\neq \emptyset$, 
then there exists $\omega$ such that 
\begin{equation*}
\lvert d_j^{\infty}(\omega, i_n(\omega))-d_k^{\infty}(\omega, i_n(\omega)) \rvert
< {2 \eta}{\langle \ell \rangle^{-\sigma}}
+2 \lvert \overline{\omega}\cdot \ell \rvert\,.
\end{equation*}
Moreover, using \eqref{FinalEigenvaluesDP}, 
\eqref{stimeautovalfinaliDP}, \eqref{clinica100}, 
\eqref{diagonalopFinale}, we get
$\lvert d_j^{\infty}(\omega, i_n(\omega))-d_k^{\infty}(\omega, i_n(\omega)) \rvert 
\geq \frac{1}{3} \lvert \og(j)-\og(k) \rvert.
$
Thus, for $\varepsilon$ small enough 
\[
2 \lvert \overline{\omega} \rvert \lvert \ell \rvert
\geq 2 \lvert \overline{\omega}\cdot \ell \rvert
\geq\left( \frac{1}{3}-\frac{2 \eta}{\langle \ell \rangle^{\sigma}
\lvert \og(j)-\og(k) \rvert} \right)\lvert \og(j)
-\og(k) \rvert\geq \frac{1}{4}\lvert \og(j)-\og(k) \rvert
\]
and this proves the first claim on $R_{\ell j k}$.
If  $Q_{\ell j}\neq \emptyset$ then we have 
$|m j|< {2 \eta}{\langle \ell \rangle^{-\sigma}}
+2 \lvert \overline{\omega}\cdot \ell \rvert$. 
Hence, for $\e$ small enough, we have
\[
\lvert j \rvert\le \frac{\lvert \omega\cdot \ell \rvert}{\lvert m \rvert}
\le \frac{1}{\tilde{C}} \lvert \ell \rvert, 
\qquad \tilde{C}:=\frac{\lvert m \rvert}{4 \lvert \overline{\omega} \rvert}.
\]
Following the same arguments and 
by using that $\lvert d_j \rvert\geq C \lvert j \rvert$ 
for some constant $C>0$ we get the last statement.
\end{proof}

\subsubsection{Measure of a resonant set}\label{singleSET}
The aim of this subsection is to prove the following lemma.
\begin{lem}\label{singolo}
There is $\mathtt{r}_0>0$ such that, 
for any $0<\mathtt{r}\leq\mathtt{r}_0$, 
and any choice of $S^{+}\in \mathcal{V}(\mathtt{r})$
we have that 
\begin{equation}\label{stimaBadERRE}
\lvert R_{\ell j k}(\eta, \sigma) \rvert\le 
K \varepsilon^{2(\nu-1)} \eta \langle \ell \rangle^{-\sigma}\,,
\end{equation}
for some $K=K(S)$.
The same holds for $Q_{\ell j}(\eta,\s)$ and $P_{\ell j}(\eta,s)$.
\end{lem}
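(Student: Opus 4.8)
The plan is to prove the measure bound \eqref{stimaBadERRE} for the three families of resonant sets in \eqref{singoloEq}--\eqref{singoloEq2} by the standard strategy of viewing the resonance function as a function of one real parameter along a well-chosen line in $\omega$-space, and then estimating the sublevel set via a quantitative non-degeneracy (transversality) argument. First I would reduce to a one-dimensional problem: parametrize $\omega = \bar\omega + \varepsilon^2 v$ with $v$ ranging in a set of size $O(1)$ (since by \eqref{xiomega} and $\xi\in[1,2]^\nu$ the frequencies $\omega$ lie in a ball of radius $O(\varepsilon^2)$ around $\bar\omega$, and $\mathbb{A}$ is invertible by Lemma \ref{Twist1}); equivalently one can work directly with $\xi\in[1,2]^\nu$ as the parameter, which is the cleanest choice because all the relevant small-divisor functions are affine or rational in $\xi$ at leading order. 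After rescaling, $\bigl|R_{\ell jk}(\eta,\sigma)\bigr| \le \varepsilon^{2\nu}\,\bigl|\{\xi : |\phi_{\ell jk}(\xi)| \le 2\eta\langle\ell\rangle^{-\sigma}\}\bigr|$ where $\phi_{\ell jk}(\xi) := \omega(\xi)\cdot\ell + d_j^\infty(\omega(\xi)) - d_k^\infty(\omega(\xi))$, and similarly for $Q$ and $P$; the extra factor $\varepsilon^{-2}$ improvement to $\varepsilon^{2(\nu-1)}$ in the statement comes from the fact that, by Lemma \ref{BrexitDP}, the direction $\ell$ is essentially fixed once $j,k$ are (so effectively one integrates over a $(\nu-1)$-dimensional slice transverse to a good direction, and in the remaining direction one gets the factor $\eta\langle\ell\rangle^{-\sigma}/\varepsilon^2$ times $\varepsilon^2$).

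The heart of the argument is a lower bound on the derivative of $\phi_{\ell jk}$ along a suitable unit vector $\hat\jmath\in\mathbb{R}^\nu$. Recall from \eqref{FreqAmplMapDP} and \eqref{FinalEigenvaluesDP} that
\[
\phi_{\ell jk}(\xi) = \bar\omega\cdot\ell + \varepsilon^2\mathbb{A}\xi\cdot\ell + \bigl(\lambda(j)-\lambda(k)\bigr)m(\omega) + \varepsilon^2\bigl(\kappa_j - \kappa_k\bigr) + \bigl(r_j^\infty - r_k^\infty\bigr) + O(\varepsilon^4),
\]
with $\kappa_j = \lambda(j)(\lal_j - c(\omega))$ as in \eqref{diagonalopFinale} and $m = 1 + \varepsilon^2 c(\omega) + O(\varepsilon^4)$ by \eqref{clinica100}. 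The dominant $\xi$-dependence at order $\varepsilon^2$ is linear: $\varepsilon^2\bigl(\mathbb{A}^T\ell + (\lambda(j)-\lambda(k))\vec v + (\lal_j-\lal_k)^{(\xi\text{-grad})} - \cdots\bigr)\cdot\xi$, and the claim is that the gradient of this linear part has modulus $\gtrsim \varepsilon^2$, uniformly in $\ell,j,k$ with $\ell\ne 0$. This is precisely the non-degeneracy encoded in the \emph{algebraic} Lemma \ref{ecologia} (referenced repeatedly in the introduction), together with the twist bound $|\det\mathbb{A}|\gtrsim \bar\jmath_1^{3\nu}$ of Lemma \ref{Twist1} which controls the $\mathbb{A}^T\ell$ term from below when $\ell\ne 0$; the $\ell=0$ cases $Q_{0j}$, $P_{0j}$, $R_{0jk}$ are excluded or handled directly since $R_{0jk}=\emptyset$ by \eqref{0di0Melnikov}. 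I would then invoke the classical sublevel-set estimate: if $\psi:[0,T]\to\mathbb{R}$ is $C^1$ with $|\psi'|\ge\delta$, then $\bigl|\{t:|\psi(t)|\le\rho\}\bigr|\le 2\rho/\delta$. Applying this along the line $\xi + t\hat\jmath$ with $\delta \gtrsim \varepsilon^2$ and $\rho = 2\eta\langle\ell\rangle^{-\sigma}$, and using Fubini over the complementary $(\nu-1)$ directions (whose measure is $O(1)$), yields $\bigl|\{\xi : |\phi_{\ell jk}|\le 2\eta\langle\ell\rangle^{-\sigma}\}\bigr| \lesssim \varepsilon^{-2}\eta\langle\ell\rangle^{-\sigma}$, hence $\bigl|R_{\ell jk}(\eta,\sigma)\bigr|\lesssim \varepsilon^{2\nu}\varepsilon^{-2}\eta\langle\ell\rangle^{-\sigma} = \varepsilon^{2(\nu-1)}\eta\langle\ell\rangle^{-\sigma}$, which is \eqref{stimaBadERRE}. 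The cases $Q_{\ell j}$ and $P_{\ell j}$ are strictly simpler: for $Q_{\ell j}$, $\phi_{\ell j}(\xi) = \bar\omega\cdot\ell + \varepsilon^2\mathbb{A}\xi\cdot\ell + \lambda(j)(m(\omega)-1)$, whose $\xi$-gradient is $\varepsilon^2(\mathbb{A}^T\ell + \lambda(j)\vec v + O(\varepsilon^2))$; since $\ell\ne 0$ and $|\mathbb{A}^T\ell|\gtrsim \bar\jmath_1^{3\nu}|\ell|$ by Lemma \ref{Twist1} dominates the $O(|j|)$ correction term (using $|\ell|\ge C|j|$ from Lemma \ref{BrexitDP} only when needed), one again gets a gradient lower bound $\gtrsim\varepsilon^2$, and the same sublevel argument applies. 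For $P_{\ell j}$ the analysis is identical with $d_j^\infty$ in place of $\lambda(j)m$, using $|r_j^\infty|\lesssim\varepsilon^{4-3a}$ from \eqref{stimeautovalfinaliDP} to treat $r_j^\infty$ as a lower-order perturbation.

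The main obstacle, and where real care is needed, is the uniform-in-$(\ell,j,k)$ transversality estimate for $\phi_{\ell jk}$ in the $R$ case: one must show that no conspiracy among the integer vector $\ell$, the rational frequencies $\lambda(j),\lambda(k)$, and the rational quantities $\lal_j,\lal_k,\kappa_j,\kappa_k$ (all of which are rational functions of $j,k$ and of the tangential sites) can make the $\varepsilon^2$-order gradient vanish or become much smaller than $\varepsilon^2$. Because the gap $\lambda(j)-\lambda(k)$ is asymptotically equal to $j-k$ (the dispersion law is asymptotically linear), when $j-k$ is large the term $(\lambda(j)-\lambda(k))\vec v$ could in principle cancel $\mathbb{A}^T\ell$; here one uses Lemma \ref{BrexitDP} which forces $|\ell|\gtrsim|j-k|$, so $|\mathbb{A}^T\ell|\gtrsim \bar\jmath_1^{3\nu}|\ell| \gtrsim \bar\jmath_1^{3\nu}|j-k| \gg |\vec v|\,|j-k|$, and the $\mathbb{A}$-term genuinely dominates. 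The truly delicate regime is $j-k$ bounded (so $\ell$ also bounded, finitely many cases of $\ell$), where the cancellation, if any, would be dictated purely by the values of $\lal_j-\lal_k$ and $\kappa_j-\kappa_k$ as functions of $j$; this is exactly the content of the algebraic Lemma \ref{ecologia}, whose whole purpose is to certify the non-degeneracy of these leading coefficients, and I would cite it here rather than re-derive the rational-function computation. A secondary technical point is bookkeeping the Lipschitz (rather than $C^1$) dependence of $d_j^\infty$ on $\omega$: one replaces derivatives by difference quotients throughout and uses the incremental bounds \eqref{r12}, \eqref{clinica1000} — this is routine but must be written with the $|\cdot|^{lip}$ norms in place of derivatives, exactly as in the analogous measure lemma of \cite{KdVAut}.
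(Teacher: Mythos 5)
Your overall skeleton (one\-dimensional sublevel estimate along a transversal direction plus Fubini, Lipschitz difference quotients in place of derivatives, and the bookkeeping that produces $\varepsilon^{2(\nu-1)}\eta\langle\ell\rangle^{-\sigma}$) is the same as the paper's, which works with the affine-in-$\omega$ splitting \eqref{phi(omega)DPOmega} rather than your $\xi$-parametrization; that difference is only a change of variables. The gap is in the transversality step, which is the entire content of the lemma. Your central claim --- that the $\varepsilon^2$-order gradient $\mathbb{A}^T\ell+(\lambda(j)-\lambda(k))\vec v+\vec w_j-\vec w_k$ is bounded below uniformly in $\ell\neq0$, $j,k\in S^c$ --- is not what Lemma \ref{ecologia} says, and it is false in general: $\mathbb{A}$ and $\vec v$ have rational entries, so there exist $n\in\Z\setminus\{0\}$ and $\ell=-n\,\mathbb{A}^{-T}\vec v\in\Z^\nu\setminus\{0\}$ with $\mathbb{A}^T\ell+n\vec v=0$ exactly; taking $j-k=n$ with $|j|,|k|\to\infty$ one has $\lambda(j)-\lambda(k)\to n$ and $\vec w_j-\vec w_k\to0$ (Lemma \ref{mammachestime}), so your gradient tends to zero. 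Your two auxiliary arguments do not repair this: Lemma \ref{Twist1} is a determinant bound, giving only $\sigma_{\min}(\mathbb{A})\gtrsim\bar\jmath_1^{3}$, not $|\mathbb{A}^T\ell|\gtrsim\bar\jmath_1^{3\nu}|\ell|$, and since Lemma \ref{BrexitDP} only yields $|j-k|\lesssim\bar\jmath_1|\ell|$, the terms $\mathbb{A}^T\ell$ and $(\lambda(j)-\lambda(k))\vec v$ are both of size $\sim\bar\jmath_1^{3}|\ell|$, so there is no strict domination when $|j-k|\sim|\ell|$; moreover ``$j-k$ bounded, so $\ell$ also bounded, finitely many cases'' inverts Lemma \ref{BrexitDP} ($|\ell|\gtrsim|j-k|$ does not bound $\ell$), and even for a fixed $\ell$ the pairs $(j,k)$ are infinitely many, so finiteness would not help anyway.

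What is missing is the dichotomy that makes Lemma \ref{ecologia} applicable, and it is exactly how the paper proceeds. For $|\ell|>\varepsilon^{-\beta}$ (Lemma \ref{ellegrande}) one dominates by $\ell$ only when $|j-k|\le c_0|\ell|$ with $c_0(S)$ small, while for $|j-k|>c_0|\ell|$ one bounds the \emph{constant} term $a_{jk}$ in \eqref{AjkDP} from below via \eqref{corto100} and deduces $|b_{\ell jk}|\gtrsim|\ell|$ on the resonant set. For $|\ell|\le\varepsilon^{-\beta}$, either $|\bar\omega\cdot\ell+\lambda(j)-\lambda(k)|\ge\gamma_0\langle\ell\rangle^{-\sigma}$ with $\gamma_0=\varepsilon^{\alpha}$, and then $R_{\ell jk}=\emptyset$ because $\omega$ ranges in an $O(\varepsilon^2)$ ball around $\bar\omega$ and all corrections are $O(\varepsilon^{2-\beta})\ll\gamma_0\langle\ell\rangle^{-\sigma}$ (Lemma \ref{singolo2}); or the unperturbed divisor is small, and one substitutes $\lambda(j)-\lambda(k)=-\bar\omega\cdot\ell+O(\varepsilon^{\alpha})$ into $b_{\ell jk}$, after which the needed non-degeneracy is precisely \eqref{corto} (resp.\ \eqref{corto100}, \eqref{cortissimo} for $Q_{\ell j}$, $P_{\ell j}$), i.e.\ a lower bound involving the matrix ${\rm I}-\mathbb{A}^{-T}\vec v\,\bar\omega^{T}$, uniform in $j,k\in S^c$ --- and that is the statement Lemma \ref{ecologia} actually proves (Lemma \ref{ellepiccoloris}). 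Without this reduction your appeal to Lemma \ref{ecologia} is to a statement it does not contain, and the sublevel argument has no uniform lower bound to run on. (A minor further slip: in \eqref{singoloeq3} the divisor for $Q_{\ell j}$ is $\omega\cdot\ell+mj$, not $\omega\cdot\ell+m\lambda(j)$.)
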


\noindent
The proof of the lemma above involves many arguments and we split it into several steps. \\
In several bounds we will evidence the dependence of the constants on the tangential set $S$ in order to highlight that the smallness of the amplitudes $\xi$ depends on the choice of the tangential sites.

Let us first consider the set $R_{\ell j k}$,
which is the most difficult case.
We study the sub-levels of the function
$\omega\mapsto\phi_R(\omega)$ defined by (recall \eqref{FreqAmplMapDP},\eqref{FinalEigenvaluesDP})
\begin{equation}\label{phi(omega)DP}
\begin{aligned}
&\phi_R(\omega):=\mathrm{i} \omega\cdot \ell+d_j^{\infty}(\omega)-d_k^{\infty}(\omega)
=\mathrm{i} \omega\cdot \ell+\mathrm{i} m(\omega) (\og(j)-\og(k))+
\mathrm{i} \varepsilon^2(\kappa_j-\kappa_k)(\omega)
+(r_j^{\infty}-r_k^{\infty})(\omega).
\end{aligned}
\end{equation}
We recall that (see \eqref{clinica100}, \eqref{clinica})
\begin{equation}\label{ferro} 
\begin{aligned}
&m=1+\varepsilon^2 c(\oo)+\mathtt{r}_{m}(\omega), 
\quad c(\oo)=\vec{v}\cdot \xi(\oo)\,,  
\quad 
\vec{v}:=(2/3)(1+\overline{\jmath}_k^2)_{k=1}^{\nu}\in\mathbb{R}^{\nu}, 
\quad  \kappa_j(\oo)=\vec{w}_j \cdot \xi(\oo)\,,
\end{aligned}
\end{equation}
where $\kappa_{j}$ is defined in \eqref{diagonalopFinale} (see also \eqref{dXi}) and 
\begin{equation}\label{rm}
\mathtt{r}_{m}:=\varepsilon^4 d(\oo)+O(\varepsilon^{10}\gamma^{-2})\,, 
\quad \lvert\mathtt{r}_{m}\rvert^{\gamma}\lesssim 
\overline{\jmath}_1  \varepsilon^4\,, 
\quad \lvert\nabla_{\omega} \mathtt{r}_{m}\rvert\lesssim 
\overline{\jmath}_1\varepsilon^2+O(\varepsilon^{10} \gamma^{-3})\,. 
\end{equation}
We first study some properties 
of the function $\phi_{R}(\omega)$ in \eqref{phi(omega)DP}.

\vspace{0.9em}
\noindent
{\bf The small divisor $\phi_{R}(\omega)$ as 
an affine function of $\omega$.}
It will be useful to consider $\phi_R(\omega)$
 in \eqref{phi(omega)DP} as a small perturbation 
 of an affine function in $\omega$
\begin{equation}\label{phi(omega)DPOmega}
\begin{aligned}
&\phi_R(\omega)
=a_{j k}+b_{\ell j k}\cdot \omega+q_{j k}(\omega)\,, 
\qquad \ell\in\mathbb{Z}^{\nu},\, j, k\in S^c\,.
\end{aligned}
\end{equation}
where
\begin{align}
a_{j k}&:=\mathrm{i} \Big( (\og(j)-\og(k)) 
[1- \vec v\cdot \mathbb{A}^{-1}\overline{\omega}]
+(\vec{w}_k-\vec{w}_j) \cdot\mathbb{A}^{-1}\overline{\omega}\Big)\,, 
\label{AjkDP}\\
b_{l j k}&:=\mathrm{i} \Big(\ell+ (\og(j)-\og(k)) \mathbb{A}^{-T} \vec v 
+\mathbb{A}^{-T}(\vec{w}_j-\vec{w}_k)\Big)\,, \label{BljkDP} 
\end{align}
and the remainder $q_{j k}(\omega)$ satisfies
\begin{equation}\label{LipQljkDP}
\begin{aligned}
\lvert q_{j k}(\omega) \rvert^{sup}&\lesssim 
\overline{\jmath}_1 \varepsilon^{4}\lvert j-k\rvert+\varepsilon^{4-3a}\,,\\[2mm]
\lvert q_{j k}(\omega) \rvert^{lip}&\le 
\lvert \mathtt{r}_{m}(\omega) \rvert^{lip}\lvert\og(j)-\og(k)\rvert
+\lvert r_j^{\infty}-r_k^{\infty}\rvert^{lip}\lesssim 
\overline{\jmath}_1 \varepsilon^{2}\lvert j-k\rvert+\varepsilon^{1- 4a}\,.
\end{aligned}
\end{equation}

\noindent
\begin{lem}\label{mammachestime}
Denoting  $\vec p_j = \og(j)\vec v+ \vec w_j$, 
we have the following bounds: 
\begin{align*}
&\lvert \vec p_j \rvert \lesssim \bar\jmath_1^4|j|\,, 
\qquad \lvert \vec{w}_j \rvert\lesssim \bar\jmath_1^6|j|^{-1}\,,\qquad
\lvert  \vec{p}_j-  \vec{p}_k \rvert \lesssim \bar\jmath_1^4 \lvert j-k \rvert\,,
\qquad \lvert  \vec{w}_j-  \vec{w}_k \rvert \lesssim  {\bar\jmath_1^8} 
\lvert j-k \rvert (|j|^{-2}+|jk|^{-1})\,.
\end{align*}
\end{lem}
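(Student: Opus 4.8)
The plan is to reduce all four bounds to elementary estimates for the explicit rational functions $\og(j)=j(4+j^2)/(1+j^2)$ and $\kappa_j$, together with a mean value argument; throughout $j,k\in S^c$, so $|j|,|k|\ge 1$. First I would make the formula for $\vec p_j$ explicit. By \eqref{clinica100}, \eqref{lambda} and \eqref{Cxi} one has $\vec v_k=\tfrac23(1+\bar\jmath_k^2)$ and $\kappa_j=\vec w_j\cdot\xi$ with
\begin{equation*}
(\vec w_j)_k=-\tfrac23(1+\bar\jmath_k^2)\,\og(j)\,\frac{7+5\bar\jmath_k^2+\bar\jmath_k^4+3j^2}{Q_k(j)}\,,\qquad Q_k(t):=(3+\bar\jmath_k^2+t^2)^2-\bar\jmath_k^2 t^2\,;
\end{equation*}
using the polynomial identity $Q_k(t)=(1+t^2)(2+\bar\jmath_k^2+t^2)+(7+5\bar\jmath_k^2+\bar\jmath_k^4+3t^2)$ this gives
\begin{equation*}
(\vec p_j)_k=\og(j)\vec v_k+(\vec w_j)_k=\tfrac23(1+\bar\jmath_k^2)\,\og(j)\,\frac{(1+j^2)(2+\bar\jmath_k^2+j^2)}{Q_k(j)}\,.
\end{equation*}
I would then record three elementary facts, uniform in $k$: $|j|\le|\og(j)|\le\tfrac52|j|$ and $\sup_{t}|\og'(t)|\le4$; each factor of $Q_k$ is $\ge 3+\tfrac12(\bar\jmath_k^2+t^2)$, so $Q_k(t)\ge\tfrac14(1+\bar\jmath_k^2+t^2)^2$; and $0<(1+t^2)(2+\bar\jmath_k^2+t^2)<Q_k(t)$. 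The first bound is then immediate: $|(\vec p_j)_k|\le\tfrac23(1+\bar\jmath_k^2)|\og(j)|\lesssim\bar\jmath_1^2|j|\le\bar\jmath_1^4|j|$.

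For the remaining bounds I would set $\psi_k(t):=\og(t)\rho_k(t)$ with $\rho_k(t):=(7+5\bar\jmath_k^2+\bar\jmath_k^4+3t^2)/Q_k(t)$, so that $(\vec w_j)_k=-\tfrac23(1+\bar\jmath_k^2)\psi_k(j)$, and note that $\psi_k$ is smooth on $\R$ ($Q_k$ has no real zero), and \emph{odd} in $t$ (numerator and $Q_k$ are even). Writing $\rho_k=(A_k+3t^2)/(B_k+C_kt^2+t^4)$ with $A_k,B_k\asymp\bar\jmath_k^4$, $C_k\asymp\bar\jmath_k^2$, a direct differentiation together with a short case split $|t|\lesssim\bar\jmath_k$ versus $|t|\gtrsim\bar\jmath_k$ (in the latter regime $Q_k(t)\asymp t^4$) yields the decay estimates, uniform in $k$ and $t$,
\begin{equation*}
\rho_k(t)+\langle t\rangle\,|\rho_k'(t)|\lesssim(1+\bar\jmath_k^2)\langle t\rangle^{-2}\,,\qquad\text{hence}\qquad|\psi_k(t)|\lesssim(1+\bar\jmath_k^2)\langle t\rangle^{-1}\,,\quad|\psi_k'(t)|\lesssim(1+\bar\jmath_k^2)\langle t\rangle^{-2}\,.
\end{equation*}
Since $\langle j\rangle=|j|$ for $j\in S^c$, the second bound follows: $|(\vec w_j)_k|\lesssim(1+\bar\jmath_k^2)|\psi_k(j)|\lesssim\bar\jmath_1^4|j|^{-1}\le\bar\jmath_1^6|j|^{-1}$.

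The fourth bound is the main point, and I would split on the sign of $jk$. If $jk>0$, the segment joining $j$ and $k$ avoids $(-1,1)$, so
\begin{equation*}
|\psi_k(j)-\psi_k(k)|=\Big|\int_k^j\psi_k'(t)\,dt\Big|\lesssim(1+\bar\jmath_k^2)\int_{\min(|j|,|k|)}^{\max(|j|,|k|)}t^{-2}\,dt=(1+\bar\jmath_k^2)\,\frac{|j-k|}{|jk|}\,;
\end{equation*}
if $jk<0$, oddness gives $\psi_k(j)-\psi_k(k)=\psi_k(|j|)+\psi_k(|k|)$, and the pointwise bound gives $|\psi_k(j)-\psi_k(k)|\lesssim(1+\bar\jmath_k^2)(|j|^{-1}+|k|^{-1})=(1+\bar\jmath_k^2)|j-k|/|jk|$ as well. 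Multiplying by $\tfrac23(1+\bar\jmath_k^2)$ and taking the maximum over $k$ yields $|\vec w_j-\vec w_k|\lesssim\bar\jmath_1^4|j-k|/|jk|$, which in particular is $\le\bar\jmath_1^8|j-k|(|j|^{-2}+|jk|^{-1})$. Finally the third bound follows from $\vec p_j-\vec p_k=(\og(j)-\og(k))\vec v+(\vec w_j-\vec w_k)$, using $|\og(j)-\og(k)|\le4|j-k|$, $|\vec v|\lesssim\bar\jmath_1^2$, and the bound just obtained.

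I expect the only genuine obstacle to be the uniform decay estimate $|\psi_k'(t)|\lesssim(1+\bar\jmath_k^2)\langle t\rangle^{-2}$: a crude bound on the numerator of $\rho_k'$ loses a factor $t^2$ when $|t|\gg\bar\jmath_k$, so one must retain the leading cancellation (equivalently, exploit that $\rho_k$ is a ratio of a degree-$2$ over a degree-$4$ polynomial in $t$), which forces a careful but elementary case analysis. Once that is in place, the integration gain $\int_{|k|}^{|j|}t^{-2}\,dt=\tfrac1{|k|}-\tfrac1{|j|}$ is exactly what produces the $|jk|^{-1}$ factor, and everything else is routine bookkeeping with the explicit rational functions.
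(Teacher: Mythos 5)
Your proof is correct and follows essentially the same elementary route as the paper: the explicit rational formulas for $\vec p_j,\vec w_j$ coming from \eqref{Cxi}, \eqref{lambda}, \eqref{bio}, together with the same AM--GM lower bound $3+\bar\jmath_k^2+t^2\pm\bar\jmath_k t\ge 3+\tfrac12(\bar\jmath_k^2+t^2)$ on the denominator factors that the paper uses for the first estimate. Where the paper dismisses the remaining bounds with ``the others follow similarly'', you supply the details via the decay of $\psi_k$ and $\psi_k'$ plus the mean-value/oddness argument for the differences, and in fact obtain the slightly stronger bound $\lvert\vec w_j-\vec w_k\rvert\lesssim\bar\jmath_1^4\lvert j-k\rvert/\lvert jk\rvert$, which implies the stated one.
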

\begin{proof}
The first bound 
follows by the fact that
$3+\overline{\jmath}_i^2+j^2 \pm j_2 
\overline{\jmath}_i\geq 3+\frac{j^2+\overline{\jmath}_i^2}{2}
\geq \frac{j^2}{2}$ and $(1+j^2)(1+\overline{\jmath}_i^2)(2+j^2
+\overline{\jmath}_i^2)\lesssim \bar\jmath_1^4 \,j^4$.
The others follow similarly.
\end{proof}

\noindent
Fix $\alpha\in(0,1/2)$ and let
\begin{equation}\label{ecoeco}
\gamma_0=\varepsilon^{\alpha}, \quad  0<\beta<{(2-\alpha)}{(\sigma+1)^{-1}}.
\end{equation}
We have the following estimates for sets $R_{\ell j k}$ with $|\ell|$ ``large''.
\begin{lem}\label{ellegrande}
Let $\lvert \ell \rvert> \varepsilon^{-\beta}$. 
Then $R_{\ell jk}(\eta,\sigma)$ satisfies \eqref{stimaBadERRE}.
\end{lem}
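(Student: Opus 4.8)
The plan is to treat the large-$|\ell|$ resonant sets by the standard device of viewing the small divisor as a perturbed affine function of $\omega$, exactly as set up in \eqref{phi(omega)DPOmega}: write $\phi_R(\omega)=a_{jk}+b_{\ell jk}\cdot\omega+q_{jk}(\omega)$, with $a_{jk},b_{\ell jk}$ the purely imaginary vectors \eqref{AjkDP}, \eqref{BljkDP} and $q_{jk}$ the small remainder \eqref{LipQljkDP}, so that $\Im\,\phi_R(\omega)=\alpha_{jk}+(\ell+\delta_{jk})\cdot\omega+\Im\,q_{jk}(\omega)$ is affine in $\omega$ up to a small correction. The goal is to show that, once $|\ell|>\varepsilon^{-\beta}$, one has $|\nabla_\omega\Im\,\phi_R|\gtrsim|\ell|$ on $\mathcal G_n$; then \eqref{stimaBadERRE} follows from the one-dimensional sublevel-set estimate and Fubini.

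Concretely, I would first use Lemma \ref{Twist1} together with the fact that the entries of $\mathbb A$ are $O(\overline{\jmath}_1^{3})$ (by \eqref{TwistMatrixDP}, since $\lambda(j)=O(j)$) to deduce $\|\mathbb A^{-1}\|\lesssim\overline{\jmath}_1^{-3}$, hence $\|\mathbb A^{-T}\vec v\|\lesssim\overline{\jmath}_1^{-1}$ because $|\vec v|\lesssim\overline{\jmath}_1^{2}$. Writing $b_{\ell jk}=\mathrm{i}(\ell+\delta_{jk})$ with the $\ell$-independent vector $\delta_{jk}:=(\lambda(j)-\lambda(k))\mathbb A^{-T}\vec v+\mathbb A^{-T}(\vec w_j-\vec w_k)$, I would bound $|\delta_{jk}|$ using Lemma \ref{BrexitDP} (which forces $|\lambda(j)-\lambda(k)|\lesssim|\ell|$ on any nonempty resonant set) and the \emph{uniform} bound $|\vec w_j|\lesssim\overline{\jmath}_1^{6}$, valid for all $j\in S^c$, from Lemma \ref{mammachestime}, arriving at $|\delta_{jk}|\le\tfrac14|\ell|+\mathtt C(S)$ (possibly after isolating the thin set of $\ell$ nearly parallel to $\mathbb A^{-T}\vec v$, which is dealt with separately through the arithmetic of $\overline\omega$ and $\lambda$). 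Since $|\ell|>\varepsilon^{-\beta}>4\,\mathtt C(S)$ for $\varepsilon$ small, this yields $|\ell+\delta_{jk}|\ge\tfrac12|\ell|$. Next, by \eqref{LipQljkDP} together with \eqref{rm}, \eqref{stimeautovalfinaliDP}, the remainder obeys $\lvert q_{jk}\rvert^{lip}\lesssim\overline{\jmath}_1\varepsilon^{2}|j-k|+\varepsilon^{1-4a}\lesssim\varepsilon^{2}|\ell|+\varepsilon^{1-4a}\le\tfrac14|\ell|$, again absorbing $\varepsilon^{1-4a}$ thanks to $|\ell|>\varepsilon^{-\beta}$ with $\beta$ as in \eqref{ecoeco} and $a$ small. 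Taking $u:=(\ell+\delta_{jk})/|\ell+\delta_{jk}|$, I conclude $|\Im\,\phi_R(\omega)-\Im\,\phi_R(\omega')|\ge\tfrac14|\ell|\,|\omega-\omega'|$ for all $\omega,\omega'\in\mathcal G_n$ with $\omega-\omega'$ parallel to $u$.

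From here the measure bound is routine. For each line parallel to $u$, the slice of $R_{\ell jk}(\eta,\sigma)$ has length $\lesssim\eta\langle\ell\rangle^{-\sigma}/|\ell|\le\eta\langle\ell\rangle^{-\sigma}$; integrating over the transversal $\nu-1$ directions — inside which $\mathcal G_n\subseteq\Omega_\varepsilon$ lies in a box of side $\lesssim\varepsilon^{2}$ in each coordinate, because $\xi(\omega)=\varepsilon^{-2}\mathbb A^{-1}(\omega-\overline\omega)+O(\varepsilon^{2})\in[1,2]^\nu$ (see \eqref{xiomega}) and $\|\mathbb A\|\lesssim\overline{\jmath}_1^{3}$ — gives $|R_{\ell jk}(\eta,\sigma)|\lesssim\varepsilon^{2(\nu-1)}\eta\langle\ell\rangle^{-\sigma}$, which is \eqref{stimaBadERRE} with $K=K(S)$. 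The sets $Q_{\ell j}(\eta,\sigma)$ and $P_{\ell j}(\eta,\sigma)$ are treated in exactly the same way and are easier: by Lemma \ref{BrexitDP} one has $|j|\lesssim|\ell|$, the affine parts are $\mathrm{i}(\ell+j\,\mathbb A^{-T}\vec v)$, resp. $\mathrm{i}(\ell+\lambda(j)\mathbb A^{-T}\vec v+\mathbb A^{-T}\vec w_j)$, whose deviations from $\ell$ are again $\le\tfrac14|\ell|+\mathtt C(S)\le\tfrac12|\ell|$ for $|\ell|>\varepsilon^{-\beta}$, and the remainders are controlled by \eqref{rm}, \eqref{stimeautovalfinaliDP}; the identical Fubini argument concludes.

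The main obstacle is precisely the gradient lower bound $|\nabla_\omega\Im\,\phi_R|\gtrsim|\ell|$. The deviation $\delta_{jk}$ carries the factor $\lambda(j)-\lambda(k)$, which Lemma \ref{BrexitDP} controls only up to a constant times $|\ell|$, so one cannot estimate $|(\lambda(j)-\lambda(k))\mathbb A^{-T}\vec v|$ crudely: it is essential to exploit the quantitative twist of Lemma \ref{Twist1}, which makes $\|\mathbb A^{-T}\vec v\|$ as small as $\overline{\jmath}_1^{-1}$, in combination with the wave-packet hypothesis $S^+\in\mathcal V(\mathtt r)$, $\mathtt r\le\mathtt r_0$ (which forces $\overline{\jmath}_1$ to be large), so that the positive powers of $\overline{\jmath}_1$ cancel and the residual $S$-dependent constant is dominated by the threshold $\varepsilon^{-\beta}$; and one must keep track of this $\overline{\jmath}_1$-bookkeeping uniformly in $j,k\in S^c$, which is why the uniform bound $|\vec w_j|\lesssim\overline{\jmath}_1^{6}$ is used rather than the sharper but non-uniform bound on $|\vec w_j-\vec w_k|$ of Lemma \ref{mammachestime}. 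This is the technical heart of the lemma; the rest (sublevel set plus Fubini over a parallelepiped of size $\sim\varepsilon^{2}$) is standard.
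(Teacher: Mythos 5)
Your reduction of the lemma to the gradient bound $|b_{\ell jk}|\gtrsim|\ell|$ is the right target, but the way you try to reach it contains a genuine gap. You claim $|\delta_{jk}|\le\tfrac14|\ell|+\mathtt{C}(S)$ by combining $|\og(j)-\og(k)|\lesssim|\ell|$ (Lemma \ref{BrexitDP}) with $|\mathbb{A}^{-T}\vec{v}|\lesssim\overline{\jmath}_1^{-1}$. But the constant hidden in Lemma \ref{BrexitDP} is of size $|\overline{\omega}|\sim\sqrt{\nu}\,\overline{\jmath}_1$ (its proof gives $|\og(j)-\og(k)|\le 8|\overline{\omega}|\,|\ell|$), while $|\mathbb{A}^{-T}\vec{v}|$ cannot be made smaller than $\sim\sqrt{\nu}\,\overline{\jmath}_1^{-1}$: indeed $\mathbb{A}^{-T}\vec{v}\cdot\overline{\omega}\to 3\nu$ in the wave-packet limit, which is exactly the computation \eqref{lamatriceB} underlying \eqref{corto100}. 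So after the powers of $\overline{\jmath}_1$ cancel, the term $(\og(j)-\og(k))\,\mathbb{A}^{-T}\vec{v}$ can be as large as $C\nu|\ell|$, not $\tfrac14|\ell|$; and this is the typical situation, not an exceptional one, because on a nonempty $R_{\ell jk}$ one has $\og(j)-\og(k)\approx-\overline{\omega}\cdot\ell$, so the bad regime is every $\ell$ not nearly orthogonal to $\overline{\omega}$. Your parenthetical "isolating the thin set of $\ell$ nearly parallel to $\mathbb{A}^{-T}\vec{v}$" does not address this: the problematic set of $\ell$'s is not thin, and no argument is supplied for it. The obstruction is a dimensional constant of order $\nu$ multiplying $|\ell|$, so it cannot be removed by taking $\mathtt{r}$ or $\varepsilon$ small or $\overline{\jmath}_1$ large. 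The same defect affects your treatment of $Q_{\ell j}$ and $P_{\ell j}$, where $|j|$ may be as large as $\sim\overline{\jmath}_1|\ell|$ on a nonempty set.

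The paper avoids exactly this by splitting according to $|j-k|\le c_0|\ell|$ or $|j-k|>c_0|\ell|$, with $c_0=c_0(S)$ small. In the first regime your triangle inequality is legitimate (there one simply uses $|\mathbb{A}^{-T}(\vec{p}_j-\vec{p}_k)|\lesssim\overline{\jmath}_1^4|j-k|<|\ell|/2$; no cancellation of $\overline{\jmath}_1$-powers is needed since $c_0$ may depend on $S$). In the second regime, where your estimate breaks down, no triangle inequality on $b_{\ell jk}$ is attempted: instead one bounds the \emph{constant} term from below, $|a_{jk}|\ge\tfrac14|j-k|$, using the non-degeneracy $|1-\mathbb{A}^{-1}\overline{\omega}\cdot\vec{v}|\ge\tfrac12$ coming from \eqref{corto100}--\eqref{spero} together with the smallness of $\mathbb{A}^{-T}(\vec{w}_j-\vec{w}_k)/|\og(j)-\og(k)|$ and the hypothesis $|\ell|>\varepsilon^{-\beta}$, and then exploits the defining inequality of $R_{\ell jk}$ itself: if the set is nonempty, then $|b_{\ell jk}\cdot\omega|\ge|a_{jk}|-|\phi_R(\omega)|-|q_{jk}(\omega)|\gtrsim|j-k|\gtrsim c_0|\ell|$, whence $|b_{\ell jk}|\gtrsim|\ell|$. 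This mechanism (or, equivalently, a use of the non-degeneracy \eqref{corto} after substituting $\overline{\omega}\cdot\ell\approx-(\og(j)-\og(k))$, as in Lemma \ref{ellepiccoloris}) is the missing ingredient in your proof. Your final sublevel-set and Fubini step, and the $\varepsilon^{2(\nu-1)}$ transversal factor, are fine once the gradient lower bound is actually established.
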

\begin{proof} 
In this proof we shall denote by $C(S)$ a 
running positive constant depending on the set $S$.
Suppose that $\lvert j-k \rvert\le c_0 \lvert \ell \rvert$ 
with $c_0=c_0(S)$ small. 
By Lemmata \ref{mammachestime} and \ref{Twist1}
we have 
$\lvert  \mathbb{A}^{-T}(\vec p_j -\vec p_k)\rvert
\lesssim \overline{\jmath}_1^4 \lvert j-k \rvert < {\lvert \ell \rvert}/{2}
$
for $c_0$ sufficiently small. 
This means that $|b_{\ell jk}| \gtrsim |\ell|/2$.
Now suppose that $\lvert j-k \rvert> c_0 \lvert \ell \rvert$.
Then
\begin{align*}
\rvert a_{j k}\lvert &\geq \lvert \og(j)-\og(k) \rvert 
\Big( \lvert 1-\mathbb{A}^{-1}\overline{\omega}
\cdot \vec{v}\rvert-\frac{\mathbb{A}^{-T}
(\vec{w}_j-\vec{w}_k)}{\lvert \og(j)-\og(k) \rvert} \rvert  \Big)\\
& \stackrel{\eqref{corto100},\eqref{spero}}{\geq}  \lvert j-k \rvert 
\Big(\frac12- \frac{C(S)\lvert j-k \rvert}{c_0\,\lvert \ell \rvert\,\lvert j k \rvert}  \Big)
\geq \lvert j-k \rvert \Big(\frac12- 2\,c_0^{-1}\,C(S) \varepsilon^{\beta}  \Big)
\geq  \lvert j-k \rvert/4
\end{align*}
for $\varepsilon$ small enough.
By \eqref{singoloEq}, \eqref{LipQljkDP}
\[
2\lvert b_{\ell j k} \rvert\lvert \overline{\omega} \rvert 
\ge \lvert b_{\ell j k} \cdot \omega \rvert
\geq \lvert a_{j k} \rvert -\lvert \phi_{\ell j k}(\omega) \rvert-\lvert q_{j k}(\omega) \rvert
\geq \Big(\frac14-\frac{2\eta}{c_0 \langle \ell \rangle^{\sigma+1}}
-C(S) \varepsilon^4-\frac{\varepsilon^{4-3 a}}{\lvert j-k \rvert} \Big)
\lvert j-k \rvert\geq \frac{1}{8} \lvert j -k\rvert,
\]
for $\varepsilon$ small enough and $\sigma\geq 1$. 
Again we have shown that $|b_{\ell jk}|>\delta |\ell|$ 
with $\delta:= c_0/ 2\lvert \overline{\omega} \rvert $.
Split $\omega=s \hat{b}+b^\perp$ where 
$\hat{b}:=b/\lvert b \rvert$ and $b^\perp\cdot b=0$. 
Let $\Psi_R(s):=\phi_R(s \hat{b}+b^\perp)$. 
For $\varepsilon$ small enough, by \eqref{LipQljkDP}, we get
\[
\lvert \Psi_R(s_1)-\Psi_R(p) \rvert
\geq (\lvert b \rvert-\lvert q_{j k} \rvert^{lip}) \lvert s_1-p \rvert
\geq \left(\delta-C(S)\e^2-\frac{\varepsilon^{1-4 a}}{\lvert j-k \rvert}\right)\,
\lvert j -k\rvert\, \lvert s_1-p \rvert
\geq \frac{\delta_1}{2} \lvert j -k\rvert \,\lvert s_1-p \rvert.
\]
The lemma follows by Fubini's theorem.
\end{proof}

We  now prove that if the main term (in size) of $\phi_{R}(\bar\omega)$
is big enough and $\lvert \ell \rvert$ is bounded by some constant then the bad set  $R_{\ell j k}(\eta, \sigma)=\emptyset$.
We remark that
\[
\phi_{R}(\bar\omega) - q_{jk}(\bar\omega) = a_{jk} +b_{\ell jk}\cdot\bar\omega = \overline{\omega}\cdot \ell+\og(j)-\og(k)\,.
\]

\begin{lem}\label{singolo2}
	If $\lvert \ell \rvert\le\varepsilon^{-\beta}$  and
	$\lvert \overline{\omega}\cdot \ell+\og(j)-\og(k) \rvert\geq \gamma_0 \langle \ell \rangle^{-\s}$ (see \eqref{ecoeco})
	then $R_{\ell j k}(\eta, \sigma)=\emptyset$.
\end{lem}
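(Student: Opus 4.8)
The statement asserts that for $|\ell|\le\varepsilon^{-\beta}$, if the ``unperturbed'' affine part of $\phi_R$ is not too small, namely $|\overline\omega\cdot\ell+\og(j)-\og(k)|\ge\gamma_0\langle\ell\rangle^{-\sigma}$ with $\gamma_0=\varepsilon^\alpha$, then $R_{\ell jk}(\eta,\sigma)$ is empty. The idea is simply that the full small divisor $\phi_R(\omega)$ differs from its affine part $a_{jk}+b_{\ell jk}\cdot\omega=\overline\omega\cdot\ell+\og(j)-\og(k)+O(\varepsilon^2\cdots)$ by the remainder $q_{jk}(\omega)$, and for $\omega\in\Omega_\varepsilon$ (so $\omega=\overline\omega+O(\varepsilon^2)$) the discrepancy $|\phi_R(\omega)-(\overline\omega\cdot\ell+\og(j)-\og(k))|$ is far smaller than the assumed lower bound $\gamma_0\langle\ell\rangle^{-\sigma}$, provided $\eta$ (which is $\gamma_n\lesssim\gamma=\varepsilon^{2+a}$ or $\gamma_n^*\lesssim\gamma^{3/2}$) is much smaller than $\gamma_0=\varepsilon^\alpha$. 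Hence $|\phi_R(\omega)|\ge \tfrac12\gamma_0\langle\ell\rangle^{-\sigma}>2\eta\langle\ell\rangle^{-\sigma}$, which contradicts membership in $R_{\ell jk}(\eta,\sigma)$.

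\textbf{Key steps.} First I would bound the deviation of $\phi_R$ from the affine term along $\Omega_\varepsilon$. Write
\[
\phi_R(\omega)-\mathrm{i}\bigl(\overline\omega\cdot\ell+\og(j)-\og(k)\bigr)
= \mathrm{i}\bigl(\mathtt r_m(\omega)-\mathtt r_m(\overline\omega)\bigr)(\og(j)-\og(k))
+\mathrm{i}\varepsilon^2(\kappa_j-\kappa_k)(\omega)
+(r_j^\infty-r_k^\infty)(\omega)
+\mathrm{i}\,\varepsilon^2 c(\omega)(\og(j)-\og(k)),
\]
together with the contribution of replacing $\omega$ by $\overline\omega$ in the linear part, which is $O(|\ell|\,\varepsilon^2)$ since $|\omega-\overline\omega|\lesssim\varepsilon^2$ on $\Omega_\varepsilon$. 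Using Lemma \ref{BrexitDP} (on $R_{\ell jk}\neq\emptyset$ one has $|\ell|\ge \tfrac C2|j-k|$, so $|\og(j)-\og(k)|\le 2C^{-1}|\ell|$), together with \eqref{clinica100}, \eqref{rm}, \eqref{diagonalopFinale}, \eqref{stimeautovalfinaliDP} and Lemma \ref{mammachestime}, each term is bounded by $C(S)\,\varepsilon^{2}|\ell|+C(S)\varepsilon^{4-3a}$; more precisely this is exactly the content already recorded in \eqref{LipQljkDP} combined with the size of $\varepsilon^2(\kappa_j-\kappa_k)$. Since $|\ell|\le\varepsilon^{-\beta}$, the total deviation is $\le C(S)\varepsilon^{2-\beta}$. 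Then, for $\omega\in R_{\ell jk}(\eta,\sigma)$,
\[
\gamma_0\langle\ell\rangle^{-\sigma}
\le |\overline\omega\cdot\ell+\og(j)-\og(k)|
\le |\phi_R(\omega)| + C(S)\varepsilon^{2-\beta}
\le 2\eta\langle\ell\rangle^{-\sigma}+C(S)\varepsilon^{2-\beta}.
\]
Since $\eta\le\gamma_n\lesssim\gamma=\varepsilon^{2+a}$ (and in the $R$ case $\eta\lesssim\gamma^{3/2}$), while $\langle\ell\rangle^{-\sigma}\ge\varepsilon^{\beta\sigma}$, the term $2\eta\langle\ell\rangle^{-\sigma}$ is $\lesssim\varepsilon^{2+a+\beta\sigma}$, which is negligible compared to $\gamma_0\langle\ell\rangle^{-\sigma}$; and $C(S)\varepsilon^{2-\beta}\langle\ell\rangle^{\sigma}\le C(S)\varepsilon^{2-\beta}\varepsilon^{-\beta\sigma}=C(S)\varepsilon^{2-\beta(\sigma+1)}\ll\gamma_0=\varepsilon^\alpha$ exactly because of the constraint $\beta<(2-\alpha)/(\sigma+1)$ in \eqref{ecoeco}. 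Hence for $\varepsilon$ small the inequality is violated, so $R_{\ell jk}(\eta,\sigma)=\emptyset$.

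\textbf{The analogous cases.} For $Q_{\ell j}(\eta,\sigma)$ and $P_{\ell j}(\eta,\sigma)$ I would argue the same way. For $Q_{\ell j}$ the relevant small divisor is $\omega\cdot\ell+m(\omega)j$, whose ``affine part'' is $\overline\omega\cdot\ell+j$ (using $m=1+O(\varepsilon^2)$), and the deviation is $|m(\omega)-1|\,|j|+|\omega-\overline\omega||\ell|\lesssim\varepsilon^2|j|+\varepsilon^2|\ell|\lesssim\varepsilon^{2-\beta}$ by Lemma \ref{BrexitDP} ($|\ell|\ge C|j|$ on $Q_{\ell j}\neq\emptyset$). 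For $P_{\ell j}$ the small divisor is $\omega\cdot\ell+d_j^\infty(\omega)$, with affine part $\overline\omega\cdot\ell+\og(j)$ and deviation $\lesssim\varepsilon^2|j|+\varepsilon^{4-3a}\lesssim\varepsilon^{2-\beta}$ using \eqref{FinalEigenvaluesDP}, \eqref{clinica100}, \eqref{stimeautovalfinaliDP} and $|\ell|\ge C|j|$. In both cases the hypothesis $|\overline\omega\cdot\ell+\og(j)-\og(k)|\ge\gamma_0\langle\ell\rangle^{-\sigma}$ has an obvious analogue (resp.\ $|\overline\omega\cdot\ell+j|$ and $|\overline\omega\cdot\ell+\og(j)|$ bounded below), and the same inequality chain gives emptiness. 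The main obstacle, such as it is, is purely bookkeeping: tracking precisely which constant $C(S)$ appears and checking that the exponent arithmetic $2-\beta(\sigma+1)>\alpha$ (and $2+a+\beta\sigma>\alpha$, which is automatic) holds under \eqref{ecoeco}; there is no genuine analytic difficulty, the lemma being a ``separation of scales'' statement that the size of the perturbations introduced through the KAM reduction and the Melnikov thresholds $\eta$ are all $o(\gamma_0)$ uniformly for $|\ell|\le\varepsilon^{-\beta}$.
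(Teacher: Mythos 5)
Your proof is correct and follows essentially the same route as the paper: write the small divisor as the unperturbed quantity $\overline{\omega}\cdot\ell+\og(j)-\og(k)$ plus corrections of size $C(S)\varepsilon^{2}(|\ell|+|j-k|)+O(\varepsilon^{4-3a})\lesssim\varepsilon^{2-\beta}$ (using $|j-k|\lesssim|\ell|$ from Lemma \ref{BrexitDP} and $|\omega-\overline\omega|\lesssim\varepsilon^2$), and conclude via the exponent condition $\beta(\sigma+1)<2-\alpha$ of \eqref{ecoeco} that these corrections, together with the threshold $2\eta\langle\ell\rangle^{-\sigma}$, are dominated by $\gamma_0\langle\ell\rangle^{-\sigma}$. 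The only cosmetic differences (your overestimate $2\eta\langle\ell\rangle^{-\sigma}\lesssim\varepsilon^{2+a+\beta\sigma}$ and taking $\eta\lesssim\gamma$ rather than the full range $\eta\le\overline{\jmath}_1^{\,4}\sqrt{\varepsilon}$) are harmless, since the relevant comparison is simply $\eta\ll\gamma_0=\varepsilon^{\alpha}$ with $\alpha<1/2$.
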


\begin{proof}
	By definition
	\[
	\lvert \omega\cdot \ell +d_j^{\infty}-d_k^{\infty} \rvert \ge \gamma_0 \langle \ell \rangle^{-\s} -|b_{\ell jk}||\omega-\bar\omega|-2 |q_{jk}|^{sup}
	\]
	By Lemma \ref{BrexitDP} (recall \eqref{phi(omega)DP}) we have $\lvert j-k \rvert\lesssim C \lvert \ell \rvert$ and so
	\begin{equation}
	\begin{aligned}
	|b_{\ell jk}||\omega-\bar\omega|+2 |q_{jk}|^{sup}
	&\leq C(S)\e^2 (|\ell|+ |j-k|) \lesssim C(S)\e^2 |\ell| 
	\lesssim  C(S)\e^{2-\beta} \le  \e^{\alpha+\s\beta}/2 
	\le \frac{\gamma_0}{2\langle \ell \rangle^{\sigma}} 
	\end{aligned}
	\end{equation}
	for some $C(S)>0$, 
	for $\varepsilon$ small enough, by \eqref{ecoeco}. 
\end{proof}

\begin{lem}\label{ellepiccoloris}
	Let $\lvert \ell \rvert\le \varepsilon^{-\beta}$ and
	$\lvert \overline{\omega}\cdot \ell+\og(j)-\og(k) \rvert\le {\gamma_0}{\langle \ell \rangle^{-\sigma}}$.
	Then $R_{\ell jk}(\eta,\sigma)$ satisfies \eqref{stimaBadERRE}.
\end{lem}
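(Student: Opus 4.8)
The plan is to reduce \eqref{stimaBadERRE} for $R_{\ell j k}$ to a one-dimensional transversality estimate for $\omega\mapsto \phi_R(\omega)$ in \eqref{phi(omega)DP}, exploiting its decomposition \eqref{phi(omega)DPOmega}--\eqref{LipQljkDP} as an affine map $a_{j k}+b_{\ell j k}\cdot\omega$ plus a controlled remainder $q_{j k}$. First I would record, via Lemma \ref{BrexitDP}, that $R_{\ell j k}\neq\emptyset$ forces $\lvert j-k\rvert\le 2C^{-1}\lvert\ell\rvert\le 2C^{-1}\varepsilon^{-\beta}$; together with \eqref{LipQljkDP} and \eqref{ecoeco} (which gives $\beta<2$), this yields $\lvert \partial_\omega q_{j k}\rvert\le \lvert q_{j k}\rvert^{lip}\lesssim_S \overline{\jmath}_1\varepsilon^{2-\beta}+\varepsilon^{1-4a}$, which is $\le \tfrac14 c_0$ for $\varepsilon$ small, where $c_0=c_0(S)>0$ is a constant to be fixed. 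Hence it suffices to produce a unit vector $\hat b$ with $\lvert b_{\ell j k}\cdot\hat b\rvert\ge c_0$: then $s\mapsto \Im\phi_R(s\hat b+w)$ has derivative of modulus $\ge c_0/2$, so for every $w\in\hat b^{\perp}$ the slice $\{s: s\hat b+w\in R_{\ell j k}\}$ has length $\le 4\eta\langle\ell\rangle^{-\sigma}/c_0$; since $\diam\Omega_\varepsilon\lesssim_S\varepsilon^2$ (recall \eqref{OmegaEpsilonDP}, \eqref{xiomega}) the projection of $\Omega_\varepsilon$ onto $\hat b^{\perp}$ has $(\nu-1)$-measure $\lesssim_S \varepsilon^{2(\nu-1)}$, and Fubini then gives \eqref{stimaBadERRE}.

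So everything comes down to the dichotomy: either $\lvert b_{\ell j k}\rvert\ge c_0$, in which case we are done by the previous step with $\hat b=b_{\ell j k}/\lvert b_{\ell j k}\rvert$, or $\lvert b_{\ell j k}\rvert<c_0$. In the second case the hypothesis of the lemma reads $\lvert a_{j k}+b_{\ell j k}\cdot\overline{\omega}\rvert=\lvert \overline{\omega}\cdot\ell+\og(j)-\og(k)\rvert\le\gamma_0\langle\ell\rangle^{-\sigma}$, whence $\lvert a_{j k}\rvert\le c_0\lvert\overline{\omega}\rvert+\gamma_0\langle\ell\rangle^{-\sigma}\le 2c_0\lvert\overline{\omega}\rvert$ for $\varepsilon$ small. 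The plan is to show this is incompatible with the structure of $a_{j k}$, namely that $\lvert a_{j k}\rvert\ge c_1(S)>0$ for \emph{all} $j\neq k$ in $S^c$; choosing $c_0<c_1(S)/(4\lvert\overline{\omega}\rvert)$ then makes the second alternative vacuous, so $R_{\ell j k}=\emptyset$ there.

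The lower bound $\lvert a_{j k}\rvert\ge c_1(S)$ is the heart of the matter. By \eqref{AjkDP}, $a_{j k}=\mathrm{i}\big((\og(j)-\og(k))[1-\vec v\cdot\mathbb{A}^{-1}\overline{\omega}]+(\vec w_k-\vec w_j)\cdot\mathbb{A}^{-1}\overline{\omega}\big)$. Using the bound $\lvert 1-\vec v\cdot\mathbb{A}^{-1}\overline{\omega}\rvert\ge\tfrac12$ already employed in the proof of Lemma \ref{ellegrande} (see \eqref{corto100}), the strict monotonicity of $\og$ (so that $\lvert\og(j)-\og(k)\rvert\gtrsim\lvert j-k\rvert\ge 1$ for all $j\neq k$), and the estimates \eqref{spero}, Lemma \ref{mammachestime} and Lemma \ref{Twist1} (which give $\lvert\mathbb{A}^{-T}(\vec w_j-\vec w_k)\rvert\lesssim_S \lvert j\rvert^{-1}+\lvert k\rvert^{-1}$ and $\lvert\mathbb{A}^{-1}\overline{\omega}\rvert\lesssim_S 1$), one obtains $\lvert a_{j k}\rvert\ge \tfrac12\lvert\og(j)-\og(k)\rvert-C_S(\lvert j\rvert^{-1}+\lvert k\rvert^{-1})$, which is $\ge c_1(S)$ as soon as $\min(\lvert j\rvert,\lvert k\rvert)\ge N_1(S)$. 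Moreover, if one index is bounded and the other large then $\lvert\og(j)-\og(k)\rvert\to\infty$, so the remaining pairs in fact lie in a fixed finite window $\{\lvert j\rvert,\lvert k\rvert\le N_2(S)\}$; on this finite set one must check $a_{j k}\neq 0$ directly, which is precisely the algebraic non-degeneracy of the leading term of the small divisor provided by Lemma \ref{ecologia} (equivalently, it follows from the generic conditions on $S^+$ in Definition \ref{Def:cono}).

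I expect the first two paragraphs to be routine bookkeeping and the genuine obstacle to be the last one: the soft, asymptotic part of the estimate reduces the problem to finitely many modes, but ruling out an \emph{exact} algebraic resonance among these finitely many small modes is where the special structure of the DP dispersion law and of the wave-packet set $S^+$ is essential, via Lemma \ref{ecologia}. (The corresponding estimates for $Q_{\ell j}$ and $P_{\ell j}$ needed in Lemma \ref{singolo} are strictly easier, since there the affine slope is $\ell+\mathbb{A}^{-T}\vec p_j$ with $\lvert\vec p_j\rvert\lesssim_S\overline{\jmath}_1^4\lvert j\rvert$ and Lemma \ref{BrexitDP} already forces $\lvert\ell\rvert\gtrsim_S\lvert j\rvert$.)
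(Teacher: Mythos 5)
Your reduction to a transversality-plus-Fubini argument and your identity $a_{jk}+b_{\ell jk}\cdot\overline{\omega}=\overline{\omega}\cdot\ell+\og(j)-\og(k)$ are correct, but the way you dispose of the alternative $\lvert b_{\ell jk}\rvert<c_0$ has a genuine gap. You need the unconditional lower bound $\lvert a_{jk}\rvert\ge c_1(S)$ for \emph{all} $j\neq k\in S^c$, and you attribute the residual finitely-many-modes case to Lemma \ref{ecologia}. That lemma does not contain such a statement: \eqref{corto100}, \eqref{corto}, \eqref{cortissimo} are non-degeneracy bounds for the \emph{slope} vectors $\ell-\big(\mathrm{I}-\mathbb{A}^{-T}\vec v\,\overline{\omega}^{\,T}\big)^{-1}\mathbb{A}^{-T}(\vec w_j-\vec w_k)$, i.e. exactly for $b_{\ell jk}$-type quantities, and say nothing about the constant $a_{jk}$, which mixes $\og(j)-\og(k)$, $\vec v\cdot\mathbb{A}^{-1}\overline{\omega}$ and $(\vec w_j-\vec w_k)\cdot\mathbb{A}^{-1}\overline{\omega}$ and is an extrapolation of $\phi_R$ to $\omega=0$, far outside $\Omega_\e$. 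Moreover your ``finite window'' is not uniformly finite: with the crude bounds you invoke (Lemma \ref{mammachestime} gives $\lvert\vec w_j-\vec w_k\rvert\lesssim\overline{\jmath}_1^{\,8}\lvert j-k\rvert(\lvert j\rvert^{-2}+\lvert jk\rvert^{-1})$ and $\lvert\mathbb{A}^{-1}\overline{\omega}\rvert$ is only $O_S(1)$ for you), the threshold $N_1(S)$, hence $N_2(S)$, grows like a power of $\overline{\jmath}_1$, so ``checking $a_{jk}\neq0$ directly'' on that set is itself an asymptotic statement in the wave-packet regime requiring a new Lemma-\ref{ecologia}-type analysis in the variables \eqref{montero}; it is plausibly true, but it is not proved and not available off the shelf.

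The paper's proof sidesteps this entirely and is where \eqref{corto} is actually used: under the hypothesis one writes $\og(j)-\og(k)=-\overline{\omega}\cdot\ell+p$ with $\lvert p\rvert\lesssim\e^{\alpha}$ and substitutes into \eqref{BljkDP}, obtaining
\begin{equation*}
b_{\ell jk}=\mathrm{i}\,\big(\mathrm{I}-\mathbb{A}^{-T}\vec v\,\overline{\omega}^{\,T}\big)\Big[\ell+\big(\mathrm{I}-\mathbb{A}^{-T}\vec v\,\overline{\omega}^{\,T}\big)^{-1}\mathbb{A}^{-T}(\vec w_j-\vec w_k)\Big]+O(\e^{\alpha})\,,
\end{equation*}
so that \eqref{corto} together with \eqref{corto100} gives $\lvert b_{\ell jk}\rvert\ge\delta\lvert\ell\rvert/2\ge\delta/2$ (note $\ell\neq0$ since $\lvert\og(j)-\og(k)\rvert>1/2$ for $j\neq k$); then the Lipschitz/Fubini argument of Lemma \ref{ellegrande} concludes. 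In other words, the hypothesis of the lemma is used to transfer the non-degeneracy directly onto the slope $b_{\ell jk}$, for which Lemma \ref{ecologia} was tailored, so no dichotomy and no lower bound on $a_{jk}$ are ever needed. If you want to keep your dichotomy, you must either prove the uniform bound on $a_{jk}$ by an independent argument, or replace the second branch by this substitution step.
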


\begin{proof}
	Let us call $p:=\overline{\omega}\cdot \ell+\og(j)-\og(k)$ and note that $\lvert p \rvert\lesssim \,\varepsilon^{\alpha}$ (recall \eqref{ecoeco}).
	We also remark that $\ell\ne 0$ since  for $j\ne k$  one has $ |\og(j)-\og(k)|>1/2$. We substitute $p$ in the definition of $b_{\ell jk}$ (see \eqref{BljkDP})
	\[
	|b_{\ell jk}|= \Big|\ell+ (-\bar\omega\cdot \ell + p ) \mathbb{A}^{-T} \vec v +\mathbb{A}^{-T}(\vec{w}_j-\vec{w}_k)\Big| \gtrsim 
	|\ell-  \mathbb{A}^{-T} \vec v \bar\omega^T \ell +\mathbb{A}^{-T}(\vec{w}_j-\vec{w}_k) |+  \varepsilon^{\alpha}.
	\]
	Then, using \eqref{corto} in Lemma \eqref{ecologia} we have $|b_{\ell jk}|\geq  |\ell|\delta/2$ for $\varepsilon$ small enough. 
	The thesis follows reasoning as
	in Lemma \eqref{ellegrande}.
\end{proof}

\begin{proof}[{\bf Proof of Lemma \ref{singolo}}]
	For the sets $R_{\ell j k}$ the lemma follows by Lemmata \ref{ellegrande}, \ref{singolo2},
	\ref{ellepiccoloris}.
	The proof for the sets $Q_{l j}$ and $P_{\ell j}$ follows using the same arguments used for $R_{\ell j k}$. Lemmata \ref{ellegrande}, \ref{singolo2} are identical, with the only difference that the non-resonance condition now reads respectively 
	$
	|\bar\omega\cdot\ell + j|\ge \gamma_0\langle\ell\rangle^{-\s}\,, |\bar\omega\cdot\ell +\og(j) |\ge \gamma_0\langle\ell\rangle^{-\s}
	$
	in the case of $Q_{\ell j}$ and $P_{\ell j}$. 
	Regarding Lemma  \ref{ellepiccoloris}, it follows from \eqref{corto100} in the case of $Q_{\ell j}$ and from \eqref{cortissimo} in the case of $P_{\ell j}$.
\end{proof}

\subsubsection{Summability}\label{siSomma}

\begin{lem}\label{InclusionideiBadSetsDP}
For $n\geq 1, \lvert \ell \rvert\le N_{n-1}$, one has  $R_{\ell j k}(i_n),Q_{\ell j}(i_n),P_{\ell j}(i_n)=\emptyset$.
\end{lem}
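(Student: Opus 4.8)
\textbf{Proof proposal for Lemma \ref{InclusionideiBadSetsDP}.}

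The plan is to show that at the $n$-th step of the Nash-Moser iteration the ``new'' resonant sets corresponding to frequencies $|\ell|\le N_{n-1}$ have in fact already been excluded at the previous stage, i.e.\ they are empty as subsets of $\mathcal{G}_n$. The mechanism is the standard one in this type of iteration: the eigenvalues $d_j^\infty(i_n)$ are close to $d_j^\infty(i_{n-1})$ (and $m(i_n)$ close to $m(i_{n-1})$) with a gain of a negative power of $N_{n-1}$, while $\mathcal{G}_n \subseteq \mathcal{G}_{n-1}\subseteq \bigcap_{i} \Lambda_{n}^{(i)}$ already enforces the Melnikov conditions for $i_{n-1}$ with the slightly larger constants $\gamma_{n-1}, \gamma_{n-1}^{3/2}$. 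Since $\gamma_{n-1}-\gamma_n = \gamma\,2^{-n}$, for $|\ell|\le N_{n-1}$ the loss from replacing $i_{n-1}$ by $i_n$ is absorbed by this difference of constants, so no $\omega\in\mathcal{G}_n$ can satisfy the reversed inequality defining $R_{\ell jk}(i_n), Q_{\ell j}(i_n), P_{\ell j}(i_n)$.

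More precisely, I would argue as follows. First, by Lemma \ref{BrexitDP}, if any of the three sets with parameter $i_n$ is nonempty then $|j|, |k|$ (respectively $|j-k|$) are $\lesssim |\ell|\le N_{n-1}$; in particular only finitely many indices are involved and all the relevant quantities are bounded in terms of $N_{n-1}$. Next, apply the Lipschitz-in-$i$ estimates already established: \eqref{clinica1000} gives $|\Delta_{12}m|\lesssim \varepsilon \|i_1-i_2\|_{s_0+\mu_1}$, and \eqref{r12} (with $\rho\rightsquigarrow$ a suitable small quantity and $N\rightsquigarrow N_{n-1}$, legitimate since $\|i_n - i_{n-1}\|_{s_0+\mu_1}=\|\hat{\mathfrak I}_n\|_{s_0+\mu_1}\lesssim \varepsilon^{b_*}\gamma^{-1}N_{n-1}^{-\alpha}$ by \eqref{FrakHatDP}) gives
\[
\gamma^{-1}|\Delta_{12}m| + \sup_j \langle j\rangle |\Delta_{12} r_j^{\infty}| \lesssim \varepsilon\gamma^{-1}\|\hat{\mathfrak I}_n\|_{s_0+\mu_1} + \varepsilon^{4-3a} N_{n-1}^{-\kappa}\,,
\]
where $\Delta_{12}$ now denotes the variation between $i_{n-1}$ and $i_n$. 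Hence for $|\ell|\le N_{n-1}$,
\[
\big|(\omega\cdot\ell + d_j^\infty(i_n) - d_k^\infty(i_n)) - (\omega\cdot\ell + d_j^\infty(i_{n-1}) - d_k^\infty(i_{n-1}))\big| \lesssim \varepsilon^{b_*}\gamma^{-1}\varepsilon\, |\omega(j)-\omega(k)| N_{n-1}^{-\alpha} + \dots \le C\varepsilon^{b_*+1}\gamma^{-1} N_{n-1}^{1-\alpha} + C\varepsilon^{4-3a}\,,
\]
and similarly (with the factor $|j|, |j|$ in place of $|j-k|$) for the $Q$ and $P$ sets; using $\alpha\gg 1$ and the smallness condition \eqref{SmallnessConditionNMdp} this whole quantity is $\le \gamma\,2^{-n}\langle\ell\rangle^{-\tau}$ (and $\le \gamma^{3/2}2^{-n}\langle\ell\rangle^{-\tau}$ in the $R$ case, since there $\gamma$ is replaced by $\gamma^{3/2}$). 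Therefore, if $\omega\in\mathcal{G}_n\subseteq\Lambda_n^{(2)}$ satisfied $|\omega\cdot\ell + d_j^\infty(i_n)-d_k^\infty(i_n)| < 2\gamma_n^*\langle\ell\rangle^{-\tau}$, we would get $|\omega\cdot\ell + d_j^\infty(i_{n-1})-d_k^\infty(i_{n-1})| < 2\gamma_n^*\langle\ell\rangle^{-\tau} + \gamma^{3/2}2^{-n}\langle\ell\rangle^{-\tau} \le 2\gamma_{n-1}^*\langle\ell\rangle^{-\tau}$, contradicting $\omega\in\Lambda_n^{(2)}$ (with constant $\gamma_{n-1}^*$); the cases of $Q_{\ell j}$ (using $\Lambda_n^{(0)}$ and $m$) and $P_{\ell j}$ (using $\Lambda_n^{(1)}$ and $d_j^\infty$) are identical. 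This forces the three sets to be empty.

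The main obstacle is purely bookkeeping: one must check that the exponents line up, i.e.\ that the gain $N_{n-1}^{-\alpha}$ from $\|\hat{\mathfrak I}_n\|$ together with the factor $|j-k|\lesssim N_{n-1}$ and the prefactors $\varepsilon^{b_*+1}\gamma^{-1}$ beats $\gamma\,2^{-n}$, and that the $\varepsilon^{4-3a}N_{n-1}^{-\kappa}$ term from \eqref{r12} is likewise negligible; both reduce to the choice of $\alpha$ in \eqref{parametriNMdp} (recall $\alpha = 3\alpha_0+1$ with $\alpha_0 = 3\mu+3$) and to the smallness condition $N_0^{C_1}\varepsilon^{b_*+1}\gamma^{-7/2}<\delta_0$. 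There is no genuine difficulty beyond making these inequalities explicit, since all the hard analytic content (the Lipschitz dependence of eigenvalues on the torus, the structure of $\mathcal{G}_n$) is already in place from Theorem \ref{ReducibilityDP} and the definition \eqref{GnDP}.
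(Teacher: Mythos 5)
Your argument is correct and follows essentially the same route as the paper: restrict via Lemma \ref{BrexitDP} to $\lvert j-k\rvert\lesssim\lvert\ell\rvert\le N_{n-1}$, use \eqref{r12}, \eqref{clinica} and \eqref{FrakHatDP} to bound the variation of $m$ and $d_j^{\infty}$ between $i_{n-1}$ and $i_n$, and absorb that variation into the gap between $\gamma_{n-1}$ and $\gamma_n$ (resp. $\gamma^{*}_{n-1}$ and $\gamma^{*}_n$) so that the conditions defining $\mathcal{G}_n$ already imply the ones with $i_n$. One caveat: keep the factor $N_{n-1}^{-\kappa}$ (with $\kappa>(3/2)\tau$) attached to the $\varepsilon^{4-3a}$ term in your displayed estimate, since the bare claim $C\varepsilon^{4-3a}\le \gamma^{3/2}2^{-n}\langle\ell\rangle^{-\tau}$ fails for $\lvert\ell\rvert$ of order $N_{n-1}$ and $n$ large — exactly as you acknowledge in your final paragraph, and as the paper exploits through the condition $\varepsilon^{4-3a}\gamma^{-3/2}N_n^{\tau-(2/3)\mathtt{a}}2^{n+1}\le 1$.
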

\begin{proof}
We first note that, by Lemma \ref{BrexitDP}, if $|\og(j)-\og(k)|> C_1^{-1}|\ell|$
(for some $C_1=C_1(S)$)
 then $R_{\ell j k}(i_n)= R_{\ell j k}(i_{n-1})=\emptyset$, so that our claim is trivial.  Otherwise,  if 
\[
|\og(j)-\og(k)|\le  C_1^{-1}|\ell|\le C_1^{-1} N_{n-1}\,
\]
By \eqref{r12} (with $i_\d^{(1)}\rightsquigarrow i_n$ and $i_\d^{(2)}\rightsquigarrow i_{n-1}$, $N\rightsquigarrow N_{n-1}$) 
and \eqref{FrakHatDP}  we have for all $j, k\in S^c$ 
\begin{equation}\label{marathon1}
\lvert (d_j^{\infty}-d_k^{\infty})(i_n)-(d_j^{\infty}-d_k^{\infty})(i_{n-1})\rvert\le \varepsilon^{4-3 a}
N_{n-1}^{-\mathtt{a}}\qquad\qquad \forall\omega\in \mathcal{G}_n,
\end{equation}
where $\mathtt{a}:=\min\{\kappa,\alpha\}$ (recall $\alpha$ in \eqref{parametriNMdp} and $\kappa$ in \eqref{ReducibilityDP}).
Now for all $j\neq k$, $\lvert \ell \rvert\le N_{n-1}$, $\omega\in\mathcal{G}_n$ by \eqref{marathon1} 
\begin{equation}
\begin{aligned}
&\lvert \omega\cdot \ell +d_j^{\infty}(i_n)-d_k^{\infty}(i_{n})\rvert\geq \lvert \omega\cdot \ell +d_j^{\infty}(i_{n-1})-d_k^{\infty}(i_{n-1})\rvert-\lvert (d_j^{\infty}-d_k^{\infty})(i_n)-(d_j^{\infty}-d_k^{\infty})(i_{n-1})\rvert\\
&\geq 2\gamma^{*}_{n-1}\langle \ell \rangle^{-\tau}-\varepsilon^{4-3 a}N_n^{-\mathtt{a}}\geq 2\gamma^{*}_{n}\langle \ell \rangle^{-\tau}
\end{aligned}
\end{equation}
since $\varepsilon^{4-3 a}\gamma^{-3/2}N_n^{\tau-(2/3)\mathtt{a}}2^{n+1}\le 1$. Since by definition $R_{\ell j k}(i_n)\subseteq \mathcal G_n$ then 
$R_{\ell j k}(i_n)=\emptyset$ .

\vspace{0.5em}

\noindent
Now we prove that $Q_{\ell j}(i_{n-1}) \subseteq Q_{\ell j}(i_n)$. We have
\begin{equation}
\begin{aligned}
\lvert m(i_n)-m(i_{n-1}) \rvert\lvert j \rvert\stackrel{(\ref{clinica})}{\le} & C \varepsilon^3 \lVert i_{n}-i_{n-1} \rVert_{s_0+2}\lvert j \rvert \stackrel{(\ref{FrakHatDP})}{\le} C \varepsilon^{b_*+3}\gamma^{-1} N_{n-1}^{-\alpha} \lvert j \rvert\\
\le\,\, &\, C \varepsilon^{b_*+3}\gamma^{-1} N_{n-1}^{-\alpha} \lvert \ell \rvert
\end{aligned}
\end{equation}
and then
\begin{equation}
\begin{aligned}
\lvert \omega\cdot \ell +m (i_{n}) j \rvert &\geq \lvert \omega\cdot \ell +m (i_{n-1}) j \rvert-\lvert m(i_n)-m(i_{n-1}) \rvert\lvert j \rvert\\
&\geq 2 \gamma_{n-1} \langle \ell \rangle^{-\tau}- \varepsilon^{b_*+3}\gamma^{-1} N_{n-1}^{-\alpha+1} \geq 2 \gamma_n \langle \ell \rangle^{-\tau}
\end{aligned}
\end{equation}
since $\lvert \ell \rvert\le N_{n-1}$.

\vspace{0.5em}
\noindent
As before, by \eqref{r12}, for all $j, k\in S^c$
\begin{equation}\label{marathon100}
\lvert d_j^{\infty}(i_n)-d_j^{\infty}(i_{n-1})\rvert\le \varepsilon^{4-3 a}N_{n-1}^{-\mathtt{a}}\qquad\qquad \forall\omega\in \mathcal{G}_n.
\end{equation}
 For all $j\neq k$, $\lvert \ell \rvert\le N_{n-1}$, $\omega\in\mathcal{G}_n$ by \eqref{marathon1} 
\begin{equation}
\begin{aligned}
&\lvert \omega\cdot \ell +d_j^{\infty}(i_n)\rvert\geq \lvert \omega\cdot \ell +d_j^{\infty}(i_{n-1})\rvert-\lvert d_j^{\infty}(i_n)-d_j^{\infty}(i_{n-1})\rvert\\
&\geq 2\gamma_{n-1} \langle \ell \rangle^{-\tau}-\varepsilon^{4-3 a}N_{n-1}^{-\mathtt{a}}\geq 2\gamma_{n} \langle \ell \rangle^{-\tau}
\end{aligned}
\end{equation}
since $\varepsilon^{4-3 a}\gamma^{-1}N_n^{\tau-(2/3)\mathtt{a}}2^{n+1}\le 1$.
\end{proof}
\noindent
We have proved that
\begin{equation}\label{differenzeInsiemiMisuraDP}
\mathcal{G}_n \setminus \mathcal{G}_{n+1}\subseteq \bigcup_{\substack{j, k\in S^c\\ \lvert \ell \rvert> N_{n-1}}}\Big( R_{\ell j k}(i_n)\cup Q_{\ell j}(i_n)\cup P_{\ell j}(i_n)\Big),  \quad \forall n\geq 1.
\end{equation}

\begin{lem}\label{delpiero}
There exists $\mathtt{C}>0$ such that
if $\lvert j \rvert, \lvert k \rvert\geq \mathtt{C} \langle \ell \rangle^{\nu+2}\gamma^{-(1/2)}$ then (recall that $\tau=2\nu+6>\nu+2$)
\begin{equation}
R_{\ell j k}(\gamma^{3/2}, \tau)\subseteq Q_{\ell, j-k}(\gamma, \nu+2).
\end{equation}
\end{lem}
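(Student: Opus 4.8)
The goal is to show that when both $|j|$ and $|k|$ are sufficiently large (polynomially in $\langle\ell\rangle$ and $\gamma^{-1/2}$), membership in the resonant set $R_{\ell jk}(\gamma^{3/2},\tau)$ for the difference of two eigenvalues forces membership in the resonant set $Q_{\ell,j-k}(\gamma,\nu+2)$ for the operator $\omega\cdot\ell+m(j-k)$. This is the key ``the second Melnikov condition is implied by the first'' mechanism, which makes the infinitely many sets $R_{\ell jk}$ summable: once we know this inclusion, the measure of $\bigcup_{j,k}R_{\ell jk}$ is controlled by the (finitely many, for each $\ell$, up to the inclusion) sets $Q_{\ell, j-k}$. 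The plan is to estimate $d_j^\infty-d_k^\infty - m(j-k)$ and show it is smaller than the difference between the two thresholds $\gamma^{3/2}\langle\ell\rangle^{-\tau}$ and $\gamma\langle\ell\rangle^{-(\nu+2)}$.

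First I would write, using \eqref{FinalEigenvaluesDP},
\begin{equation*}
d_j^\infty - d_k^\infty - m(j-k) = m\big(\og(j)-\og(k)-(j-k)\big)+\varepsilon^2(\kappa_j-\kappa_k)+(r_j^\infty-r_k^\infty).
\end{equation*}
The first term is controlled by the identity $\og(j)-j=\frac{3j}{1+j^2}$ from \eqref{dispersionLaw}, so $\og(j)-\og(k)-(j-k)=\frac{3j}{1+j^2}-\frac{3k}{1+k^2}$, which is $O(|j-k|(|j|^{-2}+|k|^{-2}))$, hence $O(|j-k|\,\mathtt{C}^{-2}\langle\ell\rangle^{-2(\nu+2)}\gamma)$ under the hypothesis; since $|m|\lesssim 1$ and $|j-k|\lesssim \langle\ell\rangle$ by Lemma \ref{BrexitDP} (the set $Q_{\ell,j-k}$ or $R_{\ell jk}$ being nonempty forces this), this term is $\lesssim \mathtt{C}^{-2}\gamma\langle\ell\rangle^{-(2\nu+3)}$. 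For the second term I would use $|\kappa_j|\lesssim |j|^{-1}$ from \eqref{diagonalopFinale}, so $\varepsilon^2|\kappa_j-\kappa_k|\lesssim \varepsilon^2(|j|^{-1}+|k|^{-1})\lesssim \varepsilon^2\mathtt{C}^{-1}\langle\ell\rangle^{-(\nu+2)}\gamma^{1/2}$, which is much smaller than $\gamma\langle\ell\rangle^{-(\nu+2)}$ for $\varepsilon$ small (recall $\gamma=\varepsilon^{2+a}$). For the third term I use $\langle j\rangle|r_j^\infty|^{\gamma^{3/2}}\lesssim \varepsilon^{4-3a}$ from \eqref{stimeautovalfinaliDP}, giving $|r_j^\infty-r_k^\infty|\lesssim \varepsilon^{4-3a}(|j|^{-1}+|k|^{-1})\lesssim \varepsilon^{4-3a}\mathtt{C}^{-1}\langle\ell\rangle^{-(\nu+2)}\gamma^{1/2}$, again negligible. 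Summing, $|d_j^\infty-d_k^\infty-m(j-k)|\le \frac12\gamma\langle\ell\rangle^{-(\nu+2)}$ for $\mathtt{C}$ large and $\varepsilon$ small.

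Then the inclusion follows directly: if $\omega\in R_{\ell jk}(\gamma^{3/2},\tau)$ then $|\omega\cdot\ell+d_j^\infty-d_k^\infty|\le 2\gamma^{3/2}\langle\ell\rangle^{-\tau}$, and since $\tau=2\nu+6>\nu+2$ and $\gamma^{3/2}<\gamma$, this is $\le \frac12\gamma\langle\ell\rangle^{-(\nu+2)}$; combined with the estimate above and the triangle inequality, $|\omega\cdot\ell+m(j-k)|\le 2\gamma\langle\ell\rangle^{-(\nu+2)}$, i.e. $\omega\in Q_{\ell,j-k}(\gamma,\nu+2)$. I would also note that the hypothesis $|j|,|k|\ge\mathtt{C}\langle\ell\rangle^{\nu+2}\gamma^{-1/2}$ is exactly calibrated so that all three error terms beat the $\gamma\langle\ell\rangle^{-(\nu+2)}$ threshold. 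The main (only real) subtlety is bookkeeping the powers of $\langle\ell\rangle$ and $\gamma$ to make sure the exponent $\nu+2$ (rather than $\tau$) is genuinely achievable on the $Q$-side — this is why one needs the stronger lower bound $|j|,|k|\gtrsim\langle\ell\rangle^{\nu+2}\gamma^{-1/2}$ rather than merely $\langle\ell\rangle\gamma^{-1/2}$; the factor $\langle\ell\rangle^{\nu+2}$ compensates precisely the loss in the first (dispersion-law) term, which only decays like $|j|^{-1}\sim \langle\ell\rangle^{-1}$ relative to the gain we want. No step requires more than elementary estimates given the quoted bounds.
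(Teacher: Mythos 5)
Your proof is correct and follows essentially the same route as the paper's: you decompose $d_j^\infty-d_k^\infty-m(j-k)$ into the dispersion tail $m(\og(j)-j-\og(k)+k)$, the $\varepsilon^2(\kappa_j-\kappa_k)$ term and the $r_j^\infty-r_k^\infty$ term, and calibrate each against $\gamma\langle\ell\rangle^{-(\nu+2)}$ using $|j|,|k|\ge\mathtt{C}\langle\ell\rangle^{\nu+2}\gamma^{-1/2}$ and $|j-k|\lesssim|\ell|$, exactly as in the paper (which merely phrases the same estimate through the reverse triangle inequality, lumping the $r_j^\infty$ terms with the $\varepsilon^2$ term).
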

\begin{proof}
We have that
\begin{equation}
\begin{aligned}
\lvert \omega\cdot \ell +d_j^{\infty}-d_k^{\infty} \rvert &\geq \lvert \omega\cdot \ell +m(j-k) \rvert-\lvert m \rvert \lvert \og(j)-j+k-\og(k) \rvert-\varepsilon^2 \lvert w_j-w_k \rvert-\lvert r_j^{\infty}\rvert-\lvert r_k^{\infty}\rvert	\\
&\geq \frac{2\gamma}{\langle \ell \rangle^{\nu+2}}-2 \lvert j-k \rvert \frac{C}{\lvert j \rvert \lvert k \rvert}-\frac{\tilde{C}\varepsilon^2}{\min\{ \lvert j \rvert, \lvert k \rvert \}}\\
&\geq \frac{2\gamma}{\langle \ell \rangle^{\nu+2}}- \frac{C\gamma}{\mathtt{C}\langle \ell \rangle^{2(\nu+2)-1}}-\frac{\tilde{C}\varepsilon^2 \sqrt{\gamma}}{\mathtt{C} \langle \ell \rangle^{\nu+2}}\geq \frac{\gamma}{\langle \ell \rangle^{\nu+2}} \Big(2-\frac{C}{2\mathtt{C}\langle \ell \rangle^{\nu+1}}-\frac{\tilde{C}\varepsilon^2 }{2\sqrt{\gamma}\mathtt{C} }  \Big)\\
&\geq \frac{\gamma}{\langle \ell \rangle^{\nu+2}} \geq \frac{\gamma^{3/2}}{\langle \ell \rangle^{\tau}}
\end{aligned}
\end{equation}
for $\mathtt{C}$ big enough and since $\varepsilon^2 (\sqrt{\gamma})^{-1}\ll 1$.
\end{proof}

We are in position to prove \eqref{MisureDP}. We have, by \eqref{differenzeInsiemiMisuraDP},
\begin{equation*}
\left\lvert \bigcup_{\ell\in\mathbb{Z}^{\nu}, j, k\in S^c} R_{\ell j k}(i_n)  \right\rvert\le \sum_{\lvert\ell\rvert >N_{n-1}, \lvert j \rvert, \lvert k \rvert\geq \mathtt{C}\langle \ell \rangle^{\nu+2}\gamma^{-(1/2)}}|R_{\ell j k}(i_n)|+\sum_{\lvert\ell\rvert >N_{n-1}, \lvert j \rvert, \lvert k \rvert\leq2 \mathtt{C} \langle \ell \rangle^{\nu+2}\gamma^{-(1/2)}} |R_{\ell j k}(i_n)|.
\end{equation*}
On one hand we have that, using Lemmata \ref{singolo} and  \ref{delpiero}, 
\begin{equation*}
\begin{aligned}
\sum_{\lvert\ell\rvert >N_{n-1}, \lvert j \rvert, \lvert k \rvert\geq \mathtt{C} \langle \ell \rangle^{\nu+2}\gamma^{-(1/2)}}|R_{\ell j k}(i_n)|&\lesssim K\sum_{j-k=h, \lvert h \rvert
\leq C \lvert \ell \rvert} \varepsilon^{2(\nu-1)}\gamma \langle \ell \rangle^{-\nu-2}\lesssim  K\varepsilon^{2(\nu-1)}\gamma\sum_{\lvert \ell \rvert\geq N_{n-1}}  \langle \ell \rangle^{-(\nu+1)}\\
&\lesssim K \e^{2(\nu-1)}\g N_{n-1}^{-1}.
\end{aligned}
\end{equation*}
On the other hand
\begin{equation*}
\begin{aligned}
\sum_{\lvert\ell\rvert >N_{n-1}, \lvert j \rvert, \lvert k \rvert\leq 2\mathtt{C} \langle \ell \rangle^{\tau_1}\gamma^{-(1/2)}, \lvert j-k \rvert 
\le C\lvert \ell \rvert} |R_{\ell j k} (i_n)| 
&
\lesssim K\gamma^{(3/2)} \varepsilon^{2(\nu-1)}  
\sum_{\lvert \ell \rvert\geq N_{n-1}} \frac{\lvert \ell \rvert \langle \ell \rangle^{\nu+2}}{\sqrt{\gamma}\langle \ell \rangle^{\tau}} 
\\
&\lesssim  K\gamma \varepsilon^{2(\nu-1)} 
\sum_{\lvert \ell \rvert\geq N_{n-1}}  \langle \ell \rangle^{-(\tau-\nu-3)}\\
&\lesssim K \gamma \varepsilon^{2(\nu-1)} N_{n-1}^{-1}.
\end{aligned}
\end{equation*}
The discussion above  implies estimates \eqref{MisureDP}.

\subsection{Conclusion of the Proof of Theorem \ref{IlTeoremaDP}}
Theorem \ref{NashMoserDP} implies that the sequence $(\mathfrak{I}_n, \zeta_n)$ is well defined for $\omega\in \mathcal{G}_{\infty}:=\cap_{n\geq 0} \mathcal{G}_n$, $\mathfrak{I}_n$ is a Cauchy sequence in $\lVert \cdot \rVert_{s_0+\mu}^{\g,\calG_{\infty}}$ (see \eqref{FrakHatDP}) and $\lvert \zeta_n \rvert^{\gamma}\to 0$. Therefore $\mathfrak{I}_n$ converges to a limit $\mathfrak{I}_{\infty}$ in norm $\lVert \cdot \rVert_{s_0+\mu}^{\gamma,\calG_{\infty}}$ and, by $(\mathcal{P} 2)_n$, for all $\omega\in\mathcal{G}_{\infty}, i_{\infty}(\varphi):=(\varphi, 0, 0)+\mathfrak{I}_{\infty}(\varphi)$ is a solution of
\begin{equation}\label{eccolasol}
\mathcal{F}(i_{\infty}, 0)=0 \qquad \mbox{with} \qquad \lVert \mathfrak{I}_{\infty} \rVert_{s_0+\mu}^{\gamma,\mathcal{G}_{\infty}}\lesssim\,\varepsilon^{9- 2 b} \gamma^{-1}
\end{equation}
by \eqref{ConvergenzaDP}, \eqref{SmallnessConditionNMdp}. 
Therefore $\varphi \mapsto i_{\infty}(\varphi)$ is an invariant torus for the Hamiltonian vector field $X_{H_{\varepsilon}}$ (recall \eqref{HepsilonDP}). By \eqref{MisureDP},
\[
\lvert \Omega_{\varepsilon}\setminus \mathcal{G}_{\infty} \rvert \le \lvert \Omega_{\varepsilon} \setminus \mathcal{G}_0 \rvert+\sum_{n\geq 0} \lvert \mathcal{G}_n \setminus \mathcal{G}_{n+1} \rvert\le 2\,C_* \varepsilon^{2 (\nu-1)} \gamma+C_* \varepsilon^{2(\nu-1)}\gamma \sum_{n\geq 1} N_{n-1}^{-1}
\lesssim C_{*}\varepsilon^{2(\nu-1)} \gamma.
\]
The set $\Omega_{\varepsilon}$ in \eqref{OmegaEpsilonDP} has measure $\lvert \Omega_{\varepsilon} \rvert=O(\varepsilon^{2 \nu})$. Hence $\lvert \Omega_{\varepsilon}\setminus \mathcal{G}_{\infty} \rvert/\lvert \Omega_{\varepsilon} \rvert\to 0$ as $\varepsilon\to 0$ because $\gamma=o(\varepsilon^2)$, and therefore the measure of $\mathcal{C}_{\varepsilon}:=\mathcal{G}_{\infty}$ satisfies \eqref{frazionemisureDP}.


It remains to show the linear stability of the embedding $i_{\infty}(\f)$.
By the discussion of Section \ref{sezione6DP} (see also \cite{KdVAut} for further details) and Section \ref{regularization},
since $i_{\infty}(\f)$ is isotropic
and solves the equation \eqref{eccolasol}, it is possible to find a change of coordinates
$G_{\infty}$ (of the form \eqref{GdeltaDP}),
so that in the linearized system of the Hamiltonian $H_{\e}\circ G_{\infty}(\varphi, \eta, w)$ the equation for the actions is given by $\dot{\eta}=0$. Moreover, by Section \ref{regularization} the linear equation for the normal variables $w$ is conjugated, by setting $w=\Upsilon \circ \Phi_{\infty}(z)$ to the diagonal system $\dot{z}_j-\ii d_j^{\infty}(\omega)z_j=f_{j}(\omega t), j\in S^{c}$, where $f(\omega t)$ is a forcing term.\\
Since $d_j^{\infty}\in\mathbb{R}$ a standard argument shows that the Sobolev norms of $w$ do not increase in time. For further details see \cite{KdVAut}, \cite{GiulianiPhD}, \cite{BBHM}.

%


\appendix

\section{Non-degeneracy conditions}\label{PrelimEst}
\begin{proof}[{\bf Proof of Lemma \ref{Twist1}}]
Recalling \eqref{TwistMatrixDP}   
we introduce a matrix $\mathbb K$ so that 
$\mathbb{A}=: (2/9)\,\,
\rm{diag}\Big(\og(\overline{\jmath}_i)(1+\overline{\jmath}_i^2)\Big)
\mathbb K $. 
Now we show that the entries of $\mathbb K$ 
are bounded by some constant independent 
of the $\overline{\jmath}_i$. 
After some direct computations we have that
\[
\mathbb{K}_{j j}=\frac{1+j^2}{j^2}\,, \quad j\in S^+\,,\;\qquad
\mathbb{K}_{j k}=
3 \frac{(1+k^2)(2+k^2+j^2)}{(3+k^2+j^2+k j)(3+k^2+j^2-k j)}\,, 
\quad j, k\in S^+,\,\, j\neq k\,.
\]
Obviously $\lvert   \mathbb{K}_{j j}  \rvert\le 2$; 
regarding the off-diagonal terms, we note that 
$0\le 2 k j \le k^2+j^2$, hence 
$\lvert \mathbb{K}_{j k} \rvert\le 12$ if $j\neq k$.

\noindent	
We consider the variables $x, p_2, \dots, p_{\nu}$ defined as
\begin{equation}\label{montero}
\overline\jmath_1 =: 1/x\,,\quad 
\overline\jmath_i =: p_i/x\,,\quad 0<p_i\le 1\,,
\end{equation}
so that $P(x,p_i)=\det(\mathbb K)$ is a rational function.
It is easily seen that $\mathbb K$ computed 
at $p_i=1$ for all $i$,  
coincides with the matrix 
\[
(1+x^2)\,\big(\mathrm{I}+2\,g(x)\,(U-\mathrm{I})\big)\,, 
\qquad g(x):=(3 x^{2}+1)^{-1}\,,
\] 
where $U$ is the matrix with components 
$U_{ij}=1$ for any $ i,j=1, \dots, \nu$.
Its determinant is 
\begin{equation}\label{arrosticini}
\left(\frac{1+x^2}{1+3 x^2}\right)^{\nu}
(3 x^2-1)^{\nu-1}(3 x^2+2 \nu-1)\,.
\end{equation}
We note that the absolute value of \eqref{arrosticini} 
is $\geq 1$ at $x=0$. We conclude that there exists 
$0<\mathtt r_0<1$ such  that
\[ 
\mbox{if}\quad 0\le x<\mathtt{r}_0\,,\,\,\, 
|p_i-1|\le \mathtt{r}_0 \quad\mbox{ then}
\quad \lvert P(x, p_i) \rvert\ge 1/2\,.
\]
This implies the thesis.
\end{proof}

\begin{lem}\label{ecologia} There exists $0<\mathtt{r}_0<1$ 
such that,  
for  any $S^+\in \mathcal{V}(\mathtt{r})$ 
with $0<\mathtt{r}\leq \mathtt{r}_0 $ 
(see Definition \ref{Def:cono}), 
the following holds true:
\begin{align}
&\bullet\quad 
 |\sum_{i=1}^{\nu} 
 \frac{\overline{\jmath}_i}{1+\overline{\jmath}_i^2}\,\ell_i|
 > \frac{\mathtt r}{2}\neq 0 \,,
 \quad \forall \ell\in \Z^\nu\,,
 \quad|\ell|=1,2,3,5\,; \label{GenericAssumptionbis}\\
&\bullet \quad \left|{\rm det}\Big( {\rm I}
-\mathbb{A}^{-T} \vec{v} (\overline{\omega})^T\Big)
\right| \geq 1\,; 
\label{corto100}\\
&\bullet \quad 
|\ell - \Big( {\rm I}-\mathbb{A}^{-T} 
\vec{v} (\overline{\omega})^T\Big)^{-1} 
\mathbb{A}^{-T}(\vec{w}_j-\vec{w}_k)|
\ge \delta|\ell| \,, 
\qquad \ell\in \Z^\nu, \; j, k\in S^c\label{corto}\\
&\bullet \quad 
|\ell - \Big( {\rm I}-\mathbb{A}^{-T} \vec{v} 
(\overline{\omega})^T\Big)^{-1} \mathbb{A}^{-T}\vec{w}_j|
\ge \delta|\ell|\,, \qquad \ell\in \Z^\nu\,, 
\; j \in S^c\label{cortissimo}
\end{align}		
Here $\vec v$ and $\vec{w}_j$ are defined in \eqref{ferro} 
(see also \eqref{diagonalopFinale}), 
$\bar\omega$ in \eqref{LinearFreqDP} and  
$\delta$ is some appropriately small pure constant.
\end{lem}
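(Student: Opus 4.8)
The four statements \eqref{GenericAssumptionbis}--\eqref{cortissimo} are all of the same nature: each says that a certain explicit rational function of $\overline{\jmath}_1,\dots,\overline{\jmath}_\nu$ (and, in the last two, also of the discrete indices $j,k$) stays bounded away from zero once $S^+\in\mathcal V(\mathtt r)$ with $\mathtt r$ small. The uniform strategy is the one already used for Lemma \ref{Twist1}: introduce the variables $x,p_2,\dots,p_\nu$ of \eqref{montero}, so that $\overline{\jmath}_1=1/x$, $\overline{\jmath}_i=p_i/x$ with $0<p_i\le 1$ and $|p_i-1|\le\mathtt r$; every quantity appearing becomes, after clearing denominators, a rational function which extends continuously up to $x=0$, $p_i=1$. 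One then computes the limiting value at $(x,p_i)=(0,1,\dots,1)$, checks it is strictly bounded away from the bad value, and invokes continuity to get the claim on a neighbourhood $0\le x<\mathtt r_0$, $|p_i-1|\le\mathtt r_0$. The only subtlety is that for \eqref{corto}, \eqref{cortissimo} one must also keep track of the extra discrete parameters $j,k\in S^c$; here the point is that $\vec w_j,\vec w_j-\vec w_k$ are \emph{small} in $j$ (Lemma \ref{mammachestime}: $|\vec w_j|\lesssim\overline{\jmath}_1^6|j|^{-1}$, $|\vec w_j-\vec w_k|\lesssim\overline{\jmath}_1^8|j-k|(|j|^{-2}+|jk|^{-1})$), so the correction to $\ell$ is a bounded vector and the estimate will follow from a lower bound on $\|(\mathrm I-\mathbb A^{-T}\vec v\,\bar\omega^T)^{-1}\mathbb A^{-T}(\vec w_j-\vec w_k)\|$ relative to a small absolute constant, combined with $\ell\in\Z^\nu$.

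\textbf{Step 1 (the divisors of order $1,2,3,5$, \eqref{GenericAssumptionbis}).} Here $\sum_i \overline{\jmath}_i(1+\overline{\jmath}_i^2)^{-1}\ell_i$ is a rational function of the $\overline{\jmath}_i$; rescaling by \eqref{montero} and multiplying by $x$, one has $\sum_i p_i x(x^2+p_i^2)^{-1}\ell_i\to \sum_i \ell_i$ as $x\to0$, $p_i\to1$... but that vanishes for many $\ell$, so this naive limit is not enough. Instead one expands to first order in $x$: $\frac{\overline{\jmath}_i}{1+\overline{\jmath}_i^2}=\frac{1}{\overline{\jmath}_i}-\frac{1}{\overline{\jmath}_i^3}+O(\overline{\jmath}_i^{-5})=x/p_i+O(x^3)$, so $\sum_i\frac{\overline{\jmath}_i}{1+\overline{\jmath}_i^2}\ell_i = x\sum_i \ell_i/p_i + O(x^3|\ell|)$. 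Since $|p_i-1|\le\mathtt r$, we have $1/p_i=1+O(\mathtt r)$, hence $\sum_i\ell_i/p_i=\sum_i\ell_i + O(\mathtt r|\ell|)$. For $|\ell|\in\{1,2,3,5\}$ (odd total weight for $1,3,5$; and $|\ell|=2$ with $\ell=e_i-e_{i'}$ or $e_i+e_{i'}$), one checks directly: if $\sum_i\ell_i\neq0$ then $|\sum\ell_i/p_i|\ge 1-O(\mathtt r)\ge 1/2$ and we divide by $\overline{\jmath}_1=1/x$ to get a bound $\ge \mathtt r/2$ using $x<\mathtt r$; if $\sum_i\ell_i=0$ (possible only for $|\ell|$ even, so $|\ell|=2$, forcing $\ell=\pm(e_i-e_{i'})$) one must use the \emph{next} order term, i.e. $\sum_i\ell_i(1/p_i-1)\neq 0$, which is exactly the content of the explicit non-vanishing assumption \eqref{GenericAssumption1} in Definition \ref{Def:cono} — more precisely \eqref{GenericAssumption1} handles $|\ell|=4$, and one verifies separately that the cases $|\ell|=2$ with $\sum\ell_i=0$ reduce to $\overline{\jmath}_i\ne\overline{\jmath}_{i'}$, which may be assumed (distinct tangential sites). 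The lower bound $\mathtt r/2$ then comes out of this expansion. I would present this case-by-case on $|\ell|$, keeping the algebra minimal.

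\textbf{Step 2 (the determinant \eqref{corto100} and the affine bounds \eqref{corto}, \eqref{cortissimo}).} For \eqref{corto100}: $\mathbb A^{-T}\vec v$ is a vector whose size one controls via Lemma \ref{Twist1} ($|\det\mathbb A|\gtrsim\overline{\jmath}_1^{3\nu}$, and the cofactors are $O(\overline{\jmath}_1^{3(\nu-1)}\cdot\overline{\jmath}_1^{\text{something}})$, while $|\vec v|\lesssim\overline{\jmath}_1^2$); in the $x$-variables one shows $\mathbb A^{-T}\vec v\to$ a specific limiting vector and $\bar\omega\to$ a limit too, so $\mathrm I-\mathbb A^{-T}\vec v\,\bar\omega^T$ tends to an explicit rank-one perturbation of the identity whose determinant is $1-\bar\omega\cdot\mathbb A^{-T}\vec v$ (by the matrix-determinant lemma), and this limiting value is computed to have absolute value strictly larger than $1$; continuity then gives $\ge 1$ on $\mathcal V(\mathtt r)$. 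The key computational input is the explicit leading behaviour of $\bar\omega\cdot\mathbb A^{-T}\vec v = \vec v\cdot\mathbb A^{-1}\bar\omega$, which should simplify using the structure $\mathbb A=\mathbb D(\tfrac12\mathrm{diag}(\cdots)+\mathbb B)$ and $\bar\omega=\mathbb D\,(\overline{\jmath}_i)_i$ modulo lower order — I expect this to collapse, as in \eqref{arrosticini}, to a clean one-variable rational function after setting $p_i=1$. For \eqref{corto}, \eqref{cortissimo}: write $M:=(\mathrm I-\mathbb A^{-T}\vec v\,\bar\omega^T)^{-1}\mathbb A^{-T}$, a matrix with operator norm $O(\overline{\jmath}_1^{C})$; then $|M(\vec w_j-\vec w_k)|\lesssim \overline{\jmath}_1^{C}\,\overline{\jmath}_1^{8}|j-k|(|j|^{-2}+|jk|^{-1})$, which one needs to be $\le(1-\delta)|\ell|$ whenever $R_{\ell jk}\ne\emptyset$; but the hypotheses under which \eqref{corto} is invoked (Lemma \ref{ellepiccoloris}) put $|\ell|\le\e^{-\beta}$ and, via Lemma \ref{BrexitDP}, $|j-k|\le C|\ell|$, while $|j|,|k|$ can be taken large (the complementary small-$j,k$ range being finite and handled by the non-degeneracy of the limiting value of $M\vec w_j$, which is again a continuity/finite-check argument). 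So the correction vector is genuinely small compared to $|\ell|\ge 1$, and since $\ell$ has integer components, $|\ell - (\text{small vector})|\ge \delta|\ell|$ for a fixed small $\delta$ once $\mathtt r$ (hence the constants) is small enough. The same reasoning with $\vec w_j-\vec w_k$ replaced by $\vec w_j$ gives \eqref{cortissimo}.

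\textbf{Main obstacle.} The routine but delicate part is the bookkeeping of the $\overline{\jmath}_1$-powers: one must be sure that the \emph{net} power of $\overline{\jmath}_1$ multiplying the $|j|^{-1}$-type decay of $\vec w_j$ after applying $M=(\mathrm I-\cdots)^{-1}\mathbb A^{-T}$ does not blow up faster than the decay helps — i.e. that $\|M\|$ grows only polynomially and is, after rescaling, a bounded function on $\mathcal V(\mathtt r)$. This is exactly where the wave-packet condition $|\overline{\jmath}_i/\overline{\jmath}_1-1|\le\mathtt r$ is essential: it forces all the $\overline{\jmath}_i$ to be comparable, so in the $x$-variables everything is a genuine function of $(x,p_2,\dots,p_\nu)$ near $(0,1,\dots,1)$, and boundedness follows from continuity up to that point, exactly as in Lemma \ref{Twist1}. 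Once that boundedness is in hand, all four bounds follow from (a) computing an explicit nonzero/large limiting value at $x=0$, $p_i=1$, (b) continuity, and (c) the integrality of $\ell$. I would therefore organize the proof as: reduce to the $(x,p_i)$ variables; prove the uniform bound $\|M\|\lesssim\overline{\jmath}_1^{C}$ with the rescaled version bounded; compute the four limiting quantities; conclude by continuity and, for \eqref{corto}, \eqref{cortissimo}, by comparing the small correction with $|\ell|\ge1$.
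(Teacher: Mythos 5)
Your handling of \eqref{GenericAssumptionbis} and \eqref{corto100} follows essentially the paper's route: pass to the variables \eqref{montero}, use that $\sum_i\ell_i$ has the same parity as $|\ell|$ (hence is nonzero for $|\ell|=1,3,5$), treat $|\ell|=2$ via distinctness of the $\overline{\jmath}_i$, and for \eqref{corto100} combine $\det(\mathrm{I}-\mathbb{A}^{-T}\vec v\,\overline{\omega}^{T})=1-\overline{\omega}\cdot\mathbb{A}^{-T}\vec v$ with a limit-and-continuity argument; note, however, that you leave the decisive evaluation unverified (the paper computes, via the explicit form \eqref{lamatriceA} of $\mathbb{A}^{-1}$ at $p_i=1$, that the limiting value of the determinant at $x=0$ has modulus $3\nu-1\ge 2$).

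For \eqref{corto} and \eqref{cortissimo} your proposal has a genuine gap. These are unconditional statements for all $\ell\in\Z^{\nu}$ and all $j,k\in S^{c}$, independent of $\varepsilon$; importing the restrictions $|\ell|\le\varepsilon^{-\beta}$, $|j-k|\lesssim|\ell|$ and ``$|j|,|k|$ large'' from Lemmata \ref{ellepiccoloris} and \ref{BrexitDP} changes the statement (and the lemma is also used in the proof of Lemma \ref{measG0}, where no such restrictions hold). More importantly, the mechanism you invoke, namely smallness of $M(\vec w_j-\vec w_k)$ with $M:=(\mathrm{I}-\mathbb{A}^{-T}\vec v\,\overline{\omega}^{T})^{-1}\mathbb{A}^{-T}$ coming from the decay of $\vec w_j$ in $j$, fails exactly in the relevant range: for $|j|,|k|\sim\overline{\jmath}_1$ (abundant in $S^{c}$) one has $|\vec w_j-\vec w_k|$ of size up to $\overline{\jmath}_1^{3}$ while $\|\mathbb{A}^{-T}\|\sim\overline{\jmath}_1^{-3}$, so the correction is of order one, not small (your own bookkeeping, $\overline{\jmath}_1^{C+8}|j-k|(|j|^{-2}+|jk|^{-1})$, is even unbounded there), and the complementary ``small $j,k$'' range is not finite uniformly in $S^{+}$, so no finite check covers it. The paper's proof does not use smallness at all: factorizing $\mathbb{A}=\Omega\mathbb{H}V$, $\vec w_j=\Omega V b_j$ and setting $t=j/\overline{\jmath}_1$, $s=k/\overline{\jmath}_1$, it shows (formula \eqref{formula1}) that the correction equals $\frac{3}{1+\nu}h(t,s)\,\vec 1+O(|x|,|p-1|)$ with $h(t,s)=t(t^{4}+t^{2}+1)^{-1}-s(s^{4}+s^{2}+1)^{-1}$, and then uses the uniform bound $|t(t^{4}+t^{2}+1)^{-1}|\le 0.41$ together with $\nu\ge 2$ to conclude that each component of the correction is bounded by $1-\delta$, strictly below $1$, uniformly in $j,k$; only then does the integrality of $\ell$ give $|\ell-\cdot|\ge\delta|\ell|$. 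This quantitative threshold (componentwise bound $<1$, which is where $\nu\ge2$ enters) is the missing idea in your argument: mere $O(1)$ boundedness of the correction would not suffice, since an order-one correction could coincide with an integer vector.
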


\begin{proof}[{\it Proof of \eqref{GenericAssumptionbis}}]
The case $|\ell|=1$ is trivial. 
For $|\ell|=2$ we use the fact 
that the $\bar\jmath_i$ are all distinct. 
For $|\ell|=3,5$ we pass to the variables \eqref{montero} and we get
\[
\bar\jmath_1 |\sum_{i=1}^{\nu} 
\frac{\overline{\jmath}_i}{1+\overline{\jmath}_i^2}\,
\ell_i| = 
|\sum_{i=1}^{\nu} \frac{p_i}{x^2+p_i^2}\,\ell_i| =: L(x,p)\,.
\]
We notice that 
$L(0,1)=|\sum_i \ell_i|\ge 1$ 
(since $\sum_i \ell_i$ has the same parity of $|\ell|$) 
so, by continuity, there exists 
$0<\mathtt r_0<1$ such  that $L(x,p)>1/2$  
for all $	0\le x<\mathtt r_0,\,\,\, |p_i-1|\le \mathtt r_0 $. 
This implies the result.
\end{proof}

\begin{proof}[{\it Proof of \eqref{corto100}}]
We first note that (recall \eqref{ferro})
\begin{equation}\label{spero}
\det(\mathrm{I} -\mathbb{A}^{-T}\vec{v}\bar{\oo}^{T})
=1-\mathbb{A}^{-T}\vec{v}\cdot \bar{\oo}\,.
\end{equation}
Consider the change of variables \eqref{montero}.
One can note that the matrix 
$\mathbb{A}$ in \eqref{TwistMatrixDP} at $p_i=1$, $i=2, \dots, \nu$,
is given by  
\begin{equation}\label{lamatriceA}
\begin{aligned}
\mathbb{A}&:=d(x)\big[\mathrm{I}+e(x)U\big]\,,
\qquad \mathbb{A}^{-1}:=\frac{1}{d(x)}\big[\mathrm{I}-f(x)U\big]\,,\\
d(x)&:=\frac{2(4x^2+1)(3x^4+2x^2+1)}{9 x^3(3x^2+1)}\,,
\qquad e(x):=\frac{2x^2}{3x^4+2x^{2}+1}\,, 
\quad f(x):=\frac{e(x)}{1+\nu e(x)}\,.
\end{aligned}
\end{equation}
By \eqref{lamatriceA} at $p_i=1$ we have
\begin{equation}\label{lamatriceB}
\mathbb{A}^{-1}\vec{v}\cdot \ol{\oo}
=\frac{2(4x^2+ 1) }{3 x}(\vec{1}\cdot \mathbb{A}^{-1}\vec{1})
=3\nu\,\,\frac{(4x^2+1) 
(3x^2+1)}{(4x^2+1)(3x^4+2(1+\nu)x^2+1)}\,.
\end{equation}
We note that, for $x=0$, one has 
$\lvert \det(\mathrm{I} 
-\mathbb{A}^{-1}\vec{v}\bar{\oo}^{T}) \rvert=3\nu-1\geq 2$.
Thus there is
$0<\mathtt r_0<1$ so that for all 
$0\le x<\mathtt r_0$, $|p_i-1|\le \mathtt r_0$ one has
$|\det (\mathrm{I} -\mathbb{A}^{-1}\vec{v}\bar{\oo}^{T})|\geq 1$.
\end{proof}
	
\begin{proof}[{\it Proof of \eqref{corto},\eqref{cortissimo}}]
We systematically use the  variables \eqref{montero}.
We define $\Omega=\diag({\bar\omega_i})$, 
$V=\diag(\vec v_i)$ and  write 
$\mathbb{A}=\Omega \mathbb{H}\,V$ 
where
\[
\mathbb H = \frac{x^2+1}3 
[ I + \frac{2}{(3x^2+1)} (U-I)] + O(|p-1|)
=-\frac{1}{3} (I-2 U)+O(\lvert p-1 \rvert, \lvert x \rvert)\,.
\] 
Then $\vec{w}_j$ in \eqref{ferro}  
can be written as 
$\vec{w}_j=\Omega\,V\,b_j$ with
\[
b_j \cdot \mathtt{e}_i
= -\frac{\og(j)(7+5 \overline{\jmath}_i^2
+\overline{\jmath}_i^4+3 j^2)}{\og(\bar\jmath_i)((3+j^2)^2+(6+j^2)
\overline{\jmath}_i^2
+\overline{\jmath}_i^4)},
\]
where $\mathtt{e}_i$ is the $i$-th vector of the canonical basis of $\mathbb{R}^{\nu}$.
In the new coordinates $(x,p)$ in \eqref{montero}, and setting
$t=j (\overline{\jmath}_1)^{-1}$,
 this reads as
\begin{align*}
&b_j \cdot \mathtt{e}_i=:b(t, x, p)\cdot \mathtt{e}_i = f(t, x) g(t, x, p_i)+\,t\,g(t, x, p_i)\,, \\
&f(t,x):= \frac{3\,t\,x^2}{x^2+t^2}\,, 
\qquad g(t, x, p):=
\frac{(x^2+p_i^2)(7x^4+5 x^2p_i^2+p_i^4+3x^2 t^2)}
{p_i\,(4x^2+p_i^2) ((3x^2+t^2)^2+(6x^2+t^2)p_i^2+p_i^4)}\,.
\end{align*}
We claim that
\begin{equation}\label{formula1}
b(t, x, p)
=-\frac{t}{t^4+t^2+1} \vec{1}+ O(|p-1|, \lvert x \rvert)\,.
\end{equation}
where the term $ O(\lvert p-1\rvert, \lvert x \rvert)$ is uniform in $t$.

\noindent
By direct computations we have
\[
\lvert g(t, x, p)-g(t, 0, 1)\rvert
\lesssim C\big(\lvert x \rvert+\lvert p-1 \rvert\big)(1+t^2)^{-1}
\,,\quad 
\lvert g(t, x, p) \rvert\le C(1+t^{2})^{-1}\,,
\]
with $C$ independent of $t$, 
and for $x,p$ in a neighborhood of $(0,1)$. 
Moreover
$\sup_{t} \lvert f(t, x) \rvert\le 3|x|^{2}/2$.
Thus, for $x,p$ sufficiently close to $(0,1)$, we 
have that 
\begin{equation*}
\lvert b(t, x, p)-b(t, 0, 1)\rvert 
 \le \lvert t \rvert\lvert g(t, x, p)-g(t, 0, 1)\rvert
 +\lvert f(t, x) \rvert \lvert g(t, x, p) \rvert\lesssim O(|p-1|,|x|)\,,
\end{equation*}
which implies the claim \eqref{formula1}.
Hence, setting $s=k (\overline{\jmath}_1)^{-1}$,
we have
\[
\ell -\Omega^{-1} (\mathbb{H}^T-U)^{-1} 
\Omega (b(t, x , p)-b(s, x, p))=
\ell- 3(1+\nu)^{-1} h(t, s)\,\vec{1}+O(\lvert p-1 \rvert, \lvert x \rvert)\,,
\]
where
\[
h(t, s):= t\, (t^{4}+t^{2}+1)^{-1}- s\, (s^{4}+s^{2}+1)^{-1}\,.
\]
We note that $\lvert t\, (t^4+t^{2}+1)^{-1} \rvert\leq 0.41$, hence each single component 
satisfies 	
\[
| 3(1+\nu)^{-1} h(t, s)+O(\lvert p-1 \rvert, \lvert x \rvert)\cdot \mathtt e_i|
\le 3 (1+\nu)^{-1} (1-\delta)  \leq(1-\delta)
\]
provided that $|p-1|,|x| \le \mathtt r_0$ small enough.
Hence, if $\nu\geq 2$, one has
	$
	|\ell_i- \big(\frac{3 h(t, s)}{1+\nu}\,\vec{1}+O(\lvert p-1 \rvert, \lvert x \rvert)\big)\cdot \mathtt e_i |\ge  \delta|\ell_i|\,.
	$
	This concludes the proof of \eqref{corto}. The proof of \eqref{cortissimo} is the same just setting
$h(t)= t\, (t^4+t^2+1)^{-1}$. 
\end{proof}

 \begin{proof}[{\bf Proof of Lemma \ref{measG0}}]\label{quadro}
It is well known that, thanks to the choice of $\tau$ in \eqref{gammaDP},
$\lvert \Omega_{\varepsilon}\setminus\mathcal{G}_0^{(0)} \rvert
\le C \varepsilon^{2(\nu-1)}\gamma$ for some $C=C(S)>0$. 
Thus we focus on the estimate for the measure of $\mathcal{G}_0^{(1)}$. 
For indexes $\ell\in \mathbb{Z}\setminus\{0\}$, $j,k\in S^{c}$
satisfying
\begin{equation}\label{momo} 
|\ell|\leq 3\,,\qquad {\rm and}\qquad
\sum_{i=1}^{\nu} \overline{\jmath}_i\,\ell_i+j=k\,,
\end{equation}
we define the sets
$
T_{\ell j k}:=\{ \omega \in\Omega_{\varepsilon} : 
\lvert \overline{\omega}\cdot \ell 
+\varepsilon^2 \mathbb{A}\xi\cdot \ell 
+ \og(j)-\og(k)+\varepsilon^2 (\og(j)\lal_{j}-\og(k)\lal_k) 
\rvert\le C \gamma \}\,.
$
Recalling \eqref{divisoriLBNF3} we have that
$\Omega_{\varepsilon}\setminus\mathcal{G}^{(1)}_{0}=\bigcup T_{\ell j k}$
where the union is restricted to $\ell,j,k$ satisfying \eqref{momo}.


\smallskip

\noindent
Let us first study the $\e$-independent part of our small divisor. By \eqref{dispersionLaw} and \eqref{momo} we have
\begin{equation}\label{kia}
\ol{\omega}\cdot \ell+\og(j)-\og(k)=3\sum_{i=1}^{\nu} \frac{\overline{\jmath}_i}{1+\overline{\jmath}_i^2}\,\ell_i+3\frac{j}{1+j^2}-3\frac{(\sum_{i=1}^{\nu} \overline{\jmath}_i\,\ell_i+j)}{1+(\sum_{i=1}^{\nu} \overline{\jmath}_i\,\ell_i+j)^2}.
\end{equation}	
By Lemma \ref{ecologia} (see \eqref{GenericAssumptionbis}) 
if $|j|\geq \hat C(S)$, 
which is easily computed in terms of $\mathtt r$,
\eqref{kia} is bounded from below by 
$\mathtt r/4$.   By \eqref{momo},  $\lvert \og(j)-\og(k) \rvert\le C(S)$. 
Since $0<\lvert \ell \rvert\le 3$ and substituting $\lal_j=c(\oo)+\kappa_j/\og(j)$ (recall Lemma \ref{mole} ),
where $c(\oo),\kappa_j$ are  defined in  \eqref{clinica100}, \eqref{diagonalopFinale})
\[
\begin{aligned}
&|\bar{\oo}\cdot\ell+\e^{2}\mathbb{A}\x\cdot\ell+(1+\varepsilon^2 c(\oo)) (\og(j)-\og(k))+\varepsilon^2 (\kappa_j-\kappa_k)|  \ge \mathtt r/8
\end{aligned}
\]
for $\e$ small enough (depending on $\mathtt r$) and by using the fact that $\kappa_j-\kappa_k$ is uniformly bounded in $j, k$. This implies that $T_{\ell jk}=\emptyset$ for $\varepsilon$ small enough.
\\
We are left to deal with the case $|j|\leq \hat C(S)$. We write \eqref{kia} as $P(\ol{\jmath}_i,j)/Q(\ol{\jmath}_i, j)$ where $P,Q$ are polynomials with integer coefficients and $Q$ has no real zeros. We remark that
$1<Q< C(S)$ due to the condition $|j|\le \hat C(S)$. 
 If $P\neq 0$ then $|P|\geq1$ and again \eqref{kia} is larger than some $K(S)$. 
 We conclude that $T_{\ell jk}=\emptyset$ by reasoning as in 
 the case $j$ large. Now we study the case in which $P=0$.
Fixed $\ell$ in \eqref{kia}, then $P$ has degree four in $j$ and so the condition $P=0$ fixes at most four choices of  $j$ that we call $\hat{\jmath}_1, \hat{\jmath}_2, \hat{\jmath}_3, \hat{\jmath}_4$.
For 
$P=0$ (which is \eqref{kia}$=0$) we have
\begin{equation}
\begin{aligned}
\bar{\oo}\cdot\ell+\e^{2}\mathbb{A}\x\cdot\ell+\lambda(j)(1+\e^{2}\lal_j)-\lambda(k)(1+\e^{2}\lal_k)
=\e^2( {\mathbb A}\xi \cdot \ell -(\bar\omega\cdot \ell) \vec{v}\cdot \xi 
+( w_{j}-w_{k})\cdot \xi)\,,
\end{aligned}
\end{equation}
where $\vec{v}$ is in \eqref{ferro} and 
$\kappa_j=w_j\cdot \xi$ with $\kappa_j$ in \eqref{diagonalopFinale}.
These are a finite number (depending only on $\nu$) of linear functions of $\xi$. 
We compute the derivative in $\xi$ which is  
\begin{equation}\label{metraggio}
({\mathbb A}^T + \vec{v}\bar{\omega}^T )\ell 
+ ( w_{\hat{\jmath}}-w_{k})\,,
\end{equation}
where $\hat{\jmath}\in \{ \hat{\jmath}_1, \hat{\jmath}_2, \hat{\jmath}_3, \hat{\jmath}_4 \}$.
Now \eqref{corto}  implies that the quantity \eqref{metraggio}
is bounded from below by a constant depending on $S$.
 This lower bound and  Fubini Theorem imply that $|T_{\ell jk}|\leq C(S)\e^{2(\nu-1)}\g$
 for some $C(S)>0$ depending on $S$.
 By the discussion above we have
 \[
 |\Omega_{\e}\setminus \calG_{0}^{(1)}|\leq 
 \sum_{|\ell |\leq 3, |j|,|k|\leq K(S)}|T_{\ell jk}|
 \leq C(S)\e^{2(\nu-1)}\g,
 \]
 where $K(S)>0$  and $C(S)>0$ are constant depending on the set $S$.
This implies the thesis.
\end{proof}

%


\section{Normal form identification}\label{restismooth100}
%
%
\begin{proof}[Proof of Theorem \ref{PartialWeak}]\label{app:Weak}
%

The core of Theorem \ref{PartialWeak}
is to show that the terms in the l.h.s. of \eqref{mare},
which are obtained through a rather complicated sets of bounded 
changes of coordinates, coincide
with the ones obtained by a purely formal full Birkhoff Normal Form procedure.
In \cite{FGPa}
it has been shown that, at purely formal level,  the latter is well-defined and not resonant, i.e.
the resonant Hamiltonian is supported only on trivial resonances as in \eqref{coppiette}.
We procede as follows.

\noindent{\bf Step 1}.
The first step is to show that resonant terms at order $\e^{2}$ 
of the Full Birkhoff normal form
coincide with the ones obtained by using the \emph{weak} BNF procedure in Section 
\ref{SezioneWBNF},  passing to
action-angle variables and finally using a \emph{formal linear} BNF.


\noindent{\bf Step 2}. In order to conclude we note that the bounded 
maps we 
applied in subsections  \ref{preliminare} and \ref{LinearBNF}
are, as functions of $\e$, $C^{3}$ with values in $\mathcal{L}(H^{s}, H^{s-3})$ .
Therefore the Taylor expansion of the  Hamiltonian associated 
to the operators in \eqref{L4dp} coincides with the Lie series of the generator up to order $\e^2$ (see also \eqref{K13}). 
Then we show that the Lie series coincides (up to order $\e^2$) to the one obtained in step 1.
Even though we taylor our proof to the particular set of changes of variables that we use in section \ref{regularization}, the argument is quite general and is essentially that the \emph{linear} BNF up to order $\e^2$ is coordinate independent.

\subsection*{Formal equivalence between  \emph{weak} $+$ \emph{linear} 
and \emph{full} BNF procedure}
One step of formal BNF means that we apply the formal change of variables generated by
\begin{equation}\label{fine2}
\mathfrak{F}^{(3)}:=[\mathrm{ad}_{H^{(2)}}]^{-1}H^{(3)}
\end{equation}
which  removes completely $H^{(3)}$ and conjugates $H$ (using \eqref{formLIE}) to
$H\circ{\Phi}_{(\mathfrak{F}^{(3)})^{-1}}=H^{(2)}+\frac12 \{\mathfrak{F}^{(3)}, H^{(3)}\} +$ h.o.t. 
\\
The scecond step of formal BNF removes all the non-resonant terms of degree four thus we get
\[
H^{(2)}+\frac12\Pi_{{\mbox{Ker}}(H^{(2)})} \{\mathfrak{F}^{(3)},H^{(3)}\} +\mbox{h.o.t.}
\]
In \cite{FGPa} it has been proved  that 
$\big(\Pi_{{\mbox{Ker}}(H^{(2)})}
-\Pi_{\mbox{triv}}\big) \{\mathfrak{F}^{(3)}, H^{(3)}\}= 0$
 (see  the notation of Definition \ref{chebellosiproietta}) and hence
\begin{equation}\label{megliotrivial}
\Pi^{d_z=2}\Pi_{{\mbox{Ker}}(H^{(2)})} \{\mathfrak{F}^{(3)},H^{(3)}\}
= \Pi^{d_z=2}\Pi_{\mbox{triv}}\{\mathfrak{F}^{(3)},H^{(3)}\}\,.
\end{equation}
We claim that
\begin{equation}\label{occhio1}
\begin{aligned}
&\Pi_{\mbox{triv}}\{\mathfrak{F}^{(3,1)},H^{(3,3)}\}=\Pi_{\mbox{triv}}\{\mathfrak{F}^{(3,3)},H^{(3,1)}\}=
\Pi_{\mbox{triv}}\{\mathfrak{F}^{(3,2)},H^{(3,0)}\}=
\Pi_{\mbox{triv}}
\{\mathfrak{F}^{(3,0)},H^{(3,2)}\}=0\,.
\end{aligned}
\end{equation}
This implies that 
\begin{equation}\label{rambo5}
\begin{aligned}
\Pi_{{\mbox{Ker}}(H^{(2)})}
&\Pi^{d_z=2}\{\mathfrak{F}^{(3)},H^{(3)}\}=
\Pi_{\mbox{triv}}\Pi^{d_z=2}\Big(
\{\mathfrak{F}^{(3,\leq1)},H^{(3,\leq1)}\}+\{\mathfrak{F}^{(3,\leq1)},H^{(3,>1)}\}
\Big)\\
&+\Pi_{\mbox{triv}}\Pi^{d_z=2}\Big(
\{\mathfrak{F}^{(3,2)},H^{(3,0)}\}+\{\mathfrak{F}^{(3,2)},H^{(3,2)},\}
+\{\mathfrak{F}^{(3,3)},H^{(3,1)}\}
\Big)\\
&=\Pi_{\mbox{triv}}\Pi^{d_z=2}\Big(
\{\mathfrak{F}^{(3,\leq1)},H^{(3,\leq1)}\}+\{\mathfrak{F}^{(3,2)},H^{(3,2)}\}
\Big)\,.
\end{aligned}
\end{equation}
\begin{proof}[Proof of the claim \eqref{occhio1}]
Let us consider the term $\{\mathfrak{F}^{(3,1)},H^{(3,3)}\}$, where
\[
\mathfrak{F}^{(3,1)}=\sum_{\substack{ j_1,j_2\in S,j_3\in S^c\\ 
j_1+j_2+j_3=0}}C_{j_1j_2j_3}u_{j_1}u_{j_2}u_{j_3}\,,
\qquad
H^{(3,3)}=\sum_{\substack {j_1,j_2,j_3 \in S^c \\
j_1+j_2+j_3=0}} \widetilde{C}_{j_1j_2j_3}u_{j_1}u_{j_2}u_{j_3}\,,
\]
for some coefficients ${C}_{j_1j_2j_3}, \widetilde{C}_{j_1j_2j_3}\in\mathbb{C}$. 
Using \eqref{PoissonBracketDP} one gets
\begin{equation}\label{trivtriv}
\{\mathfrak{F}^{(3,1)},H^{(3,3)}\}=\sum_{\substack{j_1,j_2\in S, \, k_1,k_2\in S^{c} \\ j_1+j_2+k_1+k_2=0}} 
P_{j_1j_2k_1k_2}u_{j_1}u_{j_2}u_{k_1}u_{k_2},\qquad P_{j_1j_2k_1k_2}\in \mathbb{C}\,.
\end{equation}
A monomial $u_{j_1}u_{j_2}u_{k_1}u_{k_2}$ is supported on trivial resonances \eqref{coppiette}
 only if 
 $j_1=-j_2$ and $k_1=-k_2$, since $j_i\in S$, $k_i\in S^{c}$, $i=1,2$. 
 On the other hand, by the momentum condition, we have that
$j_1+j_2=-(k_1+k_2)$, which is not possible since $0\notin S\cup S^{c}$
(see \eqref{TangentialSitesDP}). Hence the \eqref{occhio1} holds. 
The others equalities in \eqref{occhio1} in  follows in the same way.
\end{proof}

Let us now perform the same Birkhoff procedure 
by first cancelling the terms of degree 
$\le 1$ in $z$ (weak BNF) 
and then the terms of degree two (linear BNF) .


\smallskip

By the discussion in Section \ref{SezioneWBNF}, recall the notations of Proposition \ref{LemmaBelloWeak}, 
we have that, after two steps of weak Birkhoff normal form, 
the Hamiltonian of degree less or equal than $4$ is
\begin{equation}\label{afterweak}
\Pi^{d\leq 4} (H\circ \Phi^{-1}_2)=H^{(2)}+ Z^{(4,0)}_2+H_{2}^{( 3, \geq 2)}
+H_{2}^{(4, \geq 2)}.
\end{equation}
Here $Z^{(4,0)}_2$ is defined in \eqref{posta2}, 
\begin{equation}\label{NewHam3000}
H_2^{(3, 2)}=H_1^{(3, 2)}=H^{(3,2)}, 
\qquad  H_2^{(4,\geq2)}=H_1^{(4,\geq2)},
\end{equation}
where $H_1^{(3)}, H_1^{(4)}$ are defined in \eqref{NewHam3}.
The monomials of degree greater than $4$ will be not involved in this computation, so we omit them.
It is important to notice that, by direct inspection, $\mathfrak{F}^{(3,\le 1)}= {F}^{(3,\le 1)}$ defined in formula \eqref{DefF3}.
By Lemma \ref{LemmaBelloWeak}, we know that the same change of variables puts one of the constants of motion (lets say $K_3$ and drop the subindex $3$) into normal form, 
\[
\Pi^{d\leq 4} (K\circ \Phi^{-1}_2)=K^{(2)}+ W^{(4,0)}_2+K_{2}^{( 3, \geq 2)}
+K_{2}^{(4, \geq 2)}.
\]
The step of formal linear BNF entails applying the formal change of variables generated by 
\begin{equation}\label{macchine14}
{F}^{(3,2)}:=[\mathrm{ad}_{H^{(2)}}]^{-1}H^{(3,2)}\,.
\end{equation}
Again, by direct inspection, one can note that 
$F^{(3,2)}\equiv\mathfrak{F}^{(3,2)}:=\Pi^{d_{x}=2}\mathfrak{F}^{(3)}$,
where $\mathfrak{F}^{(3)}$ is given in \eqref{fine2}.
Thus obtaining the Hamiltonian
\begin{equation}\label{afterformlin}
H^{(2)}+ Z^{(4,0)}_2+ H_{2}^{( 3,  3)} +\frac12 \{\mathfrak{F}^{(3,2)},H_2^{(3,\ge 2)}\} + H_2^{(4,\geq2)} + h.o.t.
\end{equation}
Since \eqref{macchine14} puts in normal form also $K\circ \Phi^{-1}_2$ following the same reasoning as in \cite{FGPa} 
and Lemma \ref{LemmaBelloWeak}, we get that
\begin{equation}\label{resLINBNF}
\begin{aligned}
&\Pi^{d_z=2}\Pi_{{\mbox{Ker}}(H^{(2)})}
( \frac12 \{\mathfrak{F}^{(3,2)}, H_2^{(3,\ge 2)}\}+ H_2^{(4,\geq2)})
= \Pi^{d_z=2}\Pi_{\mbox{triv}} (\frac12 \{\mathfrak{F}^{(3,2)},H_2^{(3,\ge 2)}\} 
+ H_2^{(4,\geq2)})\\
&=\Pi^{d_z=2}\Pi_{\mbox{triv}} (\frac12 \{\mathfrak{F}^{(3,2)},H_2^{(3, 2)}\}
+ \frac{1}{2} \{\mathfrak{F}^{(3,\leq1)}, H^{(3, \leq 1)}\}+\{  \mathfrak{F}^{(3,1)}, H^{(3, 3)}\})\\
&\stackrel{\eqref{occhio1}}{=} 
\frac12 \Pi^{d_z=2}\Pi_{\mbox{triv}} 
( \{\mathfrak{F}^{(3,2)}, H_2^{(3, 2)}\}
+ \frac{1}{2} \{\mathfrak{F}^{(3,\leq1)}, H^{(3, \leq 1)}\})
\\ &\stackrel{\eqref{rambo5},\eqref{megliotrivial}}{=} 
\frac12 \Pi^{d_z=2}\Pi_{{\mbox{triv}}} \{\mathfrak{F}^{(3)}, H^{(3)}\}\,.
\end{aligned}
\end{equation}
We now want to pass to the action-angle variables introduced in \eqref{AepsilonDP}.
Since the rescaling with the parameter $\varepsilon$ is 
covariant under the change of variables that we use, 
we consider instead of $A_{\varepsilon}$ 
the symplectic change of variables $A_1:=A_{\varepsilon_{|_{\varepsilon=1}}}$.
By recalling that
\[
\mathsf{H}_0:=H^{(2)}\circ A_{{1{|_{ \substack{y=0\\ \theta=\f}}}}}\,, \quad \mathsf{H}_1:=  H_2^{(3, 2)}
\circ{A}_{{1{|_{ \substack{y=0\\ \theta=\f}}}}} \,, \qquad 
\Pi^{d_z=2}\big(\mathsf{H}_2+\mathsf{H}_{\mathcal{R}_2}\big):= H_2^{(4, 2)}\circ{A}_{{1{|_{ \substack{y=0\\ \theta=\f}}}}}
\]
and setting 
\begin{equation}\label{macchina}
\begin{aligned}
\widetilde{F}^{(3, 2)}:=\mathfrak{F}^{(3, 2)}
\circ{A}_{{1{|_{ \substack{y=0\\ \theta=\f}}}}}\,,
\end{aligned}
\end{equation}
we have that \eqref{resLINBNF} reads
\begin{equation}\label{macchine177}
\begin{aligned}
&\Pi^{d_z=2}\Pi_{{\mbox{Ker}}(\mathsf{H}_0)}
( \frac12 \{\widetilde{F}^{(3,2)}, \mathsf{H}_1\}+\mathsf{H}_2+\mathsf{H}_{\mathcal{R}_2 })=
\frac12\left[ \Pi^{d_z=2}\Pi_{{\mbox{triv}}} 
\{\mathfrak{F}^{(3)}, H^{(3)}\}\right]\circ
A_{{1{|_{ \substack{y=0\\ \theta=\f}}}}}\,.
\end{aligned}
\end{equation}

\subsection*{The  \emph{rigorous} procedure of subsections \ref{preliminare},
\ref{LinearBNF} and the \emph{linear} BNF}
Since the r.h.s. of \eqref{macchine177} is the r.h.s. of \eqref{mare} 
it remains to show 
that the $\e^{2}$-terms of the 
Hamiltonian associated to the operator $\mathcal{L}_{7}$ in \eqref{L4dp}
coincides with  the l.h.s. 
in \eqref{macchine177}.
We remark that the operator $\mathcal{L}_7$ has been constructed through a rigorous
procedure
providing also ``tame'' estimates of the remainder of higher order in $\e$.
As already explained the maps $\mathcal{B}$, $\Upsilon_i$, $i=1,2$,
are, as functions of $\e$, $C^{3}$ with values in $\mathcal{L}(H^{s}, H^{s-3})$ .
Therefore the Taylor expansion of the  Hamiltonian associated 
to the operators in \eqref{L4dp} coincides 
with the Lie series of the generator up to order $\e^2$ (see also \eqref{K13}).

Let us then Taylor expand the Hamiltonian  $\mathcal{K}^{(2)}=\mathsf{H}\circ\mathcal{B}^{-1}\circ\Upsilon_1^{-1}\circ{\Upsilon_2}^{-1}$
(see \eqref{fi}, \eqref{Divac}, \eqref{Kone} and \eqref{Kone1000}) up to order $\e^{2}$.
It is sufficient to consider
 $\mathcal{B}_1$ the flow generated by the Hamiltonian in \eqref{Stau} and  
 $\Upsilon_1$
 the flow of the Hamiltonian $H_{\mathtt{A_1}}$ 
 (which has  the form \eqref{HAMa1} and
satisfies \eqref{equazioneEps}). 
We have that
  \begin{align}
&\Pi_{\mbox{Ker}(\mathtt{H}_0)} \Big(\mathtt{Z}_0+ \mathcal{K}^{(1)}_2\Big)\stackrel{\eqref{Kone1000},\eqref{seven}}{=}\Pi_{\mbox{Ker}(\mathtt{H}_0)} 
 \mathsf{H}\circ \B_1^{-1}\circ\Upsilon_1^{-1}+ O(\e^3)\label{Divac10}\\
  & \stackrel{\eqref{georgeHill10},\eqref{Kone}}{=}\mathtt{H}_0+\e^{2}
 \Pi_{\mbox{Ker}(\mathtt{H}_0)}\Big(\mathsf{H}_2^{(2)}+
\frac{1}{2}\{ H_{\mathtt{A}_1}, \{H_{\mathtt{A}_1},\mathsf{H}_0\}_e\}_e
+\{H_{\mathtt{A}_1}, \mathcal{K}_1\}_e\Big)+O(\e^{3})\nonumber\\
& \stackrel{\eqref{espansioLIE2},\eqref{georgeHill10}, \eqref{espansioLIE}}{=}
\mathtt{H}_0+\e^{2}
 \Pi_{\mbox{Ker}(\mathtt{H}_0)}\Big(\mathsf{H}_2^{(1)}+
\frac{1}{2}\{ H_{\mathtt{A}_1}, \{H_{\mathtt{A}_1},\mathsf{H}_0\}_e\}_e
+\{H_{\mathtt{A}_1}, \mathsf{H}_1\}_e+
\{H_{\mathtt{A}_1},\{ S_1,\mathsf{H}_0\}_e\}_e\Big)+O(\e^{3})\,.\nonumber
\\
&\stackrel{\eqref{espansioLIE22}}{=}
\mathtt{H}_0+\e^{2}
\Pi_{\mbox{Ker}(\mathtt{H}_0)}\Big(\frac{1}{2}\{ S_1,\{ S_1,  \mathsf{H}_0\}_e\}_{e}
+ \{S_1,\mathsf{H}_1\}_{e}+\frac{1}{2}\{S_2,\mathsf{H}_0\}_{e}
+\mathsf{H}_2+\mathsf{H}_{\mathcal{R}_2}\\
&\qquad\quad\qquad\quad\qquad\quad +
\frac{1}{2}\{ H_{\mathtt{A}_1}, \{H_{\mathtt{A}_1},\mathsf{H}_0\}_e\}_e
+\{H_{\mathtt{A}_1}, \mathsf{H}_1\}_e+
\{H_{\mathtt{A}_1},\{ S_1,\mathsf{H}_0\}_e\}_e\Big)+O(\e^{3})\,.\nonumber
& \end{align}
 Using the Jacobi identity we have
 \begin{equation}\label{Divac11}
 \begin{aligned}
 \frac{1}{2}\{ S_1, \{S_1,\mathsf{H}_0\}_e\}_e&
 + \frac{1}{2}\{ H_{\mathtt{A}_1}, \{H_{\mathtt{A}_1},\mathsf{H}_0\}_e\}_e+
\{H_{\mathtt{A}_1},\{ S_1,\mathsf{H}_0\}_e\}_e=\\
&=\frac{1}{2}\{ S_1+H_{\mathtt{A}_1}, \{S_1+H_{\mathtt{A}_1},\mathsf{H}_0\}_e\}_e
+\frac{1}{2}\{\mathsf{H}_0,\{S_1,S_1+H_{\mathtt{A}_1}\}\}_e\,.
 \end{aligned}
 \end{equation}
 Moreover, setting $\mathsf{F}_1:=S_1+H_{\mathtt{A}_1}$, we note that
 \begin{equation}\label{Divac13}
 \{\mathsf{F}_1,\mathsf{H}_0\}_e= \{H_{\mathtt{A}_1},\mathsf{H}_0\}_e
 +\{S_1,\mathsf{H}_0\}_e
 \stackrel{\eqref{espansioLIE}, \eqref{georgeHill10},\eqref{equazioneEps}}{=}-\mathsf{H}_1\,.
 \end{equation}
 Since $\Pi_{\mbox{Ker}(\mathtt{H}_0)}\{S_2,\mathsf{H}_0\}_{e}\equiv0$, by \eqref{Divac10}, 
\eqref{Divac11},
and \eqref{Divac13}
 we get
 \begin{equation}\label{chiK2}
\Pi^{d_{w}=2}\Pi_{\mbox{Ker}(\mathtt{H}_0)} \Big(\mathtt{Z}_0+ \mathcal{K}^{(1)}_2\Big)
=\Pi^{d_{w}=2}
 \Pi_{\mbox{Ker}(\mathtt{H}_0)}\Big(
 \frac{1}{2}\{\mathsf{F}_1, \mathsf{H}_1\}_{e}
+\mathsf{H}_2+\mathsf{H}_{\mathcal{R}_2}
 \Big)\,.
 \end{equation}

Since $\mbox{Ker}(H^{(2)})$
is trivial
on cubic monomials, we deduce that 
\begin{equation}\label{macchine178}
\mathsf{F}_1 = -({\rm ad}^{-1}(\mathsf{H}_0)(\mathsf{H}_1 ) = \widetilde F^{(3,2)}
\end{equation}
and this concludes the proof.
%
%
%
%
\end{proof}

\section{Technical Lemmata}\label{restismooth}

\subsection{Flows of hyperbolic PDEs}\label{tecnica}
In this subsection we study some properties of the flow of \eqref{rosso}. 
We start by studying the time-one flow map of the pseudo differential PDE 
\begin{equation}\label{diffeotot}
\del_{\tau}\Psi^{\tau}(u)=(J\circ b) \Psi^{\tau}(u)\,,\qquad 
\Psi^{0}u=u\,,
\end{equation}
and how this differs from $\Phi^{\tau}$.
Proposition $3.1$ in \cite{FGP1} guarantees 
that the flow of \eqref{diffeotot} 
is the composition of the diffeomorphism 
of the torus
\begin{equation}\label{ignobel}
\begin{aligned}
&\mathcal{A}^{\tau}h(\varphi, x):=(1+\tau \beta_x(\varphi, x)) 
h(\varphi, x+\tau\beta(\varphi, x))\,, 
\qquad \quad \,\, \varphi\in \T^{\nu}\,,\,  x\in \T\,,\\
&(\mathcal{A}^{\tau})^{-1}h(\varphi, y)
:=(1+ \tilde{\beta}_y(\tau,\varphi, y))\,h(\varphi, y+\tilde{\beta}(\tau, \varphi, y))\,, 
\quad \varphi\in \T^{\nu}\,,\, y\in \T\,,
\end{aligned}
\end{equation}
where $\tilde{\be}(\tau;x,\x)$ is such that
\[
x\mapsto y=x+\tau\be(\f,x) \; \Leftrightarrow \; 
y\mapsto x=y+\tilde{\be}(\tau,\f,x)\,, 
\;\; \tau\in[0,1]\,,
\]
with a pseudo differential operator of order $-1$ up 
to smoothing remainders belonging to the class 
$\gotL_{\rho,p}$ (see Definition 2.8 in \cite{FGP1}) 
for any $\rho\in\mathbb{N}$, $\rho\geq 3$, $p\geq s_0$.
For completeness we restate the definition  of 
$\gotL_{\rho,p}$ in  Appendix  \ref{ellerhosez} (see Definition \ref{ellerho}).
The class $\gotL_{\rho,p}$ has the property 
of being closed w.r.t.  changes of variables as $\Phi^\tau$ (see Lemma $B.10$ in \cite{FGP1}).
Moreover, in Lemma \ref{IncluecompoCLASSI}, 
we show that $\gotL_{\rho,p}$ is contained 
into  $\gotC_{-1}$ which is another class 
of ``tame'' operators. Such class is introduced in 
 Definition \ref{Cuno} and is included in $-1$-tame operators, 
 see Lemma \ref{LemmaAggancio}.

We refer to Proposition $3.1$, Corollary $3.2$ 
and Appendix $A$ in \cite{FGP1} 
for  properties and estimates of 
$\Psi^{\tau}$ and $\mathcal{A}^{\tau}$
in \eqref{ignobel}.

\begin{lem}\label{differenzaFlussi}
Fix $\rho\geq 3$ and $p\geq s_0$. 
There exist $\delta\ll 1$, $\su:=\su(\rho,p)$  such that if
\begin{equation}\label{flow1}
\lVert \beta\rVert_{s_0+\s_1} \le \delta
\end{equation} 
then the following holds. 
Let $\Psi^{\tau}$ be the flow of the system 
\eqref{diffeotot}, then the flow of 
\eqref{rosso} $\Phi^\tau$ is well defined 
for $|\tau|\le 1$ and one has
$\Phi^1= \Pi_S^\perp\Psi^1\Pi_S^\perp\circ(\rm{I} +\mathcal{R})$
where $\mathcal{R}$ is an operator 
with the form \eqref{FiniteDimFormDP}.
Moreover $\mathcal{R}$ belongs to $\gotL_{\rho,p}(\calO)$ 
and satisfies
\begin{equation}\label{giallo2}
\mathbb{M}^{\gamma}_{\mathcal{R}}(s, \tb)
\lesssim_s  \lVert \beta \rVert^{\gamma, \calO}_{s+\su}\,, 
\qquad
\mathbb{M}_{\Delta_{12}\mathcal{R} }(p, \tb)\lesssim _{p}
\|\Delta_{12}\beta \|_{p+\su}\,.
\end{equation}
\end{lem}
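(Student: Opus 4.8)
\textbf{Proof plan for Lemma \ref{differenzaFlussi}.}

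The plan is to compare the flow $\Phi^\tau$ of \eqref{rosso} with the flow $\Psi^\tau$ of the ``full'' pseudo differential PDE \eqref{diffeotot}, whose structure is completely understood thanks to Proposition $3.1$ of \cite{FGP1}. The key observation is that \eqref{rosso} is simply the projection $\Pi_S^\perp (J\circ b) \Pi_S^\perp$ of the vector field in \eqref{diffeotot}, so the two flows differ only by the finite rank contribution of $\Pi_S$. First I would establish local well-posedness of $\Phi^\tau$ for $|\tau|\le 1$: since $J\circ b$ is of order $0$ (because $J$ is of order $1$ acting on $b(\tau,\f,x)\cdot$, which is of order $-1$ in the relevant Sobolev scaling, as in \cite{FGP1}), the smallness condition \eqref{flow1} with $\s_1$ large enough guarantees, by a standard Galerkin/Duhamel fixed point argument (or directly by restricting the results of \cite{FGP1} on $\Psi^\tau$ to the invariant subspace), that $\Phi^\tau$ exists, is invertible and bounded on $H^s_{S^\perp}$ for all $s_0\le s\le\mathcal S$, with estimates $\|(\Phi^{\pm1}-\mathrm I)u\|_s\lesssim_s \|\beta\|_{s_0+\s_1}\|u\|_{s+\s_1} + \|\beta\|_{s+\s_1}\|u\|_{s_0+\s_1}$ inherited from the $\Psi^\tau$ bounds.

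The heart of the argument is the identity $\Phi^1 = \Pi_S^\perp\Psi^1\Pi_S^\perp\circ(\mathrm I + \mathcal R)$ with $\mathcal R$ finite rank. To obtain it I would use a Duhamel-type comparison: writing $V^\tau := \Pi_S^\perp\Psi^\tau\Pi_S^\perp$ and differentiating, $\partial_\tau V^\tau u = \Pi_S^\perp(J\circ b)\Psi^\tau\Pi_S^\perp u = \Pi_S^\perp(J\circ b)\Pi_S^\perp V^\tau u + \Pi_S^\perp(J\circ b)\Pi_S\Psi^\tau\Pi_S^\perp u$, so that $V^\tau$ solves the same equation as $\Phi^\tau$ up to the forcing term $F^\tau u := \Pi_S^\perp(J\circ b)\Pi_S\Psi^\tau\Pi_S^\perp u$, which is finite rank (the projector $\Pi_S$ is finite rank and $J\circ b$ is a nice operator). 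By the variation of constants formula, $V^\tau = \Phi^\tau + \Phi^\tau\int_0^\tau (\Phi^{\sigma})^{-1} F^\sigma\,d\sigma$, and since $F^\sigma$ is finite rank and $\Phi^{\pm\sigma}$ maps $H^s_{S^\perp}$ to itself, the operator $\int_0^1(\Phi^\sigma)^{-1}F^\sigma\,d\sigma$ is finite rank of the form \eqref{FiniteDimFormDP} (one checks that the kernels $g_j$, $\chi_j$ come from $\Pi_S$-localized functions pushed forward by $\Psi^\sigma$, hence are smooth and finitely supported in the $j$-index). Setting $\mathrm I + \mathcal R := (\mathrm I + \int_0^1(\Phi^\sigma)^{-1}F^\sigma d\sigma)^{-1}$ — invertible by the smallness of $\beta$ — gives $\Phi^1 = V^1\circ(\mathrm I+\mathcal R) = \Pi_S^\perp\Psi^1\Pi_S^\perp\circ(\mathrm I+\mathcal R)$ as claimed.

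It remains to prove that $\mathcal R\in\gotL_{\rho,p}(\calO)$ with the bounds \eqref{giallo2}. This follows from Lemma \ref{lrofiniterank} (finite rank operators of the form \eqref{FiniteDimFormDP} lie in $\gotL_{\rho,p}$) together with tame estimates on the generating functions $g_j$, $\chi_j$. Concretely, these functions are built from $\Pi_S\Psi^\sigma\Pi_S^\perp$ and $(\Phi^\sigma)^{-1}$; using the estimates on $\Psi^\tau$, $\mathcal A^\tau$ from Proposition $3.1$ and Corollary $3.2$ of \cite{FGP1}, on $(\Phi^\sigma)^{-1}$ from the first step, and the algebra properties of $\gotL_{\rho,p}$ under composition (Lemma $B.10$ of \cite{FGP1}), one gets $\mathbb M^\gamma_{\mathcal R}(s,\tb)\lesssim_s \|\beta\|^{\gamma,\calO}_{s+\s_1}$ for an appropriate loss $\s_1=\s_1(\rho,p)$. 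The Lipschitz-in-$i$ bound $\mathbb M_{\Delta_{12}\mathcal R}(p,\tb)\lesssim_p \|\Delta_{12}\beta\|_{p+\s_1}$ is obtained by the same scheme applied to the difference equation for $\Delta_{12}\Phi^\tau$, $\Delta_{12}\Psi^\tau$, using that $b$ depends on $i$ only through $\beta$. The main obstacle I anticipate is bookkeeping: tracking the precise loss of derivatives $\s_1$ through the three compositions (flow well-posedness, Duhamel integral, inversion of $\mathrm I + \int(\Phi^\sigma)^{-1}F^\sigma$) while keeping all constants uniform on $\calO$ — but this is entirely routine given the machinery of \cite{FGP1}, and no genuinely new estimate is required.
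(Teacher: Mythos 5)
Your factorization argument is a genuinely different route from the paper's. The paper does not use Duhamel at all: it introduces the finite rank field $\mathtt{Z}u:=(J\circ b)\Pi_S u+\Pi_S(J\circ b)\Pi_S^{\perp}u$, defines $\Upsilon^{\tau}$ as the flow of the bounded, non-autonomous, finite-rank ODE $\partial_\tau\Upsilon^{\tau}=-(\Psi^{\tau}_{*}\mathtt{Z})\Upsilon^{\tau}$, verifies by a direct computation that $\Phi^{\tau}=\Pi_S^{\perp}\Psi^{\tau}\Upsilon^{\tau}$ solves \eqref{rosso}, and obtains $\mathcal{R}=\Upsilon^{1}-\mathrm{I}$ in the form \eqref{FiniteDimFormDP} by a Taylor expansion of $\Upsilon^{\tau}$ at $\tau=0$. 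Your comparison of $\Phi^{\tau}$ with $V^{\tau}=\Pi_S^{\perp}\Psi^{\tau}\Pi_S^{\perp}$, the finite-rank forcing $F^{\tau}$, variation of constants and Neumann inversion of $\mathrm{I}+K$ produce the same factorization, and your kernels are indeed of the form \eqref{FiniteDimFormDP}, with $\mathcal{R}=-K(\mathrm{I}+K)^{-1}$ of size $O(\lVert\beta\rVert)$, so the estimates \eqref{giallo2} would follow from Lemma \ref{lrofiniterank} as you say. The trade-off is that your formula contains $(\Phi^{\sigma})^{\pm1}$ explicitly, so all its mapping properties must be known beforehand, whereas the paper's construction produces $\Phi^{\tau}$ (existence, invertibility, tame bounds) as an output of objects that are already under control ($\Psi^{\tau}$ from \cite{FGP1} and a finite-rank ODE).

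This is where your proposal has a genuine gap: the preliminary well-posedness step is justified incorrectly, and everything else leans on it. Since $b(\tau,\f,x)$ acts by multiplication (order $0$) and $J$ is of order $1$, the generator $J\circ b$ in \eqref{rosso} is an unbounded transport-type operator of order one, not of order zero, so a Banach fixed point/Duhamel argument in $H^s$ does not give the flow; moreover $H^s_{S^{\perp}}$ is \emph{not} invariant under $\Psi^{\tau}$ (this is exactly why $\Phi^{\tau}\neq\Psi^{\tau}$), so one cannot obtain $\Phi^{\tau}$ by ``restricting the results of \cite{FGP1} to the invariant subspace''. Without existence, invertibility and tame estimates for $\Phi^{\sigma}$ and $(\Phi^{\sigma})^{-1}$, the variation-of-constants identity and the bounds on your kernels $\chi_j(\sigma)=(\Phi^{\sigma})^{-1}\Pi_S^{\perp}(J\circ b(\sigma))e^{\mathrm{i}jx}$ are not available. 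The repair is to observe that the projected field is $(J\circ b)-\mathtt{Z}$ with $\mathtt{Z}$ finite rank and bounded, and either run hyperbolic energy estimates for this bounded perturbation or, as in the paper, define $\Phi^{\tau}:=\Pi_S^{\perp}\Psi^{\tau}\Upsilon^{\tau}$ with $\Upsilon^{\tau}$ the flow of the finite-rank ODE above; once this is in place your Duhamel/Neumann argument goes through.
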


\begin{proof}
Proposition $3.1$ in \cite{FGP1} provides 
$\delta, \sigma_1$ such that if the 
smallness condition \eqref{flow1} 
is satisfied with such parameters, 
then the flow $\Psi^{\tau}$ is well-defined 
for $\lvert \tau \rvert\le 1$.
Let us define $\Upsilon^{\tau}$ as the flow of the following Cauchy problem 
\begin{equation}\label{rosso2}
\partial_{\tau} \Upsilon^{\tau} u=
-\big(\Psi^{\tau}_* \mathtt{Z} \big) \Upsilon^{\tau} u\,,\qquad 
\Upsilon^0 u=u
\end{equation}
with 
\[
\begin{aligned}
\mathtt{Z}u&:=(J\circ b) \Pi_S[ u]+\Pi_S(J\circ b) \Pi_S^{\perp}[ u]=
\sum_{j\in S} \big(g_j(\tau), u \big)_{L^2}\,
\chi_j(\tau)+\sum_{j\in S} \big(\tilde{g}_j(\tau), u \big)_{L^2}\,
\tilde{\chi}_j(\tau)\,,\\
g_j&=\tilde{\chi}_{j}:=e^{\mathrm{i} j x}\,, 
\quad \chi_j=J (b(\tau) \,e^{\mathrm{i} j x})\,, 
\quad \tilde{g}_j:=\og(j)\Pi_S^{\perp}[b(\tau)\,e^{\mathrm{i} j x}]\,.
\end{aligned}
\]
Equation \eqref{rosso2} is well posed on 
$H^{s}$ since its vector field is finite rank. 
By the following computations
\begin{equation*}
\begin{aligned}
(\partial_{\tau} \Psi^{\tau}) (\Upsilon^{\tau} u)
+\Psi^{\tau} (\partial_{\tau} \Upsilon^{\tau} u) 
&=(J\circ b) \Pi_S^{\perp}[\Psi^{\tau} (\Upsilon^{\tau} u)]
-\Pi_S[(J\circ b) \Pi_S^{\perp}[\Psi^{\tau} (\Upsilon^{\tau} u)]\\
&=(J\circ b) \Psi^{\tau} (\Upsilon^{\tau} u)
-\Big((J\circ b) \Pi_S[\Psi^{\tau} (\Upsilon^{\tau} u)]
+\Pi_S[(J\circ b) \Pi_S^{\perp}[\Psi^{\tau} (\Upsilon^{\tau} u)]\Big)\,
\end{aligned}
\end{equation*}
we have that $\Phi^{\tau}=\Pi_S^{\perp}\circ\Psi^{\tau}\circ \Upsilon^{\tau}$ 
is well-defined on $H^s$ and solves \eqref{rosso}.

\noindent
Now we show that $\Upsilon^{\tau}-\mathrm{I}$ is
of the form \eqref{FiniteDimFormDP}.
By Taylor expansion at $\tau=0$ we get
\begin{equation}\label{verde}
\Upsilon^{\tau} u-u=
-\tau \big(\Psi^{\tau}_* \mathtt{Z} (\Upsilon^{\tau} u) \big)_{|_{\tau=0}}
+\int_0^{\tau}(1-t) \big(\partial_{t t} \Upsilon^{t}(u)\big)\,dt\,.
\end{equation}
Note that 
\[
\Psi^{\tau}_* \mathtt{Z} (\Upsilon^{\tau} u)=\sum_{j\in S} 
\big((\Phi^{\tau})^* g_j(\tau),  u \big)_{L^2}\,(\Psi^{\tau})^{-1}\chi_j(\tau)
+\sum_{j\in S} 
\big((\Phi^{\tau})^*\tilde{g}_j(\tau),  u \big)_{L^2}\,(\Psi^{\tau})^{-1}\tilde{\chi}_j(\tau)
\]
has already the form \eqref{FiniteDimFormDP} and 
$\big(\Psi^{\tau}_* \mathtt{Z} 
(\Upsilon^{\tau} u)\big)_{|_{\tau=0}}=\mathtt{Z} u$.
We denoted by $(\Psi^{\tau})^*$, $(\Phi^{\tau})^*$ the flows of the adjoint PDEs
\begin{equation*}
\partial_{\tau} (\Psi^{\tau})^{*} u=-b J( (\Psi^{\tau})^{*} u)\,,\qquad
\partial_{\tau} (\Phi^{\tau})^{*} u
=-\Pi_S^{\perp}\,b\, J\, \Pi_S^{\perp}[ (\Phi^{\tau})^{*} u]\,.
\end{equation*}
We have
\[
\int_0^{\tau} (1-t)\partial_{t t} \Upsilon^t(u)\,ds=
\sum_{j\in S} \int_0^{\tau}(1-t) \big(\mathtt{g}_j(t), u \big)_{L^2}\,
\mathtt{f}_j(t)\,dt+\sum_{j\in S}\int_0^{\tau} (1-t)
\big(\tilde{\mathtt{g}}_j(t), u \big)_{L^2}\,\tilde{\mathtt{f}}_j(t)\,dt
\]
where
\begin{align*}
\mathtt{g}_j&:=
\big(-(\Psi_*^s \mathtt{Z})^* (\Phi^s)^*-(\Upsilon^s)^* b\,J ((\Psi^s)^*)\big)(g_j)\,, 
\qquad \mathtt{f}_j:=-(J\circ b) ((\Psi^{s})^{-1})\chi_j
+(\Psi^s)^{-1}J(b'(s) e^{\mathrm{i} j x})\,,\\
\tilde{\mathtt{g}}_j&:=
\big(-(\Psi_*^s \mathtt{Z})^* (\Phi^s)^*-(\Upsilon^s)^* b\,
J ((\Psi^s)^*)\big)(\tilde{g}_j)
+(\Phi^s)^*\og(j)\Pi_S^{\perp}[b'(s) e^{\mathrm{i} j x}]\,, 
\qquad \tilde{\mathtt{f}}_j:=-(J\circ b) ((\Psi^{s})^{-1})\tilde{\chi}_j\,.
\end{align*}
Thus by \eqref{verde}, for $\tau=1$ we get
\begin{align}
&\Upsilon^1 u-u=\mathcal{R} u\,, 
\qquad 
\mathcal{R} u:=\mathcal{R}_1 u+\mathcal{R}_2 u\,,\label{viola1}\\
&\mathcal{R}_1 u:=-\mathtt{Z} u\,, 
\qquad \mathcal{R}_2 u:=\int_0^{1} (1-t)\partial_{t t} \Upsilon^t(u)\,dt\,,\label{viola2}
\end{align}
where $\mathcal{R}_1$ has the finite dimensional 
form \eqref{FiniteDimFormDP} and 
$\mathcal{R}_2$ has the form \eqref{FiniteDimFormDP}. 
Hence by Lemma \ref{lrofiniterank} we have
\begin{equation}\label{verde1}
\mathbb{M}^{\gamma}_{\mathcal{R}_1}(s, \tb) 
\lesssim_s 
\sum_{\lvert j \rvert\le C} \,\,\sup_{\tau\in [0,1]} 
(\lVert \mathtt{f}_j(\tau)\rVert_s^{\gamma, \calO_0}
\lVert \mathtt{g}_j(\tau) \rVert_{s_0}^{\gamma, \calO_0}
+\lVert \mathtt{f}_j(\tau)\rVert_{s_0}^{\gamma, \calO_0}
\lVert \mathtt{g}_j(\tau)\rVert_s^{\gamma, \calO_0})\,,
\end{equation}
\begin{equation}\label{verde2}
\mathbb{M}^{\gamma}_{\mathcal{R}_2}(s, \tb) 
\lesssim_s \sum_{\lvert j \rvert\le C} \,\,\sup_{\tau\in [0,1]} 
(\lVert \tilde{\mathtt{f}}_j(\tau)\rVert_s^{\gamma, \calO_0}
\lVert \tilde{\mathtt{g}}_j(\tau) \rVert_{s_0}^{\gamma, \calO_0}
+\lVert \tilde{\mathtt{f}}_j(\tau)\rVert_{s_0}^{\gamma, \calO_0}
\lVert \tilde{\mathtt{g}}_j(\tau)\rVert_s^{\gamma, \calO_0})\,.
\end{equation}
By using the estimates in Proposition $3.1$ and Corollary $3.2$ in \cite{FGP1}
we have
\begin{equation}\label{giallo1}
\lVert \mathtt{f}_j \rVert^{\gamma, \calO_0}_s, 
\lVert \tilde{ \mathtt{g}_j} \rVert^{\gamma, \calO_0}_s 
\lesssim_s \lVert b \rVert^{\gamma, \calO_0}_{s+s_0+2}
+\lVert \partial_{\tau} b(\tau) \rVert^{\gamma, \calO_0}_{s+1}\,,\qquad
\lVert \tilde{\mathtt{f}_j}\rVert^{\gamma, \calO_0}_s\,, 
\lVert \mathtt{g}_j \rVert_s^{\gamma, \calO_0}
\lesssim_s \lVert b \rVert^{\gamma, \calO_0}_{s+s_0+2}\,.
\end{equation}
In the same way, the bounds for the variation 
on the $i$-variable (the second in \eqref{giallo2}) 
follows by the estimates on the derivatives 
of the coefficients $\mathtt{g}_j, \tilde{\mathtt{g}_j}, \chi_j, \tilde{\chi_j}$ 
whose depend on the variation $\Delta_{1 2}$ 
of the flows $\Phi^{\tau}$, $\Psi^{\tau}$ 
and their adjoints. 
We have proved that $\Upsilon^1u=u+\mathcal{R} u$ and hence 
$\Phi^1 u=\Psi^{1}\circ(\rm{I}+\mathcal{R})u$. 
By \eqref{verde1}, \eqref{verde2} and \eqref{giallo1} we obtain \eqref{giallo2}.
\end{proof}

The system \eqref{rosso} is a Hyperbolic PDE, thus we shall use a version of Egorov Theorem 
to study how pseudo differential operators change under the flow $\Phi^{\tau}$.
This is the content of Theorem $3.4$ in \cite{FGP1}
which provides precise estimates for the transformed pseudo differential operators.


\noindent
The following proposition is the counter-part of 
Proposition $3.5$ in \cite{FGP1}.
It describes the structure of an operator like $\mathcal{L}_{\omega}$ 
conjugated by a flow of a system like \eqref{rosso}.

\begin{prop}[\textbf{Conjugation}]\label{ConjugationLemma}
Let $\mathcal{O}$ be a subset of $\R^\nu$. 
Fix $\rho\geq 3$, $\alpha\in \mathbb{N}$, 
$p\geq s_0$ 
and consider a linear operator
\begin{equation}\label{onizuka6}
\mathcal{L}:=\Pi_S^{\perp}\Big( \omega\cdot\partial_{\varphi}-
J \circ (m+a(\varphi, x)) +\mathcal{Q} \Big)
\end{equation}
where $m=m(\omega)$ is a real constant,
$a=a(\omega,i(\omega))\in C^\infty(\T^{\nu+1})$ is real valued, 
both are Lipschitz in $\omega\in \calO$ 
and $a $ is Lipschitz in the variable $i$.
Moreover $\calQ=\op(\mathtt{q}(\varphi, x, \xi))+\widehat{\calQ}$ 
with  $\widehat{\mathcal{Q}}\in\gotL_{\rho,p}(\calO)$  
and $\tq=\tq(\omega,i(\omega))\in S^{-1}$ satisfying
\begin{align}
\lvert \mathtt{q} \rvert_{-1, s, \alpha}^{\gamma, \calO}
&\lesssim_{s, \alpha} \tk_1+ \tk_2 \lVert f \rVert^{\g,\calO}_{s+\sigma}\,,\label{docq}\\
\lvert \Delta_{12} \mathtt{q}  \rvert_{-1, p, \alpha} 
&\lesssim_{p, \alpha} \tk_3(\lVert \Delta_{12} f  \rVert_{p+\sigma}
+\lVert \Delta_{12} f   \rVert_{p+\sigma}\lVert f \rVert_{p+\sigma})\,.\label{docqi}
\end{align}
Here $\tk_1, \tk_2, \tk_3,\sigma>0$ 
are constants depending on 
$\mathtt{q}$ while 
$f=f(\omega,i(\omega))\in C^\infty(\T^{\nu+1})$, 
is Lipschitz in $\omega$ and in the variable $i$ .
There exist $\tilde{\s}=\tilde{\s}(\rho)>0$ 
and $\delta_*:=\delta_*(\rho)$ such that, if
\begin{equation}\label{filini}
\lVert \bt \rVert^{\gamma, \calO}_{s_0+\tilde{\s}}
+\lVert a \rVert^{\gamma, \calO}_{s_0+\tilde{\s}}
+\tk_2 \lVert f\rVert^{\gamma, \calO}_{s_0+\tilde{\s}}
+\tk_1+\mathbb{M}^{\g}_{\widehat{\mathcal{Q}}}(s_0, \tb) 
\le \delta_*\,,
\end{equation}
the following holds for $p+\s\leq s_0+\tilde{\s}$.
Consider $\Phi:=\Phi^{1}$ the flow at time 
one of the system \eqref{rosso}, 
where $b$ is defined in \eqref{pseudo}. 
Then we have
\begin{equation}\label{ellepiu}
\mathcal{L}_+:=\Phi \mathcal{L}\Phi^{-1}
=\Pi_S^{\perp} \Big(\omega\cdot\partial_{\varphi}
-J \circ (m+ a_+(\varphi, x))+\mathcal{Q}_+\Big)
\end{equation}
where
\begin{equation}\label{Round}
m+a_+(\varphi, x):=
-(\omega\cdot\partial_{\varphi} \tilde{\beta})(\varphi, x+\beta(\varphi, x))
+(m+a(\varphi, x+\beta(\varphi, x)))
(1+\tilde{\beta}_x(\varphi, x+\beta(\varphi, x)))
\end{equation}
with $\tilde{\beta}$ the function such that 
$x+\tilde{\beta}(\varphi, x)$ is the inverse 
of the diffeomorphism of the torus $x\mapsto x+\beta(\varphi, x)$.
The operator 
$\mathcal{Q}_+:=\op(\mathtt{q}_+(\varphi, x, \xi))
+\widehat{\mathcal{Q}}_+$, with
\begin{equation}\label{lapiuimportante}
\begin{aligned}
\lvert \mathtt{q}_+ \rvert^{\g, \calO}_{-1, s, \alpha} 
&\lesssim_{s, \alpha, \rho} \tk_1+\tk_2 \lVert f\rVert^{\g, \calO}_{s+\tilde{\s}}
+\lVert \beta \rVert^{\g, \calO}_{s+\tilde{\s}}+\lVert a \rVert^{\g, \calO}_{s+\tilde{\s}}\,,\\
\lvert \Delta_{12} \mathtt{q}_+ \rvert_{-1, p, \alpha} 
&\lesssim_{p, \alpha, \rho} 
\mathtt{k}_3(\lVert \Delta_{12} f \rVert_{p+\tilde{\s}}
+\lVert \Delta_{12} f \rVert_{p+\tilde{\s}}\lVert f \rVert_{p+\tilde{\s}})
+\lVert \Delta_{12}\beta \rVert_{p+\tilde{\s}}
+\lVert \Delta_{12} a \rVert_{p+\tilde{\s}}
\end{aligned}
\end{equation}
and
$\widehat{\mathcal{Q}}_+\in\gotL_{\rho,p}(\calO)$
with, for $s_0\le s \le \mathcal{S}$,
\begin{equation}\label{JeTame}
\mathbb{M}^{\g}_{\widehat{\mathcal{Q}}_+}(s, \tb)
\lesssim_{s, \rho} \mathbb{M}^{\g}_{\widehat{\calQ}}(s, \tb)
+ \|\be\|^{\g,\calO}_{s+\tilde{\s}} +\tk_1
+\tk_2 \|f\|^{\g,\calO}_{s+\tilde{\s}}+\|a\|^{\g,\calO}_{s+\tilde{\s}}\,,
\end{equation}
for any $0\le \tb\le \rho-2$ and
\begin{equation}\label{ServelloniMazzantiVienDalMare}
\begin{aligned}
\mathbb{M}_{\Delta_{12} \widehat{\calQ}_+  }(p, \tb) &\lesssim_{p, \rho} \mathbb{M}_{\Delta_{12} \widehat{\calQ}}(p, \tb)\\
&+\mathtt{k}_3(\lVert \Delta_{12} f \rVert_{p+\tilde{\s}}
+\lVert \Delta_{12} f \rVert_{p+\tilde{\s}}\lVert f \rVert_{p+\tilde{\s}})
+\lVert \Delta_{12}\beta \rVert_{p+\tilde{\s}}+\lVert \Delta_{12} a \rVert_{p+\tilde{\s}}
\end{aligned}
\end{equation}
for any $0\le\tb\le \rho-3$.
\end{prop}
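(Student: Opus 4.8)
The statement to prove is Proposition~\ref{ConjugationLemma}, the "Conjugation" result describing how an operator $\mathcal{L}$ of the form \eqref{onizuka6} transforms under the time-one flow $\Phi=\Phi^1$ of the system \eqref{rosso}. My plan is to follow closely the strategy of Proposition~$3.5$ in \cite{FGP1}, adapting it to the slightly more general setting in which the zeroth-order part carries not only the diffeomorphism-conjugable term $-J\circ(m+a)$ but also an already-present smoothing/pseudo differential remainder $\mathcal{Q}=\op(\mathtt{q})+\widehat{\mathcal{Q}}$ with $\mathtt{q}\in S^{-1}$ and $\widehat{\mathcal{Q}}\in\gotL_{\rho,p}$.

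First I would establish that $\Phi^1$ is well defined. By Lemma~\ref{differenzaFlussi}, under the smallness assumption \eqref{filini} (which in particular forces $\lVert\beta\rVert_{s_0+\sigma_1}\le\delta$ for the relevant $\sigma_1$, since $b$ in \eqref{pseudo} satisfies $\lVert b\rVert_s\lesssim\lVert\beta\rVert_{s+1}$ for $\beta$ small), the flow $\Phi^1$ exists for $|\tau|\le 1$ and factorizes as $\Phi^1=\Pi_S^\perp\Psi^1\Pi_S^\perp\circ(\mathrm{I}+\mathcal{R})$ with $\mathcal{R}\in\gotL_{\rho,p}$ satisfying \eqref{giallo2}. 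Then I would split the conjugation into two pieces: the conjugation by $\Psi^1$ (the flow of the pseudo differential PDE \eqref{diffeotot}), and the conjugation by the finite-rank-type correction $\mathrm{I}+\mathcal{R}$. For the first piece: conjugating the transport part $\omega\cdot\partial_\varphi-J\circ(m+a)$ — with $J=\Lambda^{-1}(4-\partial_{xx})\partial_x$ whose principal symbol is $\mathrm{i}\xi$ up to lower order — by the change of variables $\mathcal{A}^\tau$ in \eqref{ignobel} gives, by the standard computation for first-order transport operators, the new top-order coefficient \eqref{Round}; the lower-order part of $J$ contributes a pseudo differential operator of order $\le 0$, actually of order $-1$ after extracting the (constant) symbol, which goes into $\mathtt{q}_+$. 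The key input here is Theorem~$3.4$ of \cite{FGP1} (Egorov) which controls $\Psi^\tau\op(c)(\Psi^\tau)^{-1}$ for $\op(c)\in OPS^m$, together with the fact that $\gotL_{\rho,p}$ is closed under conjugation by $\Psi^\tau$ (Lemma~$B.10$ of \cite{FGP1}). Applying Egorov to $\op(\mathtt{q})$ (with $\mathtt{q}\in S^{-1}$) produces again an operator in $OPS^{-1}$ with symbol estimates inheriting the constants $\tk_1,\tk_2$ via \eqref{docq}; applying the closure property to $\widehat{\mathcal{Q}}$ keeps it in $\gotL_{\rho,p}$ with bounds controlled by $\mathbb{M}^\gamma_{\widehat{\mathcal{Q}}}(s,\tb)$ plus losses. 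For the second piece: conjugating by $\mathrm{I}+\mathcal{R}$ with $\mathcal{R}\in\gotL_{\rho,p}$ of the finite-rank form \eqref{FiniteDimFormDP} produces, via Neumann expansion of $(\mathrm{I}+\mathcal{R})^{-1}$ and the algebra/ideal properties of $\gotL_{\rho,p}$ (Lemma~\ref{lrofiniterank} and the composition lemmas), additional contributions that are again in $\gotL_{\rho,p}$, with norms bounded by $\mathbb{M}^\gamma_{\mathcal{R}}(s,\tb)\lesssim\lVert\beta\rVert^{\gamma,\calO}_{s+\su}$; the commutator $[\omega\cdot\partial_\varphi,\mathcal{R}]$ stays in the class as well (it costs one $\varphi$-derivative, hence the index drop from $\rho-2$ to $\rho-3$ in the Lipschitz-in-$i$ estimates).

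Assembling the two steps, the transformed operator has the claimed form \eqref{ellepiu}: the new multiplication coefficient $a_+$ is exactly \eqref{Round}, and $\mathcal{Q}_+=\op(\mathtt{q}_+)+\widehat{\mathcal{Q}}_+$ collects (i) the Egorov-transformed $\op(\mathtt{q})$, (ii) the order-$(-1)$ leftover from conjugating the lower-order part of $J$ and from the error terms in \eqref{Round}, (iii) the Egorov-transformed $\widehat{\mathcal{Q}}$, and (iv) the $\gotL_{\rho,p}$-contributions from the $\mathrm{I}+\mathcal{R}$ factor. The symbol bounds \eqref{lapiuimportante} follow from the $S^{-1}$-symbol calculus in Section~$2$ and Appendix~$B$ of \cite{FGP1}, tracking the constants $\tk_1,\tk_2$ (sup-norm) and $\tk_3$ (Lipschitz in $i$), and noting that $\beta$ and $a$ themselves enter because $\mathcal{A}^\tau$ depends on $\beta$ and $a_+$ on $a$; the $\gotL_{\rho,p}$-bounds \eqref{JeTame}, \eqref{ServelloniMazzantiVienDalMare} follow from the same lemmas plus \eqref{giallo2}. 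The Lipschitz dependence on $\omega$ is handled uniformly throughout by working with the weighted norms $|\cdot|^{\gamma,\calO}_{m,s,\alpha}$ and $\lVert\cdot\rVert^{\gamma,\calO}_s$, and the dependence on the torus embedding $i$ is handled by the $\Delta_{12}$ estimates in the quoted lemmas.

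The main technical obstacle, as in \cite{FGP1}, is bookkeeping the loss of regularity $\tilde\sigma=\tilde\sigma(\rho)$ through the composition of Egorov's theorem (which loses a fixed number of derivatives depending on $\rho$) and the flow estimates of Lemma~\ref{differenzaFlussi} (which lose $\su=\su(\rho,p)$), and ensuring that the smallness condition \eqref{filini}, stated only in low norm $s_0+\tilde\sigma$, is enough to make the Neumann series for $(\mathrm{I}+\mathcal{R})^{-1}$ and the fixed-point step defining $\tilde\beta$ converge. A secondary subtlety is the extra term $\tk_1$ (the $f$-independent part of the symbol bound) which, unlike in \cite{FGP1}, must be propagated additively; this requires checking that none of the composition lemmas introduce products of $\tk_1$ with large quantities — they do not, since $\op(\mathtt{q})$ always appears linearly at the orders we track. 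Once these regularity-counting points are pinned down, the proof is a routine, if lengthy, application of the pseudo differential and smoothing-operator calculus developed in \cite{FGP1}, which is why I would present it by reduction to those results rather than re-deriving the estimates.
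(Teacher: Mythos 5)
Your proposal follows essentially the same route as the paper's proof: conjugate the unprojected operator $\mathcal{L}^0$ by the flow $\Psi^1$ of \eqref{diffeotot} via Proposition $3.5$ (Egorov-type) of \cite{FGP1}, which produces \eqref{Round} and the pseudo differential plus $\gotL_{\rho,p}$ remainders, and then absorb the discrepancy coming from Lemma \ref{differenzaFlussi} — the finite-rank factor $\mathrm{I}+\mathcal{R}$ together with the projector mismatch terms — into $\widehat{\mathcal{Q}}_+$ using Lemma \ref{lrofiniterank} and the closure properties of $\gotL_{\rho,p}$. The only cosmetic difference is that the paper writes the correction $\widehat{\mathcal{Q}}_{\star\star}$ explicitly as a finite list of finite-rank operators (see \eqref{primavera}) rather than invoking a Neumann expansion of $(\mathrm{I}+\mathcal{R})^{-1}$, but the estimates used are the same.
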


\begin{proof}
The strategy is the following.

\noindent
$\bullet$  We conjugate 
\begin{equation}\label{Llibero}
\mathcal{L}^0:= \omega\cdot\partial_{\varphi}-
J \circ (m+a(\varphi, x)) +\mathcal{Q} 
\end{equation}
by the flow $\Psi^{\tau}$ in \eqref{diffeotot}. 
In order to do this we use 
Proposition $3.5$ in \cite{FGP1}.

\medskip
\noindent
$\bullet$ The operator $\mathcal{L}^0$ 
differs from $\mathcal{L}$ 
by a infinitely regularizing operator 
of the form \eqref{FiniteDimFormDP}. 
By using this fact and Lemma \ref{differenzaFlussi} 
we estimate the difference 
between $\Phi^{\tau} \mathcal{L} (\Phi^{\tau})^{-1}$ 
and $\Psi^{\tau} \mathcal{L}^0 (\Psi^{\tau})^{-1}$. 
	
	\vspace{0.8em}
	
\noindent
Let $\Psi^{\tau}$ be the flow in \eqref{diffeotot}. 
The  \eqref{docq}-\eqref{docqi}  imply that 
$\calL_0$ in \eqref{Llibero} satisfies the hypotheses of 
Proposition $3.5$ in \cite{FGP1}. Hence, 
if $y:=x+\beta(\varphi, x)$, then we have
\begin{align*}
\mathcal{L}^0_+ h&:=\Psi \mathcal{L}^0 \Psi^{-1}
= \omega\cdot\partial_{\varphi} h-J 
\{\big(m+a_+(\varphi, x)\big)\,h\}
+\op(\mathtt{q}_+)+\widehat{\mathcal{Q}}_{\star} h\,,\\
m+a_+(\varphi, x)&:=
-\omega\cdot\partial_{\varphi}\tilde{\beta}(\varphi, y)
+(m+a(\varphi, y))(1+\tilde{\beta}_y(\varphi, y))\,.
\end{align*}
In particular $\mathtt{q}_+$ and 
$\widehat{\mathcal{Q}}_{\star} $ satisfy bounds like 
\eqref{lapiuimportante}-\eqref{ServelloniMazzantiVienDalMare}.
By Lemma \ref{differenzaFlussi} 
we have (recall \eqref{viola1}, \eqref{viola2})
\begin{equation*}
\begin{aligned}
\widehat{\mathcal{Q}}_{\star \star}:= 
\Phi \mathcal{L} \Phi^{-1}-\Pi_S^{\perp} \mathcal{L}^0_+ \Pi_S^{\perp} 
&=\Pi_S^{\perp} \Psi \mathcal{L}^0 \Psi^{-1}\Pi_S
+\Pi_S \Psi \mathcal{L}^0\Psi^{-1}
+\Psi \mathcal{L} \Gamma \Psi^{-1}
+\Psi \mathcal{L} \mathcal{R} \Psi^{-1}\\
&+\Psi \mathcal{R} \mathcal{L} \Gamma \Psi^{-1}
-\Psi \mathcal{L}^0 \Pi_S \Psi^{-1}
-\Psi \Pi_S \mathcal{L} \Pi_S^{\perp} \Psi^{-1}.
\end{aligned}
\end{equation*}
We define the remainder 
$\widehat{\mathcal{Q}}_+:=\widehat{\mathcal{Q}}_{\star}+\widehat{\mathcal{Q}}_{\star \star}$.
To conclude the proof we show 
that $\widehat{\mathcal{Q}}_{\star \star}$ 
satisfies the bounds \eqref{JeTame} 
and \eqref{ServelloniMazzantiVienDalMare}. We note that
\begin{equation}\label{primavera}
\begin{aligned}
&\Pi_S^{\perp} \Psi \mathcal{L}^0 \Psi^{-1}\Pi_S h
=\sum_{j\in S} (h, g_j^{(1)})_{L^2} \chi_j^{(1)}\,, 
\quad g_j^{(1)}:=e^{\mathrm{i} j x}\,, 
\quad \chi_j^{(1)}:=\Psi \mathcal{L}^0 \Psi^{-1}e^{\mathrm{i} j x}\,,\\
&\Pi_S \Psi \mathcal{L}^0\Psi^{-1} h
=\sum_{j\in S} (h, g_j^{(2)})_{L^2} \chi_j^{(2)}\,, 
\quad g_j^{(2)}:=\Psi \mathcal{L}^0\Psi^{-1} e^{\mathrm{i} j x}\,, 
\quad \chi_j^{(2)}=e^{\mathrm{i} j x}\,,\\
&\Psi \mathcal{L}^0 \Pi_S \Psi^{-1}
=\sum_{j\in S} (h, g_j^{(3)})_{L^2} \chi_j^{(3)}\,,
\quad g_j^{(3)}:=(\Psi^{-1})^* e^{\mathrm{i} j x}\,, 
\quad \chi_j^{(3)}:=\Psi \mathcal{L}^0 e^{\mathrm{i} j x}\,,\\
&\Psi \Pi_S \mathcal{L} \Pi_S^{\perp} \Psi^{-1} h
=\sum_{j\in S} (h, g_j^{(4)})_{L^2} \chi_j^{(4)}\,, 
\quad g_j^{(4)}:=(\mathcal{L} \Pi_S^{\perp} \Psi^{-1})^* e^{{\rm i} j x}\,, 
\quad \chi_j^{(4)}:=\Psi e^{{\rm i} j x}\,.
\end{aligned}
\end{equation}
Thus by Lemma 
\ref{lrofiniterank}, Corollary $3.2$ in \cite{FGP1} 
(for the estimates on $\Psi$) 
and \eqref{Llibero} we get the bounds 
\eqref{JeTame} and 
\eqref{ServelloniMazzantiVienDalMare} 
for the operators \eqref{primavera}.
The bounds on $\Psi \mathcal{L} \Gamma \Psi^{-1}, 
\Psi \mathcal{L} \mathcal{R} \Psi^{-1}, 
\Psi \mathcal{R} \mathcal{L} \Gamma \Psi^{-1}$ 
follow by Proposition $3.1$ in \cite{FGP1} and  \eqref{giallo2}.
\end{proof}

\begin{remark}
We point out that the remainder in Proposition 
\ref{ConjugationLemma} is of the order of $\beta$, 
i.e. of the generator of the torus diffeomorphism.  
This will create problems in fulfilling the smallness 
conditions in the KAM reducibility scheme 
of Section \ref{SezioneDiagonalization}, 
where a term is perturbative if it is 
small w.r.t. $\g^{3/2}$ ( and $\g\ll \e^2$ ).
\end{remark}

\subsection{Classes of ``smoothing'' operators}\label{ellerhosez}

In the first step of our reduction procedure 
(see Theorem \ref{risultatosez8})
we need to work with operators which are pseudo differential up to a remainder in the class $\mathfrak{L}_{\rho,p}$
defined as follows. This class of smoothing (in space) operators has been introduced in \cite{FGP1}.

\begin{defi}\label{ellerho}
Fix $s_0\geq (\nu+1)/2$ and $p,\,\mathcal{S}\in \mathbb{N}$ with $s_0\le p< \mathcal{S}$ with possibly $\mathcal{S}=+\infty$.
Fix $\rho\in\mathbb{N}$,  with $\rho\geq 3$ 
and consider any subset $\calO$ of $\R^\nu$. 
We denote by $\gotL_{\rho, p}=\gotL_{\rho, p}(\calO)$ 
the set of the linear operators 
$A=A(\omega)\colon H^s(\T^{\nu+1})\to H^{s}(\T^{\nu+1})$, $\omega\in \calO$ with the following properties:

\vspace{0.5em}
\noindent
$\bullet$ the operator $A$ is Lipschitz in $\omega$,

\vspace{0.5em}
\noindent		
$\bullet$  the operators $\partial_{\varphi}^{\vec{\tb}} A$,  $[\partial_{\varphi}^{\vec{\tb}} A, \partial_x]$, for all 
$\vec{\tb}=(\tb_1,\ldots,\tb_\nu)\in \mathbb{N}^{\nu}$ with 
$0\le |\vec{\tb}| \leq \rho-2$  have the following properties, 
for any $s_0\le s\le \mathcal{S}$, with possibly $\mathcal{S}=+\infty$:
\begin{itemize}
		
\item[(i)] 
for any $m_{1},m_{2}\in \mathbb{R}$, $m_{1},m_{2}\geq0$
and $m_{1}+m_{2}=\rho-|\vec{\tb}|$ 
one has that  the operator
$\langle D_{x}\rangle^{m_{1}} \del_{\f}^{\vec{\mathtt{b}}}A \langle D_{x}\rangle^{m_{2}}$
is Lip-$0$-tame according to Definition \ref{LipTameConstants} 
and we set
\begin{equation}\label{megaTame2}
\gotM^{\gamma}_{\del_{\f}^{\vec{\mathtt{b}}}A}(-\rho+|\vec{\tb}|,s):=
\sup_{\substack{m_{1}+m_{2}=\rho-|\vec{\tb}|\\
m_{1},m_{2}\geq0}}\gotM^{\gamma}_{\langle D_{x}\rangle^{m_{1}} \del_{\f}^{\vec{\mathtt{b}}}A 
\langle D_{x}\rangle^{m_{2}}}(0,s)\,;
\end{equation}
		
\item[(ii)]
for any $m_{1},m_{2}\in \mathbb{R}$, $m_{1},m_{2}\geq0$
and $m_{1}+m_{2}=\rho- |\vec{\tb}|-1$ 
one has that  
$\langle D_{x}\rangle^{m_{1}} [\del_{\f}^{\vec{\mathtt{b}}}A,\del_{x}] \langle D_{x}\rangle^{m_{2}}$
is Lip-$0$-tame according to Definition \ref{LipTameConstants} 
and we set
\begin{equation}\label{megaTame4}
\gotM^{\gamma}_{[\del_{\f}^{\vec{\mathtt{b}}}A,\del_{x}]}(-\rho+|\vec{\tb}|+1,s):=
\sup_{\substack{m_{1}+m_{2}=\rho-|\vec{\tb}|-1\\
m_{1},m_{2}\geq0}}\gotM^{\gamma}_{\langle D_{x}\rangle^{m_{1}} 
[\del_{\f}^{\vec{\mathtt{b}}}A,\del_{x}] \langle D_{x}\rangle^{m_{2}}}(0,s)\,.
\end{equation}
\end{itemize}
We define for $0\le \tb\le \rho-2$
\begin{equation}\label{Mdritta}
\begin{aligned}\mathbb{M}^{\gamma}_{A}(s, \mathtt{b}):=&\max_{0\leq |\vec{\tb}|\leq \tb}\max\left(
\gotM_{\del_{\f}^{\vec{\mathtt{b}}}A}^{\gamma}(-\rho+|\vec{\tb}|,s),
\gotM_{\del_{\f}^{\vec{\mathtt{b}}}[A,\del_{x}]}^{\gamma}(-\rho+|\vec{\tb}|+1,s)\right).
\end{aligned}
\end{equation}
Assume now that  the set $\calO$ 
and the operator $A$ depend on 
$i=i(\oo)$, and are well defined for $\oo\in \calO_0\subseteq\Omega_{\e}$
for all $i$ satisfying \eqref{IpotesiPiccolezzaIdeltaDP}.
We consider  $i_{1}=i_{1}(\oo)$, $i_{2}=i_{2}(\oo)$ and 
for $\oo\in \calO(i_1)\cap\calO(i_2)$
we define 
\begin{equation}\label{DELTA12}
\Delta_{12}A:=A(i_1)-A(i_2)\,.
\end{equation}
%
We require the following:

\vspace{0.5em}
\noindent		
$\bullet$ The operators
$\del_{\f}^{\vec{\mathtt{b}}}\Delta_{12}A $, 
$[\del_{\f}^{\vec{\mathtt{b}}}\Delta_{12}A ,\del_{x}]$, for  $0\le |\vec{\tb}|\leq \rho-3$, 
have the following properties, for any $s_0\le s\le \mathcal{S}$, 
with possibly $\mathcal{S}=+\infty$:

\begin{itemize}
\item[(iii)] 
for any $m_{1},m_{2}\in \mathbb{R}$, $m_{1},m_{2}\geq0$
and $m_{1}+m_{2}=\rho-|\vec{\tb}|-1$ 
one has that  
$\langle D_{x}\rangle^{m_{1}} \del_{\f}^{\vec{\mathtt{b}}} \Delta_{12}A  \langle D_{x}\rangle^{m_{2}}$
is  bounded on $H^{p}$ into itself. More precisely there is a positive constant 
$\gotN_{\del_{\f}^{\vec{\mathtt{b}}}\Delta_{12}A }(-\rho+|\vec{\tb}|+1,p)$ such that, for any $h\in H^{p}$,
we have
\begin{equation}\label{megalipTame2}
\sup_{\substack{m_{1}+m_{2}=\rho-|\vec{\tb}|-1\\
m_{1},m_{2}\geq0}}
\|\langle D_{x}\rangle^{m_{1}}
\del_{\f}^{\vec{\mathtt{b}}}\Delta_{12}A\langle D_{x}\rangle^{m_{2}} h\|_{p}\leq 
\gotN_{\del_{\f}^{\vec{\mathtt{b}}}\Delta_{12}A }(-\rho+|\vec{\tb}|+1,p)\|h\|_{p}\,;
\end{equation}
		
\item[(iv)] 
for any $m_{1},m_{2}\in \mathbb{R}$, $m_{1},m_{2}\geq0$
and $m_{1}+m_{2}=\rho-|\vec{\tb}|-2$ 
one has that  
$\langle D_{x}\rangle^{m_{1}} [\del_{\f}^{\vec{\mathtt{b}}} \Delta_{12}A ,\del_{x}] 
\langle D_{x}\rangle^{m_{2}}$
is  bounded on $H^{p}$ into itself. More precisely there is a positive constant 
$\gotN_{[\del_{\f}^{\vec{\mathtt{b}}}\Delta_{12}A,\del_{x}] }(-\rho+|\vec{\tb}|+2,p)$ such that for any 
$h\in H^{p}$ one has
\begin{equation}\label{megalipTame4}
\sup_{\substack{m_{1}+m_{2}=\rho-|\vec{\tb}|-2\\
m_{1},m_{2}\geq0}}
\|\langle D_{x}\rangle^{m_{1}}[
\del_{\f}^{\vec{\mathtt{b}}}\Delta_{12}A,\del_{x}]\langle D_{x}\rangle^{m_{2}} h\|_{p}\leq 
\gotN_{[\del_{\f}^{\vec{\mathtt{b}}}\Delta_{12}A, \del_{x}] }(-\rho+|\vec{\tb}|+2,p)\|h\|_{p}\,.
\end{equation}
\end{itemize}
		
We define for $0\le \tb\le \rho-3$
\begin{equation}\label{Mdrittaconlai}
\begin{aligned}\mathbb{M}_{\Delta_{12}A }(p, \mathtt{b}):=&\max_{0\le |\vec{\tb}|\leq \tb}\max \left(
\gotN_{\del_{\f}^{\vec{\mathtt{b}}}\Delta_{12}A }(-\rho+|\vec{\tb}|+1,p),
\gotN_{\del_{\f}^{\vec{\mathtt{b}}}[\Delta_{12}A ,\del_{x}]}(-\rho+|\vec{\tb}|+2,p)\right)\,.
\end{aligned}
\end{equation}

By construction one has that 
$\mathbb{M}^{\gamma}_{A}(s, \mathtt{b}_1)\leq 
\mathbb{M}^{\gamma}_{A}(s, \mathtt{b}_2)$
 if $\tb_1\le \tb_2\le \rho-2$ and 
$\mathbb{M}_{\Delta_{12}A }(p, \mathtt{b}_1)\leq 
\mathbb{M}_{\Delta_{12}A }(p, \mathtt{b}_2)$
if $\mathtt{b}_1\leq \mathtt{b}_2\leq \rho-3$.
\end{defi}

We shall also deal with ``tame'' operators in the following class.

\begin{defi}\label{Cuno}
Fix 
$\mathcal{S}\in \mathbb{N}$ with possibly $\mathcal{S}=+\infty$.
Fix $\tb=s_0+6\tau+6$ and consider $\calO\subseteq \R^\nu$. 
We denote by $\gotC_{-1}:=\gotC_{-1}(\calO)$ 
the set of the linear operators 
$A=A(\omega)\colon H^s(\T^{\nu+1})\to H^{s}(\T^{\nu+1})$, 
$\omega\in \calO$ 	
which satisfy the following for any $s_0\le s\le \mathcal{S}$: 
	
\noindent
$\bullet$ \;\; $\langle D_x \rangle^{1/2} A \langle D_x \rangle^{1/2}$, 
$\langle D_x \rangle^{1/2}\partial_{\varphi_m}^{\tb} A \langle D_x \rangle^{1/2}$,  
$\langle D_x \rangle^{1/2}[\partial_{\varphi_m}^{\tb} A, \partial_x]\langle D_x \rangle^{1/2}$, 
for $m=1, \dots, \nu$, $0\le \tb_1\le \tb$  
are Lip-$0$-tame operators 
(see Definition \ref{LipTameConstants}) and we define
\begin{equation}
\begin{aligned}
\mathfrak{M}^{\g}_{\partial_{\varphi_m}^{\tb_1}A}(-1, s)&:=
\mathfrak{M}^{\g}_{\langle D_x \rangle^{1/2}
\partial_{\varphi_m}^{\tb_1}A \langle D_x \rangle^{1/2}}(0, s),\\
\mathfrak{M}^{\g}_{\partial_{\varphi_m}^{\tb_1}[A, \partial_x]}(-1, s) 
&:=\mathfrak{M}^{\g}_{\langle D_x \rangle^{1/2}\partial_{\varphi_m}^{\tb_1}
[A, \partial_x]\langle D_x \rangle^{1/2}}(0, s) \,,
\end{aligned}
\end{equation}
\begin{equation}\label{Mdritta2}
\mathbb{B}^{\g}_A(s):=\max_{\substack{0\le \tb_1\le \tb\\ m=1,\dots, \nu}} 
\max\Big(\mathfrak{M}^{\g}_{\partial_{\varphi_m}^{\tb_1}A}(-1, s), 
\mathfrak{M}^{\g}_{\partial_{\varphi_m}^{\tb_1}[A, \partial_x]}(-1, s)  \Big)\,.
\end{equation}	
Assume now that  the set $\calO$ 
and the operator $A$ depend on 
$i=i(\oo)$, and are well defined for $\oo\in \calO\subseteq\R^\nu$
for all $i$ satisfying \eqref{IpotesiPiccolezzaIdeltaDP}.
We consider  $i_{1}=i_{1}(\oo)$, $i_{2}=i_{2}(\oo)$ and 
for $\oo\in \calO(i_1)\cap\calO(i_2)$ 
let $\Delta_{12}A$ as in \eqref{DELTA12}.
We require the following:
	
\noindent		
$\bullet$ \;\; $\langle D_x \rangle^{1/2} \partial_{\varphi_m}^{\tb_1} 
\Delta_{12} A \langle D_x \rangle^{1/2}$, 
$\langle D_x \rangle^{1/2}[\partial_{\varphi_m}^{\tb_1} 
\Delta_{12} A, \partial_x]\langle D_x \rangle^{1/2}$ 
for $m=1, \dots, \nu$, $0\le \tb_1\le \tb$ are 
bounded operators on $H^{s_0}$ into itself. More precisely 
there are positive constants 
$\mathfrak{N}_{\partial_{\varphi_m}^{\tb_1}\Delta_{12} A}(-1, s_0)$, and 
$\mathfrak{N}_{[\partial_{\varphi_m}^{\tb_1}\Delta_{12} A,\del_{x}]}(-1, s_0)$
such that, for any $h\in H^{s_0}$, we have
\begin{equation}\label{StimeDeltai}
\begin{aligned}
\|\langle D_x \rangle^{1/2} \partial_{\varphi_m}^{\tb_1} 
\Delta_{12} A \langle D_x \rangle^{1/2}h\|_{s_0}
&\leq 
\mathfrak{N}_{\partial_{\varphi_m}^{\tb_1}\Delta_{12} A}(-1, s_0)\|h\|_{s_0}\,,\\
\|\langle D_x \rangle^{1/2} [\partial_{\varphi_m}^{\tb_1} \Delta_{12} A,\del_{x}] 
\langle D_x \rangle^{1/2}h\|_{s_0}
&\leq 
\mathfrak{N}_{[\partial_{\varphi_m}^{\tb_1}\Delta_{12} A,\del_{x}]}(-1, s_0)\|h\|_{s_0}\,.
\end{aligned}
\end{equation}
We define
\begin{equation}\label{Mdrittaconlai2}
\begin{aligned}
\mathbb{B}_{\Delta_{12} A}(s_0):=
\max_{\substack{0\le \tb_1\le \tb\\ m=1,\dots, \nu}} 
\max\Big(\mathfrak{N}_{\partial_{\varphi_m}^{\tb_1}\Delta_{12}A}(-1, s_0), 
\mathfrak{N}_{\partial_{\varphi_m}^{\tb_1}[\Delta_{12}A, \partial_x]}(-1, s_0) \Big)\,.
\end{aligned}
\end{equation}
\end{defi}

\smallskip

The next Lemma shows that the finite rank operators of the form 
\eqref{FiniteDimFormDP} are in $\gotL_{\rho, p}$.
\begin{lem}\label{lrofiniterank}
Fix $\rho\geq 3$. Let $\mathcal{R}$ be an operator of the form \eqref{FiniteDimFormDP},
where the functions $g_j(\tau), \chi_j(\tau)$ belong to $H^s$ for $\tau\in [0, 1]$ and depend in a Lipschitz way on the parameter $\omega\in \calO \subset \mathbb{R}^{\nu}$. Then 
there exists $\su=\su(\rho)>0$ such that 
$\mathcal{R}$ belongs to $\gotL_{\rho, p}$ and
\begin{equation}
\begin{aligned}
&\mathbb{M}^{\g}_{\mathcal{R}}(s, \tb)\lesssim_s 
\sum_{\lvert j \rvert\le C} \,\,\sup_{\tau\in [0,1]} 
(\lVert \chi_j(\tau)\rVert_{s+\su}^{\gamma, \calO}
\lVert g_j \rVert_{s_0+\su}^{\gamma, \calO}
+\lVert \chi_j(\tau)\rVert_{s_0+\su}^{\gamma, \calO}
\lVert g_j(\tau)\rVert_{s+\su}^{\gamma, \calO})\,,
\end{aligned}
\end{equation}
\begin{equation}
\begin{aligned}
\mathbb{M}_{\Delta_{12} \mathcal{R}  }(p, \tb) \le_{p} 
\sum_{\lvert j \rvert\le C} &
\,\,\sup_{\tau\in [0,1]} \Big( \lVert \Delta_{12} \chi_j(\tau)  \rVert_{p+\su}
\lVert g_j \rVert_{p+\su}+
\lVert  \chi_j(\tau) \rVert_{p+\su}\lVert \Delta_{12} g_j   \rVert_{p+\su} \Big)\,.
\end{aligned}
\end{equation}
\end{lem}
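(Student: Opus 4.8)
\textbf{Proof proposal for Lemma \ref{lrofiniterank}.}

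The plan is to unwind Definition \ref{ellerho} and reduce all the required estimates to two elementary facts about rank-one operators of the form $h\mapsto (h,g)_{L^2}\,\chi$: first, applying $\langle D_x\rangle^{m_1}\,\cdot\,\langle D_x\rangle^{m_2}$ to such an operator produces the rank-one operator $h\mapsto (h,\langle D_x\rangle^{m_2}g)_{L^2}\,\langle D_x\rangle^{m_1}\chi$, and second, a rank-one operator $h\mapsto(h,a)_{L^2}b$ is Lip-$0$-tame with tame constant controlled by $\lVert a\rVert_{s_0}^{\gamma}\lVert b\rVert_s^{\gamma}+\lVert a\rVert_s^{\gamma}\lVert b\rVert_{s_0}^{\gamma}$ (this is the standard interpolation/tame estimate for the $L^2$ pairing, e.g. the smoothing-operator lemmas in the Appendix of \cite{FGP1}). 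Since $\mathcal R$ in \eqref{FiniteDimFormDP} is a finite sum over $|j|\le C$ of $\tau$-integrals of such rank-one operators, and the relevant norms are taken with $\sup_{\tau\in[0,1]}$, it suffices to bound one summand uniformly in $\tau$ and $j$.

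First I would treat $\partial_{\varphi}^{\vec{\mathtt b}}\mathcal R$ for $|\vec{\mathtt b}|\le\rho-2$. Differentiating under the integral sign and using the Leibniz rule, $\partial_{\varphi}^{\vec{\mathtt b}}\big[(h,g_j(\tau,\varphi))_{L^2}\chi_j(\tau,\varphi)\big]$ is again a finite sum of rank-one operators with kernels $\partial_\varphi^{\vec{\mathtt b}_1}g_j$ and $\partial_\varphi^{\vec{\mathtt b}_2}\chi_j$, $|\vec{\mathtt b}_1|+|\vec{\mathtt b}_2|=|\vec{\mathtt b}|$; these $\varphi$-derivatives cost at most $\rho$ extra regularity, absorbed into $\sigma_\star$. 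Then for $m_1+m_2=\rho-|\vec{\mathtt b}|$ I conjugate by $\langle D_x\rangle^{m_1},\langle D_x\rangle^{m_2}$: the operator becomes rank-one with kernels $\langle D_x\rangle^{m_2}\partial_\varphi^{\vec{\mathtt b}_1}g_j$ (losing $m_2$ space derivatives) and $\langle D_x\rangle^{m_1}\partial_\varphi^{\vec{\mathtt b}_2}\chi_j$ (losing $m_1$). Since $m_1+m_2\le\rho$, all losses are bounded by $\rho$ and absorbed into $\sigma_\star$; the rank-one tame estimate then gives \eqref{megaTame2}. The commutator terms $[\partial_\varphi^{\vec{\mathtt b}}\mathcal R,\partial_x]$ are handled identically: $[\,\cdot\,,\partial_x]$ applied to $h\mapsto(h,g)_{L^2}\chi$ yields $h\mapsto(h,\partial_x g)_{L^2}\chi+(h,g)_{L^2}\partial_x\chi$ (integrating by parts on the pairing), costing one more derivative, consistent with $m_1+m_2=\rho-|\vec{\mathtt b}|-1$ in \eqref{megaTame4}; the supremum over $m_1,m_2$ and the maximum over $\vec{\mathtt b}$ in \eqref{Mdritta} then produce the claimed bound on $\mathbb M^{\gamma}_{\mathcal R}(s,\mathtt b)$.

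For the $\Delta_{12}$ part I would use the identity $\Delta_{12}\big[(h,g_j)_{L^2}\chi_j\big]=(h,\Delta_{12}g_j)_{L^2}\chi_j(i_2)+(h,g_j(i_1))_{L^2}\Delta_{12}\chi_j$, so that $\Delta_{12}\mathcal R$ is again a finite sum of rank-one operators, one factor of each being a difference; the same conjugation-by-$\langle D_x\rangle$ argument together with boundedness of rank-one operators on $H^p$ gives \eqref{megalipTame2}, \eqref{megalipTame4} and hence the bound on $\mathbb M_{\Delta_{12}\mathcal R}(p,\mathtt b)$. Note here only the $H^p$-norm (not the full tame estimate) is needed, which is even simpler. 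I do not anticipate a genuine obstacle: the only point requiring a little care is bookkeeping of how many $x$- and $\varphi$-derivatives are spent at each step so as to verify they never exceed $\rho$, and fixing $\sigma_\star=\sigma_\star(\rho)$ large enough (e.g. $\sigma_\star=\rho+1$) to absorb them uniformly; the $\sup_{\tau\in[0,1]}$ and the finiteness of the sum over $|j|\le C$ make all constants depend only on $s$ (resp.\ $p$) and $\rho$ as stated.
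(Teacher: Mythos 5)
Your argument is correct, and it is essentially the argument the paper relies on: the paper's proof simply defers to Lemma B.2 of \cite{FGP1}, which is precisely this reduction of the finite-rank operator \eqref{FiniteDimFormDP} to rank-one pieces $h\mapsto (h,g)_{L^2}\chi$, whose conjugates by $\langle D_x\rangle^{m_1},\langle D_x\rangle^{m_2}$, $\varphi$-derivatives, commutators with $\partial_x$ and $\Delta_{12}$-differences stay rank-one with kernels losing at most $O(\rho)$ derivatives, together with the elementary Lip-$0$-tame bound for such operators. Your bookkeeping of the losses (absorbing them into $\sigma_\star(\rho)$) and the handling of the $\tau$-integral and the finite sum over $|j|\le C$ are exactly what is needed, so the proposal is complete (up to the harmless sign in the integration by parts for $[\,\cdot\,,\partial_x]$).
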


\begin{proof}
The Lemma follows by  reasoning  as
in  the proof of Lemma B.2 in \cite{FGP1}
and using the explicit formula 
\eqref{FiniteDimFormDP}.
\end{proof}

We conclude this section by showing the connection between the class 
$\mathfrak{L}_{\rho,p}$ and the class $\mathfrak{C}_{-1}$ 
in Definition \ref{Cuno}.

\begin{lem}\label{IncluecompoCLASSI}
Consider $\mathtt{b}\in \mathbb{N}$ and $\rho\in \mathbb{N}$ 
with $\rho\geq\mathtt{b}+3$.
The following holds.
\begin{itemize}
\item[(i)] If $A\in \gotL_{\rho,p}$ (see Definition \ref{ellerho}) then 
$A\in \gotC_{-1}$ (see Definition \ref{Cuno})
with 
\begin{equation}\label{pasta10}
\mathbb{B}^{\gamma}_{A}(s,\mathtt{b})
\leq_{\rho,s}\mathbb{M}^{\gamma}_{A}(s,\rho-2)\,, 
\qquad 
\mathbb{B}_{\Delta_{12}A }(p,\mathtt{b})
\leq_{\rho,p}\mathbb{M}_{\Delta_{12}A }(p,\rho-3)\,.
\end{equation}

\item[(ii)] 
Consider a symbol $a=a(\oo, i(\omega))$ in $S^{m}$ with $m\leq-1$
depending on $\oo\in \calO_0\subset \mathbb{R}^{\nu}$ in a Lipschitz way 
and on $i$ in a  Lipschitz way and let $A:=\op(a(x,\x))$.
Then one has that $A\in \gotC_{-1}$
with
\begin{equation}\label{pasta11}
\mathbb{B}^{\gamma}_{A}(s,\mathtt{b})
\leq_{s}|a|^{\g,\calO_0}_{m,s+\tb+2,\al}\,.
\end{equation}

\item[(iii)] Let $A,B\in \gotC_{-1}$. Then $A\circ B\in \gotC_{-1}$
with 
\begin{align}
\mathbb{B}^{\gamma}_{A\circ B}(s,\mathtt{b})&\leq_{s}
\mathbb{B}^{\gamma}_{A}(s,\mathtt{b})\mathbb{B}^{\gamma}_{A}(s_0,\mathtt{b})+
\mathbb{B}^{\gamma}_{A}(s_0,\mathtt{b})\mathbb{B}^{\gamma}_{A}(s,\mathtt{b})\label{pasta12}\\
\mathbb{B}_{\Delta_{12} (A\circ B)  }(p, \tb)&\le_{p, \rho} 
\mathbb{B}_{\Delta_{12} A  }(p,\mathtt{b})\mathbb{B}_B(p,\mathtt{b})\,,
+\mathbb{B}_{\Delta_{12} B  }(p,\mathtt{b})\mathbb{B}_A(p,\mathtt{b})\,.\label{pasta13}
\end{align}
\end{itemize}
\end{lem}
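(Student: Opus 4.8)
\textbf{Proof plan for Lemma \ref{IncluecompoCLASSI}.}

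The plan is to verify the three claims by reducing each of them to the tame-estimate machinery for the class $\gotC_{-1}$ (Definition \ref{Cuno}), together with the already established composition/action properties of pseudo differential operators from \cite{FGP1} and of the class $\gotL_{\rho,p}$ from Definition \ref{ellerho}. Throughout, the key reduction is the same: by Definition \ref{menounomodulotame} and the definition of $\mathbb{B}^{\gamma}_A(s)$, an operator $A$ lies in $\gotC_{-1}$ precisely when the conjugated operators $\langle D_x\rangle^{1/2}\partial_{\varphi_m}^{\tb_1}A\langle D_x\rangle^{1/2}$ and $\langle D_x\rangle^{1/2}[\partial_{\varphi_m}^{\tb_1}A,\partial_x]\langle D_x\rangle^{1/2}$, for $0\le\tb_1\le\tb$ and $m=1,\dots,\nu$, are Lip-$0$-tame; and analogously for $\Delta_{12}A$ on $H^{s_0}$. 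So in each item I would write out what has to be bounded and match it to a hypothesis.

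For item $(i)$: if $A\in\gotL_{\rho,p}$, then by Definition \ref{ellerho}(i)-(ii), for $|\vec{\tb}|\le\rho-2$ the operators $\langle D_x\rangle^{m_1}\partial_\varphi^{\vec{\tb}}A\langle D_x\rangle^{m_2}$ (with $m_1+m_2=\rho-|\vec{\tb}|$) and $\langle D_x\rangle^{m_1}[\partial_\varphi^{\vec{\tb}}A,\partial_x]\langle D_x\rangle^{m_2}$ (with $m_1+m_2=\rho-|\vec{\tb}|-1$) are Lip-$0$-tame. Choosing $\vec{\tb}=\tb_1 \mathtt{e}_m$ with $\tb_1\le\tb\le\rho-3$ and taking $m_1=m_2=1/2$ in the first family (legitimate since $\rho-\tb_1\ge 3\ge 1$, and the larger exponents dominate the smaller ones because $\langle D_x\rangle^{m}$ has operator norm $\ge 1$ and one can interpolate/absorb the extra smoothing), and $m_1=m_2=1/2$ in the commutator family (here $\rho-\tb_1-1\ge 2\ge 1$), gives exactly the Lip-$0$-tameness of the operators appearing in \eqref{Mdritta2}, with tame constants controlled by $\mathbb{M}^\gamma_A(s,\rho-2)$. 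The $\Delta_{12}$ part is identical, using Definition \ref{ellerho}(iii)-(iv) and the constraint $\rho\ge\tb+3$, yielding the bound by $\mathbb{M}_{\Delta_{12}A}(p,\rho-3)$; this proves \eqref{pasta10}. For item $(ii)$, since $a\in S^m$ with $m\le-1$, the operator $\langle D_x\rangle^{1/2}\partial_{\varphi_m}^{\tb_1}A\langle D_x\rangle^{1/2}=\op(\langle\xi\rangle^{1/2}\partial_{\varphi_m}^{\tb_1}a\,\langle\xi\rangle^{1/2})$ has symbol in $S^{m+1}\subset S^0$, hence is Lip-$0$-tame by the pseudo differential action estimates in Appendix B of \cite{FGP1}, with tame constant bounded by $|a|^{\gamma,\calO_0}_{m,s+\tb+2,\al}$ (the $+\tb+2$ accounts for the $\tb$ angular derivatives and the two extra $\langle D_x\rangle^{1/2}$ and commutator losses); the commutator $[\partial_{\varphi_m}^{\tb_1}A,\partial_x]$ has symbol $-\partial_x(\partial_{\varphi_m}^{\tb_1}a)\in S^m$ as well, so the same argument applies, giving \eqref{pasta11}. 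For item $(iii)$, one writes $\langle D_x\rangle^{1/2}(A\circ B)\langle D_x\rangle^{1/2}=\big(\langle D_x\rangle^{1/2}A\langle D_x\rangle^{1/2}\big)\circ\big(\langle D_x\rangle^{-1/2}\langle D_x\rangle^{-1/2}\big)\circ\big(\langle D_x\rangle^{1/2}B\langle D_x\rangle^{1/2}\big)$; actually more efficiently $\langle D_x\rangle^{1/2}A\circ B\langle D_x\rangle^{1/2}=(\langle D_x\rangle^{1/2}A\langle D_x\rangle^{1/2})(\langle D_x\rangle^{-1/2}\cdot\langle D_x\rangle^{-1/2})(\langle D_x\rangle^{1/2}B\langle D_x\rangle^{1/2})$ is the composition of three Lip-$0$-tame operators (the middle being the bounded Fourier multiplier $\langle D_x\rangle^{-1}$), so the composition lemma for Lip-tame operators gives the product estimate \eqref{pasta12}; the $\tb$ angular derivatives are handled by Leibniz, distributing $\partial_\varphi^{\vec{\tb}}$ over the composition, and the commutator with $\partial_x$ by the standard identity $[\partial_x, A\circ B]=[\partial_x,A]\circ B+A\circ[\partial_x,B]$, each term again a composition of Lip-$0$-tame operators after inserting $\langle D_x\rangle^{\pm1/2}$; the $\Delta_{12}$ estimate \eqref{pasta13} follows from $\Delta_{12}(A\circ B)=(\Delta_{12}A)\circ B(i_2)+A(i_1)\circ(\Delta_{12}B)$ and the same composition bounds on $H^{s_0}$.

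The main obstacle I anticipate is purely bookkeeping rather than conceptual: in item $(iii)$ one must carefully track how the fixed number $\tb=s_0+6\tau+6$ of angular derivatives interacts with the Leibniz expansion and with the insertion of the $\langle D_x\rangle^{\pm1/2}$ factors, making sure that every intermediate operator still falls under an available Lip-$0$-tame composition estimate with constants controlled by $\mathbb{B}^\gamma(s)$ and $\mathbb{B}^\gamma(s_0)$ in the asymmetric ``tame'' form (one factor in high norm, the other in low norm). A secondary technical point in item $(i)$ is justifying that the $m_1=m_2=1/2$ splitting is dominated by the full range $m_1+m_2=\rho-|\vec{\tb}|$ of splittings controlled in \eqref{megaTame2}: this uses that $\langle D_x\rangle^{1/2}\partial_\varphi^{\vec{\tb}}A\langle D_x\rangle^{1/2}=\langle D_x\rangle^{-(m_1-1/2)}(\langle D_x\rangle^{m_1}\partial_\varphi^{\vec{\tb}}A\langle D_x\rangle^{m_2})\langle D_x\rangle^{-(m_2-1/2)}$ for any admissible $m_1,m_2\ge1/2$, and the outer factors are bounded Fourier multipliers, so composing with them preserves Lip-$0$-tameness without increasing the tame constant beyond a dimensional factor. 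Once these routine verifications are in place, the three displayed inequalities follow directly.
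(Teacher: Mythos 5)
Your proposal is correct and follows essentially the same route as the paper: item (i) by factoring the $\langle D_x\rangle^{1/2}\cdot\langle D_x\rangle^{1/2}$ conjugation out of an admissible splitting in Definition \ref{ellerho} via bounded negative-order Fourier multipliers, item (ii) by the pseudo differential action/composition estimates (the paper simply defers to Lemma A.4 of \cite{FGP1}), and item (iii) by Leibniz in $\varphi$, inserting $\langle D_x\rangle^{1/2}\langle D_x\rangle^{-1}\langle D_x\rangle^{1/2}$ between the factors, and the telescoping identity for $\Delta_{12}(A\circ B)$. No substantive differences to report.
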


\begin{proof}
Let us check item $(i)$.
The fact that  $\langle D_{x}\rangle^{1/2}A\langle D_x \rangle^{1/2}$  
is Lip-$0$-tame  follows by \eqref{megaTame2} 
since $\rho\geq 1$. 
Indeed $\langle D_{x}\rangle^{-\rho+1}$ 
is bounded in $x$ and for any $h\in H^{s}$
\[
\begin{aligned}
\|\langle D_{x}\rangle^{\frac{1}{2}}A\langle D_x \rangle^{\frac{1}{2}}h\|_{s}^{\g,\calO_0}
&\leq 
\|\langle D_{x}\rangle^{-\rho+1}\big(\langle D_{x}\rangle^{\rho-\frac{1}{2}}A
\langle D_x \rangle^{\frac{1}{2}}\big)  h\|_{s}^{\g,\calO_0} \leq_{s}\gotM^{\gamma}_{A}(-\rho,s)\|h\|^{\gamma,\calO_0}_{s_0}+
\gotM^{\gamma}_{A}(-\rho,s_0)\|h\|^{\gamma,\calO_0}_{s}. 
\end{aligned}
\]
By  studying the tameness constant of the operators 
$\partial_{\varphi}^{\vec{\tb}} A, [A, \partial_x], [\partial_{\varphi}^{\vec{\tb}} A, \partial_x]
\Delta_{12}A ,\partial_{\f}^{\vec{\tb}}\Delta_{12}A , [\Delta_{12}A ,\partial_{x}]$,
$[\partial_{\f}^{\vec{\tb}}\Delta_{12}A ,\partial_{x}]$
for $\vec{\tb}\in \mathbb{N}^{\nu}$, $|\vec{\tb}|=\tb$, following the same reasoning as above,
one gets the \eqref{pasta10}.

\noindent
In order to prove  item $(ii)$ one can follow almost word  by word the proof of Lemma $A.4$.  
\noindent
Let us check \eqref{pasta12}.
Let $\tb\in \mathbb{N}$ and consider $0\leq \tb_1\leq \mathtt{b}$, $m=1, \dots, \nu$. 
One has 
\begin{equation}\label{pasta15}
\langle D_{x}\rangle^{\frac{1}{2}}\partial_{\f_m}^{\tb_1}(A\circ B)
\langle D_{x}\rangle^{\frac{1}{2}}
=\sum_{\mathtt{c}_1+\mathtt{c}_2=\tb_1}C(\mathtt{c}_1, \mathtt{c}_2) \langle D_{x}\rangle^{\frac{1}{2}}(\partial_{\f_m}^{\mathtt{c}_1}A)
(\partial_{\f_m}^{\mathtt{c}_2}B)\langle D_{x}\rangle^{\frac{1}{2}}\,.
\end{equation}
We show that each summand in \eqref{pasta15} is a Lip-$0$-tame operator.
We have for $h\in H^{s}$
\begin{equation}\label{pasta16}
\begin{aligned}
&\|\langle D_{x}\rangle^{\frac{1}{2}}
(\partial_{\f_m}^{\mathtt{c}_1}A)
(\partial_{\f_m}^{\mathtt{c}_2}B)\langle D_{x}\rangle^{\frac{1}{2}}h
\|_{s}^{\gamma,\calO_0}
\leq 
\|\langle D_{x}\rangle^{\frac{1}{2}}
(\partial_{\f_m}^{\mathtt{c}_1}A)\langle D_{x}\rangle^{\frac{1}{2}}
\langle D_{x}\rangle^{-1}
\langle D_{x}\rangle^{\frac{1}{2}}
(\partial_{\f_m}^{\mathtt{c}_2}B)\langle D_{x}\rangle^{\frac{1}{2}}h
\|_{s}^{\gamma,\calO_0}\\
&\qquad \quad\leq_{s}
(\mathbb{B}^{\gamma}_{A}(s,\mathtt{b})
\mathbb{B}^{\gamma}_{B}(s_0,\mathtt{b})
+\mathbb{B}^{\gamma}_{A}(s_0,\mathtt{b})\mathbb{B}^{\gamma}_{B}(s,\mathtt{b}))\|h\|_{s_0}^{\gamma,\calO_0}
+\mathbb{B}^{\gamma}_{A}(s_0,\mathtt{b})
\mathbb{B}^{\gamma}_{B}(s_0,\mathtt{b})\|h\|_{s}^{\gamma,\calO_0}\,.
\end{aligned}
\end{equation}
In \eqref{pasta16} 
we used the fact that 
$\langle D_{x}\rangle^{\frac{1}{2}}
(\partial_{\f_m}^{\mathtt{c}_1}A)\langle D_{x}\rangle^{\frac{1}{2}}$
and $\langle D_{x}\rangle^{\frac{1}{2}}
(\partial_{\f_m}^{\mathtt{c}_2}B)\langle D_{x}\rangle^{\frac{1}{2}}$ 
are $0$-tame by hypothesis (see Definition \eqref{Cuno}).
This proves \eqref{pasta12} for the operators 
$A\circ B$ and $\partial_{\f_m}^{\tb_1}(A\circ B)$ 
for any $0\le \tb_1\le \tb$, $m=1, \dots, \nu$. 
One concludes the proof of \eqref{pasta12} and \eqref{pasta13} 
followings the same ideas used above. 
For further details we refer to the proof of Lemma $B.1$ in \cite{FGP1}.
\end{proof}

\begin{lem}\label{commutatoC1}
Let $X,Y\in \gotC_{-1}$ then 
$\mathrm{ad}^k_{X}[Y]\in \gotC_{-1}$ for any 
$k\geq 1$ (recall \eqref{def:AD}). 
Moreover for any $k\geq 1$ we have
\begin{equation}\label{alieni}
\mathbb{B}^{\g}_{\mathrm{ad}^k_{X}[Y]}(s)
\lesssim_s \mathbb{B}_{X}^{\g}(s)
\big(\mathbb{B}_{X}^{\g}(s_0)\big)^{k-1}\mathbb{B}^{\g}_Y(s_0)
+\big(\mathbb{B}_{X}^{\g}(s_0)\big)^k \mathbb{B}^{\g}_Y(s)\,.
\end{equation}
Moreover if $X, Y$ depend on some parameter $i$ we have
\begin{equation}\label{chegioia}
\mathbb{B}_{\Delta_{12}\mathrm{ad}^k_{X}[Y]}(s_0)\lesssim \sum_{j=1}^{k-1} \mathbb{B}_{X(i_2)}^{j-1}(s_0)\,\mathbb{B}_{\Delta_{12} X}(s_0)\,\mathbb{B}_{X(i_1)}^{k-j}(s_0)\,\mathbb{B}_{Y(i_1)}(s_0)+\mathbb{B}_{X(i_2)}^k \mathbb{B}_{\Delta_{12} Y}(s_0).
\end{equation}
\end{lem}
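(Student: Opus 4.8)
The goal is to prove Lemma \ref{commutatoC1}: that $\gotC_{-1}$ is closed under iterated adjoint action $\mathrm{ad}_X^k[Y]$ with the quantitative bounds \eqref{alieni} and \eqref{chegioia}. The strategy is an induction on $k$, built entirely on the composition stability of $\gotC_{-1}$ already established in Lemma \ref{IncluecompoCLASSI}-$(iii)$.

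First I would record the base case $k=1$. Here $\mathrm{ad}_X[Y]=[X,Y]=X\circ Y-Y\circ X$, and by Lemma \ref{IncluecompoCLASSI}-$(iii)$ both $X\circ Y$ and $Y\circ X$ belong to $\gotC_{-1}$, so their difference does too (closure under sums is immediate from the definition of $\mathbb{B}^\g$ and the triangle inequality, as noted essentially in Remark \ref{remarkalmost} for the almost-diagonal side and directly from \eqref{Mdritta2} for the tame side). The bound \eqref{pasta12} gives
$$
\mathbb{B}^\g_{[X,Y]}(s)\lesssim_s \mathbb{B}^\g_X(s)\mathbb{B}^\g_Y(s_0)+\mathbb{B}^\g_X(s_0)\mathbb{B}^\g_Y(s),
$$
which is exactly \eqref{alieni} for $k=1$. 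Then I would run the inductive step: writing $\mathrm{ad}_X^{k}[Y]=[X,\mathrm{ad}_X^{k-1}[Y]]$, apply the base case with $Y\rightsquigarrow \mathrm{ad}_X^{k-1}[Y]$ and plug in the inductive hypothesis for $\mathbb{B}^\g_{\mathrm{ad}_X^{k-1}[Y]}(s)$ and $\mathbb{B}^\g_{\mathrm{ad}_X^{k-1}[Y]}(s_0)$. The interpolation-type structure of \eqref{pasta12} — top norm times low norm plus low norm times top norm — is precisely what makes the product telescope: the high-norm factor is hit at most once at each level, producing the single $\mathbb{B}^\g_X(s)$ or $\mathbb{B}^\g_Y(s)$ and $k-1$ (resp. $k$) copies of $\mathbb{B}^\g_X(s_0)$ in \eqref{alieni}. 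The constants $C(s,k)$ absorb the combinatorial factors from iterating \eqref{pasta12} $k$ times, with $s_0$ fixed.

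For the Lipschitz-in-$i$ estimate \eqref{chegioia}, the plan is the same induction, now using \eqref{pasta13} together with the Leibniz-type expansion of $\Delta_{12}$ over a composition: $\Delta_{12}(A\circ B)=(\Delta_{12}A)\circ B(i_1)+A(i_2)\circ(\Delta_{12}B)$ (this is the identity underlying \eqref{pasta13}). Iterating this over the $k$-fold commutator, the $\Delta_{12}$ lands on exactly one of the $k$ copies of $X$ or on $Y$, giving the sum over $j$ in \eqref{chegioia} with $j-1$ factors $\mathbb{B}_{X(i_2)}$ to the left, one $\mathbb{B}_{\Delta_{12}X}$, then $\mathbb{B}_{X(i_1)}$ and $\mathbb{B}_{Y(i_1)}$ to the right, plus the term where $\Delta_{12}$ hits $Y$. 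One must be slightly careful about which torus-embedding argument sits on the left versus right of each composition, since $\gotC_{-1}$-norms of $\Delta_{12}$-operators are only controlled at $s=s_0$ (see \eqref{Mdrittaconlai2}); but \eqref{pasta13} is stated at $s_0=p$ precisely in this asymmetric form, so at each step all low-norm factors are available and the estimate closes.

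I expect the only mildly delicate point to be purely bookkeeping: making sure that in the iteration the "extra'' terms coming from $\mathrm{ad}_X^k[\,\cdot\,]$ applied to operators like $\omega\cdot\partial_\varphi-mJ$ (which appear in the applications, e.g. \eqref{alieni3}, \eqref{R7}) are not part of this lemma — Lemma \ref{commutatoC1} concerns only $X,Y\in\gotC_{-1}$, and the operators $(\omega-\bar\omega)\cdot\partial_\varphi X$ and $[X,J]$ are shown to lie in $\gotC_{-1}$ separately in the proofs of Lemmata \ref{delfino} and \ref{LBNF1}. So the proof here is genuinely short: base case from \eqref{pasta12}/\eqref{pasta13}, then induction, and the constants and combinatorics are routine. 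There is no real obstacle; the lemma is a clean corollary of the algebra-closure properties of $\gotC_{-1}$.
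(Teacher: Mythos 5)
Your proposal is correct and follows essentially the same route as the paper: the paper's (very terse) proof consists precisely of the Leibniz-type formula $\Delta_{12}\,\mathrm{ad}_X^k[Y]=\sum_{j=1}^{k-1}\mathrm{ad}_{X(i_2)}^{j-1}\,\mathrm{ad}_{\Delta_{12}X}\,\mathrm{ad}_{X(i_1)}^{k-j}[Y(i_1)]+\mathrm{ad}_{X(i_2)}^k[\Delta_{12}Y]$ together with an iterative application of the composition estimates of Lemma \ref{IncluecompoCLASSI}, which is exactly your induction on $k$ via \eqref{pasta12} and \eqref{pasta13}. Your bookkeeping remarks (subadditivity of $\mathbb{B}^\g$, the $\Delta_{12}$-norms being controlled only at $s_0$, and the exclusion of terms like $\mathrm{ad}_X^k[\omega\cdot\partial_\varphi-mJ]$, which are handled separately in Lemmata \ref{delfino} and \ref{LBNF1}) are accurate.
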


\begin{proof}
It follows by using the formula
\[
\Delta_{12} \mathrm{ad}_X^k[Y]=\sum_{j=1}^{k-1} \mathrm{ad}_{X(i_2)}^{j-1}\,\mathrm{ad}_{\Delta_{12} X}\,\mathrm{ad}^{k-j}_{X(i_1)}[Y(i_1)]+\mathrm{ad}_{X(i_2)}^{k}[\Delta_{12} Y]
\]
and applying iteratively the estimates of Lemma \ref{IncluecompoCLASSI}.
\end{proof}

\begin{lem}\label{LemmaAggancio}
 Let $A\in \mathfrak{C}_{-1}$ then $A$ is a Lip-$-1$-modulo tame operator
 according to Definition \ref{menounomodulotame}. 
 Moreover
\begin{align}
&\mathfrak{M}^{\sharp, \gamma^{3/2}}_A( s)\le 
\max_{m=1, \dots, \nu} 
\mathfrak{M}^{\gamma^{3/2}}_{\partial_{\varphi_m}^{s_0}  [A, \partial_x]}(-1,s)\,,
\qquad \quad\mathfrak{M}^{\sharp, \gamma^{3/2}}_A( s, \tb_0)\le 
\max_{m=1,\dots, \nu}  
\mathfrak{M}^{\gamma^{3/2}}_{\partial_{\varphi_m}^{s_0+\tb_0}  [A, \partial_x]}(-1,s)\,,\label{chavez1}\\
&\|\lD^{1/2}\underline{\Delta_{12} A}\lD^{1/2}\|_{\mathcal L(H^{s_0})}, \,\, 
\|\lD^{1/2}\underline{\Delta_{12} \langle 
\partial_{\f}\rangle^{{\mathtt b_0}} A  }
\lD^{1/2}\|_{\mathcal L(H^{s_0})} \le  
\mathbb{B}_{\Delta_{12} A}(s_0, \mathtt{b}_0)\,.\label{chavez20}
\end{align}
\end{lem}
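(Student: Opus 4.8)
\textbf{Proof plan for Lemma \ref{LemmaAggancio}.}

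The statement asserts two things: first, that membership in $\gotC_{-1}$ implies the Lip-$-1$-modulo-tame property with the explicit bounds \eqref{chavez1} on the modulo-tame constants (both with and without the extra $\tb_0$ derivatives in $\f$), and second, the bounds \eqref{chavez20} on the majorant norms of the $\Delta_{12}$-variation. The plan is to exploit the algebraic identity relating the majorant of $A$ to that of the commutator $[A,\del_x]$ via a gain of one derivative in $x$, which is precisely what converts a ``$0$-tame with one extra $x$-derivative on each side'' estimate into a ``$-1$-modulo-tame'' estimate in the sense of Definitions \ref{menounomodulotame} and \ref{def:op-tame}.

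First I would recall, from Definition \ref{Cuno}, that $A\in\gotC_{-1}$ means $\lD^{1/2}\del_{\f_m}^{\tb_1}A\,\lD^{1/2}$ and $\lD^{1/2}\del_{\f_m}^{\tb_1}[A,\del_x]\lD^{1/2}$ are Lip-$0$-tame for all $0\le\tb_1\le\tb=s_0+6\tau+6$ and all $m$, with tame constants controlled by $\mathbb{B}^{\g}_A(s)$. In particular, since $\tb_0=6\tau+6$ and hence $s_0+\tb_0\le\tb$, both $\del_{\f_m}^{s_0}[A,\del_x]$ and $\del_{\f_m}^{s_0+\tb_0}[A,\del_x]$ fall within the admissible range. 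The key observation is that the matrix entries of $A$ and of $[A,\del_x]$ are related by $([A,\del_x])_j^{j'}(\ell)=\mathrm{i}(j-j')A_j^{j'}(\ell)$, so that $|A_j^{j'}(\ell)|\le |j-j'|^{-1}|([A,\del_x])_j^{j'}(\ell)|$ whenever $j\ne j'$, while the diagonal-in-$x$ part ($j=j'$) of $\lD^{1/2}A\lD^{1/2}$ is handled directly since there $\lD^{1/2}$ acts as a bounded multiplier on the relevant Fourier modes up to the weight $\langle j\rangle$. Passing to majorant operators (which only affects absolute values of entries and hence is compatible with the partial ordering \eqref{partialorder}) and using that $\langle\pa_\f\rangle^{s_0}$ resp. $\langle\pa_\f\rangle^{s_0+\tb_0}$ acting on the majorant is dominated by $\sum_m\del_{\f_m}^{s_0}$ resp. $\sum_m\del_{\f_m}^{s_0+\tb_0}$ (up to combinatorial constants absorbed in $\lesssim$), one deduces that $\lD^{1/2}A\lD^{1/2}$ and $\lD^{1/2}\langle\pa_\f\rangle^{\tb_0}A\lD^{1/2}$ inherit the required tame estimates from those on $[A,\del_x]$ with the stated loss of $s_0$ (resp. $s_0+\tb_0$) $\f$-derivatives; comparing with Definition \ref{menounomodulotame} (recall $\gamma^{3/2}<\g$, so a $\g$-weighted Lipschitz bound implies a $\gamma^{3/2}$-weighted one) yields \eqref{chavez1}. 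For the inequalities \eqref{chavez20} one repeats the same majorant manipulation on $\Delta_{12}A$ and $\langle\pa_\f\rangle^{\tb_0}\Delta_{12}A$, now using the bounds \eqref{StimeDeltai} at $s=s_0$ packaged into $\mathbb{B}_{\Delta_{12}A}(s_0,\tb_0)$, together with the boundedness of $\lD^{-1}$ to absorb the $|j-j'|^{-1}$ factor.

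The main obstacle I anticipate is the bookkeeping at the diagonal $j=j'$ (and more generally the near-diagonal block $|j-j'|\le 1$), where the heuristic ``divide by $|j-j'|$'' fails; one must argue separately there, using that on this block $\lD^{1/2}A\lD^{1/2}$ coincides, up to a bounded Fourier multiplier, with $\langle D_x\rangle A$, whose tameness is again read off from the $\gotC_{-1}$ hypothesis (taking $m_1=1,m_2=0$ is not literally available, but the $0$-tameness of $\lD^{1/2}A\lD^{1/2}$ with the extra derivative structure, combined with the fact that on a fixed finite band of $j-j'$ all weights $\langle j\rangle,\langle j'\rangle$ are comparable, suffices). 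Once this band is dispatched, the remainder of the argument is the routine majorant/partial-ordering estimate, entirely parallel to Lemma B.1 and the surrounding material in \cite{FGP1}, and I would simply cite that reference for the purely mechanical parts. $\EP$
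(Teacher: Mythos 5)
Your proposal is correct and takes essentially the same route as the paper: the paper's own proof is a one-line citation of Lemma A.4 in \cite{FGP1}, whose argument is precisely what you reconstruct — entrywise gain of $\langle \ell-\ell'\rangle^{-s_0}$ (resp. $\langle \ell-\ell'\rangle^{-s_0-\tb_0}$) and $|j-j'|^{-1}$ from the $\partial_{\varphi_m}$-derivatives and the commutator with $\partial_x$, followed by the majorant/Cauchy--Schwarz estimate, with $\gamma^{3/2}<\gamma$ handling the Lipschitz part. Your separate treatment of the band $j=j'$ via the non-commutator part of the $\mathfrak{C}_{-1}$ data is also how the cited proof proceeds; it simply means the bounds are really controlled by the full constants $\mathbb{B}^{\gamma}_{A}(s)$, $\mathbb{B}_{\Delta_{12}A}(s_0,\mathtt{b}_0)$, which is exactly how \eqref{chavez1}--\eqref{chavez20} are used in the proof of Theorem \ref{risultatosez8}.
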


\begin{proof}
It follows 
arguing as  in the proof of Lemma $A.4$ 
in Appendix $A$ of \cite{FGP1} using the definition of $\mathfrak{C}_{-1}$
in Definition \ref{Cuno}.
\end{proof}



\subsection{Linear Birkhoff normal form}

The next lemma regards almost diagonal vector fields which belong to $\gotC_{-1}$.

\begin{lem}\label{Numeratore}
Let $X_A:=J A\in \gotC_{-1}$ with $A$ 
almost diagonal in the sense of Definition \ref{almostDIAG} .
Then $\lvert A_{j, l}^{j', l'} \rvert\le C\langle j,j'\rangle^{-1}$
for some constant $C>0$.
\end{lem}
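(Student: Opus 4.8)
The statement to be proved (Lemma \ref{Numeratore}) says: if $X_A = JA \in \gotC_{-1}$ and $A$ is almost diagonal, then the matrix entries of $A$ satisfy $|A_{j,\ell}^{j',\ell'}| \le C\langle j,j'\rangle^{-1}$. My plan is to combine two pieces of information: the smoothing (tame) bounds encoded in $\gotC_{-1}$, which control $\langle D_x\rangle^{1/2} A \langle D_x\rangle^{1/2}$ in operator norm, and the support restriction from almost diagonality, which forces $\langle j-j',\ell-\ell'\rangle \le \mathtt{C}$ whenever $A_{j,\ell}^{j',\ell'}\ne 0$. The first ingredient tells me that $|A_{j,\ell}^{j',\ell'}|$ is small like $\langle j\rangle^{-1/2}\langle j'\rangle^{-1/2}$ (morally $A$ is of order $-1$), the second ingredient tells me that on the support $\langle j\rangle$ and $\langle j'\rangle$ are comparable, so the product $\langle j\rangle^{-1/2}\langle j'\rangle^{-1/2}$ really is comparable to $\langle j,j'\rangle^{-1}$.

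First I would recall from Definition \ref{Cuno} and Lemma \ref{LemmaAggancio} that $A\in\gotC_{-1}$ implies $A$ is Lip-$(-1)$-modulo tame, hence in particular $\langle D_x\rangle^{1/2} A \langle D_x\rangle^{1/2}$ is Lip-$0$-tame; applied at $s=s_0$ this gives a bound $\|\langle D_x\rangle^{1/2} A \langle D_x\rangle^{1/2} u\|_{s_0} \lesssim \mathbb{B}_A^\gamma(s_0)\|u\|_{s_0}$. Second I would extract the pointwise matrix bound from this operator bound by testing against a single exponential $u = e^{\mathrm{i}(\ell'\cdot\varphi + j' x)}$: computing $\langle D_x\rangle^{1/2} A \langle D_x\rangle^{1/2} u$ and reading off the coefficient of $e^{\mathrm{i}(\ell\cdot\varphi + j x)}$ yields $\langle j\rangle^{1/2}\langle j'\rangle^{1/2}|A_{j,\ell}^{j',\ell'}| \lesssim C'\langle \ell,j\rangle^{s_0}\langle \ell',j'\rangle^{-s_0}$ — more precisely, isolating one Fourier mode and using the tame estimate at index $s_0$ gives control of $|A_{j,\ell}^{j',\ell'}|$ by a constant times $\langle j\rangle^{-1/2}\langle j'\rangle^{-1/2}$ times a ratio of Japanese brackets in $(\ell,\ell')$. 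Third, on the support of $A$, almost diagonality gives $|j - j'|\le \mathtt C$ and $|\ell - \ell'|\le \mathtt C$, so $\langle j\rangle$, $\langle j'\rangle$, $\langle j,j'\rangle$ are all mutually comparable up to a constant depending on $\mathtt C$, and likewise the bracket ratio in $(\ell,\ell')$ is bounded. Putting these together: $|A_{j,\ell}^{j',\ell'}| \lesssim \langle j\rangle^{-1/2}\langle j'\rangle^{-1/2} \lesssim \langle j,j'\rangle^{-1}$, with $C$ depending on $\mathtt C$, on $s_0$, and on $\mathbb{B}_A^\gamma(s_0)$.

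The one slightly delicate point — the main obstacle, though a mild one — is making the passage from the operator-norm / tame estimate to the pointwise matrix-entry estimate fully rigorous, since testing against a non-$L^2$-normalized single exponential and reading off a single output coefficient must be done carefully with the definition of the $H^{s}$ norm and of tameness (one really wants to say that the $(j,\ell)$ Fourier coefficient of the image is bounded by $\langle j,\ell\rangle^{-s_0}$ times the $H^{s_0}$ norm of the image, which is itself controlled); and one must also keep track that the bracket $\langle D_x\rangle^{1/2}$ acting on the input contributes the factor $\langle j'\rangle^{1/2}$ and on the output the factor $\langle j\rangle^{1/2}$. Once this bookkeeping is set up, the rest is immediate from the almost-diagonal support condition. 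I expect this to be short: essentially a two-line computation after invoking Lemma \ref{LemmaAggancio} and Definition \ref{almostDIAG}.
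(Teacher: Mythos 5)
Your overall strategy (extract a pointwise matrix bound with weights $\langle j\rangle^{1/2}\langle j'\rangle^{1/2}$ from the $\langle D_x\rangle^{1/2}(\cdot)\langle D_x\rangle^{1/2}$ boundedness encoded in $\gotC_{-1}$, then use the almost-diagonal support $\langle j-j',\ell-\ell'\rangle\le\mathtt C$ to compare $\langle j\rangle,\langle j'\rangle,\langle j,j'\rangle$) is the same as the paper's, and the Fourier-testing step you worry about is indeed routine: testing $B u$ with $u=e^{\mathrm{i}(\ell'\cdot\varphi+j'x)}$ and isolating the $(j,\ell)$ output mode gives $|B_j^{j'}(\ell-\ell')|\le C\langle \ell',j'\rangle^{s_0}\langle\ell,j\rangle^{-s_0}$, and on the almost-diagonal support the bracket ratio is bounded (note, incidentally, that you wrote this ratio with the roles of input and output reversed; harmless here precisely because of almost diagonality, but the input bracket should carry the positive power).

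There is, however, one genuine gap: you use the hypothesis as if it said $A\in\gotC_{-1}$, whereas the lemma assumes $X_A:=JA\in\gotC_{-1}$. What Definition \ref{Cuno} gives you directly is the boundedness of $\langle D_x\rangle^{1/2}\,JA\,\langle D_x\rangle^{1/2}$, not of $\langle D_x\rangle^{1/2}A\langle D_x\rangle^{1/2}$, and your first ingredient is therefore not available as stated (invoking Lemma \ref{LemmaAggancio} does not change this; it concerns the same operator $JA$). The gap is easily repaired in either of two ways. The paper's route: the matrix entries of $B:=\langle D_x\rangle^{1/2}JA\langle D_x\rangle^{1/2}$ are $\lambda(j)\langle j\rangle^{1/2}\langle j'\rangle^{1/2}A_j^{j'}(\ell-\ell')$, they are uniformly bounded because $B\in\mathcal L(H^s)$, and almost diagonality gives $|\lambda(j)|\gtrsim|j|\gtrsim\langle j\rangle^{1/2}\langle j'\rangle^{1/2}$, whence $|A_j^{j'}(\ell-\ell')|\lesssim\langle j\rangle^{-1/2}\langle j'\rangle^{-1/2}|j|^{-1}\le C\langle j,j'\rangle^{-1}$; here the factor $\lambda(j)$ coming from $J$ is used, not discarded. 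Alternatively, you could recover your version of the first ingredient by writing $A=J^{-1}(JA)$ and observing that on $H_{S^\perp}$ (where the zero mode is absent) $J^{-1}$ is a bounded Fourier multiplier commuting with $\langle D_x\rangle^{1/2}$, so $\langle D_x\rangle^{1/2}A\langle D_x\rangle^{1/2}=J^{-1}\langle D_x\rangle^{1/2}(JA)\langle D_x\rangle^{1/2}$ is bounded; after that your bookkeeping goes through. As written, though, the proposal silently replaces the hypothesis on $JA$ by one on $A$, and that substitution must be justified.
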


\begin{proof}
The operator $B:=\langle D_x \rangle^{1/2} J A \langle D_x \rangle^{1/2}$ 
belongs to $\mathcal{L}(H^s)$ for any $s$. 
Then $\lvert B_{j, l}^{j', l'} \rvert\le C$ for all 
$j, j'\in \mathbb{Z}$, $l, l'\in\mathbb{Z}^{\nu}$. 
The thesis follows by the fact that
\[
\lvert B_{j, l}^{j', l'} \rvert
=\lvert A_{j, l}^{j', l'} \rvert \langle j \rangle^{1/2}\langle j' \rangle^{1/2}\lvert 
\og(j)\rvert\gtrsim \lvert A_{j, l}^{j', l'} \rvert \langle j \rangle^{1/2}
\langle j' \rangle^{1/2}\lvert j \rvert
\]
and $\lvert j \rvert\gtrsim\langle j \rangle^{1/2}\langle j' \rangle^{1/2}$, since 
$A$ is almost diagonal.
\end{proof}


We now study the flows generated by operators belonging to the  class $\mathfrak{C}_{-1}$
given in Definition \ref{Cuno}.

\noindent
In subsection \ref{LinearBNF} 
we look for symplectic changes of variable 
$\Upsilon_i\colon H^s_{S^{\perp}}(\T^{\nu+1})\to H^s_{S^{\perp}}(\T^{\nu+1})$, 
$i=1,2, 3$, that are the time-$1$ flow of quadratic Hamiltonians
\begin{equation}\label{HAMa1}
H_{\mathtt{A}_i}(u):=\varepsilon\sum_{j, j'\in S^c} 
(\mathtt{A}_i)_j^{j'}(\varphi)\,u_{j'}\,\overline{u}_{j}\,,
\end{equation}
where $\mathtt{A}_i(\varphi)$ is a self-adjoint operator 
$\forall \varphi\in\T^{\nu}$ and thus 
\begin{equation}\label{Fi1dp}
\Upsilon_i:=\exp(\varepsilon\, J \mathtt{A}_i)
=\mathrm{I}_{H_S^{\perp}}+\varepsilon\, J \mathtt{A}_i
+\varepsilon^2 \frac{(J \mathtt{A}_i)^2}{2}
+\varepsilon^3 R_i\,, 
\quad R_i:=\sum_{k\geq 3} 
\frac{\varepsilon^{k-3}}{k!}\,(J \mathtt{A}_i)^k\,.
\end{equation}

\noindent
Given linear operators $\mathtt{B}_i$, $i=1,2,3$ define 
the matrices $\mathtt{A}_i$, $i=1,2,3$ as
\begin{align}
(\mathtt{A}_i)_{j, l}^{j', l'}&=(\mathtt{A}_i)_{j}^{j'}(l-l')
=-\frac{(\mathtt{B}_i)_{j}^{j'}(l-l')}{\delta_{l j j'}}\,, 
\qquad \delta_{l j j'}\neq 0\,,\quad \lvert j-j'\rvert\le 2\,i\,\overline{\jmath}_1\,,
\,\,\,\,\lvert l-l'\rvert\le i, \quad i=1, 2\,,\label{us}\\
(\mathtt{A}_3)_{j, l}^{j', l'}&=(\mathtt{A}_3)_{j}^{j'}(l-l')
=-\frac{(\mathtt{B}_3)_{j}^{j'}(l-l')}{\delta^*_{l j j'}}\,, 
\qquad \delta^*_{l j j'}\neq 0\,,\quad \lvert j-j'\rvert\le 6\,\overline{\jmath}_1\,,
\,\,\,\,\lvert l-l'\rvert\le 3\,,\label{them}
\end{align}
where, recall \eqref{dispersionLaw}, \eqref{TwistMatrixDP},\eqref{lambdaJ0},
\begin{equation}\label{deltinoino}
\delta_{\ell j j'}:=\overline{\omega}\cdot \ell+\og(j)-\og(j'), \quad
\delta_{\ell j j'}^*:=\delta_{\ell j j'}
+\varepsilon^2 ( \mathbb{A}\xi\cdot \ell +\og(j')\lal_{j'}-\og(j)\lal_j).
\end{equation}

\begin{lem}\label{Denominatore}
Let $j, j'\in S^c$, $j\neq j'$. If $\sum_{i=1}^{\nu} \overline{\jmath}_i\,\ell_i+j-j'=0$, $0<\lvert \ell \rvert\le 2$, $\delta_{\ell j j'}\neq 0$,
where $\delta_{\ell j j'}$ are given in \eqref{deltinoino},
 then there exists a constant $C$ depending on the set $S$ such that
 $ \lvert \delta_{\ell j j'} \rvert\geq C$.
\end{lem}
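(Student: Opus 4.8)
\textbf{Proof plan for Lemma \ref{Denominatore}.}
The statement asks for a lower bound on $\lvert \delta_{\ell j j'}\rvert = \lvert \overline{\omega}\cdot\ell + \og(j) - \og(j')\rvert$ under the momentum constraint $\sum_i \overline{\jmath}_i \ell_i + j - j' = 0$ with $0 < \lvert\ell\rvert \le 2$, $j,j' \in S^c$, $j\neq j'$, and the hypothesis $\delta_{\ell j j'} \neq 0$. The plan is to express $\delta_{\ell j j'}$ as a rational function of the integer indices $\ell, j, j', \overline{\jmath}_1,\dots,\overline{\jmath}_\nu$ with bounded denominator, so that nonvanishing forces the numerator to have absolute value at least one, and then conclude. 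Using $\og(m) = m + 3m/(1+m^2)$ and the momentum relation $\overline{\omega}\cdot\ell = \sum_i \og(\overline{\jmath}_i)\ell_i = \sum_i \overline{\jmath}_i\ell_i + 3\sum_i \overline{\jmath}_i\ell_i/(1+\overline{\jmath}_i^2) = (j'-j) + 3\sum_i \overline{\jmath}_i\ell_i/(1+\overline{\jmath}_i^2)$, the linear part cancels and we get
\[
\delta_{\ell j j'} = 3\sum_{i=1}^{\nu}\frac{\overline{\jmath}_i}{1+\overline{\jmath}_i^2}\,\ell_i + \frac{3j}{1+j^2} - \frac{3j'}{1+j'^2}.
\]

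First I would split into two regimes according to the size of $\max\{\lvert j\rvert,\lvert j'\rvert\}$. If this maximum exceeds a constant $\widehat C(S)$ (explicitly computable from $\mathtt r$), then by Lemma \ref{ecologia}, estimate \eqref{GenericAssumptionbis} (which applies since $0<\lvert\ell\rvert\le 2$), the sum $\lvert 3\sum_i \overline{\jmath}_i\ell_i/(1+\overline{\jmath}_i^2)\rvert \ge 3\mathtt r/2$, while the remaining two terms $\lvert 3j/(1+j^2) - 3j'/(1+j'^2)\rvert \le 6/\widehat C(S)$ can be made smaller than $\mathtt r$ by choosing $\widehat C(S)$ large. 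Hence $\lvert\delta_{\ell jj'}\rvert \ge \mathtt r/2$ in this regime, giving the claimed bound with $C$ depending only on $S$ (through $\mathtt r$). Note the momentum relation $\sum_i\overline{\jmath}_i\ell_i = j'-j$ also forces $\lvert j - j'\rvert \le 2\overline{\jmath}_1$, so in fact $j$ large is equivalent to $j'$ large here, which is harmless.

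The remaining regime is $\lvert j\rvert,\lvert j'\rvert \le \widehat C(S)$, where the whole expression $\delta_{\ell jj'}$ becomes a rational number $P/Q$ with $P,Q$ integers (clearing the denominators $1+\overline{\jmath}_i^2$, $1+j^2$, $1+j'^2$), $Q$ having no real zeros and satisfying $1 \le \lvert Q\rvert \le C(S)$ because all the indices $\overline{\jmath}_i$ are bounded once $S$ is fixed and $j,j'$ are bounded by $\widehat C(S)$. Since $\delta_{\ell jj'}\neq 0$ by hypothesis, $P \neq 0$, hence $\lvert P\rvert \ge 1$, so $\lvert\delta_{\ell jj'}\rvert = \lvert P\rvert/\lvert Q\rvert \ge 1/C(S)$. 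Taking $C := \min\{\mathtt r/2,\,1/C(S)\}$ covers both regimes. The main (and only real) obstacle is bookkeeping: making sure the momentum constraint is used correctly to eliminate the unbounded linear term and to control $\lvert j - j'\rvert$, and verifying that $\lvert\ell\rvert \in \{1,2\}$ is exactly the range for which \eqref{GenericAssumptionbis} is available — there is no deep difficulty, this is the same mechanism already used in the proof of Lemma \ref{measG0} in Appendix \ref{quadro}.
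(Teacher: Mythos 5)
Your proof is correct, but it takes a genuinely different route from the paper's. The paper argues case by case in $\lvert\ell\rvert$: using the momentum condition it rewrites the divisor as an explicit factorized rational function (for $\lvert\ell\rvert=2$, $\og(j_1)+\og(j_2)+\og(j)-\og(j')=(j_1+j_2)(j_1+j)(j_2+j)\,P(j_1,j_2,j)$ with $P$ as in \eqref{esselunga}), checks that the three integer factors are nonzero thanks to $j\neq j'$ and $j\in S^c$, and then bounds the expression from below by splitting into $\lvert j\rvert>N(S)$ (where the explicit asymptotics of $P$ give a bound uniform in $j$) and $\lvert j\rvert\le N(S)$ (where it uses the same integrality argument as you: nonzero integer numerator over a bounded denominator); notably it never invokes the wave-packet condition. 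You instead cancel the linear part via momentum and reuse the mechanism of the measure estimate of Lemma \ref{measG0}: for large indices, \eqref{GenericAssumptionbis} of Lemma \ref{ecologia} controls the tangential sum while the dispersive tails $3j/(1+j^2)$, $3j'/(1+j'^2)$ are small — your remark that momentum forces $\lvert j-j'\rvert\le 2\overline{\jmath}_1$, so that $j$ and $j'$ are simultaneously large, is exactly the point that makes this work — and for bounded indices you use integrality. This is a valid and arguably more uniform argument (it treats $\lvert\ell\rvert=1,2$ at once), at the price of relying on $S^+\in\mathcal{V}(\mathtt{r})$, $\mathtt{r}\le\mathtt{r}_0$, which is a standing assumption and therefore harmless, whereas the paper's proof is independent of the wave-packet condition and produces explicit constants. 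One small caveat: the constant $\mathtt{r}/2$ you quote from \eqref{GenericAssumptionbis} is the one stated in the paper, but what the proof of Lemma \ref{ecologia} actually delivers is a lower bound of order $\overline{\jmath}_1^{-1}$ (odd $\lvert\ell\rvert$) or $\overline{\jmath}_1^{-2}$ ($\lvert\ell\rvert=2$ with a difference of two sites); since you only need a positive constant depending on $S$ and you are free to choose $\widehat C(S)$ afterwards, your argument is unaffected, but the final bound should be recorded as an $S$-dependent constant rather than $\mathtt{r}/2$.
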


\begin{proof}
 If $\lvert \ell \rvert=1$ we have by the preservation of momentum
\[
\delta_{\ell, j, j'}=\og(j-j')-\og(j)+\og(j')=\frac{3 j j'(j-j')[3+j j'+(j-j')^2]}{(1+j^2)(1+j'^2)(1+(j-j')^2)}\,.
\]
It is easy to verify that (recall that $\lvert j-j'\rvert\le 2 \overline{\jmath}_1$)
\[
\lvert j\,\, j' \rvert\lvert j-j'\rvert\lvert 3+j j'+(j-j')^2 \rvert\geq \lvert j\,\, j' \rvert^2,
\quad (1+j^2)(1+j'^2)(1+(j-j')^2)\le K  \lvert j\,\, j' \rvert^2\overline{\jmath}_1^2\,.
\]
Which implies the thesis for $\lvert \ell \rvert=1$.
Now suppose $\lvert \ell \rvert=2$ and consider $j_1, j_2\in S$. We can write $\delta_{\ell j j'}=\og(j_1)+\og(j_2)+\og(j)-\og(j')$ and by the conservation of momentum
\begin{equation}\label{eurodespar}
\og(j_1)+\og(j_2)+\og(j)-\og(j')=(j_1+j_2)(j_1+j)(j_2+j) \,P(j_1, j_2, j)
\end{equation}
and $P$ is the rational function
\begin{equation}\label{esselunga}
P(x, y, z):=\frac{3+x^2+y^2+z^2+xy+xz+ y z+x y z(x+y+z)}{(1+x^2)(1+y^2)(1+z^2)(1+(x+y+z)^2)}\,.
\end{equation}
If $\lvert j \rvert > N$, 
where $N=N(S)$ is a large constant to be fixed and which depends on the set $S$, then
\[
\lvert (j_1+j_2) (j_1+j) (j_2+j)\rvert\geq  C_1\,j^2 
\]
for some constant $C_1:=C_1(N)>0$ (possibly small), provided that $N$ is large enough. Moreover 
\[
\lvert (1+j_1 j_2) j^2+(j_1^2 j_2+j_1 j_2^2) j+3+j_1^2+j_2^2 \rvert\geq C_2\, j^2\,,
\quad
\lvert (1+j_1^2)(1+j_2^2)(1+j^2)(1+(j_1+j_2+j)^2)\rvert\le C_3\, j^4
\]
for some small constant $C_2:=C_2(N)>0$, provided that $N$ is large enough, and some big constant $C_3:=C_3(S)>0$. Thus
$
\lvert \og(j_1)+\og(j_2)+\og(j)-\og(j') \rvert\geq C_1 C_2 C_{3}^{-1}>0.
$\\
Now consider $\lvert j \rvert\le N$. Then we have
\[
\lvert (j_1+j_2) (j_1+j) (j_2+j)\rvert\lvert (1+j_1 j_2) j^2+(j_1^2 j_2+j_1 j_2^2) j+3+j_1^2+j_2^2 \rvert\geq M_1
\]
\[
\lvert (1+j_1^2)(1+j_2^2)(1+j^2)(1+(j_1+j_2+j)^2\rvert\le M_2
\]
for some constant $M_1, M_2>0$ depending on $S$. Set $C_4:=M_1/ M_2$.\\
Therefore $\lvert \og(j_1)+\og(j_2)+\og(j)-\og(j')\rvert\geq C_4>0$. At the end we choose $C\geq\max\{ C_4, C_1 C_2/ C_3 \}$.
\end{proof}

\begin{lem}\label{delfino}
Assume that 
$\mathtt{B}_{i}$, $i=1,2,3$ 
are such that $J\mathtt{B}_{i}\in \mathfrak{C}_{-1} $
and that they are \emph{almost diagonal} 
(see Definition \ref{almostDIAG}).
Then, for any $\omega\in \mathcal{G}^{(1)}_0$ 
(see \eqref{divisoriLBNF3}), the following holds true:
	
\noindent
$(i)$ The linear vector fields 
$X_{\mathtt{A}_i}:=J \mathtt{A}_i$, 
with $\mathtt{A}_i$ defined as in \eqref{us},\eqref{them}, 
belongs to the class $\gotC_{-1}$, in particular it satisfies the following: 
\begin{equation}\label{bojack}
\mathbb{B}_{\varepsilon^i X_{\mathtt{A}_i}}^{\g}(s)
\le C(s) \varepsilon^i \quad i=1,2\,, 
\qquad  \mathbb{B}_{\varepsilon^i X_{\mathtt{A}_3}}^{\g}(s)
\le C(s) \varepsilon^3\g^{-1}\,,\qquad \forall s\geq s_0\,.
\end{equation}
Note that $X_{\mathtt{A}_i}$ does not depend on $i(\oo)$.
	
\noindent
$(ii)$ The transformation 
$\Upsilon_i\colon H^s(\T^{\nu+1})\to H^s(\T^{\nu+1})$, $i=1,2,3$ 
defined in \eqref{Fi1dp} is invertible and satisfies, 
for any $u=u(\omega)\in H^s$ Lipschitz in $\omega\in\calO_{\infty}^{2\g}$,
\begin{align}
\lVert (\Upsilon_i^{\pm 1}-\mathrm{I}) u \rVert_s^{\g, \calO_{\infty}^{2\g}}
&\lesssim \varepsilon^i C(s_0) 
\lVert u \rVert^{\g, \calO_{\infty}^{2\g}}_s
+ \varepsilon^i C(s) \lVert u \rVert^{\g, \calO_{\infty}^{2\g}}_{s_0}\,, 
\quad i=1,2\,,\label{upo1}\\
\lVert (\Upsilon_3^{\pm 1}-\mathrm{I}) u \rVert_s^{\g, \calO_{\infty}^{2\g}}
&\lesssim \varepsilon^3\gamma^{-1} C(s_0) 
\lVert u \rVert^{\g, \calO_{\infty}^{2\g}}_s
+ \varepsilon^3\gamma^{-1} C(s) \lVert u \rVert^{\g, \calO_{\infty}^{2\g}}_{s_0}\,. \label{upo3}
\end{align}
\end{lem}

\begin{proof} 
First of all notice that, 
by Lemmata \ref{Numeratore}, 
\ref{Denominatore} 
and the fact that $\omega\in \mathcal{G}_0^{(1)}$ 
(see \eqref{divisoriLBNF3}) 
we will have
\begin{equation}\label{rw}
\lvert (\mathtt{A}_i)_{j, l}^{j', l'} \rvert
\le \frac{C}{\langle j, j'\rangle} \quad i=1,2, 
\qquad \lvert (\mathtt{A}_3)_{j, l}^{j', l'} \rvert
\le \frac{C\gamma}{\langle j, j'\rangle}\,, 
\qquad \forall j, j'\in S^c,\,\, l, l'\in\mathbb{Z}^{\nu}
\end{equation}
for some constant $C>0$ depending on the set $S$.
	
\noindent
\emph{Proof of item (i).}
First we note that that 
$B:=\langle D_x \rangle^{1/2} J \mathtt{A}_i \langle D_x \rangle^{1/2}$ 
maps $H^s$ to itself for all $s\geq 0$. 
Indeed it is sufficient to exploit the fact that 
the matrix entries $B_j^{j'}(l-l')$ are 
uniformly bounded by a constant and 
$B$ is almost diagonal.
The matrix elements of 
$\partial_{\varphi_m}^{\tb} X_{\mathtt{A}_1}$, 
$[X_{\mathtt{A}_1}, \partial_x]$, 
$[\partial_{\varphi_m}^{\tb} X_{\mathtt{A}_1}, \partial_x]$ 
are respectively
\[
\langle \ell_m-\ell'_m \rangle^{\tb} \og(j) (\mathtt{A}_1)_{j}^{j'}(\ell-\ell')\,, \;\;
(j-j') \og(j) (\mathtt{A}_1)_{j}^{j'}(\ell-\ell')\,,\;\; 
\langle \ell_m-\ell'_m \rangle^{\tb}(j-j') \og(j) (\mathtt{A}_1)_{j}^{j'}(\ell-\ell')\,.
\]
Note that by the definition of $\mathtt{A}_1$ in \eqref{us}
\[
\langle \ell_m-\ell'_m \rangle^{\tb}, \lvert j-j'\rvert\le C
\]
for some constant $C$ depending on the set $S$. 
Thus arguing as above one can easily 
prove that $\partial_{\varphi_m}^{\tb} X_{\mathtt{A}_1}$, 
$[X_{\mathtt{A}_1}, \partial_x]$, 
$[\partial_{\varphi_m}^{\tb} X_{\mathtt{A}_1}, \partial_x]$ 
are $-1$-Lip-tame operators. 
This concludes the proof of the \eqref{bojack}
	
\noindent
\emph{Proof of item (ii).}
By \eqref{Fi1dp} we have
\begin{equation}\label{charlie}
(\Upsilon_i-\mathrm{I})u
=\sum_{k\geq 1} \frac{\varepsilon^{i k} X_{\mathtt{A}_i}^k u}{k!}\,, 
\qquad (\Upsilon_1^{-1}-\mathrm{I})u
=\sum_{k\geq 1} (-1)^k\frac{\varepsilon^{i k} X_{\mathtt{A}_i}^k u}{k!}\,.
\end{equation}
By using iteratively the property $(iii)$ of Lemma \ref{IncluecompoCLASSI} and 
item $(i)$ we have that
\[
\lVert X^k_{\mathtt{A}_i} u \rVert_s^{\g, \calO_{\infty}^{2\g}} 
\le \mathbb{B}^{\g}_{X_{\mathtt{A}_i}}(s) 
(\mathbb{B}^{\g}_{X_{\mathtt{A}_i}}(s_0) )^{k-1}
\lVert u \rVert_{s_0}^{\g, \calO_{\infty}^{2\g}}
+(\mathbb{B}^{\g}_{X_{\mathtt{A}_i}}(s_0) )^{k}
\lVert u \rVert_s^{\g, \calO_{\infty}^{2\g}}\,.
\]
By using this relation to estimate 
the Lip-Sobolev norm of \eqref{charlie} 
and by noting that $\varepsilon^{n} C(s_0)^n$ 
is a summable sequence, for $\varepsilon$ small enough, 
we prove the thesis.
\end{proof}

\bigskip

\end{document}